\newcommand{\dint}{\mathrm{d}}
\newcommand{\dist}{\mathsf{d}}
\newcommand{\Lint}{\mathit{L}}
\newcommand{\X}{{\sf X}}
\newcommand{\Y}{{\sf Y}}
\newcommand{\mea}{\mathfrak{m}}
\newcommand{\mms}{(\X, \dist, \mea)}
\newcommand{\real}{\mathbb{R}}
\newcommand{\N}{\mathbb{N}}
\newcommand{\W}{\mathit{W}^{1,2}}
\newcommand{\WT}{\mathit{W}^{2,2}}
\newcommand{\ener}{\mathsf{E}}
\newcommand{\hess}{\mathrm{Hess}}
\newcommand{\Div}{{\rm{div}}}
\newcommand{\Test}{{\rm{Test}}}
\newcommand{\fct}{\mathrm{F}}
\newcommand{\V}{\mathit{V}}
\newcommand{\D}{\mathrm{D}}
\newcommand{\Sclass}{{S}^2}
\newcommand{\loc}{\mathsf{loc}}
\newcommand{\bpi}{\boldsymbol{\pi}}
\newcommand{\mae}{\mea\text{-a.e.}}
\newcommand{\Hil}{\mathscr{H}}
\newcommand{\T}{\mathit{T}}
\DeclarePairedDelimiter{\abs}{\lvert}{\rvert}
\def\Xint#1{\mathchoice 
  {\XXint\displaystyle\textstyle{#1}}%
  {\XXint\textstyle\scriptstyle{#1}}%
  {\XXint\scriptstyle\scriptscriptstyle{#1}}%
  {\XXint\scriptscriptstyle\scriptscriptstyle{#1}}%
  \!\int} 
\def\XXint#1#2#3{{\setbox0=\hbox{$#1{#2#3}{\int}$} 
  \vcenter{\hbox{$#2#3$}}\kern-.5\wd0}} 
\def\-int{\Xint -}
\numberwithin{equation}{subsection}
\newcommand{\R}{\mathbb{R}}
\newcommand{\mm}{{\mbox{\boldmath$m$}}}
\newcommand{\nn}{{\mbox{\boldmath$n$}}}
\newcommand{\ppi}{{\mbox{\boldmath$\pi$}}}
\newcommand{\sfd}{{\sf d}}
\newcommand{\sfh}{{\sf h}}
\newcommand{\Kliminf}{K\kern-3pt-\kern-2pt\mathop{\rm lim\,inf}\limits}  
\newcommand{\supp}{\mathop{\rm supp}\nolimits}   
\newcommand{\Lip}{\mathop{\rm Lip}\nolimits}          
\renewcommand{\d}{{\mathrm d}}
\newcommand{\restr}[1]{\lower3pt\hbox{$|_{#1}$}} 
\newcommand{\la}{\left<}                  
\newcommand{\ra}{\right>}
\newcommand{\nchi}{{\raise.3ex\hbox{$\chi$}}}
\newcommand{\weakto}{\rightharpoonup}
\newcommand{\limi}{\varliminf}
\newcommand{\fr}{\penalty-20\null\hfill$\blacksquare$}                      
\newcommand{\prob}[1]{\mathscr P(#1)}                   
\newcommand{\e}{{\rm{e}}}                           
\renewcommand{\mm}{\mathfrak m}                                
\renewcommand{\nn}{\mathfrak n}                                
\renewenvironment{proof}{\removelastskip\par\medskip   
\noindent{\em Proof.}\rm}{\penalty-20\null\hfill$\square$\par\medbreak}
\newtheorem{theorem}{Theorem}[section]
\newtheorem{corollary}[theorem]{Corollary}
\newtheorem{lemma}[theorem]{Lemma}
\newtheorem{proposition}[theorem]{Proposition}
\newtheorem{assumption}[theorem]{Assumption}
\newtheorem{thmdef}[theorem]{Theorem/Definition}
\newtheorem{definition}[theorem]{Definition}
\newtheorem{example}[theorem]{Example}
\newtheorem{remark}[theorem]{Remark}
\newcommand{\test}[1]{{\rm Test F}(#1)}
\newcommand{\h}{{\sfh}}
\newcommand{\CD}{{\sf CD}}
\newcommand{\RCD}{{\sf RCD}}
\newcommand{\HS}{{\lower.3ex\hbox{\scriptsize{\sf HS}}}}
\renewcommand{\H}{{\rm Hess}}
\renewcommand{\div}{{\rm div}}
\newcommand{\sx}{{\sf x}}
\newcommand{\sy}{{\sf y}}
\newcommand{\bs}{{\sf bs}}
\newcommand{\E}{{\sf E}}
\DeclareMathOperator*{\lw}{\text{weak-}L^2\text{-lim}}
\title{Partial derivatives in the nonsmooth setting}
\author{Nicola Gigli \thanks{SISSA, Trieste. Email: ngigli@sissa.it} \quad Chiara Rigoni  \thanks{Institut f\"ur Angewandte Mathematik, Universit\"at Bonn. Email: rigoni@iam.uni-bonn.de}}
\begin{document}
\maketitle

\begin{abstract}
We study partial derivatives on the product of two metric measure structures, in particular in connection with calculus via modules as proposed by the first named author in \cite{Gigli14}.

Our main results are:
\begin{itemize}
\item[i)] The extension to this non-smooth framework of  Schwarz's theorem about symmetry of mixed second derivatives
\item[ii)] A quite complete set of results relating the property $f\in W^{2,2}(\X\times\Y)$ on one side with that of $f(\cdot,y)\in W^{2,2}(\X)$ and $f(x,\cdot)\in W^{2,2}(\Y)$ for a.e.\ $y,x$ respectively on the other. Here $\X,\Y$ are $\RCD$ spaces so that second order Sobolev spaces are well defined.
\end{itemize}
These results are in turn based upon the study of Sobolev regularity, and of the underlying notion of differential, for a map with values in a Hilbert module: we mainly apply this notion to the map $x\mapsto\d_\sy f(x,\cdot)$ in order to build, under the appropriate regularity requirements, its differential $\d_\sx\d_\sy f$.
\end{abstract}

\tableofcontents
\section{Introduction}
Given  two smooth manifolds $M,N$ and points $p\in M$, $q\in N$, it is a classical fact in differential geometry that 
\begin{equation}
\label{eq:i1}
T_{(p,q)}(M\times N)\cong T_pM\times T_qN
\end{equation}
where the isomorphism is given by $(\d\pi_N,\d\pi_M)$, with $\pi_N,\pi_M$ being the projection onto the respective coordinates and $\d\pi_N,\d\pi_M$ denote their differentials. This being valid for any $(p,q)\in M\times N$ we can regard any vector field $v$ on $M\times N$ via its components, i.e.\ we can write  $v=(v_x,v_y)$ where $v_x$ is a vector field on $M$ parametrized by $q\in N$ (or, better, a section of the pullback of the tangent bundle of $M$ via $\pi_M$) and symmetrically $v_y$ is a vector field on $N$ parametrized by $p\in M$.

\bigskip

In the nonsmooth setting the analogue of \eqref{eq:i1} has been investigated in our earlier work \cite{GR17}. Here the concept of tangent space is interpreted in terms of the language of normed modules introduced in \cite{Gigli14} and is tightly linked to Sobolev calculus. In this sense it should not surprise that the validity of (the analogue of) \eqref{eq:i1} on metric measure spaces is related to the validity of appropriate tensorization properties of Sobolev spaces (this latter topic has been investigated only relatively recently: the first studies we are aware of have been conducted in \cite{AmbrosioGigliSavare11-2}, see also \cite{Ambrosio-Pinamonti-Speight15} and \cite{GH15} for more recent contributions). Without entering into details here, let us just say that these tensorization properties of Sobolev spaces we are alluding to are always satisfied if the spaces $\X,\Y$ are $\RCD$ spaces (see Proposition \ref{prop:rcdass}) and that in this case the appropriate analogue of \eqref{eq:i1} holds (see Theorem \ref{dectang}). This has been established in our earlier work \cite{GR17}.

In the present manuscript we push the investigation further, in particular in connection with higher order differentiations (in \cite{GR17} only first order derivatives have been considered). Our main results are:
\begin{itemize}
\item[i)] A generalization to this setting of the classical theorem by Schwarz about symmetry of second order derivatives: we shall see in Theorem \ref{thm:swap} that under fairly general assumptions the identity
\begin{equation}
\label{eq:i2}
\d_\sx\d_\sy f=\d_\sy\d_\sx f
\end{equation}
holds even in this framework
\item[ii)] A thorough study of the relation between second order regularity in the product of two $\RCD$ spaces and second order regularity in the factors which-among other things-shows that the expected formula
\begin{equation}
\label{eq:i3}
\H (f)=\Bigg(
\begin{array}{cc}
\H_\sx (f)&\d_\sx\d_\sy f\\
\d_\sy\d_\sx f&\H_\sy (f)
\end{array}
\Bigg)
\end{equation}
holds true  (see Theorems \ref{prop:dafatt}, \ref{prop:w22prod} and Propositions \ref{prop:dafatt2}, \ref{prop:h22prod}). The discussion here is complicated by the fact that on $\RCD$ spaces there is no single `second order Sobolev space' as it is not clear whether the $`H$' version (obtained by completion of the space of `smooth' functions) coincides with the $`W$' one (obtained via integration by parts). Along similar lines we also investigate differentiability properties of vector fields in relation to that of their components (see Theorems \ref{prop:dafattCov}, \ref{thm:wCprod}  and Proposition \ref{prop:hCprod}). 
\end{itemize}
A crucial aspect of our analysis, and perhaps the most important advance to the theory among those given in the current manuscript, is in the possibility of speaking of Sobolev regularity for functions with values in a (Hilbert) module, and in the related concept of differential. Notice indeed that in order to just state the identities \eqref{eq:i2} and \eqref{eq:i3}, it is crucial to know what $\d_\sx\d_\sy f$ is and given that for a function $f$ in two variables the object $\d_\sy f$ can be interpreted as the map sending $x$ to $\d_\sy f(x,\cdot)\in L^0(T^*\Y)$, it is imperative to be able to differentiate this sort of maps. This analysis is carried out in Section \ref{se:dermod}. \bigskip

{\bf Acknowledgement:} The second author gratefully acknowledges support by the European Union through the ERC-AdG 694405 RicciBounds.

\section{Notation and preliminary results}

For us a metric measure space $\mms$ will always be a complete and separable metric space $(\X, \dist)$ equipped with a reference non-negative (and non-zero) Borel measure $\mea$ which is finite on bounded sets.

\subsection{Sobolev spaces for locally integrable objects}
In this section we briefly recall some basic definitions of Sobolev-related objects, with particular focus on quantities which are only integrable on bounded sets. For the definition and properties of Sobolev functions and minimal weak upper gradients we refer to \cite{Cheeger00} (see also  \cite{Shanmugalingam00} and the more recent \cite{AmbrosioGigliSavare11, AmbrosioGigliSavare11-2} whose presentation we are going to follow) while for what concerns $L^\infty$ and $L^0$ normed modules we refer to \cite{Gigli14, GR17}.

Given a metric measure space $\mms$, by  $L^2_\loc(\X)$ we mean the space of (equivalence classes w.r.t.\ $\mm$-a.e.\ equality of) Borel functions $f:\X\to\R$ such that $\nchi_Bf\in L^2(\X)$ for every bounded Borel set $B\subset \X$. A sequence $(f_n)\subset L^2_\loc(\X)$ converges to $f$ in $L^2_\loc(\X)$ provided $\nchi_Bf_n\to \nchi_B f$ in $L^2(\X)$ for every bounded Borel set $B\subset \X$.

A test plan $\ppi$ is a Borel probability measure on $C([0,1],\X)$ such that
\[
\begin{split}
(\e_t)_*\ppi&\leq C\mm,\qquad\forall t\in[0,1],\\
\iint_0^1|\dot\gamma_t|\,\d t\,\d\ppi(\gamma)&<\infty,
\end{split}
\]
for some $C>0$.

The {Sobolev class} $\Sclass(\X)$ (resp.\ $\Sclass_\loc(\X)$) is the space of all Borel functions $f: \X\to \R$ for which there exists $G\in \Lint^2(\mea)$ (resp.\ $G\in \Lint^2_\loc(\mea)$) non-negative, called weak upper gradient, such that for every test plan $\bpi$ it holds
\[
\int|f(\gamma_1)-f(\gamma_0)|\,\d\pi(\gamma)\leq \iint_0^1G(\gamma_t)|\dot\gamma_t|\,\d t\,\d\bpi(\gamma).
\]

It can be proved that $f\in \Sclass_\loc(\X)$ and $G$ is a weak upper gradient  if and only if for every test plan $\ppi$ we have that for $\ppi$-a.e.\ $\gamma$ the map $t\mapsto f(\gamma_t)$ is in $W^{1,1}(0,1)$ and 
\[
\Big|\frac\d{\d t}f(\gamma_t)\Big|\leq G(\gamma_t)|\dot\gamma_t|\qquad a.e.\ t\in[0,1].
\]
In particular, this characterization implies the existence of a minimal weak upper gradient in the $\mm$-a.e.\ sense: we call it {\bf minimal weak upper gradient} and denote it by $|\D f|$. 

The { Sobolev space} $\W(\X)$ (resp.\ $\W_\loc(\X)$) is defined as $\Lint^2\cap\Sclass(\X)$ (resp.\ $\Lint^2_\loc\cap\Sclass_\loc(\X)$). It can be proved that $f\in \W_\loc(\X)$ if and only if $\eta f\in \W(\X)$  for every $\eta$ Lipschitz with bounded support. We recall that $\W(\X)$  is a Banach space when endowed with the norm
\[
\|f\|_{\W(\X)}^2:=\|f\|_{\Lint^2(\mea)}^2+\||\D f|\|_{\Lint^2(\mea)}^2.
\]
We say (\cite{Gigli12}) that $\mms$ is {\bf infinitesimally Hilbertian} provided $\W(\X)$ is a Hilbert space.

It is useful to recall that minimal weak upper gradients have the following important locality property:
\[
|\D f|=|\D g|\qquad \mm-a.e.\text{ on }\{f=g\}\qquad\qquad\forall f,g\in\Sclass_\loc(\X).
\]

From the notion of minimal weak upper gradient it is possible to extract the one of differential via the following result:
\begin{thmdef}\label{thm:defd}
There exists a unique couple $(\Lint^0(T^*\X),\dint)$, where $\Lint^0(T^*\X)$ is a $\Lint^0(\X)$-normed module and $\dint:\Sclass_\loc(\X)\to \Lint^0(T^*\X)$ is  a linear map, such that
\begin{itemize}
\item[i)] $|\dint f|=|\D f|$ $\mm$-a.e.\ for every $f\in \Sclass_\loc(\X)$,
\item[ii)] $\Lint^0(T^*\X)$ is generated by $\{\dint f : f\in\Sclass_\loc(\X)\}$, i.e.\ $\Lint^0$-linear combinations of objects of the form $\dint f$ are dense in $\Lint^0(T^*\X)$.
\end{itemize}
Uniqueness is intended up to unique isomorphism, i.e.\ if $(\mathscr{M},\dint')$ is another such couple, then there is a unique isomorphism $\Phi \colon \Lint^0(T^*\X)\to \mathscr{M}$ such that $\Phi(\dint f)=\dint'f$ for every $f\in\Sclass_\loc(\X)$.
\end{thmdef}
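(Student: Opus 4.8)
The plan is to reproduce, in the present $\Lint^0$/$\Sclass_\loc$ setting, the construction of the cotangent module from \cite{Gigli14}. For existence I would first introduce a \emph{pre-cotangent module} $\mathcal P$, the set of finite formal sums $\sum_{i=1}^n\chi_{A_i}\dint f_i$ where $(A_i)_{i=1}^n$ is a Borel partition of $\X$ and $f_i\in\Sclass_\loc(\X)$. This becomes a vector space by summing two elements through the common refinement of their partitions and by scaling termwise, and a module over simple functions by absorbing the finitely many values of the multiplier into the partition. On $\mathcal P$ I would define the candidate pointwise norm
\[
\Big|\sum_{i}\chi_{A_i}\dint f_i\Big|:=\sum_i\chi_{A_i}|\D f_i|\in\Lint^0(\X),
\]
which makes sense since $|\D f_i|\in\Lint^2_\loc(\X)\subset\Lint^0(\X)$. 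The crucial point, and the step I expect to be the main obstacle, is to verify that this is well defined on $\mathcal P$, i.e.\ independent of the chosen representation: this is exactly where the locality property $|\D f|=|\D g|$ $\mm$-a.e.\ on $\{f=g\}$ recalled above is used, together with the subadditivity $|\D(f+g)|\le|\D f|+|\D g|$ and the $1$-homogeneity of minimal weak upper gradients, which at the same time yield the triangle inequality and the identity $|gv|=|g|\,|v|$ required of a pointwise norm.

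Having the pointwise norm, I would pass to the quotient of $\mathcal P$ by the submodule of elements of zero pointwise norm (this is a submodule precisely by the norm axioms just checked), equip the quotient with the distance $\dsp_{\Lint^0}(v,w):=\int(|v-w|\wedge1)\,\d\tilde{\mm}$ for a probability measure $\tilde{\mm}$ with the same negligible sets as $\mm$, and define $\Lint^0(T^*\X)$ as the completion. Module multiplication by simple functions extends to all of $\Lint^0(\X)$ by continuity, since simple functions are $\dsp_{\Lint^0}$-dense, and the module-norm axioms pass to the completion, giving the $\Lint^0(\X)$-normed module structure. Setting $\dint f:=[\chi_\X\dint f]$ makes $\dint\colon\Sclass_\loc(\X)\to\Lint^0(T^*\X)$ linear, property (i) holds by the very definition of the pointwise norm, and property (ii) is immediate because each generator $\sum_i\chi_{A_i}\dint f_i$ of the dense image of $\mathcal P$ is by construction an $\Lint^0$-linear combination of the $\dint f_i$.

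For uniqueness, given a second couple $(\mathscr M,\dint')$ I would define $\Phi$ on the dense subset of $\Lint^0(T^*\X)$ consisting of classes $\sum_i\chi_{A_i}\dint f_i$ by $\Phi\big(\sum_i\chi_{A_i}\dint f_i\big):=\sum_i\chi_{A_i}\dint' f_i$. Well-definedness and the isometric property follow once more from locality combined with property (i) for $\mathscr M$: on each $A_i$ the pointwise norm of the image reduces to $|\dint' f_i|=|\D f_i|$, whence $\big|\sum_i\chi_{A_i}\dint' f_i\big|_{\mathscr M}=\sum_i\chi_{A_i}|\D f_i|=\big|\sum_i\chi_{A_i}\dint f_i\big|$, so $\Phi$ preserves pointwise norms (hence $\dsp_{\Lint^0}$-distances) on a dense subset and extends uniquely to a module morphism. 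It is surjective because its image is closed, being the isometric image of a complete space, and dense, by property (ii) for $\mathscr M$; thus $\Phi$ is an isomorphism, and it is the unique one with $\Phi(\dint f)=\dint' f$ since that identity forces its value on the generating set. The only genuine work is the locality bookkeeping of the first paragraph; everything downstream is the routine completion-and-extension machinery of $\Lint^0$-normed modules.
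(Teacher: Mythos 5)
Your proposal is correct and follows exactly the route the paper relies on: Theorem/Definition \ref{thm:defd} is stated as a recalled result with no proof of its own, the construction being delegated to \cite{Gigli14} (and its $L^0$ variant), and your argument faithfully reproduces that construction --- pre-cotangent module of simple-function combinations of differentials, pointwise norm well-posed via locality and subadditivity of minimal weak upper gradients, quotient and completion for the $\dsp_{L^0}$ distance, and uniqueness by isometric extension of the map $\sum_i\chi_{A_i}\dint f_i\mapsto\sum_i\chi_{A_i}\dint' f_i$ from the dense generating set. No gaps worth flagging.
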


The space of vector fields $\Lint^0(T\X)$ is defined as the dual of the $\Lint^0$-normed module $\Lint^0(T^*\X)$. It can be equivalently characterized as the $\Lint^0$-completion of the dual $\Lint^2(T\X)$ of the $\Lint^2$-normed module $\Lint^2(T^*\X)$ (see \cite{Gigli14}, \cite{Gigli17}). $\Lint^2_\loc(T\X)\subset \Lint^0(T\X)$ is the space of $X$'s such that $|X|\in \Lint^2_\loc(\mea)$.

We say that  $X \in \Lint^2_\loc (T\X)$ has {\bf divergence} in $\Lint^2_\loc$, and  write $X \in \D(\Div_{\loc})$, if there exists $h \in \Lint^2_\loc(\X)$ such that
\[  
\displaystyle \int f h \ \dint \mea = - \int \dint f(X) \ \dint \mea, \quad \text{for every} \,\,\, f \in \W(\X) \,\,\, \text{with bounded support}. 
\]
In this case we call $h$ (which is unique by the density of $\W(\X)$ in $\Lint^2(\mea)$) the divergence of $X$, and denote it by $\div(X)$.

Let us now assume that $\mms$ is infinitesimally Hilbertian, so that the pointwise norms in $\Lint^0(T^*\X)$ and in $\Lint^0(T\X)$ induce pointwise scalar products. In this case the modules $\Lint^0(T^*\X)$ and $\Lint^0(T\X)$ are canonically isomorphic via the Riesz (musical) isomorphism
\[
\flat \colon \Lint^0(\mathit{T} \X) \rightarrow \Lint^0(\mathit{T}^\ast \X) \qquad \ \text{and} \qquad \ \sharp \colon  \Lint^0(\mathit{T}^\ast \X) \rightarrow \Lint^0(\mathit{T} \X)
\]
defined by
\[
X^\flat (Y):=\la X,Y\ra\qquad \text{and} \qquad \la\omega^\sharp,X\ra:=\omega(X)
\]
for every $X,Y\in \Lint^0(T\X)$ and $\omega\in \Lint^0(T^*\X)$. The {\bf gradient} of a function $f\in\W_\loc(\X)$ is defined as $\nabla f:=(\dint f)^\sharp\in \Lint^2_\loc(T\X)$.

We say that $f\in  \W_\loc(\X)$ has {\bf Laplacian} in $\Lint^2_\loc(\mea)$, namely $f \in \D(\Delta_\loc)$, if there exists $h \in \Lint^2_\loc(\mea)$ such that it holds
\[ 
\displaystyle \int g  h \,\dint \mea = - \int \la\nabla f,  \nabla g \ra\dint \mea, \quad \text{for every} \,\,\, g \in \W(\X) \,\,\, \text{with bounded support}
\]
(this is the same as requiring that $\nabla f\in \D({\Div}_\loc)$ with ${\Div}(\nabla f)=h$). In this case we call $h$ the Laplacian of $f$, and we denote it by $\Delta f$. If $f,h\in L^2(\X)$ we shall write $f\in D(\Delta)$ instead of $f \in D(\Delta_\loc)$, and in this case the Laplacian is equivalently defined as infinitesimal generator of the Dirichlet form 
\begin{equation}\label{def:E}
\ener(f):=\left\{\begin{array}{ll}
\dfrac12\displaystyle{\int|\d f|^2\,\d\mm}&\qquad\text{ if }f\in \W(\X),\\
+\infty&\qquad\text{ otherwise}.
\end{array}
\right.
\end{equation}
From the properties of the minimal upper gradient we deduce that $\ener$ is convex, lower semicontinuous and with dense domain, namely $\{ f : \ener(f) < \infty  \}$ is dense in $L^2(\mm)$. Hence, the classical theory of gradient flows of convex functions on Hilbert spaces ensures existence and uniqueness of a 1-parameter semigroup $(\h_t)_{t \ge 0}$ of continuous operators from $L^2(\mm)$ to itself such that for every $f \in L^2(\mm)$ the curve $t \mapsto \h_t(f) \in L^2(\mm)$ is continuous on $[0, \infty)$, absolutely continuous on $(0, \infty)$ and satisfies
\[
\dfrac{\d}{\d t} \h_t(f) = \Delta f, \qquad \text{for a.e. } t > 0,
\]
where it is part of the statement the fact that $\h_t(f) \in \D(\Delta)$ for every $f \in L^2(\mm)$ and $t > 0$ (see \cite{AmbrosioGigliSavare08}, and the references therein). We remark that in the case in which $\mms$ is infinitesimally Hilbertian, the Laplacian and the operators $\h_t$ are linear.

\subsection{Calculus tools on product spaces}\label{Sec:CalcPr}

Let $(\X,\sfd_\X,\mm_\X)$ and $(\Y,\sfd_\Y,\mm_\Y)$ be two metric measure spaces.  The product space $\X \times \Y$ will be always implicitly endowed with the product measure and the distance
\[
(\sfd_\X\otimes\sfd_\Y)^2\big((x_1,y_1),(x_2,y_2)\big):=\sfd_\X^2(x_1,x_2)+\sfd_\Y^2(y_1,y_2).
\]

In the following we will denote by $\pi_\X \colon \X\times\Y\to \X$, the canonical projection on the first coordinate. Observe that this is a map of local bounded deformation, meaning that \ for every bounded set $B\subset \X \times\Y$ there is a constant $C(B)>0$ such that $\pi_\X\restr B$ is $C(B)$-Lipschitz and $(\pi_\X)_*(\mm_\X\otimes\mm_\Y\restr B)\leq C(B)\mm_\X$: this in particular allows to construct a pullback module over $\X\times\Y$ starting from a  $L^0(\X)$-module $\mathscr M$ over $\X$ (see  \cite[Section 3.1]{GR17} for the definition of such pullback). Given the particular structure of the projection map, a very explicit construction can be given to such pullback, as we briefly discuss now.

Let us consider a $L^0(\X)$-module   $\mathscr M$ over $\X$. We have on one side  the pullback $([\pi_\X^\ast]\mathscr M, [\pi_\X^\ast])$  of $\mathscr M$ through $\pi_\X$ (see \cite[Theorem/Definition 3.2]{GR17} and notice that in particular $[\pi_\X^\ast]\mathscr M$ is a $L^0(\X\times\Y)$-module) and on the other the module $L^0(\Y, \mathscr{M})$ that we are going to define now.  $L^0(\Y, \mathscr{M})$ is the space of all the equivalence classes up to $\mm_\Y$-a.e. equality of strongly measurable (i.e., Borel and essentially separably valued) functions from $\Y$ to $\mathscr M$. This space canonically carries the structure of a $L^0(\X \times \Y)$-module. Indeed:
\begin{itemize}
\item[-] the multiplication of an element of $L^0(\Y, \mathscr M)$ by a function $f \in L^0(\X \times \Y)$ is defined as the map $\Y \ni y \mapsto f(\cdot, y)  v(\cdot, y) \in \mathscr M$. Recalling that $L^0(\X\times\Y)\sim L^0(\Y;L^0(\X))$ and approximating  $f \in L^0(\X\times \Y)$ with functions with finite range as maps from $\Y$ to $L^0(\X)$, it is not hard to see  that $y \mapsto f(\cdot, y)  v(\cdot, y)$ has essentially separable range, provided that $y \mapsto v(\cdot, y)$ does;
\item[-] the pointwise norm of $v \in L^0(\Y, \mathscr M)$ is obtained by composing the map $y \mapsto v(\cdot, y) \in \mathscr M$ with the pointwise norm on $\mathscr M$. Again the isomorphism  $L^0(\Y, L^0(\X)) \sim L^0(\X \times \Y)$ ensures that the so-defined map takes values in $ L^0(\X \times \Y)$. 
\end{itemize}
It is then clear that this pointwise norm induces a complete distance on $L^0(\Y,\mathscr M)$ via the formula
\[
\sfd_{L^0}(v,w):=\int 1\wedge|v-w|\,\d\mathfrak n,
\]
where $\mathfrak n\in\prob{\X\times\Y}$ has the same negligible sets of $\mm_\X\times\mm_\Y$ (the topology induced by $\sfd_{L^0}$ is independent on the choice of the particular $\nn$) and thus that, as claimed, $L^0(\Y,\mathscr M)$ is a $L^0(\X\times\Y)$-module.

By construction, $L^0(\Y,\mathscr M)$ is generated by constant maps and for any $v\in \mathscr M$ the map $\hat v\in L^0(\Y,\mathscr M)$ constantly equal to $v$ satisfies $|\hat v|=|v|\circ\pi_\X$. In other words, by the characterization of pullback of modules, the module $L^0(\Y,\mathscr M)$ and the map $v\mapsto \hat v$ can be identified with $([\pi_\X^\ast]\mathscr M, [\pi_\X^\ast])$, meaning that there is a unique isomorphism $\Phi:L^0(\Y,\mathscr M)\to [\pi_\X^\ast]\mathscr M$ such that $\Phi(\hat v)=[\pi_\X^\ast]v$ for any $v\in\mathscr M$.

In what will come next, we shall often implicitly use this identification in the case in which $\mathscr M = L^0(T^*\X)$ , namely $L^0(\Y,L^0(T^*\X))\sim [\pi_\X^*]L^0(T^*\X)$.\\

\bigskip

The fact that $\pi_\X$ is of local bounded deformation ensures that it can be used to pullback 1-forms (see \cite{Gigli14} and \cite[Section 3.1.2]{GR17}): for any $f \in \Sclass_\loc(\X_1)$ we have that
\begin{equation}
\label{eq:locpb1form}
f \circ \pi_\X \in \Sclass_\loc(\X \times \Y) \,\,\, \text{with} \,\,\, \abs{\d(f \circ \pi_\X)} = \abs{\d f} \circ \pi_\X, \quad \mm_\X \otimes \mm_\Y\text{-a.e.},
\end{equation}
and from this fact it follows that there exists a unique linear and continuous map $\pi_\X^\ast \colon L^0(\T^\ast \X)\to  L^0(\T^\ast(\X \times \Y))  $ with the property that
\[
\begin{split}
\pi_\X^\ast(\d f) &= \d (f \circ \pi_\X), \,\,\qquad \forall f \in \Sclass_\loc(\X),\\
\pi_\X^\ast(g \omega) &= g \circ \pi_\X\, \pi_\X^\ast\omega, \,\,\quad \forall g \in L^0(\X), \omega \in L^0(\T^\ast \X),\\
\abs{\pi_\X^\ast \omega} &= \abs{\omega} \circ \pi_\X, \,\,\qquad \mm_\X\otimes\mm_\Y\text{-a.e.}, \, \forall \omega  \in L^0(\T^\ast \X).
\end{split}
\]
All these constructions can be repeated with the roles of $\X$ and $\Y$ inverted.\\

\bigskip

By the universal property of the pullback of modules it is easy to see that the map $\pi_\X^*$ just described splits through the pullback map from $L^0(T\X)$ to $L^0(\Y,L^0(T\X))$ and a module morphism $\Phi_\X:L^0(\Y,L^0(T^*\X))\to L^0(T^*(\X\times  \Y))$. More precisely we have the following proposition (see {\cite[Proposition 3.7]{GR17}} for the proof):
\begin{proposition}\label{prop:defPhi}
Let $(\X,\sfd_\X,\mm_\X)$ and $(\Y,\sfd_\Y,\mm_\Y)$ be two metric measure spaces. There exists a unique $L^0(\X\times\Y)$-linear and continuous map $\Phi_\sx$ from $L^0(\Y,L^0(T^*\X))$ to $L^0(T^*(\X\times  \Y))$ such that
\[
\Phi_\sx(\widehat{\d g})=\d(g\circ\pi_\X)\qquad\forall g\in \Sclass_\loc(\X),
\]
where $\widehat{\d g}:\Y\to L^0(T^*\X)$ is the function identically equal to $\d g$. Such map preserves the pointwise norm.

Similarly, there is a unique $L^0(\X\times\Y)$-linear and continuous map $\Phi_\sy:L^0(\X,L^0(T^*\Y))\to L^0(T^*(\X\times  \Y))$ such that
\[
\Phi_\sy(\widehat{\d h})=\d(h\circ\pi_\Y)\qquad\forall h\in \Sclass_\loc(\Y),
\]
where $\widehat{\d h}:\X\to L^0(T^*\Y)$ is the function identically equal to $\d h$, and  such map preserves the pointwise norm.
\end{proposition}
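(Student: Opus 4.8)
The plan is to reduce everything to the single construction of $\Phi_\sx$, since $\Phi_\sy$ is produced by the identical argument with the roles of $\X$ and $\Y$ exchanged. The whole point is to recognize that the form--pullback map $\pi_\X^\ast\colon L^0(T^*\X)\to L^0(T^*(\X\times\Y))$ recorded just above is exactly the data to which the universal property of the pullback module applies, and that, via the identification $\iota\colon L^0(\Y,L^0(T^*\X))\to[\pi_\X^\ast]L^0(T^*\X)$ with $\iota(\hat v)=[\pi_\X^\ast]v$ discussed earlier in this section, the source $L^0(\Y,L^0(T^*\X))$ is nothing but the abstract pullback $[\pi_\X^\ast]L^0(T^*\X)$.

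For existence I would first observe that $\pi_\X^\ast$ is linear, satisfies $\pi_\X^\ast(g\omega)=(g\circ\pi_\X)\,\pi_\X^\ast\omega$ for $g\in L^0(\X)$, and preserves the pointwise norm, $|\pi_\X^\ast\omega|=|\omega|\circ\pi_\X$. These are precisely the hypotheses under which the universal property of $[\pi_\X^\ast]L^0(T^*\X)$ (see \cite[Theorem/Definition 3.2]{GR17}) yields a unique $L^0(\X\times\Y)$-linear continuous morphism $\Psi\colon[\pi_\X^\ast]L^0(T^*\X)\to L^0(T^*(\X\times\Y))$ with $\Psi\circ[\pi_\X^\ast]=\pi_\X^\ast$. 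Setting $\Phi_\sx:=\Psi\circ\iota$ then gives an $L^0(\X\times\Y)$-linear continuous map, and for every $g\in\Sclass_\loc(\X)$ one computes
\[
\Phi_\sx(\widehat{\d g})=\Psi\big(\iota(\widehat{\d g})\big)=\Psi\big([\pi_\X^\ast]\d g\big)=\pi_\X^\ast(\d g)=\d(g\circ\pi_\X),
\]
using $\iota(\hat v)=[\pi_\X^\ast]v$ in the second equality and the defining property of $\pi_\X^\ast$ in the last.

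Next I would verify norm preservation and uniqueness. Equality of norms holds on the generators, $|\Psi([\pi_\X^\ast]\omega)|=|\pi_\X^\ast\omega|=|\omega|\circ\pi_\X=|[\pi_\X^\ast]\omega|$; since finite combinations $\sum_i\chi_{A_i}[\pi_\X^\ast]\omega_i$ with characteristic-function coefficients have dense $L^0$-span and $\Psi$ is continuous, the identity $|\Psi(\cdot)|=|\cdot|$ extends to the whole module, whence $\Phi_\sx$ preserves the pointwise norm because $\iota$ does. For uniqueness, if $\Phi_\sx'$ is another map with the stated factorization property, then $\Phi_\sx$ and $\Phi_\sx'$ agree on the set $\{\widehat{\d g}:g\in\Sclass_\loc(\X)\}$, and being both $L^0(\X\times\Y)$-linear and continuous they agree on the closure of the $L^0(\X\times\Y)$-linear span of this set.

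The step carrying the actual content---and the one I would be most careful about---is showing that this span is dense, i.e.\ that the constant maps $\widehat{\d g}$ generate $L^0(\Y,L^0(T^*\X))$. Here I would use that $\{\d g:g\in\Sclass_\loc(\X)\}$ generates $L^0(T^*\X)$ by Theorem/Definition \ref{thm:defd}, together with the identity $\widehat{g\,\d f}=(g\circ\pi_\X)\,\widehat{\d f}$ (valid because multiplication by $g\circ\pi_\X\in L^0(\X\times\Y)$ acts fibrewise and is $y$-independent), so that $L^0(\X\times\Y)$-combinations of the $\widehat{\d g}$ contain the image under $v\mapsto\hat v$ of the generating combinations of $L^0(T^*\X)$; since $v\mapsto\hat v$ preserves the pointwise norm and the constant maps generate the pullback, density follows. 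This generation fact is really the crux: once it is in place, existence, norm preservation and uniqueness are all formal consequences of the universal property.
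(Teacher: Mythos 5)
Your argument is correct and follows essentially the route the paper intends: the paper defers to \cite[Proposition 3.7]{GR17}, but the surrounding discussion makes explicit that $\Phi_\sx$ is obtained by factoring the form-pullback $\pi_\X^\ast$ through the universal property of $[\pi_\X^\ast]L^0(T^*\X)$ and the identification of that pullback with $L^0(\Y,L^0(T^*\X))$, exactly as you do. Your treatment of the one substantive point---that the constant maps $\widehat{\d g}$ generate $L^0(\Y,L^0(T^*\X))$, via the generation of $L^0(T^*\X)$ by differentials and the identity $\widehat{g\,\d f}=(g\circ\pi_\X)\,\widehat{\d f}$---is also the right one.
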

A way to think at the above is the following. Say that we have two smooth manifolds $M_1$ and $M_2$ and a  map assigning to every $x_2\in M_2$ a 1-form $\omega(x_2)$ on $M_1$. Then we might think at such map as the 1-form $\Omega$ on $M_1\times M_2$ which at the point $(x_1,x_2)$ has value $(\omega(x_2)(x_1),0)$.  Here a way of thinking at $(\omega(x_2)(x_1),0)$ as element of the cotangent space at $(x_1,x_2)$ is the following: say that $\omega(x_2)(x_1)=\d_{x_1}f$ for some smooth function $f\colon M_1\to \R$ ($f$ depends on $x_2$, but such dependence is not emphasized here). Then we can think/define $(\omega(x_2)(x_1),0)$ as the differential of $f\circ \pi_1\colon M_1\times M_2\to\R$ at the point $(x_1,x_2)$. In the setting of metric measure spaces the assignment $\omega_\cdot\mapsto\Omega$ is the map $\Phi_\sx$ defined by Proposition \ref{prop:defPhi} above.

\bigskip

Without further informations on the structure of $\X,\Y$ it seems hard to find other relations between calculus on the base spaces and calculus on the product. In particular, one would expect the map $\Phi_\sx\oplus\Phi_\sy$ (see below for the precise definition and in particular Theorem \ref{dectang}) to be an isomorphism of modules: an investigation of this fact, carried out in \cite{GR17}, shows that it depends on the validity of the `Assumption \ref{ass}' below. The quotation marks are due because we don't know of any example for which such assumption is not satisfied, so perhaps there is a chance that Assumption \ref{ass} is rather a theorem; for our purposes it will be sufficient to know that we can actually prove that Assumption \ref{ass} holds if the given spaces are $\RCD$, see Proposition \ref{prop:rcdass} below.

In what follows we shall denote by $\d_\sx,\d_\sy,\d$ the differentials in $\X,\Y,\X\times\Y$ respectively.

\begin{definition}[Tensorization of the Cheeger energy]\label{def:tensch} Let $(\X,\sfd_\X,\mm_\X)$ and $(\Y,\sfd_\Y,\mm_\Y)$ be two metric measure spaces. We say that they  have the property of tensorization of the Cheeger energy provided for any $f \in \Lint^2(\X \times \Y)$ the following holds:  $ f\in \W(\X\times \Y)$ if and only if \smallskip
\begin{itemize}
\item[-] for $\mm_\X$-a.e. $x \in \X$  it holds $f(x, \cdot) \in\W (\Y)$  with $\displaystyle \int \abs{\d_\sy f(x,\cdot)}^2(y)\,\d (\mm_\X\otimes \mm_\Y)(x,y) < \infty$ \\
\item[-] for $\mm_\Y$-a.e. $y \in \Y$  it holds $f(\cdot, y) \in\W (\X)$ with $\displaystyle \int\abs{\d_\sy f(\cdot,y)}^2(x)\,\d (\mm_\X\otimes \mm_\Y)(x,y) < \infty$
\end{itemize}
and, in this case, we have
\begin{equation} \label{Chpr}
\abs{\d f}^2  = \abs{\d_\sx f}^2 + \abs{\dint_\sy f}^2 \qquad\mm_1\otimes \mm_2\text{-a.e.}.
\end{equation}
\end{definition}

\begin{definition}[Strong measurability of the sections]\label{def:smsec} Let $(\X,\sfd_\X,\mm_\X)$ and $(\Y,\sfd_\Y,\mm_\Y)$ be two metric measure spaces. We say that they have   the property of the strong measurability of the sections if for any $f \in \Sclass_\loc(\X \times \Y)$ the maps $\Y \ni y \mapsto \d_\sy f \in L^0(\T^\ast \X)$ and $\X \ni x \mapsto \d_\sx f \in L^0(\T^\ast \Y)$ are essentially separably valued.
\end{definition}
We shall be interested in spaces $\X,\Y$ satisfying the following:
\begin{assumption}\label{ass} $(\X,\sfd_\X,\mm_\X)$ and $(\Y,\sfd_\Y,\mm_\Y)$   are two metric measure spaces for which both the tensorization of Cheeger energy \ref{def:tensch} and the strong measurability of the sections \ref{def:smsec} hold.
\end{assumption}
For our purposes it is important to recall that if $\X,\Y$ are $\RCD$ spaces, then they satisfy such assumption, see Proposition \ref{prop:rcdass}.

\begin{remark}{\rm
We will refer to \cite{GR17} for the proofs of  the forthcoming results. Notice that there the assumption made  involves the density of a suitable class of functions in $\W(\X\times \Y)$, called ``product algebra'', in the strong topology of $\W(\X\times \Y)$. However, the reason why the density of the product algebra is needed in \cite{GR17} is exactly to show the strong measurability of the sections, therefore all the results proved in \cite{GR17} are still true once we work under Assumption \ref{ass}.} \fr
\end{remark}
The following lemma provides a link from differentials on $\X\times\Y$ to differentials on $\X$ and $\Y$, thus going in the opposite direction of Proposition \ref{prop:defPhi}:
\begin{lemma}[{\cite[Lemma 3.12]{GR17}}]
\label{le:perprod}
Let $(\X,\sfd_\X,\mm_\X)$ and $(\Y,\sfd_\Y,\mm_\Y)$   be two metric measure spaces satisfying Assumption \ref{ass}. 

Then for every $f \in \Sclass_\loc(\X \times \Y)$ we have that $f(\cdot, y) \in\Sclass_\loc(\X)$ for $\mea_2$-a.e.\ $y$ and the map $y\mapsto  \dint_\sx f(\cdot,y)$ (that below we shall simply denote by $\d_\sx f$) belongs to $\Lint^0(\Y, \Lint^0(\mathit{T}^{\ast}\X))$. Moreover, for $(f_n)\subset\Sclass_\loc(\X_1\times\X_2)$ we have
\[
\dint f_n\to \dint f\text{ in }\Lint^0(T^*(\X\times\Y))\qquad\Rightarrow\qquad \dint_\sx f_n\to \dint_\sx f \text{ in }\Lint^0(\Y,\Lint^0(T^*\X)).
\]
 Similarly for the roles of  $\X$ and $\Y$ inverted. Finally, the identity \eqref{Chpr} holds for any $f\in\Sclass_\loc(\X\times\Y)$.
 \end{lemma}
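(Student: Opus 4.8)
The plan is to reduce everything to the genuinely $L^2$ situation $f\in\W(\X\times\Y)$, where the tensorization of the Cheeger energy (Definition \ref{def:tensch}) applies verbatim, and then to bootstrap to $f\in\Sclass_\loc(\X\times\Y)$ by localizing with Lipschitz cutoffs and invoking the locality of minimal weak upper gradients. The two halves of Assumption \ref{ass} play complementary roles: Definition \ref{def:tensch} delivers both the membership of the sections and the pointwise identity \eqref{Chpr}, while the strong measurability of the sections (Definition \ref{def:smsec}) is exactly what promotes the map $y\mapsto\d_\sx f(\cdot,y)$ to a bona fide element of $L^0(\Y,L^0(T^*\X))$.

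\emph{Sections and the Pythagorean identity.} For $f\in\W(\X\times\Y)$, Definition \ref{def:tensch} gives at once $f(\cdot,y)\in\W(\X)\subset\Sclass_\loc(\X)$ for $\mm_\Y$-a.e.\ $y$ together with \eqref{Chpr}. To pass to $f\in\Sclass_\loc(\X\times\Y)$ I would fix an increasing sequence of Lipschitz cutoffs with bounded support $\eta_n=\alpha_n\beta_n$ of product form, with $\alpha_n\uparrow 1$ on $\X$ and $\beta_n\uparrow 1$ on $\Y$, so that $\eta_n f\in\W(\X\times\Y)$ for every $n$. Applying the previous step to each $\eta_n f$ and choosing a common full-measure set of $y$'s, one has $(\eta_n f)(\cdot,y)\in\W(\X)$; since $\eta_n f=f$ on $\{\eta_n=1\}$, the locality $|\d h|=|\d h'|$ $\mm$-a.e.\ on $\{h=h'\}$ (on each factor and on the product) transfers the conclusions there, and letting $\{\eta_n=1\}$ exhaust $\X\times\Y$ yields $f(\cdot,y)\in\Sclass_\loc(\X)$ for $\mm_\Y$-a.e.\ $y$ and the validity of \eqref{Chpr} for all $f\in\Sclass_\loc(\X\times\Y)$.

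\emph{Strong measurability.} This is the point where Assumption \ref{ass} is essential. Fix $f\in\Sclass_\loc(\X\times\Y)$; by the previous step $\d_\sx f(\cdot,y)\in L^0(T^*\X)$ is well defined for $\mm_\Y$-a.e.\ $y$, and Definition \ref{def:smsec} grants that $y\mapsto\d_\sx f(\cdot,y)$ has essentially separable range. To conclude that it is strongly measurable, hence an element of $L^0(\Y,L^0(T^*\X))$, it remains to verify Borel measurability into $(L^0(T^*\X),\sfd_{L^0})$; since the range is separable and $L^0(T^*\X)$ is generated by exact differentials, this reduces to checking that $y\mapsto\sfd_{L^0}(\d_\sx f(\cdot,y),\d g)=\int 1\wedge|\d_\sx f-\d g|(\cdot,y)\,\d\nn_\X$ is Borel for each $g\in\Sclass_\loc(\X)$, where $\nn_\X$ is a probability measure on $\X$ with the null sets of $\mm_\X$. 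Via \eqref{eq:locpb1form} this is a Fubini-type consequence of the joint Borel measurability on $\X\times\Y$ of the pointwise norm $|\d_\sx f-\d g|=|\d_\sx(f-g\circ\pi_\X)|$, which is the technical heart of the matter.

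\emph{Continuity and conclusion.} The convergence statement is then immediate from \eqref{Chpr} and the linearity of $\d_\sx$: for $f_n,f\in\Sclass_\loc(\X\times\Y)$ one has $\d_\sx f_n-\d_\sx f=\d_\sx(f_n-f)$ and $|\d_\sx(f_n-f)|\le|\d(f_n-f)|$ pointwise on $\X\times\Y$, whence, with $\nn\in\mathscr{P}(\X\times\Y)$ having the null sets of $\mm_\X\otimes\mm_\Y$,
\[
\sfd_{L^0}(\d_\sx f_n,\d_\sx f)=\int 1\wedge|\d_\sx(f_n-f)|\,\d\nn\le\int 1\wedge|\d(f_n-f)|\,\d\nn=\sfd_{L^0}(\d f_n,\d f),
\]
and the right-hand side tends to $0$ since $\d f_n\to\d f$ in $L^0(T^*(\X\times\Y))$. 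The statements with the roles of $\X$ and $\Y$ exchanged are proved identically. I expect the main obstacle to be the strong measurability step: the essential separability of the range is precisely the content of Definition \ref{def:smsec} and is therefore granted, but stitching it together with Borel measurability—through the Fubini-and-generation argument above—to land in $L^0(\Y,L^0(T^*\X))$ is the genuinely delicate part, all remaining assertions reducing to localization and the pointwise identity \eqref{Chpr}.
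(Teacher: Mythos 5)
The paper does not prove this lemma; it is quoted verbatim from \cite[Lemma 3.12]{GR17}, so there is no in-paper argument to compare against. Your proposal follows what is essentially the intended route: reduce to the $W^{1,2}$ case where Definition \ref{def:tensch} applies, localize, use Definition \ref{def:smsec} plus a Pettis-type argument to get membership in $L^0(\Y,L^0(T^*\X))$, and deduce the continuity statement from the pointwise bound $|\d_\sx(f_n-f)|\le|\d(f_n-f)|$ coming from \eqref{Chpr}. That last step is exactly right and is the cleanest part of the write-up.

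Two points need patching. First, in the localization step you assert $\eta_n f\in\W(\X\times\Y)$, but membership in $\Sclass_\loc(\X\times\Y)$ does not force $f\in L^2_\loc$, so $\eta_n f$ need not be square-integrable; you must also truncate, working with $\eta_n\,\big(N\wedge f\vee(-N)\big)$ exactly as the paper does in the proof of Proposition \ref{prop:l0s2eq}, and then let $N\to\infty$ using the locality of minimal weak upper gradients. Second, in the measurability step, checking that $y\mapsto\sfd_{L^0}(\d_\sx f(\cdot,y),\d g)$ is Borel for every $g\in\Sclass_\loc(\X)$ is not by itself sufficient: Borel measurability of a separably valued map requires measurability of the distance to each point of a countable dense subset of the \emph{essential range}, and the fact that $L^0(T^*\X)$ is generated by exact differentials only gives density of $L^0$-linear combinations $\sum_i h_i\,\d g_i$, which are not exact differentials. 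The fix is to observe that the essential range of $y\mapsto\d_\sx f(\cdot,y)$ consists precisely of exact differentials, namely $\d\big(f(\cdot,y)\big)$ with $f(\cdot,y)\in\Sclass_\loc(\X)$, so a countable dense subset of it can be chosen among objects of the form $\d g$ with $g\in\Sclass_\loc(\X)$; your Fubini argument on the jointly Borel representative of $|\d_\sx(f-g\circ\pi_\X)|$ then closes the loop. With these two repairs the argument is complete.
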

We now turn to the `full' relation between forms on $\X,\Y$ and forms on $\X\times\Y$. To this aim, notice that for given $L^0$-normed modules $\mathscr M_1,\mathscr M_2$  on the same space ${\rm Z}$, the product $\mathscr M_1\times\mathscr M_2$ is canonically a $L^0$-normed module on ${\rm Z}$ once it is endowed with the product topology, the multiplication by $L^0$-functions  given by $f(v_1,v_2):=(fv_1,fv_2)$ and the pointwise norm defined as
\[
|(v_1,v_2)|^2:=|v_1|^2+|v_2|^2.
\]
In particular, $L^0(\Y,L^0(T^*\X))\times L^0(\X,L^0(T^*\Y))$ is a $L^0(\X\times\Y)$-normed module and we can define $\Phi_\sx\oplus\Phi_\sy$ as
\[
\begin{array}{cccc}
\Phi_\sx \oplus \Phi_\sy \colon& \Lint^0 (\Y, \Lint^0(\mathit{T}^{\ast}\X)) \times \Lint^0(\X, \Lint^0(\mathit{T}^{\ast}\Y)) &\rightarrow  &\Lint^0 (\mathit{T}^{\ast}(\X \times \Y))\\
&(\omega, \sigma) &\mapsto& \Phi_\sx(\omega) + \Phi_\sy(\sigma)
\end{array}
\]
We then have the following result:
\begin{theorem}[{\cite[Theorem 3.13]{GR17}}]\label{dectang}
Let $(\X,\sfd_\X,\mm_\X)$ and $(\Y,\sfd_\Y,\mm_\Y)$   be two metric measure spaces satisfying Assumption \ref{ass}. 

Then $\Phi_\sx\oplus\Phi_\sy$  is an isomorphism of modules, i.e.\ it is $L^0(\X\times\Y)$-linear, continuous, 
 surjective and  for every $\omega \in \Lint^0 (\Y, \Lint^0(\mathit{T}^{\ast}\X))$ and $\sigma \in \Lint^0 (\X, \Lint^0(\mathit{T}^{\ast}\Y))$ the following identity holds
\[
|\Phi_\sx(\omega) + \Phi_\sy(\sigma)|^2 = |\omega|^2+|\sigma|^2\quad\mea_\X\otimes\mea_\Y-a.e..
\]
Moreover, for every   $f \in \Sclass_\loc(\X \times \Y)$ it holds:
\[
\dint f = \Phi_\sx ( \dint_\sx f) + \Phi_\sy ( \dint_\sy f).
\]
\end{theorem}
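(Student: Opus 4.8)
The plan is to dispatch the easy algebraic and topological properties first, then isolate the two substantive tasks — the pointwise norm identity and the decomposition formula — and finally assemble these into the isomorphism statement. The $L^0(\X\times\Y)$-linearity and continuity of $\Phi_\sx\oplus\Phi_\sy$ are immediate from the corresponding properties of $\Phi_\sx,\Phi_\sy$ recorded in Proposition \ref{prop:defPhi} and from the definition of the product-module structure. Thus the genuine content splits into: (a) the pointwise identity $|\Phi_\sx(\omega)+\Phi_\sy(\sigma)|^2=|\omega|^2+|\sigma|^2$, which says the map preserves the pointwise norm and hence is injective with closed image; and (b) the formula $\d f=\Phi_\sx(\d_\sx f)+\Phi_\sy(\d_\sy f)$, which forces the image to contain every $\d f$ and therefore, the image being closed, to be all of $L^0(T^*(\X\times\Y))$, yielding surjectivity.

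For (a) I would argue by reduction to generators. Since $L^0(T^*\X)$ is generated by $\{\d g:g\in\Sclass_\loc(\X)\}$ and $L^0(\Y,L^0(T^*\X))$ is generated by the constant maps, finite sums $\sum_i F_i\,\widehat{\d g_i}$ (with $F_i\in L^0(\X\times\Y)$) are dense, and symmetrically for $\sigma$. By continuity of $\Phi_\sx,\Phi_\sy$ and of the pointwise norm it suffices to treat such finite sums; approximating the coefficients by simple functions and invoking the locality of the pointwise norm, it suffices to treat the case where all coefficients are constant on a fixed Borel piece. On such a piece $\omega=\widehat{\d G}$ and $\sigma=\widehat{\d H}$ with $G=\sum_i a_i g_i\in\Sclass_\loc(\X)$ and $H=\sum_j b_j h_j\in\Sclass_\loc(\Y)$, so that the defining property of $\Phi_\sx,\Phi_\sy$ gives $\Phi_\sx(\omega)+\Phi_\sy(\sigma)=\d\big(G\circ\pi_\X+H\circ\pi_\Y\big)$. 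Applying the tensorization identity \eqref{Chpr} to $f:=G\circ\pi_\X+H\circ\pi_\Y$, whose partial differentials are $\d_\sx f=\widehat{\d G}$ and $\d_\sy f=\widehat{\d H}$, yields $|\Phi_\sx(\omega)+\Phi_\sy(\sigma)|^2=|\d G|^2\circ\pi_\X+|\d H|^2\circ\pi_\Y=|\omega|^2+|\sigma|^2$ on that piece. Letting the simple approximations and then the finite sums converge, continuity delivers the identity in general.

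For (b) the formula is first checked on product functions $f=\sum_k a_k\,b_k$ (with $a_k\in\Sclass_\loc(\X)$, $b_k\in\Sclass_\loc(\Y)$): the Leibniz rule gives $\d f=\sum_k\big(b_k\circ\pi_\Y\,\d(a_k\circ\pi_\X)+a_k\circ\pi_\X\,\d(b_k\circ\pi_\Y)\big)$, while $\d_\sx f=\sum_k (b_k\circ\pi_\Y)\,\widehat{\d a_k}$ and $\d_\sy f=\sum_k(a_k\circ\pi_\X)\,\widehat{\d b_k}$, and the defining relations for $\Phi_\sx,\Phi_\sy$ match the two expressions term by term. The main obstacle is the passage to a general $f\in\Sclass_\loc(\X\times\Y)$, since under Assumption \ref{ass} the product functions need not be dense in the Sobolev topology, so one cannot simply extend by continuity in $f$. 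I would instead argue by the chain rule along curves: testing against a test plan $\bpi$ on $\X\times\Y$ and splitting each curve $\gamma=(\alpha,\beta)$ into its $\X$- and $\Y$-components, the time derivative $\tfrac{\d}{\d t}f(\gamma_t)$ decomposes into a horizontal contribution governed by $\d_\sx f$ and a vertical one governed by $\d_\sy f$; since by construction $\Phi_\sx(\d_\sx f)$ and $\Phi_\sy(\d_\sy f)$ act precisely as these two contributions along $\bpi$-a.e.\ curve, the two $L^0$-forms $\d f$ and $\Phi_\sx(\d_\sx f)+\Phi_\sy(\d_\sy f)$ must coincide. Here the strong measurability of the sections guarantees that $\d_\sx f,\d_\sy f$ are honest elements of the section modules to which $\Phi_\sx,\Phi_\sy$ apply, and Lemma \ref{le:perprod} supplies both the well-posedness of $\d_\sx f$ and the continuity needed to justify the limiting steps.

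Finally I would assemble the pieces. By (a) the map $\Phi_\sx\oplus\Phi_\sy$ preserves the pointwise norm, hence is injective and has closed image; by (b) this image contains $\d f$ for every $f\in\Sclass_\loc(\X\times\Y)$, and since such differentials generate $L^0(T^*(\X\times\Y))$ by Theorem/Definition \ref{thm:defd}, the closed image is the entire module, giving surjectivity. Combined with the norm identity of (a) this is exactly the asserted isomorphism, and the displayed decomposition $\d f=\Phi_\sx(\d_\sx f)+\Phi_\sy(\d_\sy f)$ is precisely statement (b).
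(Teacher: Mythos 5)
First, a remark on the comparison itself: the paper does not prove this statement — Theorem \ref{dectang} is imported verbatim from \cite[Theorem 3.13]{GR17} — so your proposal can only be measured against the argument of that reference and against its own internal logic. Your treatment of linearity, continuity, the norm identity (a), and the final assembly (norm preservation gives injectivity and a closed image; the image contains the $\d f$'s, which generate $L^0(T^*(\X\times\Y))$ by Theorem/Definition \ref{thm:defd}, hence surjectivity) is sound. In particular, for (a) the reduction to $\omega=\widehat{\d G}$, $\sigma=\widehat{\d H}$ on Borel pieces, followed by an application of \eqref{Chpr} (valid for all of $\Sclass_\loc(\X\times\Y)$ by Lemma \ref{le:perprod}) to $G\circ\pi_\X+H\circ\pi_\Y$, is exactly the right mechanism.

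The gap is in step (b), which is the substantive part of the theorem and the place where Assumption \ref{ass} must do its work. You correctly observe that product functions need not be dense in $\W(\X\times\Y)$, but the proposed substitute — testing against a test plan and splitting $\tfrac{\d}{\d t}f(\gamma_t)$ into a horizontal contribution governed by $\d_\sx f$ and a vertical one governed by $\d_\sy f$ — is not available as stated. Two problems: first, no pairing between elements of the abstract module $L^0(T^*(\X\times\Y))$ and test plans is defined in this framework, so ``the two $L^0$-forms act the same along $\bpi$-a.e.\ curve, hence coincide'' has no meaning without importing and justifying the speed-of-a-plan machinery of \cite{Gigli14}; second, and more seriously, the asserted splitting of $\tfrac{\d}{\d t}f(\gamma_t)$ for a general Sobolev $f$ and a general plan on the product is precisely the infinitesimal form of the identity $\d f=\Phi_\sx(\d_\sx f)+\Phi_\sy(\d_\sy f)$ that you are trying to prove: a priori one only knows $|\tfrac{\d}{\d t}f(\gamma_t)|\le|\D f|(\gamma_t)|\dot\gamma_t|$, and a general test plan on $\X\times\Y$ does not factor through plans on the factors. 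The argument is circular. The repair is cheap and uses only the tools you already deploy in (a): prove the \emph{affine} norm identity
\[
|\d f+\Phi_\sx(\omega)+\Phi_\sy(\sigma)|^2=|\d_\sx f+\omega|^2+|\d_\sy f+\sigma|^2\qquad\mm_\X\otimes\mm_\Y\text{-a.e.}
\]
for every $f\in \Sclass_\loc(\X\times\Y)$ and every $\omega,\sigma$, by the same reduction to $\omega=\widehat{\d G}$, $\sigma=\widehat{\d H}$ on Borel pieces and an application of \eqref{Chpr} to $f+G\circ\pi_\X+H\circ\pi_\Y$ (whose partial differentials are $\d_\sx f+\d G$ and $\d_\sy f+\d H$). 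Then the choice $\omega=-\d_\sx f$, $\sigma=-\d_\sy f$ yields $|\d f-\Phi_\sx(\d_\sx f)-\Phi_\sy(\d_\sy f)|=0$, which is exactly (b); with (b) so established, your assembly of surjectivity goes through unchanged.
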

In other words, and in line with the discussion made after Proposition \ref{prop:defPhi}, this last theorem provides a decomposition of $L^0(T^\ast(\X\times\Y))$ in two submodules, the image of $\Phi_\sx$ and the image of $\Phi_\sy$, and we shall think at these as the decomposition of a 1-form on $\X\times\Y$ into its components cotangent to $\X$ and $\Y$ respectively: for brevity, given $\omega\in L^0(T^*(\X\times\Y))$ we shall write
\begin{equation}\label{eq:decome}
\omega=(\omega_\sx,\omega_\sy)
\end{equation}
meaning that $\omega=\Phi_\sx(\omega_\sx)+\Phi_\sy(\omega_\sy)$. Theorem \ref{dectang} above ensures that both $\omega_\sx$ and $\omega_\sy$ are uniquely determined by $\omega$. In the smooth case, given a 1-form $\omega$ on the product of two manifolds $M_1,M_2$ and $(x_1,x_2)\in M_1\times M_2$, the 1-form $\omega_\sx(x_2)$ at the point $x_1$ is the restriction of $\omega(x_1,x_2)$ to the kernel of the differential of $\pi_{M_2}$ in $T_{(x_1,x_2)}(M_1\times M_2)$ (this kernel being isomorphic to $T_{x_1}M_1$ via the differential of $\pi_{M_1}$).

\bigskip

Now recall that if $L^0(T\X)$ is separable, then the dual of $L^0(Y,L^0(T^*\X))$ be canonically identified with  $L^0(Y,L^0(T\X))$ via the coupling
\[
\left.\begin{array}{ll}
L^0(Y,L^0(T^*\X))\ni \omega\\
L^0(Y,L^0(T^*\X))\ni v
\end{array}\right\}\qquad\Rightarrow\qquad
y\,\,\mapsto\,\,\omega(y)(v(y))\in L^0(\X).
\]
Below we shall assume that $L^0(T\X)$ is separable (recall that this is always the case if $\X$ is infinitesimally Hilbertian, see \cite{ACM14}) and constantly identify $L^0(Y,L^0(T\X))$ with $L^0(Y,L^0(T^*\X))^*$ via the above isomorphism.  Same assumption and identification with the roles of $\X,\Y$ swapped.

With this said,  the $L^0(\X\times\Y)$-linear map $\Phi_\sx \colon L^0(\Y,L^0(T^*\X)) \to L^0(T^*(\X\times  \Y))$ has the adjoint
\[
\Phi_\sx^\ast \colon L^0(T(\X\times  \Y)) \to L^0(\Y,L^0(T\X))
\]
characterized by the property that for any $v \in  L^0(T(\X\times  \Y))$ and $\omega \in L^0(\Y,L^0(T^\ast\X))$ it holds
\[ 
\omega \big( \Phi^\ast_\sx(v)\big) = \big(\Phi_\sx(\omega)\big)(v).
\]
Similarly,  the operator $\Phi_\sy^\ast \colon L^0(T(\X\times  \Y)) \to L^0(\X,L^0(T\Y))$, adjoint of $\Phi_\sy$ is characterized by the fact that  for any $v \in  L^0(T(\X\times  \Y))$ and $\eta \in L^0(\X,L^0(T^\ast\Y))$ we have
\[ 
\eta \big( \Phi^\ast_\sy(v)\big) = \big(\Phi_\sy(\eta)\big)(v).
\]
Since the adjoint of $\Phi_\sx\oplus\Phi_\sy$ is 
\[
(\Phi_\sx^*,\Phi_\sy^*):L^0(T(\X\times  \Y)) \to L^0(\Y,L^0(T\X))\times L^0(\X,L^0(T\Y)),
\]
from Theorem \ref{dectang}, we deduce that $(\Phi_\sx^*,\Phi_\sy^*)$ is an isomorphism of modules. 

Given $v\in L^0(T(\X\times\Y))$, we shall write $v_\sx,v_\sy$ in place of $\Phi_\sx^*(v),\Phi_\sy^*(v)$ respectively and also often write
\begin{equation}\label{eq:decv}
v=(v_\sx,v_\sy)
\end{equation}
thus implicitly identifying $L^0(T(\X\times  \Y))$ with $L^0(\Y,L^0(T\X))\times L^0(\X,L^0(T\Y))$ through $(\Phi_\sx^*,\Phi_\sy^*)$. Notice that in the smooth setting $v_\sx,v_\sy$ are the components of the vector field $v$ along $\X,\Y$ respectively.

\bigskip

We conclude recalling that if $\X,\Y$ are infinitesimally Hilbertian, then also $\X\times\Y$ is so and the decomposition given in Theorem \ref{dectang} is orthogonal:
\begin{proposition}[{\cite[Proposition 3.14]{GR17}}] Let $(\X,\sfd_\X,\mm_\X)$ and $(\Y,\sfd_\Y,\mm_\Y)$   be two metric measure spaces infinitesimally Hilbertian and satisfying Assumption \ref{ass}.

Then $\X\times\Y$ is also infinitesimally Hilbertian and for every $\omega\in \Lint^0 ( \Y, \Lint^0(\mathit{T}^* \X) )$ and $\eta\in \Lint^0 ( \X, \Lint^0(\mathit{T}^* \Y) )$ we have
\begin{equation}
\label{eq:orto}
\la \Phi_\sx(\omega),\Phi_\sy(\eta)\ra=0\quad\mea_\X\otimes\mea_\Y-a.e..
\end{equation}
\end{proposition}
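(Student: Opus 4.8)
The plan is to establish the two assertions in turn: first the infinitesimal Hilbertianity of $\X\times\Y$, and then the pointwise orthogonality by polarization of the norm identity in Theorem \ref{dectang}.

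First I would show that $\W(\X\times\Y)$ satisfies the parallelogram identity. Fix $f,g\in\W(\X\times\Y)\subset\Sclass_\loc(\X\times\Y)$. By Lemma \ref{le:perprod} the identity \eqref{Chpr} holds for $f$, $g$ and $f\pm g$, so $\mm_\X\otimes\mm_\Y$-a.e.\ one has $|\d f|^2=|\d_\sx f|^2+|\d_\sy f|^2$ and likewise for the others. For $\mm_\Y$-a.e.\ $y$ the sections $f(\cdot,y),g(\cdot,y)$ lie in $\Sclass_\loc(\X)$, and since $\d_\sx$ is linear we have $\d_\sx(f\pm g)(\cdot,y)=\d_\sx f(\cdot,y)\pm\d_\sx g(\cdot,y)$; moreover, as $\X$ is infinitesimally Hilbertian the module $L^0(T^*\X)$ is a Hilbert module, so its pointwise norm obeys the pointwise parallelogram identity. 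Combining these facts gives, $\mm_\X\otimes\mm_\Y$-a.e.,
\[
|\d_\sx(f+g)|^2+|\d_\sx(f-g)|^2=2|\d_\sx f|^2+2|\d_\sx g|^2,
\]
and symmetrically for the $\Y$-component. Adding the two identities and inserting \eqref{Chpr} yields $|\d(f+g)|^2+|\d(f-g)|^2=2|\d f|^2+2|\d g|^2$ pointwise; integrating against $\mm_\X\otimes\mm_\Y$ and adding the (trivially quadratic) $L^2$-contributions shows that $\|\cdot\|_{\W(\X\times\Y)}^2$ satisfies the parallelogram law, whence $\W(\X\times\Y)$ is a Hilbert space.

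With infinitesimal Hilbertianity established, $L^0(T^*(\X\times\Y))$ is a Hilbert module and its pointwise norm is induced by a pointwise scalar product $\la\cdot,\cdot\ra$, recovered through the polarization formula $\la a,b\ra=\tfrac12(|a+b|^2-|a|^2-|b|^2)$. I would then apply this with $a:=\Phi_\sx(\omega)$ and $b:=\Phi_\sy(\eta)$. Theorem \ref{dectang} gives $|a+b|^2=|\Phi_\sx(\omega)+\Phi_\sy(\eta)|^2=|\omega|^2+|\eta|^2$, while the norm-preservation of $\Phi_\sx$ and $\Phi_\sy$ from Proposition \ref{prop:defPhi} (equivalently, the same identity with $\eta=0$, resp.\ $\omega=0$) gives $|a|^2=|\omega|^2$ and $|b|^2=|\eta|^2$. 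Substituting into the polarization formula makes every term cancel, so $\la\Phi_\sx(\omega),\Phi_\sy(\eta)\ra=0$ $\mm_\X\otimes\mm_\Y$-a.e., which is \eqref{eq:orto}.

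The linearity and norm-preservation invocations are routine; the only genuinely delicate point is the first step, where the pointwise parallelogram identity valid on each factor must be transferred to the product. This rests on \eqref{Chpr} together with the strong measurability of the section maps $y\mapsto\d_\sx f(\cdot,y)$ granted by Assumption \ref{ass} (via Lemma \ref{le:perprod}), which is precisely what makes the a.e.\ pointwise manipulations on the factors meaningful on $\X\times\Y$.
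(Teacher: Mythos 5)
Your argument is correct and is essentially the standard one: the paper itself defers the proof to \cite[Proposition 3.14]{GR17}, where infinitesimal Hilbertianity of the product is likewise obtained from the pointwise identity \eqref{Chpr} combined with the parallelogram law on each factor, and \eqref{eq:orto} then follows by polarization from the norm identity of Theorem \ref{dectang} together with the norm preservation of $\Phi_\sx$ and $\Phi_\sy$. No gaps: the only delicate point, transferring the sectionwise parallelogram identity to an $\mm_\X\otimes\mm_\Y$-a.e.\ statement, is correctly justified via Lemma \ref{le:perprod} and the strong measurability of sections in Assumption \ref{ass}.
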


\subsection{Other differential operators in the product space}\label{sec:CalcPr2}

Up to now we have seen how the differential behaves under products of spaces. We shall now investigate other differentiation operators (divergence and Laplacian) and for simplicity we shall stick to the case of infinitesimally Hilbertian spaces.

As for the case of differentials, we shall denote by $\div_\sx,\Delta_\sx$ the divergence and Laplacian in the space $\X$ (and similarly for $\Y$) while we keep the un-labeled versions $\div,\Delta$ for the operators in the product space.
\begin{proposition}[{\cite[Proposition 3.15]{GR17}}]\label{divvf}  Let $(\X,\sfd_\X,\mm_\X)$ and $(\Y,\sfd_\Y,\mm_\Y)$   be two metric measure spaces infinitesimally Hilbertian and satisfying Assumption \ref{ass}.

Then $v \in D( \div_{\loc}, \X)$ if and only if $(\hat v, 0) \in D (\Div_{\loc}, {\X \times \Y})$, where $\hat v\in \Lint^0(\Y,\Lint^0(T\X))$ is the function identically equal to $v$ (and we are adopting the convention in \eqref{eq:decv}),  and in this case 
\[
\div\big((\hat v, 0)\big) = \div_\sx (v) \circ \pi_\X.
\]
\end{proposition}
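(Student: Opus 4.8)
The plan is to reduce the defining integration-by-parts identity for the divergence on $\X\times\Y$ to the one on $\X$ by slicing in the $\Y$-variable, after first computing the relevant pairing. The key preliminary is the evaluation of $\d F\big((\hat v,0)\big)$ for a generic $F\in\Sclass_\loc(\X\times\Y)$. By Theorem \ref{dectang} we have $\d F=\Phi_\sx(\d_\sx F)+\Phi_\sy(\d_\sy F)$, while by the convention \eqref{eq:decv} the components of $(\hat v,0)$ are $\Phi_\sx^*((\hat v,0))=\hat v$ and $\Phi_\sy^*((\hat v,0))=0$. Hence the defining property of the adjoints $\Phi_\sx^*,\Phi_\sy^*$ yields
\[
\d F\big((\hat v,0)\big)=\big(\d_\sx F\big)(\hat v)+\big(\d_\sy F\big)(0)=\big(\d_\sx F\big)(\hat v),
\]
which, recalling Lemma \ref{le:perprod} and that $\hat v$ is the constant map $y\mapsto v$, is represented $\mm_\X\otimes\mm_\Y$-a.e.\ by $(x,y)\mapsto \d_\sx F(\cdot,y)(v)(x)$. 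I would also record two elementary facts, both following from Fubini and local finiteness of the measures: that $\div_\sx(v)\circ\pi_\X\in\Lint^2_\loc(\X\times\Y)$ whenever $\div_\sx(v)\in\Lint^2_\loc(\X)$, and that $|(\hat v,0)|=|v|\circ\pi_\X$ (using orthogonality \eqref{eq:orto} of the decomposition in the infinitesimally Hilbertian case) lies in $\Lint^2_\loc(\X\times\Y)$ if and only if $v\in\Lint^2_\loc(T\X)$.

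For the forward implication, assume $v\in D(\div_\loc,\X)$ and set $h:=\div_\sx(v)$. I would verify that $h\circ\pi_\X$ is the divergence of $(\hat v,0)$ by checking the identity against an arbitrary $F\in\W(\X\times\Y)$ with bounded support. Using the computation above together with Fubini, the relevant right-hand side becomes
\[
-\int \d_\sx F(\cdot,y)(v)(x)\,\d(\mm_\X\otimes\mm_\Y)=-\int\Big(\int \d_\sx F(\cdot,y)(v)\,\d\mm_\X\Big)\,\d\mm_\Y(y).
\]
By the tensorization property (Definition \ref{def:tensch}, valid under Assumption \ref{ass}) and Fubini, for $\mm_\Y$-a.e.\ $y$ the slice $F(\cdot,y)$ lies in $\W(\X)$, has bounded support (contained in the bounded $\pi_\X$-projection of $\supp F$), and satisfies $\int|\d_\sx F(\cdot,y)|^2\,\d\mm_\X<\infty$; hence the defining identity for $\div_\sx(v)$ applies slice-wise and turns the inner integral into $-\int F(\cdot,y)\,h\,\d\mm_\X$. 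Integrating in $y$ and undoing Fubini produces exactly $\int F\,(h\circ\pi_\X)\,\d(\mm_\X\otimes\mm_\Y)$, which is the claimed identity; integrability throughout is ensured by Cauchy–Schwarz on the bounded support of $F$.

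For the converse, assume $(\hat v,0)\in D(\Div_\loc,\X\times\Y)$ with divergence $H$. The idea is to separate variables using product test functions. For $g\in\W(\X)$ and $\phi\in\W(\Y)$ with bounded supports, tensorization shows that $F:=(g\circ\pi_\X)(\phi\circ\pi_\Y)$ belongs to $\W(\X\times\Y)$ and has bounded support, and the first step gives $\d F((\hat v,0))(x,y)=\phi(y)\,\d_\sx g(v)(x)$. The defining identity for $H$ then reads
\[
\int \phi(y)\Big(\int g\,H(\cdot,y)\,\d\mm_\X\Big)\,\d\mm_\Y(y)=-\int\phi(y)\Big(\int \d_\sx g(v)\,\d\mm_\X\Big)\,\d\mm_\Y(y).
\]
Since the inner integral on the right is independent of $y$ and $\phi$ ranges over a dense subset of $\Lint^2(\Y)$, for each fixed $g$ there is an $\mm_\Y$-null set outside of which $\int g\,H(\cdot,y)\,\d\mm_\X=-\int \d_\sx g(v)\,\d\mm_\X$. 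Choosing a countable family $\mathcal D\subset\W(\X)$ of boundedly supported functions that is dense in $\Lint^2(\X)$ and collecting the corresponding null sets, I get a single $\mm_\Y$-full set of $y$'s for which this holds simultaneously for all $g\in\mathcal D$; since the right-hand side does not depend on $y$, a density argument forces $H(\cdot,y)$ to agree $\mm_\X$-a.e.\ with a fixed $h\in\Lint^2_\loc(\X)$ for $\mm_\Y$-a.e.\ $y$, that is $H=h\circ\pi_\X$. The identity $\int g\,h\,\d\mm_\X=-\int \d_\sx g(v)\,\d\mm_\X$ for all $g\in\mathcal D$ (hence, by density, for all boundedly supported $g\in\W(\X)$) then shows $v\in D(\div_\loc,\X)$ with $\div_\sx(v)=h$.

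The main obstacle I expect lies in the converse: passing from the separated identity to the conclusion that $H$ has the pullback form $h\circ\pi_\X$. This requires care in handling the null sets that depend on the test function (resolved via the countable dense family), in checking that the product functions are admissible test functions through tensorization, and, more structurally, in the density argument showing that a function whose integrals against a dense family of $g$'s are $y$-independent must itself be $y$-independent. All the Fubini and measurability manipulations ultimately rest on the strong measurability of the sections encoded in Assumption \ref{ass} (via Lemma \ref{le:perprod}), which guarantees that the slice maps $y\mapsto\d_\sx F(\cdot,y)$ and the associated scalar functions are measurable.
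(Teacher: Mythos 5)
The paper itself does not prove this proposition — it is recalled verbatim from \cite{GR17} — so there is no in-paper argument to compare against; I can only assess your proposal on its merits. Your strategy is the natural one and is essentially sound: compute $\d F\big((\hat v,0)\big)=\big(\d_\sx F\big)(\hat v)$ from Theorem \ref{dectang} and the adjoint characterization, then for the forward implication slice in $y$ via Fubini and the tensorization of the Cheeger energy (the slices $F(\cdot,y)$ indeed lie in $W^{1,2}(\X)$ with bounded support, and the integrability bookkeeping via $|\d_\sx F|\le|\d F|$ and locality on $\{F=0\}$ is correct), and for the converse separate variables with $F=(g\circ\pi_\X)(\phi\circ\pi_\Y)$.

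The one step that does not work as written is the final extension ``for all $g\in\mathcal D$ (hence, by density, for all boundedly supported $g\in W^{1,2}(\X)$)''. Your family $\mathcal D$ is only assumed dense in $L^2(\X)$, but the right-hand side $g\mapsto-\int\d g(v)\,\d\mm_\X$ is not continuous for the $L^2$-norm, so $L^2$-density does not allow you to pass to the limit in that identity. Two repairs are available: either build $\mathcal D$ so that every boundedly supported $g\in W^{1,2}(\X)$ is a $W^{1,2}$-limit of elements of $\mathcal D$ with supports in a common bounded set (possible by separability of $W^{1,2}(\X)$ and Lipschitz cut-offs); or, more cleanly, once $H=h\circ\pi_\X$ is identified, return to the defining identity on $\X\times\Y$ with $F=(g\circ\pi_\X)(\phi\circ\pi_\Y)$ for the \emph{arbitrary} boundedly supported $g\in W^{1,2}(\X)$ and one fixed $\phi\ge 0$ with $\int\phi\,\d\mm_\Y>0$: both sides factor, and dividing by $\int\phi\,\d\mm_\Y$ gives $\int g h\,\d\mm_\X=-\int\d g(v)\,\d\mm_\X$ directly, with no density needed. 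A related (harmless) localization is needed in the step showing $H(\cdot,y)$ is independent of $y$: since $H(\cdot,y)-H(\cdot,y')$ is only in $L^2_\loc(\X)$, you should take $\mathcal D=\bigcup_m\mathcal D_m$ with $\mathcal D_m$ supported in a fixed bounded set $B_m$ and dense in $L^2(B_m)$, so that the pairing against the dense family genuinely forces the difference to vanish on each $B_m$. With these adjustments the argument is complete.
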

\begin{proposition}[{\cite[Proposition 3.16]{GR17}}]
\label{divprod}
  Let $(\X,\sfd_\X,\mm_\X)$ and $(\Y,\sfd_\Y,\mm_\Y)$   be two metric measure spaces infinitesimally Hilbertian and satisfying Assumption \ref{ass}.

Let $v = (v_\sx, v_\sy) \in \Lint^2(\mathit{T}(\X \times \Y))$ be such that:
\begin{itemize}
\item[-] $v_\sx \in D(\div_\sx, \X)$ for $\mea_\Y$-a.e. $y \in \Y$ with $ \int  |{\div_\sx (v_\sx)} |^2  \ \dint (\mea_\X \otimes \mea_\Y) < \infty$,
\item[-] $v_\sy \in D(\div_\sy, \Y)$ for $\mea_\X$-a.e. $x \in \X$ with $ \int   |{\div_\sy (v_\sy)}|^2  \ \dint (\mea_\X \otimes \mea_\Y) < \infty$.
\end{itemize}
Then $v \in D (\Div,\X\times\Y)$ and 
\begin{equation}
\label{eq:divprod}
\div(v) = \div_\sx(v_\sx) \circ \pi_\X + \div_\sy(v_\sy) \circ \pi_\Y.
\end{equation}
\end{proposition}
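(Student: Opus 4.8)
The plan is to verify directly the integration-by-parts identity that defines the divergence on the product, reducing it slice-wise to the factors via Fubini's theorem. Set $\mea:=\mea_\X\otimes\mea_\Y$ and fix an arbitrary $f\in\W(\X\times\Y)$ with bounded support. By Theorem \ref{dectang} we have $\dint f=\Phi_\sx(\dint_\sx f)+\Phi_\sy(\dint_\sy f)$, and pairing this with $v$ while using the defining property of the adjoints together with the identifications $v_\sx=\Phi_\sx^\ast(v)$, $v_\sy=\Phi_\sy^\ast(v)$ gives the $\mea$-a.e.\ identity
\[
\dint f(v)=(\dint_\sx f)(v_\sx)+(\dint_\sy f)(v_\sy).
\]
Since $|(\dint_\sx f)(v_\sx)|\le|\dint_\sx f|\,|v_\sx|\le|\dint f|\,|v|\in \Lint^1(\mea)$ (using the isometric decomposition of the norms from Theorem \ref{dectang}), and likewise for the other summand, both terms are $\mea$-integrable and can be handled separately.

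For the first term I would integrate in the $\X$-variable first. By the tensorization of the Cheeger energy (Definition \ref{def:tensch}, cf.\ Lemma \ref{le:perprod}) and the boundedness of $\supp f$, for $\mea_\Y$-a.e.\ $y$ the slice $f(\cdot,y)$ lies in $\W(\X)$ and has bounded support; by hypothesis also $v_\sx(\cdot,y)\in D(\div_\sx,\X)$ for $\mea_\Y$-a.e.\ $y$. Applying the definition of the divergence on $\X$ with test function $f(\cdot,y)$ yields, for such $y$,
\[
\int_\X \dint_\sx f(\cdot,y)\big(v_\sx(\cdot,y)\big)\,\dint\mea_\X=-\int_\X f(\cdot,y)\,\div_\sx\big(v_\sx(\cdot,y)\big)\,\dint\mea_\X.
\]
Integrating in $y$ against $\mea_\Y$ and invoking Fubini's theorem — legitimate because the left integrand is dominated by $|\dint f|\,|v|\in \Lint^1(\mea)$ and the right one by the product of $f\in \Lint^2$ and $\div_\sx(v_\sx)\in \Lint^2(\mea)$ — gives $\int(\dint_\sx f)(v_\sx)\,\dint\mea=-\int f\,\div_\sx(v_\sx)\,\dint\mea$. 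The symmetric computation provides the companion identity for the $\Y$-term.

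Summing the two identities and using the decomposition of $\dint f(v)$ from the first paragraph, we obtain
\[
\int \dint f(v)\,\dint\mea=-\int f\,\big(\div_\sx(v_\sx)+\div_\sy(v_\sy)\big)\,\dint\mea
\]
for every $f\in\W(\X\times\Y)$ with bounded support. As $\div_\sx(v_\sx)+\div_\sy(v_\sy)\in \Lint^2(\mea)$ by assumption, this is precisely the assertion that $v\in D(\Div,\X\times\Y)$ with divergence given by \eqref{eq:divprod} (here $\div_\sx(v_\sx)$ is read as the function on $\X\times\Y$ assembled slice-wise from the divergences of $v_\sx(\cdot,y)$ on $\X$, and symmetrically for $\div_\sy(v_\sy)$).

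The algebraic identity $\dint f(v)=(\dint_\sx f)(v_\sx)+(\dint_\sy f)(v_\sy)$ is a formal consequence of Theorem \ref{dectang} and the definition of the adjoint maps, so the analytic core of the proof is the Fubini reduction to the factors. I expect the point demanding the most care to be exactly this passage to slices: one must guarantee that $f(\cdot,y)$ is an admissible test function on $\X$ — in $\W(\X)$ with bounded support — for $\mea_\Y$-a.e.\ $y$, which is where the tensorization property is essential, and one must control the measurability in $y$ of the slice-wise divergences together with the integrability needed to exchange the order of integration.
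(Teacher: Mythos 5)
Your argument is correct: the paper does not reprove this statement (it is quoted from \cite[Proposition 3.16]{GR17}), and your proof is exactly the expected one — decompose $\d f(v)=(\d_\sx f)(v_\sx)+(\d_\sy f)(v_\sy)$ via Theorem \ref{dectang} and its adjoints, test slice-wise with $f(\cdot,y)\in\W(\X)$ of bounded support (guaranteed by the tensorization of the Cheeger energy), and reassemble by Fubini. The one point you flag as delicate, measurability in $y$ of the slice-wise divergences, is in fact already built into the hypothesis, since $\int|\div_\sx(v_\sx)|^2\,\d(\mea_\X\otimes\mea_\Y)<\infty$ presupposes that $\div_\sx(v_\sx)$ is a measurable function on the product.
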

\begin{remark}{\rm
The viceversa of Proposition \ref{divprod} - obviously - does not hold, indeed the vector field on $\R^2$ defined by
\[ 
X(x_1, x_2) = \text{sgn}(x_1 - x_2) (\mathbf{e}_1 + \mathbf{e}_2),  
\]
where $\text{sgn}$ is the sign function, is easily seen to have null divergence, while the distributional divergence of its components are Dirac masses (that get canceled out in considering the divergence in the product space).

In other words, the problem is that if one has  integrability for the two quantities on the right hand side of \eqref{eq:divprod}, then the left hand side has the same integrability properties, but from the integrability of the latter one cannot deduce informations on the integrability of both addends in the right hand side. 
}\fr\end{remark}

A direct application of Proposition \ref{divvf} and Proposition \ref{divprod} gives the following result on the Laplacian on the product space:
\begin{proposition}[{\cite[Corollary 3.17]{GR17}}]\label{cor:lapprod}
 Let $(\X,\sfd_\X,\mm_\X)$ and $(\Y,\sfd_\Y,\mm_\Y)$   be two metric measure spaces infinitesimally Hilbertian and satisfying Assumption \ref{ass}. Then:
\begin{itemize}
\item[i)] $f \in D(\Delta_\loc, \X)$ if and only if $f \circ \pi_\X \in D(\Delta_\loc, \X \times \Y)$ and in this case 
\[
\Delta(f \circ \pi_\X) = (\Delta_\sx f) \circ \pi_\X.
\]
\item[ii)] Let $f \in \W(\X \times \Y)$ be such that
\begin{itemize}
\item for $\mea_\X$-a.e.\ $x \in \X$, $f^{x}:=f(x,\cdot) \in D(\Delta, \Y)$ with $\int |\Delta_\sy f^{x}|^2(y)\, \dint (\mea_\X\times\mea_\Y)(x,y) < \infty$,\\
\item for $\mea_\Y$-a.e.\ $y \in \Y$, $f_{y}:=f(\cdot,y) \in D(\Delta, \X)$ with $\int|\Delta_\sx f_{y}|^2(x) \, \dint  (\mea_\X\times\mea_\Y)(x,y) < \infty$.
\end{itemize}
Then $f \in D(\Delta, \X \times \Y)$ and
\[
\Delta f (x,y) = \Delta_\sx f_{y} (x) + \Delta_\sy  f^{x}(y)\qquad\mea_1\times\mea_2-a.e.\ (x,y).
\]
\end{itemize} 
\end{proposition}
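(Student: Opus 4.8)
The plan is to use the identity $\Delta=\div\circ\nabla$ together with the fact that, in the infinitesimally Hilbertian setting, the gradient decomposes compatibly with the product structure; this reduces both statements to the divergence results of Propositions \ref{divvf} and \ref{divprod}. The preliminary step, on which everything rests, is to show that for $f\in\W_\loc(\X\times\Y)$ one has, in the notation of \eqref{eq:decv},
\[
\nabla f=(\nabla_\sx f,\nabla_\sy f),
\]
where $\nabla_\sx f\in L^0(\Y,L^0(T\X))$ is the map $y\mapsto\nabla(f(\cdot,y))$ and symmetrically for $\nabla_\sy f$.

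To establish this component identity I would start from Theorem \ref{dectang}, which gives $\d f=\Phi_\sx(\d_\sx f)+\Phi_\sy(\d_\sy f)$, and compute $\Phi_\sx^\ast(\nabla f)$ by testing against an arbitrary $\omega\in L^0(\Y,L^0(T^\ast\X))$: using the defining property of the adjoint and of $\sharp$ one gets $\omega\big(\Phi_\sx^\ast(\nabla f)\big)=\la\Phi_\sx(\omega),\d f\ra$. The orthogonality \eqref{eq:orto} kills the $\Phi_\sy(\d_\sy f)$ term, and since $\Phi_\sx$ preserves the pointwise norm (hence, by polarization, the pointwise scalar product) the remaining term equals $\la\omega,\d_\sx f\ra=\omega\big((\d_\sx f)^\sharp\big)$ computed fiberwise. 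As $\omega$ is arbitrary this yields $\Phi_\sx^\ast(\nabla f)=(\d_\sx f)^\sharp$, and Lemma \ref{le:perprod} identifies $\d_\sx f$ at the parameter $y$ with $\d(f(\cdot,y))$, so that $(\d_\sx f)^\sharp$ at $y$ is $\nabla(f(\cdot,y))$. The analogous computation with $\Phi_\sy^\ast$ completes the identity.

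For part i) observe that, by Proposition \ref{prop:defPhi}, $\d(f\circ\pi_\X)=\Phi_\sx(\widehat{\d f})$, so $\d_\sy(f\circ\pi_\X)=0$ while $\d_\sx(f\circ\pi_\X)$ is the constant map $\widehat{\d f}$; applying the component identity, $\nabla(f\circ\pi_\X)=(\widehat{\nabla f},0)$ in the sense of \eqref{eq:decv}. Now $f\in D(\Delta_\loc,\X)$ means by definition $\nabla f\in D(\div_\loc,\X)$ with $\div_\sx(\nabla f)=\Delta_\sx f$, and Proposition \ref{divvf} states precisely that this is equivalent to $(\widehat{\nabla f},0)\in D(\Div_\loc,\X\times\Y)$, i.e.\ to $f\circ\pi_\X\in D(\Delta_\loc,\X\times\Y)$, with $\Delta(f\circ\pi_\X)=\div_\sx(\nabla f)\circ\pi_\X=(\Delta_\sx f)\circ\pi_\X$. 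Being an equivalence, this gives both implications at once.

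For part ii) I would apply Proposition \ref{divprod} to $v=\nabla f$, which lies in $L^2(T(\X\times\Y))$ because $f\in\W(\X\times\Y)$, and whose components are $v_\sx=\nabla_\sx f$, $v_\sy=\nabla_\sy f$ by the component identity. The hypothesis $f_y:=f(\cdot,y)\in D(\Delta,\X)$ for $\mm_\Y$-a.e.\ $y$ means exactly that $\nabla_\sx f(\cdot,y)=\nabla(f_y)\in D(\div_\sx,\X)$ with $\div_\sx(\nabla_\sx f)(\cdot,y)=\Delta_\sx f_y$, and the assumed $L^2$-integrability of $\Delta_\sx f_y$ is the integrability required by Proposition \ref{divprod}; symmetrically for $f^x:=f(x,\cdot)$. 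Thus Proposition \ref{divprod} applies and gives $\nabla f\in D(\Div,\X\times\Y)$, i.e.\ $f\in D(\Delta,\X\times\Y)$, together with
\[
\Delta f=\div(\nabla f)=\div_\sx(\nabla_\sx f)\circ\pi_\X+\div_\sy(\nabla_\sy f)\circ\pi_\Y=\Delta_\sx f_y(x)+\Delta_\sy f^x(y),
\]
which is the claimed formula. The only genuinely delicate point is the component identity of the first two paragraphs: once it is in hand, both parts are, as stated, a direct transcription of the divergence results. Care must also be taken that the fiberwise $\sharp$ in the Hilbert module $L^0(\Y,L^0(T^\ast\X))$ coincides with $y\mapsto\sharp$ on $L^0(T^\ast\X)$, which again follows from the norm-preservation of $\Phi_\sx$ and the orthogonality of the decomposition.
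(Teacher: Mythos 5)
Your proposal is correct and follows exactly the route the paper indicates: the text introduces this proposition as ``a direct application of Proposition \ref{divvf} and Proposition \ref{divprod}'', which, combined with the gradient decomposition $\nabla f=(\nabla_\sx f,\nabla_\sy f)$ (stated after Proposition \ref{prop:Wprod} and justified precisely by your adjoint/orthogonality computation), is what you carry out. The only thing you make more explicit than the paper is the verification that $\Phi_\sx^\ast(\nabla f)=(\d_\sx f)^\sharp$, which is a worthwhile addition but not a deviation.
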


\section{Partial first order derivatives on the product space}
\subsection{Partial first order derivatives for functions in two variables}
Aim of this section is to review, in preparation of the subsequent results, the properties of those functions of two variables which are differentiable with respect one of them. We start with the following:
\begin{definition}[{The space $L^0(\Y,S^{2}_{\loc}(\X))$ and the operator $\d_\sx$}]\label{def:l0s2}
Let $(\X,\sfd_\X,\mm_\X),(\Y,\sfd_\Y,\mm_\Y)$ be two metric measure spaces. Then $L^0(\Y,S^{2}_{\loc}(\X))\subset L^0(\Y,L^0(\X))\sim L^0(\X\times\Y)$ is the space of  functions $f\in L^0(\X\times\Y)$ such that $f(\cdot,y)\in S^2_\loc(\X)$ for $\mm_\Y$-a.e.\ $y\in\Y$, and the map $\Y\ni y\mapsto \d_\sx f\in L^0(T^*\X)$ belongs to $L^0(\Y;L^0(T^*\X))$.

We then define $\d_\sx:L^0(\Y,S^{2}_{\loc}(\X))\to L^0(\Y,L^0(T^*\X))$ by $\d_\sx f(y):=\d_\sx f(\cdot,y)$ for $\mm_\Y$-a.e.\ $y\in\Y$.

We shall also denote by $L^2(\Y,S^2(\X))\subset L^0(\Y,S^{2}_{\loc}(\X))$ the space of functions $f$ such that $\int |\d_\sx f|^2\,\d(\mm_\X\otimes\mm_\Y) < \infty$.
\end{definition}

The definition above guarantees that $\d_\sx f$ is a well defined element of $ L^0(\Y,L^0(T^*\X))$ (i.e.\ that it is Borel and essentially separably valued).


It is worth noticing that if the space $W^{1,2}(\X)$ is separable (as it is the case if it is Hilbert - recall \cite[Proposition 7.6]{ACM14}), then elements of $L^0(\Y,S^2_\loc(\X))$ can be easily singled out: \black
\begin{proposition}\label{prop:l0s2eq}
Let $(\X,\sfd_\X,\mm_\X),(\Y,\sfd_\Y,\mm_\Y)$ be two metric measure spaces with $W^{1,2}(\X)$ separable and $f\in L^0(\X\times\Y)$.

Then $f\in L^0(\Y,S^2_\loc(\X))$ if and only if for $\mm_\Y$-a.e.\ $y\in \Y$ we have $f(\cdot,y)\in S^2_\loc(\X)$. 
\end{proposition}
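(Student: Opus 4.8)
The plan is to prove the nontrivial (``if'') implication, the ``only if'' being immediate from Definition \ref{def:l0s2}. So I assume that $f(\cdot,y)\in\Sclass_\loc(\X)$ for $\mm_\Y$-a.e.\ $y$ and I must show that $G\colon y\mapsto\d_\sx f(\cdot,y)$ is an element of $L^0(\Y,L^0(T^*\X))$, i.e.\ that it is strongly measurable. This splits into two properties: that $G$ is essentially separably valued, and that it is Borel. The first one I expect to be free. Since $\W(\X)$ is separable, and since $L^0(\X)$ is separable (the Borel $\sigma$-algebra of the separable space $\X$ being countably generated), the whole module $L^0(T^*\X)$ is itself separable: indeed $\{\d g: g\in\W(\X)\}$ generates $L^0(T^*\X)$ --- any $\d h$ with $h\in\Sclass_\loc(\X)$ is the $L^0$-limit of $\d(\eta_n\tau_k(h))$, where $\tau_k$ is the truncation at level $k$ and $\eta_n$ are Lipschitz cutoffs increasing to $1$, and each $\eta_n\tau_k(h)\in\W(\X)$ --- so a countable dense family is produced from a countable dense subset of $\W(\X)$ together with a countable dense family of $L^0(\X)$-coefficients. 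The same generation argument, with coefficients dense in the relevant $L^2(\mm)$-sense, shows that $L^2(T^*\X)$ is separable as well. In particular every map into $L^0(T^*\X)$ is automatically essentially separably valued, so only Borel measurability of $G$ remains.

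The heart of the argument is the measurability of the differential, seen as a map between Polish spaces. The key observation is that $\d\colon\W(\X)\to L^2(T^*\X)$, with domain regarded as a subset of $L^2(\X)$, is a \emph{closed} operator: this is precisely the lower semicontinuity of the Cheeger energy (if $g_j\to g$ in $L^2(\X)$ and $\d g_j\to\omega$ in $L^2(T^*\X)$, then $g\in\W(\X)$ and $\d g=\omega$). Hence its graph $\Gamma\subset L^2(\X)\times L^2(T^*\X)$ is closed, thus Borel, and the projection $\pi_1\colon\Gamma\to L^2(\X)$ onto the first factor is continuous and injective. Both $L^2(\X)$ and $L^2(T^*\X)$ are Polish (the latter being separable precisely because $\W(\X)$ is), so by the Lusin--Souslin theorem the image $\W(\X)=\pi_1(\Gamma)$ is Borel in $L^2(\X)$ and the inverse $g\mapsto(g,\d g)$ is Borel; composing with the continuous projection onto the second factor, I conclude that $\d\colon\W(\X)\to L^2(T^*\X)$ is a Borel map.

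It then remains to transfer this to $G$ by localization. Fix the cutoffs $\eta_n$ and truncations $\tau_k$ as above. Since $f\in L^0(\X\times\Y)\cong L^0(\Y,L^0(\X))$, the map $y\mapsto f(\cdot,y)\in L^0(\X)$ is measurable; post-composing with $g\mapsto\eta_n\tau_k(g)$ (which lands among bounded, boundedly supported functions, hence in $L^2(\X)$ with convergence controlled by dominated convergence) gives that $y\mapsto\eta_n\tau_k(f(\cdot,y))\in\W(\X)\subset L^2(\X)$ is measurable. Feeding this into the Borel map $\d$ of the previous step, I get that $y\mapsto\d\big(\eta_n\tau_k(f(\cdot,y))\big)\in L^2(T^*\X)\hookrightarrow L^0(T^*\X)$ is measurable for each $n,k$. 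By the chain rule and the locality of the differential, on the set $B_n\cap\{|f(\cdot,y)|<k\}$ this object coincides with $\d_\sx f(\cdot,y)$; multiplying by the characteristic function of this set (jointly measurable, hence measurable in $y$) and letting $n,k\to\infty$ along a diagonal exhibits $\d_\sx f(\cdot,y)$ as an $\mm_\Y$-a.e.\ pointwise $L^0(T^*\X)$-limit of measurable maps, whence $G$ is measurable. The main obstacle --- and the step deserving the most care --- is exactly this passage from $\W(\X)$ to $\Sclass_\loc(\X)$: one must check that the truncation--cutoff scheme is compatible with the chain rule for $\d$ on $\Sclass_\loc(\X)$ and that the sets $B_n\cap\{|f(\cdot,y)|<k\}$ increase to $\X$ up to $\mm_\X$-null sets for a.e.\ $y$, so that the diagonal limit genuinely reproduces $\d_\sx f(\cdot,y)$ in $L^0(T^*\X)$.
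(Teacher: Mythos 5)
Your proof is correct, and its skeleton is the same as the paper's: reduce via the cutoff--truncation scheme $\eta_n\tau_k$ to sections lying in $W^{1,2}(\X)$, then use the $L^2$-closedness of the differential to obtain Borel measurability in $y$, and finally undo the truncation by locality and a diagonal limit. Where you genuinely diverge is in the key measurability lemma. The paper proves that the Borel $\sigma$-algebra induced on $W^{1,2}(\X)$ by the $W^{1,2}$-norm coincides with the one induced by the $L^2$-norm (the nontrivial inclusion coming from the fact that $W^{1,2}$-closed balls are $L^2$-closed, by $L^2$-lower semicontinuity of the norm); it then concludes that $y\mapsto f(\cdot,y)$ is Borel as a $W^{1,2}(\X)$-valued map, the passage to $y\mapsto\d_\sx f(\cdot,y)$ being immediate from the $W^{1,2}$-continuity of $\d$. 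You instead apply the Lusin--Souslin theorem to the closed graph of $\d$ in $L^2(\X)\times L^2(T^*\X)$, deducing both that $W^{1,2}(\X)$ is an $L^2$-Borel subset and that $\d$ is Borel for the $L^2$-topology on its domain. The two routes rest on the same analytic input (closedness of the Sobolev structure under $L^2$-convergence); the paper's version is more elementary and self-contained, while yours is a heavier descriptive-set-theoretic tool that buys, as by-products, the Borelness of $W^{1,2}(\X)$ inside $L^2(\X)$ and an argument that works verbatim for any closed operator between Polish spaces. Your preliminary observation that separability of $W^{1,2}(\X)$ forces separability of $L^2(T^*\X)$ and $L^0(T^*\X)$, so that essential separable-valuedness is automatic, is needed for your formulation (which tracks the $L^0(T^*\X)$-valued map directly rather than the $W^{1,2}(\X)$-valued one) and is correct, though the paper sidesteps it by working with $L^0(\Y,W^{1,2}(\X))$.
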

\begin{proof} We start with the `only if' and to this aim define $\eta^m:=1\wedge(m-\sfd_\X(x,\bar x))^+$ where $\bar x\in\X$ is a fixed point. Then it follows from our assumption that for every $n,m\in\N$ we have that for  $\mm_\Y$-a.e.\ $y\in\Y$ the function $x\mapsto \eta^m(x)f^n(x,y)$, where $f^n:=n\wedge f\vee (-n)$, belongs to $W^{1,2}(\X)$. Fix $y\in\Y$ for which this holds and let first $m\to\infty$ and then $n\to\infty$ to deduce that $x\mapsto f(x,y)$ belongs to $S^2_\loc(\X)$, giving the claim.

For the `if' we notice that by the very definition of $L^0(\Y,S^2_\loc(\X))$ and up to replace $f$ by $\eta^mf^n$ with $\eta^m,f^n$ as above and letting first $m\to \infty$ and then $n\to\infty$, it is sufficient to deal with the case  $f\in L^2(\X\times\Y)$ with $f(\cdot,y)\in W^{1,2}(\X)$ for $\mm_\Y$-a.e.\ $y\in\Y$. 

In this case to conclude it is sufficient to prove that $f\in L^0(\Y,W^{1,2}(\X))$ that is to say, as $W^{1,2}(\X)$ is separable, that $y\mapsto f(\cdot,y)\in W^{1,2}(\X)$ is (the equivalence class up to $\mm_\Y$-a.e.\ equality of) a Borel map. Since    $y\mapsto f(\cdot,y)\in L^2(\X)$ is Borel by \eqref{eq:locpb1form} and Assumption \ref{ass},  to conclude it is sufficient to show that the Borel structure on $W^{1,2}(\X)$ induced by the $W^{1,2}$-distance coincides with that induced by the $L^2$-distance. Since $\|f\|_{L^2}\leq \|f\|_{W^{1,2}}$ it is clear that $L^2$-open sets are also $W^{1,2}$-open, whence the same is true for Borel ones. For the opposite inclusion we notice that since the $W^{1,2}$-norm is $L^2$-lower semicontinuous, we have that a $W^{1,2}$-closed ball is a $L^2$-closed set, thus a $W^{1,2}$-open ball is the union of a countable number of $L^2$-closed sets, hence $L^2$-Borel. The conclusion follows.
\end{proof}
\begin{remark}[Partial differentiation as example of $D$-structure]{\rm
In \cite{GT01} the authors present an axiomatic approach to the theory of Sobolev spaces over abstract metric measure spaces, introducing the notion of $D$-structure. Given a metric measure space $(\X, \sfd, \mm)$ and an exponent $p \in (1, \infty)$, a $D$-structure on $(\X, \sfd, \mm)$ is any map $D$ associating to any function $u \in L^p_\loc(\mm)$ a family of non-negative Borel functions $D[u]$, called pseudo-gradients, such that a proper list of axioms is fulfilled (see \cite[Section 1.2]{GT01}). Intuitively, these pseudo-gradients provide a control from above on the variation of $u$, in a suitable sense. 

This allows to define the space $W^p(\X, \sfd, \mm, D)$ as the set of all functions in $L^p(\mm)$ admitting a pseudo-gradient in $L^p(\mm)$. Moreover, using standard techniques of functional analysis, to any Sobolev function $u \in W^p(\X, \sfd, \mm, D)$ it can be associated a uniquely determined minimal object $\underline D u \in D[u] \cap L^p(\mm)$, called minimal pseudo-gradient of $u$.

In \cite{GP18}, the authors prove that, under suitable locality assumptions, these $D$-structures give rise to a first-oder differential structure, namely to a natural notion of cotangent module $L^p(T^\ast \X; D)$, whose properties are analogous to the ones of the cotangent module $L^2(T^\ast \X)$: the main result of such paper shows the existence of an abstract differential $\d \colon W^{1, p}(\X, \sfd, \mm, D) \to L^p(T^\ast \X; D)$, which is a linear operator such that for any $u \in W^{1, p}(\X, \sfd, \mm, D)$ the pointwise norm $\abs{\d u} \in L^p(\mm)$ of $\d u$ coincides with $\underline D u$ in the $\mm$-a.e. sense.

We point out that an example of $D$-structure satisfying all the locality conditions as investigated in \cite{GP18} is the one that assigns to any $u\in L^2(\Y,W^{1,2}(\X))\subset L^2(\X\times \Y)$ the set $D[u]:=\{G\in L^2(\X\times\Y): G\geq |\d_\sx u|\ \mm_\X\times\mm_\Y-a.e.\}$.
}\fr\end{remark}

\black

The operator $\d_\sx$ defined on $L^0(\Y,S^{2}_{\loc}(\X))$ trivially  inherits all the calculus rules of $\d_\sx :S^2_\loc(\X)\to L^0(T^*\X)$:

\begin{proposition}[Calculus rules]\label{prop:calpar}Let $(\X,\sfd_\X,\mm_\X),(\Y,\sfd_\Y,\mm_\Y)$ be two metric measure spaces. Then the following hold:
\begin{itemize}
\item[i)]\emph{Closure}. Let $(f_n)\subset L^2(\Y;S^2(\X))$ be $\mm_\X\otimes\mm_\Y$-a.e.\ converging to $f_\infty$. Assume also that for some open sets $\Omega^k\subset \X$ (resp.\ Borel subsets $B^k\subset\Y$) with $\cup_k\Omega^k=\X$ (resp.\ $\mm_\Y(\Y\setminus\cup_kB^k)=0$) and $\omega\in L^0(\Y;L^0(T^*\X))$ we have that  $(\nchi_{\Omega^k}\nchi_{B^k}\d_\sx f_n)$ converges to  $\nchi_{\Omega^k}\nchi_{B^k}\omega$ in the weak topology of the Banach space $L^2(\Y,L^2(T^*\X))$ for any $k\in\N$ (notice that this holds in particular if $\d_\sx f_n\weakto \omega$ in $L^2(\Y,L^2(T^*\X))$).

Then $f_\infty\in  L^2(\Y;S^2(\X))$ and $\d_\sx f_\infty=\omega$.

The same conclusion holds if $(f_n)\subset L^2(\Y;W^{1,2}(\X))$ converges to $f_\infty$ in the weak topology of $L^2(\X\times\Y)$.
\item[ii)]\emph{Locality}. For any $f,g\in L^0(\Y,S^{2}_{\loc}(\X))$ we have
\begin{equation}
\label{eq:locdx}
\d_\sx f=\d_\sx g\qquad\mm_\X\otimes\mm_\Y-a.e.\ on\ \{f=g\}.
\end{equation}
\item[iii)]\emph{Chain rule}. Let $f\in L^0(\Y,S^{2}_{\loc}(\X))$ and $N\subset\R$ Borel and $\mathcal L^1$-negligible. Then $\d_\sx f=0$ $\mm_\X\otimes\mm_\Y$-a.e.\ on $f^{-1}(N)$. Moreover, for $\varphi:\R\to\R$ Lipschitz we have $\varphi\circ f\in  L^0(\Y,S^{2}_{\loc}(\X))$ and 
\[
\d_\sx(\varphi\circ f)=\varphi'\circ f\d_\sx f,
\] 
(notice that by what just claimed, the actual definition of $\varphi'\circ f$ on  the set $f^{-1}(\{\text{non differentiability points of }\varphi\})$ is irrelevant because on such set $\d_\sx f$ is zero).
\item[iv)]\emph{Leibniz rule}. Let $f,g\in L^0(\Y,S^{2}_{\loc}(\X))\cap L^\infty_\loc(\X\times\Y)$. Then $fg\in  L^0(\Y,S^{2}_{\loc}(\X))$ as well with
\[
\d_\sx (fg)=f\d_\sx g+g\d_\sx f.
\]
If $g$ only depends on the $y$ variable, then the above holds without assuming that $f\in L^\infty_\loc(\X\times\Y)$.
\end{itemize}
\end{proposition}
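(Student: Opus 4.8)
The plan is to derive all four statements fiberwise: fix $y$, invoke the corresponding calculus rule for the operator $\d_\sx\colon \Sclass_\loc(\X)\to L^0(T^*\X)$ applied to $f(\cdot,y)$, and then check that the resulting objects depend measurably on $y$ so as to produce genuine elements of $L^0(\Y,L^0(T^*\X))$. For the locality (ii), chain rule (iii) and Leibniz rule (iv) this reduction is immediate: for $\mm_\Y$-a.e.\ $y$ the sections $f(\cdot,y),g(\cdot,y)$ lie in $\Sclass_\loc(\X)$, so the single-variable identities give $\d_\sx f(\cdot,y)=\d_\sx g(\cdot,y)$ $\mm_\X$-a.e.\ on $\{f(\cdot,y)=g(\cdot,y)\}$, and likewise $\d_\sx(\varphi\circ f)(\cdot,y)=\varphi'\circ f(\cdot,y)\,\d_\sx f(\cdot,y)$ (with $\d_\sx f(\cdot,y)=0$ on $f(\cdot,y)^{-1}(N)$) and $\d_\sx(fg)(\cdot,y)=f(\cdot,y)\,\d_\sx g(\cdot,y)+g(\cdot,y)\,\d_\sx f(\cdot,y)$. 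Integrating these fiberwise identities via Fubini (using that $\{f=g\}$ has $\mm_\X$-a.e.\ section $\{f(\cdot,y)=g(\cdot,y)\}$) upgrades them to $\mm_\X\otimes\mm_\Y$-a.e.\ statements on the product. The only point requiring care is that $\varphi\circ f$ and $fg$ genuinely belong to $L^0(\Y,\Sclass_\loc(\X))$, i.e.\ that the maps $y\mapsto\d_\sx(\varphi\circ f)(\cdot,y)$ and $y\mapsto\d_\sx(fg)(\cdot,y)$ are strongly measurable; this holds because, by the fiberwise formulas, they are $L^0(\X\times\Y)$-linear combinations of the strongly measurable maps $\d_\sx f,\d_\sx g$ with $L^0(\X\times\Y)$ coefficients ($\varphi'\circ f$, $f$, $g$), and such operations preserve essential separable-valuedness. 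The local boundedness hypotheses in (iv), or the fact that $g$ depends only on $y$, are exactly what makes the fiberwise Leibniz rule applicable and the products locally integrable.

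The substantial point is the closure statement (i), where weak convergence of differentials does not localize to fibers directly. Here I would first reduce, via the cut-off/truncation procedure already used in the proof of Proposition \ref{prop:l0s2eq} (multiplying by $\eta^m$ and truncating to $f^n$) and by working separately on each rectangle $\Omega^k\times B^k$, to the case of genuine $L^2$ functions with differentials converging weakly in $L^2(\Y,L^2(T^*\X))$. I would then apply Mazur's lemma to pass to convex combinations $\tilde f_n=\sum_{i\ge n}\lambda_i^n f_i$ whose differentials $\d_\sx\tilde f_n$ converge \emph{strongly} to $\omega$ in $L^2(\Y,L^2(T^*\X))$; since the coefficients are nonnegative and sum to one, these combinations still converge $\mm_\X\otimes\mm_\Y$-a.e.\ to $f_\infty$. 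From strong $L^2(\Y,L^2(T^*\X))$ convergence one extracts a subsequence along which, for $\mm_\Y$-a.e.\ $y$, both $\d_\sx\tilde f_n(\cdot,y)\to\omega(y)$ in $L^2(T^*\X)$ and $\tilde f_n(\cdot,y)\to f_\infty(\cdot,y)$ $\mm_\X$-a.e. The closure of the single-variable differential then yields, for $\mm_\Y$-a.e.\ $y$, that $f_\infty(\cdot,y)\in\Sclass_\loc(\X)$ with $\d_\sx f_\infty(\cdot,y)=\omega(y)$; gluing over the covering $\{\Omega^k\times B^k\}$ by locality (part (ii)) gives $f_\infty\in L^2(\Y;S^2(\X))$ with $\d_\sx f_\infty=\omega$, the global $L^2$ bound on $\omega$ coming from weak lower semicontinuity of the norm. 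The final variant, with $f_n\in L^2(\Y;W^{1,2}(\X))$ merely converging weakly in $L^2(\X\times\Y)$, follows by the same scheme once the uniform $W^{1,2}$-bounds are used to extract a weakly convergent subsequence of the differentials and identify its limit with $\omega$.

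The main obstacle is precisely this passage from weak convergence in the Bochner space $L^2(\Y,L^2(T^*\X))$ to a fiberwise statement strong enough to invoke the single-space closure theorem: weak limits cannot be read off fiber-by-fiber, so Mazur's lemma (to manufacture strong convergence while preserving a.e.\ convergence of the functions) together with a diagonal subsequence extraction across the countable family $\{\Omega^k\times B^k\}$ is the technical heart of the argument.
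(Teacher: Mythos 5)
Your proposal is correct and follows essentially the same route as the paper: locality, chain rule and Leibniz rule are obtained fiberwise from the corresponding single-variable calculus rules (checking measurability of the resulting sections), and closure is handled by Mazur's lemma to upgrade weak to strong convergence of the differentials, a diagonal subsequence extraction over the countable covering to obtain fiberwise $L^2(T^*\X)$-convergence for $\mm_\Y$-a.e.\ $y$, and then the closure of the single-variable differential, with the second variant reduced to the first by applying Mazur's lemma to the functions themselves. The paper's proof is just a terser version of the same argument.
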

\begin{proof} Locality, Chain rule and Leibniz rule follow directly from the definition of $\d_\sx$ and the analogous properties of functions depending solely on the $x$ variable. For the closure we notice that first using Mazur's lemma, then passing to subsequences and finally with a diagonal argument we can assume that for any $k\in\N$ we have $\nchi_{\Omega^k}\d_\sx f_n(y)\to \nchi_{\Omega^k}\omega(y)$ in $L^2(T^*\X)$ for $\mm_\Y$-a.e.\ $y\in\Y$. Thus the claim follows from the closure of the differential operator on functions depending solely on the $x$ variable and on the fact that the result does not depend on the subsequence chosen. The second claim about the closure follows along similar lines by using Mazur's lemma to reduce to strong convergence in $L^2(\X\times\Y)$ and then passing to a subsequence to achieve $\mm_\X\otimes\mm_\Y$-a.e.\ convergence.
\end{proof}

The definitions given allow to reformulate Theorem \ref{dectang} in the following way:
\begin{proposition}\label{prop:Wprod}
Let $(\X,\sfd,\mm_\X)$ and $(\Y,\sfd_\Y,\mm_\Y)$ be two metric measure spaces satisfying Assumption \ref{ass} and let $f\in S^2_\loc(\X\times\Y)$. 

Then $y\mapsto f(\cdot,y)$ is in $L^0(\Y,S^2_\loc(\X))$ and, symmetrically, $x\mapsto f(x,\cdot)$ is in $L^0(\X,S^2_\loc(\Y))$, Moreover  we have
\begin{equation}
\label{eq:difflegati}
\d f=\Phi_\sx(\d_\sx f)+\Phi_\sy(\d_\sy f).
\end{equation}
\end{proposition}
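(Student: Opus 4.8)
The plan is to read off all three assertions directly from the results recalled above, so that the proof amounts to matching the hypotheses of Definition \ref{def:l0s2} against what Lemma \ref{le:perprod} already provides, and then recognizing formula \eqref{eq:difflegati} as the concluding assertion of Theorem \ref{dectang}. In other words, I expect this to be a genuine reformulation rather than a statement requiring new work.

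First I would establish the two membership claims. Applying Lemma \ref{le:perprod} to $f\in S^2_\loc(\X\times\Y)$, we obtain on the one hand that $f(\cdot,y)\in S^2_\loc(\X)$ for $\mm_\Y$-a.e.\ $y$, and on the other hand that the map $y\mapsto \d_\sx f(\cdot,y)$ is an element of $L^0(\Y,L^0(T^*\X))$. These are precisely the two conditions appearing in Definition \ref{def:l0s2} for $y\mapsto f(\cdot,y)$ to belong to $L^0(\Y,S^2_\loc(\X))$, so this membership is immediate. The symmetric claim $x\mapsto f(x,\cdot)\in L^0(\X,S^2_\loc(\Y))$ follows by repeating the argument with the roles of $\X$ and $\Y$ exchanged, which is legitimate because Assumption \ref{ass} is symmetric in the two factors.

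It then remains to prove \eqref{eq:difflegati}. Having just verified that $\d_\sx f$ and $\d_\sy f$ are well-defined elements of $L^0(\Y,L^0(T^*\X))$ and $L^0(\X,L^0(T^*\Y))$ respectively, the identity $\d f=\Phi_\sx(\d_\sx f)+\Phi_\sy(\d_\sy f)$ is nothing but the final assertion of Theorem \ref{dectang}, now read through the notation of Definition \ref{def:l0s2}. Hence there is nothing further to check.

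Since every ingredient is an immediate consequence of statements already available, I do not expect a genuine obstacle. The only point demanding attention is the essential separable-valuedness of $y\mapsto \d_\sx f(\cdot,y)$, required for this object to qualify as an element of $L^0(\Y,L^0(T^*\X))$; but this is exactly the strong measurability of the sections encoded in Assumption \ref{ass} and already exploited in the proof of Lemma \ref{le:perprod}. Thus the proposition is, as announced in the text preceding it, merely a restatement of Theorem \ref{dectang} phrased via the operator $\d_\sx$ introduced in Definition \ref{def:l0s2}.
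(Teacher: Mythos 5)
Your proof is correct and matches the paper's, which simply declares the proposition to be a restatement of Theorem \ref{dectang}; your explicit appeal to Lemma \ref{le:perprod} for the membership claims and to the strong measurability of the sections from Assumption \ref{ass} just spells out what that one-line proof leaves implicit. Nothing further is needed.
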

\begin{proof}
This is simply a restatement of Theorem \ref{dectang}.
\end{proof}
In what follows we shall adopt the more compact (and more similar to the one adopted in the smooth setting and in \eqref{eq:decome}) notation
\begin{equation}
\label{eq:coppia}
\d f=(\d_\sx f,\d_\sy f)
\end{equation}
in place of \eqref{eq:difflegati}: this should hopefully clarify that we are dealing with partial derivatives and that $\d_\sx f,\d_\sy f$ are the two components of $\d f$. Recalling the definition of $\Phi_\sx$, this means that for $f\in S^2_\loc(\X)$ we have
\[
\d(f\circ\pi_\X)=(\d_\sx f,0). 
\]
Similarly, if the spaces under consideration are also infinitesimally Hilbertian (as it often will be the case) then we shall adopt a similar notation for the gradients to the one in \eqref{eq:decv}, so that $\nabla f = (\nabla_\sx f,\nabla_\sy f)$ and, for $f\in S^2_\loc(\X)$, $\nabla(f\circ\pi_\X)=(\nabla_\sx f,0)$. 

In this direction it will be useful to notice that what just said and Proposition \ref{divprod} allow to write
\begin{equation}
\label{eq:divdivx}
\div(h\nabla (g\circ\pi_\X))=\div_\sx(h\nabla_\sx g)\qquad\mm_\X\otimes\mm_\Y-a.e.
\end{equation}
for any $h\in \Lip_\bs(\X\times\Y)$ and $g\in D(\Delta_\X)$.

We now show that under appropriate integrability assumptions for both the function and the differential, belonging to $L^0(\Y;S^2_\loc(\X))$ can be checked via integration by parts. Notice that this requires also that Assumption \ref{ass} holds and that  $\X$ is infinitesimally Hilbertian (this latter point can be slightly weakened, but we won't push in this direction).
\begin{proposition}\label{prop:l2div}
Let $(\X,\sfd_\X,\mm_\X),(\Y,\sfd_\Y,\mm_\Y)$ be two metric measure spaces satisfying Assumption \ref{ass} and with $\X$ being infinitesimally Hilbertian. Also, let   $f\in L^2(\X\times\Y)$. Then the following are equivalent:
\begin{itemize}
\item[i)] $f\in L^2(\Y,W^{1,2}(\X))$,
\item[ii)] There is $A\in L^0(\Y;L^0(T^*\X))$ with $|A|\in L^2(\X\times \Y)$ such that
\begin{equation}
\label{eq:partX}
-\int f\div(h\nabla (g\circ\pi_\X))\,\d(\mm_\X\otimes\mm_\Y)=\int h\la A,\d_\sx g\ra \,\d(\mm_\X\otimes\mm_\Y)
\end{equation}
holds for any $h\in \Lip_\bs(\X\times\Y)$ and $g\in D(\Delta_\X)$. 
\end{itemize}
Moreover, if this is the case the choice $A=\d_\sx f$ is the only one for which $(ii)$ holds.
\end{proposition}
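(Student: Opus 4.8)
Assume $f\in L^2(\Y,W^{1,2}(\X))$ and set $A:=\d_\sx f$, which by Definition \ref{def:l0s2} satisfies $|A|\in L^2(\X\times\Y)$. By \eqref{eq:divdivx} the left-hand side of \eqref{eq:partX} equals $-\int f\,\div_\sx(h\nabla_\sx g)\,\d(\mm_\X\otimes\mm_\Y)$, so by Fubini it suffices to verify, for $\mm_\Y$-a.e.\ $y$, the one-variable identity
\[
-\int_\X f(\cdot,y)\,\div_\sx\big(h(\cdot,y)\nabla_\sx g\big)\,\d\mm_\X=\int_\X h(\cdot,y)\la\d_\sx f(\cdot,y),\d_\sx g\ra\,\d\mm_\X .
\]
This is just integration by parts on $\X$: for fixed $y$ the field $h(\cdot,y)\nabla_\sx g$ lies in $D(\div_\sx)$ (Leibniz rule, since $g\in D(\Delta_\X)$ and $h(\cdot,y)\in\Lip_\bs(\X)$) and has bounded support, while $f(\cdot,y)\in W^{1,2}(\X)$; a cutoff equal to $1$ on the support of the field removes the need for $f(\cdot,y)$ to be compactly supported. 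Integrating in $y$ gives \eqref{eq:partX}.

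\textbf{From (ii) to (i): reduction to one variable.} First I specialize \eqref{eq:partX} to $h(x,y)=\eta(x)\theta(y)$ with $\eta\in\Lip_\bs(\X)$, $\theta\in\Lip_\bs(\Y)$. As $\theta$ does not depend on $x$, \eqref{eq:divdivx} gives $\div(h\nabla(g\circ\pi_\X))=\theta\,\div_\sx(\eta\nabla_\sx g)$, so \eqref{eq:partX} reads
\[
\int_\Y \theta(y)\Big[-\int_\X f(\cdot,y)\,\div_\sx(\eta\nabla_\sx g)\,\d\mm_\X-\int_\X \eta\,\la A(y),\d_\sx g\ra\,\d\mm_\X\Big]\,\d\mm_\Y=0 .
\]
Both inner integrals are, as functions of $y$, in $L^2(\Y)$ (by Cauchy--Schwarz, using $f,|A|\in L^2$), so the arbitrariness of $\theta\in\Lip_\bs(\Y)$ forces the bracket to vanish $\mm_\Y$-a.e. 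Running this over a fixed countable family $\{\eta_j\}$ dense in $\Lip_\bs(\X)$ (including cutoffs $\eta_R$ with $\eta_R=1$ on $B_R$ and $|\nabla_\sx\eta_R|\le 1/R$) and a countable set $\{g^k\}\subset D(\Delta_\X)$ dense for the graph norm of $\Delta_\X$, I obtain a single $\mm_\Y$-null set outside of which
\[
-\int_\X f(\cdot,y)\,\div_\sx(\eta\nabla_\sx g)\,\d\mm_\X=\int_\X \eta\,\la A(y),\d_\sx g\ra\,\d\mm_\X
\tag{$\dagger$}
\]
holds for all $\eta\in\Lip_\bs(\X)$ and $g\in D(\Delta_\X)$ (by continuity in $\eta,g$); I also discard the null sets where $f(\cdot,y)\notin L^2(\X)$ or $A(y)\notin L^2(T^*\X)$.

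\textbf{From (ii) to (i): the heat-flow estimate and identification.} Fix a good $y$, write $u:=f(\cdot,y)$, $\alpha:=A(y)$, and let $\heatl^\X_s$ be the heat semigroup of $\X$ acting in the $x$-variable. Letting $\eta=\eta_R\to 1$ in $(\dagger)$ (the term with $\nabla_\sx\eta_R$ vanishes since $|\nabla_\sx\eta_R|\to 0$ uniformly, the others by dominated convergence) yields $\int_\X\la\alpha,\d_\sx g\ra\,\d\mm_\X=-\int_\X u\,\Delta_\sx g\,\d\mm_\X$ for every $g\in D(\Delta_\X)$. Taking $g=\heatl^\X_s u$ and using self-adjointness together with $\heatl^\X_s=\heatl^\X_{s/2}\heatl^\X_{s/2}$ to rewrite $-\int_\X u\,\Delta_\sx\heatl^\X_s u\,\d\mm_\X=\int_\X|\d_\sx\heatl^\X_{s/2}u|^2\,\d\mm_\X$, one gets
\[
\int_\X|\d_\sx\heatl^\X_{s/2}u|^2\,\d\mm_\X=\int_\X\la\alpha,\d_\sx\heatl^\X_s u\ra\,\d\mm_\X\le\|\alpha\|_{L^2}\,\|\d_\sx\heatl^\X_s u\|_{L^2}.
\]
Since the Cheeger energy is non-increasing along the flow, $\|\d_\sx\heatl^\X_s u\|_{L^2}\le\|\d_\sx\heatl^\X_{s/2}u\|_{L^2}$, and the inequality collapses to the uniform bound $\|\d_\sx\heatl^\X_{s/2}u\|_{L^2}\le\|\alpha\|_{L^2}$ for all $s>0$. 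As $s\downarrow0$ we have $\heatl^\X_s u\to u$ in $L^2(\X)$ with differentials bounded in $L^2(T^*\X)$, so the closure of $\d_\sx$ (Proposition \ref{prop:calpar}(i) in the one-variable form) gives $u\in W^{1,2}(\X)$; thus $f(\cdot,y)\in W^{1,2}(\X)$ for $\mm_\Y$-a.e.\ $y$. Finally, integrating by parts the now-legitimate $u\in W^{1,2}(\X)$ turns $(\dagger)$ into $\int_\X\eta\,\la\d_\sx f(\cdot,y)-A(y),\d_\sx g\ra\,\d\mm_\X=0$ for all $\eta\in\Lip_\bs(\X)$, $g\in D(\Delta_\X)$, whence $\la\d_\sx f(\cdot,y)-A(y),\d_\sx g\ra=0$ $\mm_\X$-a.e.\ for every $g$; as $\{\d_\sx g:g\in D(\Delta_\X)\}$ generates $L^2(T^*\X)$ as a module, this forces $A(y)=\d_\sx f(\cdot,y)$. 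Together with $|A|\in L^2$ and $A\in L^0(\Y,L^0(T^*\X))$ this places $f$ in $L^2(\Y,W^{1,2}(\X))$ and, at the same time, proves the uniqueness claim $A=\d_\sx f$.

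I expect the main obstacle to be the whole implication (ii)$\Rightarrow$(i): one must disentangle the coupled test functions to descend to a fixed-$y$ statement (carefully collapsing the null sets through a countable dense family and passing to the cutoff-free identity), and then exploit the self-improving structure of the heat-flow estimate, which is exactly what upgrades the raw integration-by-parts inequality into the uniform gradient control $\|\d_\sx\heatl^\X_s u\|_{L^2}\le\|A(y)\|_{L^2}$ that closes the argument.
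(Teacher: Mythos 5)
Your proof is correct, and your argument for (i)$\Rightarrow$(ii) is the same as the paper's (the Fubini computation based on \eqref{eq:divdivx} and the definition of $\div_\sx$). For (ii)$\Rightarrow$(i) you take a genuinely different route. You first localize in $y$: testing with $h=\eta\otimes\theta$, exhausting countable families of $\theta$, $\eta$, $g$, and sending the cutoffs $\eta_R\to1$, you arrive at the fixed-$y$ weak identity $-\int_\X u\,\Delta_\sx g\,\d\mm_\X=\int_\X\la\alpha,\d_\sx g\ra\,\d\mm_\X$ with $u=f(\cdot,y)$, $\alpha=A(y)$, and then run the heat-flow estimate ($g=\h_{\X,s}u$, self-adjointness, Cauchy--Schwarz and monotonicity of the energy along the flow) separately for each good $y$. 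The paper never disintegrates: it partitions $\Y$ into cells $A^n_i$ of small diameter, replaces $f$ by its cell averages $f^n_i$, tests the hypothesis against $h^n_{i,m}$ approximating $\nchi_{\X\times A^n_i}$ and $g=\h_{\X,t}(f^n_i)$, and closes the argument with the integrated lower-semicontinuity inequality \eqref{eq:limEX}, invoking Proposition \ref{prop:l0s2eq} at the end for measurability. The analytic engine (heat flow, Young/Cauchy--Schwarz, decrease of $\E_\X$ along the flow) is identical; what differs is the reduction to one variable. Your route yields the a.e.\ identification $A(y)=\d_\sx f(\cdot,y)$ --- hence the uniqueness claim and the strong measurability of $y\mapsto \d_\sx f(\cdot,y)$ --- as an immediate byproduct, whereas the paper's cell averaging avoids all bookkeeping of $y$-dependent null sets. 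One point deserves more care in your write-up: the claim that $(\dagger)$ extends ``by continuity'' to every $\eta\in\Lip_\bs(\X)$ is delicate, since approximating $\eta$ within a countable family does not give strong $L^2$ convergence of the differentials, and $\int f(\cdot,y)\la\d\eta_j,\d_\sx g\ra\,\d\mm_\X$ is only an $L^1$--$L^\infty$ pairing, so weak convergence of $\d\eta_j$ does not suffice either. This is harmless in the end because you never need $(\dagger)$ for arbitrary $\eta$: the cutoffs $\eta_R$ and, at the identification stage, any fixed countable family of Lipschitz bounded-support functions separating $L^1(\X)$ are enough, and both are countable, so the exceptional $\mm_\Y$-null set can be fixed once and for all.
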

\begin{proof}\ \\
\noindent{$(i)\Rightarrow(ii)$}  From \eqref{eq:divdivx} and the definition of $\div_\sx$  we obtain
\[
\begin{split}
-\int f\div(h\nabla (g\circ\pi_\X))\,\d(\mm_\X\otimes\mm_\Y)&=-\int \Big(\int f(\cdot,y)\div_\sx(h(\cdot,y)\nabla_\sx g)\,\d\mm_\X\Big)\d\mm_\Y(y)\\
&=\int\Big(\int h \la \d_\sx f,\d_\sx g\ra\,\d\mm_\X\Big)\d\mm_\Y,
\end{split}
\]
which proves $(ii)$ with $A:=\d_\sx f$.

\noindent{$(ii)\Rightarrow(i)$}   For every $n\in\N$, let $(A^n_i)$ be a Borel partition of $\Y$ made of at most countable sets, with $\mm_\Y(A^n_i)\in(0,\infty)$, ${\rm diam}(A^n_i)\leq \frac1n$ and so that $(A^{n+1}_i)$ is a refinement of $(A^n_i)$. Put $f^n_i:=\mm_\Y(A^n_i)^{-1}\int_{A^n_i}f(\cdot,y)\,\d\mm_\Y(y)\in L^2(\X)$ and $f^n(x,y):=\sum_i\nchi_{A^n_i}(y)f^n_i(x)\in L^2(\X\times\Y)$. It is clear that $f^n\to f$ in $L^2(\X\times\Y)$ and thus  the  lower semicontinuity of the Cheeger-Dirichlet energy $\E_\X$ on $\X$ easily gives
\begin{equation}
\label{eq:limEX}
\int \E_\X(f(\cdot,y))\,\d \mm_\Y(y)\leq \limi_{n\to\infty }\int \E_\X(f^n(\cdot,y))\,\d \mm_\Y(y).
\end{equation}
For every $n,i$ let $(h^{n}_{i,m})\subset \Lip_\bs(\X\times\Y)$ be a sequence of functions 1-Lipschitz in the $x$ variable, with values in $[0,1]$ and such that $(h^n_{i,m}),(|\d_\sx h^n_{i,m}|)$ converge $\mm_\X\otimes\mm_\Y$-a.e.\ to $\nchi_{\X\times A^n_i}$ and $0$, respectively, as $m\to\infty$. Also, for $t>0$ let us put $g^n_{i,t}:=\h_{\X,t}(f^n_i)\in D(\Delta_\X)$. Then an application of the dominate convergence theorem shows that for every $n,i,t$ we have
\[
\begin{split}
-\int f\div_\sx(h^n_{i,m}\nabla_\sx(g^n_{i,t}))\,\d(\mm_\X\otimes\mm_\Y)&\quad\to\quad -\int_{\X\times A^n_i} f\Delta_\X g^n_{i,t}\,\d(\mm_\X\otimes\mm_\Y)\\
\int h^n_{i,m}\la A,\d_\sx g^n_{i,t}\ra \,\d(\mm_\X\otimes\mm_\Y)&\quad\to\quad \int \la A,\nchi_{\X\times A^n_i}\d_\sx g^n_i\ra \,\d(\mm_\X\otimes\mm_\Y)
\end{split}
\]
as $m\to\infty$, so that taking into account our assumption and the identity \eqref{eq:divdivx} we obtain
\begin{equation}
\label{eq:parti}
-\int_{\X\times A^n_i} f\Delta_\X g^n_{i,t}\,\d(\mm_\X\otimes\mm_\Y)=\int \la A,\nchi_{\X\times A^n_i}\d_\sx g^n_i\ra \,\d(\mm_\X\otimes\mm_\Y).
\end{equation}
Now observe that from the closure of the differential it is easy to justify the following computation:
\[
\begin{split}
-\int_{\X\times A^n_i} f\Delta_\X g^n_{i,t}\,\d(\mm_\X\otimes\mm_\Y)&=-\int_{A^n_i}\int_\X f(\cdot,y) \Delta_{\X}\h_{\X,t}(f^n_i)\,\d\mm_\X\,\d\mm_\Y(y)\\
&=-\int_{A^n_i}\int_\X \h_{\X,t/2}(f(\cdot,y)) \Delta_{\X}\h_{\X,t/2}(f^n_i)\,\d\mm_\X\,\d\mm_\Y(y)\\
&=\int_\X\int_{A^n_i}\la\d_\sx \h_{\X,t/2}(f(\cdot,y)),\d_\sx\h_{\X,t/2}(f^n_i)\ra\,\d\mm_\Y(y)\,\d\mm_\X\\
&=\mm_\Y(A^n_i)\int_\X|\d_\sx\h_{\X,t/2}(f^n_i)|^2\,\d\mm_\X\\
&=2\mm_\Y(A^n_i)\E_\X\big(\h_{\X,t/2}(f^n_i)\big).
\end{split}
\]
On the other hand by Young's inequality and the fact that $\E_\X$ is decreasing along the heat flow we have
\[
\begin{split}
\int \la A,\nchi_{\X\times A^n_i}\d_\sx g^n_i\ra \,\d(\mm_\X\otimes\mm_\Y)&\leq \frac12\int_{\X\times A^n_i}|A|^2\,\d(\mm_\X\otimes\mm_\Y)+\mm_\Y(A^n_i)\E_\X(g^n_i)\\
&\leq \frac12\int_{\X\times A^n_i}|A|^2\,\d(\mm_\X\otimes\mm_\Y)+\mm_\Y(A^n_i)\E_\X\big(\h_{\X,t/2}(f^n_i)\big).
\end{split}
\]
Coupling these last two (in)equalities with \eqref{eq:parti} we deduce that
\[
\mm_\Y(A^n_i)\E_\X(f^n_i)=\lim_{t\downarrow0}\mm_\Y(A^n_i)\E_\X\big(\h_{\X,t/2}(f^n_i)\big)\leq \frac12\int_{\X\times A^n_i}|A|^2\,\d(\mm_\X\otimes\mm_\Y).
\]
Summing over $i$ and recalling \eqref{eq:limEX} we conclude that 
\[
\int_\Y\E_\X(f(\cdot,y))\,\d \mm_\Y(y)\leq \frac12\int_{\X\times\Y}|A|^2\,\d(\mm_\X\otimes\mm_\Y),
\] 
showing that $f\in L^2(\Y,W^{1,2}(\X))$ (recall Proposition \ref{prop:l0s2eq}),  as desired.

To conclude that $A=\d_\sx f$ is the only choice for which \eqref{eq:partX} holds, it is enough to show that the vector space generated by elements of the form $h\d_\sx g$ with $h,g$ as in the statement is dense in $L^2(\Y;L^2(T^*\X))$. To see this, notice that arguing as we just did the closure of such space contains all the elements of the form $\nchi_{B\times C}\d_\sx g$ for $B,C$ Borel subsets of $\X,\Y$ respectively and $g\in W^{1,2}(\X)$. Then the conclusion follows from the fact that $L^2(T^*\X)$ is generated by differentials of $W^{1,2}$-functions and the fact that piecewise constant maps from $\Y$ to $L^2(T^*\X)$ are dense in $L^2(\Y;L^2(T^*\X))$.
\end{proof}

\subsection{Sobolev maps with values in a Hilbert module}\label{se:dermod}
In this section we study the differential of a function on $\X$  with values in a Hilbert module $\Hil$ over $\Y$. The prototype case we want to cover is that of $\Hil:=L^2(T^*\Y)$ as we aim to give a meaning, in the non-smooth context, to the Sobolev regularity of $x\mapsto \d_\sy f$ and to its differential $\d_\sx \d_\sy f$.

For the purpose of the present discussion, a Hilbert module on $\Y$ is a $L^2$-normed $L^\infty$-module on $\Y$. Given such a module $\Hil$ and its dimensional decomposition $(E_i)_{i\in \N\cup\{\infty\}}$ (see \cite{Gigli14}), a local Hilbert base $(e_i)_{i\in\N}$ is a collection of elements of the $L^0$-completion of $\Hil$ such that for every $n\in\N$ we have $\la e_i,e_j\ra=\delta_{ij}$ on $E_{n}$ for every $i,j<n$.

\begin{definition}[The space $S^2_\loc(\X;\Hil_\loc)$]\label{def:s2hl}
Let $(\X,\sfd_\X,\mm_\X),(\Y,\sfd_\Y,\mm_\Y)$ be two metric measure spaces, with $\X$ infinitesimally Hilbertian. Moreover, let $\Hil$ be a separable Hilbert module over $\Y$, and $(e_i)_{i \in \N}$ be a local Hilbert base  of $\Hil$ with $|e_i|\in L^\infty(\Y)$ for any $i \in \N$.

Then the space $S^2_\loc(\X;\Hil_\loc)\subset L^0(\X;\Hil)$ is the space of functions $f$ such that:
\begin{itemize}
\item[i)] the map $y\mapsto \la f(\cdot),e_i\ra(y)$ is in $L^0(\Y;S^2_\loc(\X))$, for every $i \in \N$,
\item[ii)] the function $|\d_\sx f|\colon\X\times\Y\to[0,\infty]$ defined by
\begin{equation}
\label{eq:defdsx}
|\d_\sx f|^2:=\sum_i|\d_\sx (\Phi_i\circ f)|^2
\end{equation}
belongs to $L^0(\Y;L^2_\loc(\X))$ (i.e.\ is such that $|\d_\sx f|(\cdot,y)\in L^2_\loc(\X)$ for $\mm_\Y$-a.e.\ $y\in\Y$),  where $\Phi_i:\Hil\to L^0(\Y)$ is given by $\Phi_i(v):=\la v,e_i\ra$.
\end{itemize}
By $W^{1,2}(\X;\Hil)\subset S^2_\loc(\X;\Hil_\loc)$ we denote the space of functions $f\in S^2_\loc(\X;\Hil_\loc)$ with $|f|,|\d_\sx f|\in L^2(\X\times\Y)$. 
\end{definition}

\begin{remark}\label{rem:relconprima}{\rm
Picking $\Hil:=L^2(\Y)$ and comparing the above with Definition \ref{def:l0s2} we see that  $S^2_\loc(\X;L^2_\loc(\Y))=L^0(\Y,S^2_\loc(\X))$.

Then, taking into account that the infinitesimal Hilbertianity of $\X$ yields the separability of $W^{1,2}(\X)$ and Proposition \ref{prop:l0s2eq} it is clear that $W^{1,2}(\X;L^2(\Y))\sim L^2(\Y;W^{1,2}(\X))$ as both spaces are made of those functions $f\in L^2(\X\times \Y)$ such that $f(\cdot,y)\in W^{1,2}(\X)$ for $\mm_\Y$-a.e.\ $y$ and so that $|\d_\sx f|\in L^2(\X\times \Y)$.
}\fr\end{remark}

\begin{remark}{\rm
A priori, Definition \ref{def:s2hl} depends on the particular fixed local Hilbert base $(e_i)_{i \in \N}$ but we shall see in Proposition \ref{prop:modpart} below that this is not the case. Such independence is also based on the assumption that $\X$ is infinitesimally Hilbertian, which is needed in order to ensure that $\abs{\d_\sx f}$ is well defined (i.e.\ that it does not depend on the particular base chosen). In fact, the proof of Proposition \ref{prop:modpart} below is based on the fact that we can make use of the parallelogram identity in the chain of equalities \eqref{eq:modpart}, and this is actually guaranteed by the fact that $\X$ is infinitesimally Hilbertian. 
}\fr\end{remark}

\begin{proposition}\label{prop:modpart}
Let $(\X,\sfd_\X,\mm_\X),(\Y,\sfd_\Y,\mm_\Y)$ be two metric measure spaces, with $\X$ infinitesimally Hilbertian and let $\Hil$ be a separable Hilbert module over $\Y$.

Then the space $ S^2_\loc(\X;\Hil_\loc)$ does not depend on the particular local base $(e_i)$ of $\Hil$ chosen and for $f\in  S^2_\loc(\X;\Hil_\loc)$  the function $|\d_\sx f|$ defined by \eqref{eq:defdsx} also does not depend on the base.

Moreover the following holds:
\begin{itemize}
\item[i)] \emph{Locality.} For any $f\in S^2_\loc(\X;\Hil_\loc)$ we have
\[
|\d_\sx f|=0\qquad\mm_\X\otimes\mm_\Y-a.e.\ on\ \{f=0\}.
\]
\item[ii)]\emph{Subadditivity.} For any $f,g\in S^2_\loc(\X;\Hil_\loc)$ and $\alpha,\beta\in\R$ we have
\[
|\d_\sx(\alpha f+\beta g)|\leq |\alpha||\d_\sx f|+|\beta||\d_\sx g|\qquad\mm_\X\otimes\mm_\Y-a.e..
\]
\item[iii)]\emph{Lower semicontinuity.} Let $(f_n)\subset \W(\X;\Hil)$ be weakly $L^2(\X;\Hil)$-converging to $f\in L^2(\X;\Hil)$ and such that $|\D_\sx f_n|\weakto G$ in the weak topology of $L^2(\X\times\Y)$ for some $G$. Then $f\in  \W({\X;\Hil})$ and $|\D_\sx f|\leq G$ $\mm_\X\otimes\mm_\Y$-a.e.. 
\end{itemize}
\end{proposition}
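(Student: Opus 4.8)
The plan is to treat the base-independence as the heart of the matter and to reduce the three listed properties to the corresponding scalar statements for the operator $\d_\sx$ on $L^0(\Y,S^2_\loc(\X))$ recorded in Proposition \ref{prop:calpar}, combined with the pointwise scalar product on $L^0(T^*\X)$ that is available precisely because $\X$ is infinitesimally Hilbertian. Throughout I write $a_i:=\Phi_i\circ f=\la f,e_i\ra\in L^0(\Y,S^2_\loc(\X))$ for the coordinates of $f$ in the base $(e_i)$, so that $|\d_\sx f|^2=\sum_i|\d_\sx a_i|^2$ with the pointwise norm taken fibrewise in $L^0(T^*\X)$.

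For base-independence I would argue stratum by stratum along the dimensional decomposition $(E_n)_{n\in\N\cup\{\infty\}}$ of $\Hil$. Fix a second local base $(\tilde e_j)$ with coordinates $\tilde a_j:=\la f,\tilde e_j\ra$. On $E_n$ with $n<\infty$ both $(e_i)_{i<n}$ and $(\tilde e_j)_{j<n}$ are $\mm_\Y$-a.e.\ orthonormal bases of the fibre, so setting $O_{ij}:=\la e_i,\tilde e_j\ra\in L^0(\Y)$ we have $\tilde a_j=\sum_{i<n}O_{ij}\,a_i$ and $\sum_{j<n}O_{ij}O_{kj}=\delta_{ik}$ for $\mm_\Y$-a.e.\ $y$. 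Since each $O_{ij}$ depends only on $y$, it is a scalar for the differentiation in $x$, hence $\d_\sx\tilde a_j=\sum_{i<n}O_{ij}\,\d_\sx a_i$. Expanding the squared pointwise norm by means of the scalar product on $L^0(T^*\X)$ --- this is the step that genuinely uses the parallelogram identity, i.e.\ the infinitesimal Hilbertianity of $\X$ --- gives
\[
\sum_{j<n}|\d_\sx\tilde a_j|^2=\sum_{j<n}\sum_{i,k<n}O_{ij}O_{kj}\,\la\d_\sx a_i,\d_\sx a_k\ra=\sum_{i,k<n}\delta_{ik}\,\la\d_\sx a_i,\d_\sx a_k\ra=\sum_{i<n}|\d_\sx a_i|^2,
\]
which is the desired equality on $E_n$. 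The main obstacle is the stratum $E_\infty$: there the change of base is an infinite orthogonal matrix and the manipulation above requires justifying the interchange of the two infinite summations; I would handle this by truncating to the first $N$ vectors, using the previous identity together with a Bessel-type monotonicity to pass to the limit $N\to\infty$, and exploiting the symmetry of the two expressions in the roles of the two bases. Once equality of $|\d_\sx f|$ holds on every stratum, membership in $S^2_\loc(\X;\Hil_\loc)$ and the value of $|\d_\sx f|$ are seen to be independent of the base.

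The locality (i) and subadditivity (ii) are then immediate. For (i), on $\{f=0\}$ we have $a_i=0$ for every $i$, so the scalar locality \eqref{eq:locdx} of Proposition \ref{prop:calpar} forces $\d_\sx a_i=0$ $\mm_\X\otimes\mm_\Y$-a.e.\ there, whence $|\d_\sx f|^2=\sum_i|\d_\sx a_i|^2=0$. For (ii), linearity of the pairing gives $\d_\sx\la\alpha f+\beta g,e_i\ra=\alpha\,\d_\sx a_i+\beta\,\d_\sx b_i$ with $b_i:=\la g,e_i\ra$, and the claim is exactly homogeneity together with the triangle inequality for the $\ell^2$-norm $(\xi_i)_i\mapsto(\sum_i|\xi_i|^2)^{1/2}$ built on the pointwise Hilbert structure of $L^0(T^*\X)$, applied fibrewise.

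Finally, for the lower semicontinuity (iii) I would first note that pairing with the fixed $e_i\in L^\infty$ is a bounded $L^0(\X\times\Y)$-linear, hence weakly continuous, map $L^2(\X;\Hil)\to L^2(\X\times\Y)$, so $a_i^n:=\la f_n,e_i\ra\weakto a_i:=\la f,e_i\ra$ weakly in $L^2(\X\times\Y)$. From $|\d_\sx a_i^n|\le|\D_\sx f_n|$ (immediate from \eqref{eq:defdsx}) and the $L^2$-boundedness of $|\D_\sx f_n|$, the gradients $\d_\sx a_i^n$ are bounded in $L^2(\X\times\Y;T^*\X)$; a diagonal extraction together with the scalar closure of Proposition \ref{prop:calpar} yields a subsequence along which $\d_\sx a_i^n\weakto\d_\sx a_i$ for every $i$, in particular $a_i\in L^0(\Y,S^2_\loc(\X))$. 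To upgrade this to the pointwise bound I would fix $N$, set $v^n:=(\d_\sx a_i^n)_{i<N}$ and $v:=(\d_\sx a_i)_{i<N}$, and test against an arbitrary measurable section $w$ with $|w|\le 1$: since $\int\phi\,\la v^n,w\ra\le\int\phi\,|v^n|\le\int\phi\,|\D_\sx f_n|$ for every $\phi\ge 0$, passing to the limit and then choosing $w=v/|v|$ on $\{v\neq 0\}$ gives $\int\phi\,(\sum_{i<N}|\d_\sx a_i|^2)^{1/2}\le\int\phi\,G$ for all $\phi\ge 0$, hence $(\sum_{i<N}|\d_\sx a_i|^2)^{1/2}\le G$ $\mm_\X\otimes\mm_\Y$-a.e. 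Letting $N\to\infty$ yields $|\d_\sx f|\le G\in L^2$, so $f\in\W(\X;\Hil)$ and $|\D_\sx f|\le G$.
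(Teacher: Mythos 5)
Your argument is correct and is essentially the paper's own proof: truncate $f$ in the first base, use $\sum_j\la e_i,\tilde e_j\ra\la e_k,\tilde e_j\ra=\la e_i,e_k\ra=\delta_{ik}$ together with the pointwise scalar product on $L^0(T^*\X)$ (this is where infinitesimal Hilbertianity of $\X$ enters) to get $\sum_j|\d_\sx\la f_N,\tilde e_j\ra|^2=\sum_{i<N}|\d_\sx(\Phi_i\circ f)|^2$ exactly, pass to the limit in $N$, and conclude by swapping the roles of the two bases; your stratification along the dimensional decomposition is already implicit in the paper's convention for local Hilbert bases, so that is a cosmetic difference, and points (i), (ii) are handled as in the paper. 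The only step you leave genuinely unargued is the one the paper isolates as Lemma \ref{le:normwl}: on the infinite-dimensional stratum, knowing that each $|\d_\sx\la f,\tilde e_j\ra|$ is dominated by limits of $|\d_\sx\la f_N,\tilde e_j\ra|$ and that the $\ell^2$-sums over $j$ are uniformly bounded by $\sum_i|\d_\sx(\Phi_i\circ f)|^2$, one must still deduce the same bound for $\sum_j|\d_\sx\la f,\tilde e_j\ra|^2$ --- the phrase ``Bessel-type monotonicity'' does not by itself supply this interchange of limit and $\ell^2$-sum. Fortunately the duality argument you wrote out for point (iii), testing the finite vector $(\d_\sx\la f,\tilde e_j\ra)_{j<N}$ against measurable sections of pointwise norm at most $1$ and then letting $N\to\infty$, is precisely a proof of that lemma, so your proposal closes once you invoke it here as well; alternatively, observe that the partial sums $\sum_{i<N}\la e_i,\tilde e_j\ra\,\d_\sx(\Phi_i\circ f)$ are Cauchy in the pointwise norm (Cauchy--Schwarz with $\sum_i|\la e_i,\tilde e_j\ra|^2=|\tilde e_j|^2<\infty$ and $\sum_i|\d_\sx(\Phi_i\circ f)|^2<\infty$ a.e.), so the convergence of $\d_\sx\la f_N,\tilde e_j\ra$ is strong rather than weak and Fatou's lemma for series finishes the estimate.
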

\begin{proof} Let $\{e_i\}$ be a local Hilbert base of $\Hil$ such that $f\in S^2_\loc(\X;\Hil_\loc)$ in the sense of the  Definition \ref{def:s2hl} above, and let $\{\tilde e_j\}$ be another local Hilbert base of $\Hil$. Put $f_n:=\sum_{i\leq n}e_i\Phi^i\circ f$ so that $f_n\to f$ $\mm_\X$-a.e.\ as $n\to\infty$ and, as direct consequence of the definitions, $f_n\in S^2_\loc(\X;\Hil_\loc)$. 

We have $\langle f_n,\tilde e_j\rangle=\sum_{i\leq n}\langle\tilde e_j,e_i\rangle\Phi^i\circ f_n$, which is trivially an element in $L^0(\Y;S^2_\loc(\X))$ (notice that $\Phi^i\circ f_n\in S^2_\loc(\X)$ by assumption and $\langle\tilde e_j,e_i\rangle\in L^0(\Y)$, hence both functions can be seen as elements of $L^0(\Y,S^2_\loc(\X))$ and then apply the Leibniz rule in Proposition \ref{prop:calpar}), and
\begin{equation}\label{eq:modpart}
\begin{split}
\sum_j|\d_\sx\langle f_n,\tilde e_j\rangle|^2&=\sum_j\sum_{i,k\leq n}\langle\tilde e_j,e_i\rangle\langle\tilde e_j,e_k\rangle\, \la \d_\sx(\Phi^i\circ f_n), \d_\sx(\Phi^k\circ f_n)\ra\\
&=\sum_{i,k\leq n} \la \d_\sx(\Phi^i\circ f_n), \d_\sx(\Phi^k\circ f_n)\ra\underbrace{\sum_j\langle\tilde e_j,e_i\rangle\langle\tilde e_j,e_k\rangle}_{=\la e_i,e_k\ra=\delta_{ik}}=\sum_{i\leq n}|\d_\sx(\Phi^i\circ f_n)|^2,
\end{split}
\end{equation}
where the order of summation can be swapped because $i,k$ run over a finite set. In particular we have
\begin{equation}
\label{eq:boundnn}
\sum_j|\d_\sx\langle f_n,\tilde e_j\rangle|^2\leq \sum_{i}|\d_\sx(\Phi^i\circ f)|^2\quad\mm_\X\otimes\mm_\Y-a.e.\ \qquad\forall n\in\N.
\end{equation}
It follows that for every bounded Borel subset $B\subset\X$ we have that for $\mm_\Y$-a.e.\ $y\in\Y$ the sequence of functions $(\nchi_B|\d_\sx\langle f_n,\tilde e_j\rangle|(\cdot,y))$ is bounded in $L^2(\X)$. Also, for $\mm_\Y$-a.e.\ $y\in\Y$ the functions $(\langle f_n(\cdot),\tilde e_j\rangle(y))$ converge to $\langle f(\cdot),\tilde e_j\rangle(y)$ $\mm_\X$-a.e., hence what just said and the lower semicontinuity of minimal weak upper gradients gives that $\langle f(\cdot),\tilde e_j\rangle(y)\in S^2_\loc(\X)$ with $|\d_\sx \langle f,\tilde e_j\rangle|(\cdot,y)\restr B\leq G_j(\cdot, y)$ $\mm_\X$-a.e., where $G_j$ is any weak $L^2$-limit of some subsequence of  $(\nchi_B|\d_\sx\langle f_n,\tilde e_j\rangle|(\cdot,y))$. Then from  Lemma \ref{le:normwl} below, \eqref{eq:boundnn} and the arbitrariness of $B$ we deduce that
\[
\sum_j|\d_\sx \langle f,\tilde e_j\rangle|^2\leq\sum_{i}|\d_\sx(\Phi^i\circ f)|^2\quad\mm_\X\otimes\mm_\Y-a.e..
\]
Swapping the roles of the bases $(e_i)$ and $(\tilde e_j)$ we conclude.

Finally, properties $(i),(ii),(iii)$ are direct consequences of the definitions and the analogous properties for $L^2(\Y)$-valued functions (for $(iii)$ we also use Lemma \ref{le:normwl} below).
\end{proof}

\black
\begin{lemma}\label{le:normwl}
Let $(\X,\sfd,\mm)$ be a metric measure space and for every $i\in\N$ let $(g^i_n)\subset L^2(\X)$ be a  sequence of non-negative functions such that $\sqrt{\sum_i|g^i_n|^2}\weakto G$ in $L^2(\X)$ for some $G\geq 0$ as $n\to\infty$. Also, let $g^i\in L^2(\X)$ be non-negative and such that $g^i\leq G^i$ $\mm$-a.e.\ for any weak $L^2$-limit $G^i$ of $(g^i_n)$. Then
\[
\sqrt{\sum_n|g^i|^2}\leq G\qquad\mm-a.e..
\]
\end{lemma}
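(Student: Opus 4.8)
The plan is to reduce the claim to finitely many indices and then to settle it by an optimal-coefficient duality argument. First I would observe that since $\sqrt{\sum_i|g^i_n|^2}\weakto G$, the sequence $\big(\sqrt{\sum_i|g^i_n|^2}\big)_n$ is bounded in $L^2(\X)$, and hence so is each $(g^i_n)_n$ (because $|g^i_n|\leq\sqrt{\sum_i|g^i_n|^2}$ pointwise). By weak sequential compactness in the Hilbert space $L^2(\X)$ together with a diagonal extraction, I would pass to a single subsequence $(n_k)$ along which $g^i_{n_k}\weakto G^i$ for every $i\in\N$, for suitable non-negative $G^i\in L^2(\X)$ (non-negativity being preserved since $\{f\geq 0\}$ is convex and $L^2$-closed, hence weakly closed). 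Along this same subsequence one still has $\sqrt{\sum_i|g^i_{n_k}|^2}\weakto G$. Since each $G^i$ is then a weak limit of $(g^i_n)_n$, the hypothesis yields $g^i\leq G^i$ $\mm$-a.e., so that it suffices to prove $\sqrt{\sum_i|G^i|^2}\leq G$ $\mm$-a.e.

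Next, I would fix $N\in\N$ and arbitrary Borel coefficients $a^i$, $i\leq N$, with $a^i\in L^\infty(\X)$ and $\sum_{i\leq N}|a^i|^2\leq 1$ $\mm$-a.e. By the pointwise Cauchy--Schwarz inequality, $\sum_{i\leq N}a^i g^i_{n_k}\leq\sqrt{\sum_i|g^i_{n_k}|^2}$ $\mm$-a.e. Multiplying by an arbitrary non-negative $\varphi\in L^2(\X)$ and integrating, I would let $k\to\infty$: the right-hand side converges to $\int G\varphi\,\d\mm$ by the weak convergence of the norms, while the left-hand side is a finite sum of terms $\int a^i\varphi\,g^i_{n_k}\,\d\mm\to\int a^i\varphi\,G^i\,\d\mm$ (here $a^i\varphi\in L^2(\X)$ precisely because $a^i\in L^\infty$ and $\varphi\in L^2$, which is what lets me test the weak convergence $g^i_{n_k}\weakto G^i$). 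As $\varphi\geq 0$ is arbitrary, this gives $\sum_{i\leq N}a^i G^i\leq G$ $\mm$-a.e.

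Finally I would make the optimal measurable choice $a^i:=G^i\big(\sum_{j\leq N}|G^j|^2\big)^{-1/2}$ where the denominator is positive and $a^i:=0$ elsewhere; this is admissible since $|a^i|\leq 1$ and $\sum_{i\leq N}|a^i|^2\leq 1$, and it produces $\sum_{i\leq N}a^iG^i=\sqrt{\sum_{i\leq N}|G^i|^2}$. Therefore $\sqrt{\sum_{i\leq N}|G^i|^2}\leq G$ $\mm$-a.e., and combining with $0\leq g^i\leq G^i$ we get $\sqrt{\sum_{i\leq N}|g^i|^2}\leq G$ $\mm$-a.e. Letting $N\to\infty$, the partial sums $\sum_{i\leq N}|g^i|^2$ increase $\mm$-a.e.\ to $\sum_i|g^i|^2$, whence $\sqrt{\sum_i|g^i|^2}\leq G$ $\mm$-a.e., as claimed.

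As for the main obstacle: the delicate point is the quantifier structure in the duality inequality. The bound $\sum_{i\leq N}a^i G^i\leq G$ holds a.e.\ for each fixed coefficient field, but with an exceptional null set that depends on $(a^i)$; I sidestep any countable-dense-coefficients/separability argument by noting that the derivation only used $a^i\in L^\infty$ with $\sum_{i\leq N}|a^i|^2\leq 1$, so the single optimal measurable field may be inserted directly. Equally essential is that the diagonal extraction supplies all the limits $G^i$ along one common subsequence that simultaneously retains the weak limit $G$ of the norms; without this coherence the Cauchy--Schwarz bound could not be passed to the limit consistently across the indices $i$.
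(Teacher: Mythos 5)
Your proof is correct and follows essentially the same route as the paper's: a diagonal extraction of weak limits $G^i$ along one common subsequence, linearization of the $\ell^2$-norm by dual coefficients, passage of the weak convergences through the resulting linear functionals, and recovery of the norm by optimizing over the coefficients. The only difference is in how the supremum over coefficients is handled -- the paper fixes a countable dense set of constant $\ell^2$-sequences $(a_i)$ so that the union of the exceptional null sets remains null, whereas you truncate to finitely many indices and insert the single optimal measurable coefficient field before letting $N\to\infty$ by monotone convergence; both devices resolve the same quantifier issue and are equally valid.
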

\begin{proof} With a diagonalization argument, up to pass to a subsequence we can assume that $g^i_n\weakto G^i$ for some $(G^i)\subset L^2(\X)$. Now let $A\subset \ell_2$ be countable and dense in the set of non-negative sequences of $\ell_2$-norm $\leq 1$ and notice that
\begin{equation}
\label{eq:A}
\sqrt{\sum_i|h_i|^2}=\sup_{(a_i)\in A}\sum_ia_ih_i\quad\text{for any sequence of non-negative numbers $h_i$}.
\end{equation}
For any $(a_i)\in A$, $\mm$-a.e.\ we have
\[
\begin{split}
\sum_ia_ig^i\leq \sum_ia_iG^i=\lim_{N\to\infty}\sum_{i=1}^Na_iG^i=\lim_{N\to\infty}\lw_{n\to\infty} \sum_{i=1}^Na_ig^i_n\leq\lw_{n\to\infty}\sum_{i}a_ig^i_n.
\end{split}
\]
Now observe that since for every $n\in\N$ we have that $\mm$-a.e.\ it holds $\sum_{i}a_ig^i_n\leq \sqrt{\sum_i|g^i_n|^2}$, the same relation is in place for the respective weak $L^2$-limits, thus from the above we get $\sum_ia_ig^i\leq G$ $\mm$-a.e.. Then the arbitrariness of $(a_i)\in A$ and  \eqref{eq:A} give the conclusion.
\end{proof}
To the Sobolev space $S^2_\loc(\X;\Hil_\loc)$ we can canonically associate a differentiation operator:
\begin{theorem}[Module-valued partial derivatives]
Let $(\X,\sfd_\X,\mm_\X),(\Y,\sfd_\Y,\mm_\Y)$ be two metric measure spaces, with $\X$ infinitesimally Hilbertian and let $\Hil$ be a separable Hilbert module over $\Y$. 

Then there exists a unique couple $(L^0(T^*\X;\Hil),\d_\sx)$ where $L^0(T^*\X;\Hil)$ is a $L^0(\X\times\Y)$-normed module, $\d_\sx:  S^2_\loc(\X;\Hil_\loc)\to L^0(T^*\X;\Hil^0)$ is linear and satisfies
\begin{itemize}
\item[i)] for any $f\in  S^2_\loc(\X;\Hil_\loc)$ the pointwise norm of $\d_\sx f$ coincides with $|\d_\sx f|$  $\mm_\X\otimes\mm_\Y$-a.e.,
\item[ii)] $L^0(\X\times\Y)$-linear combinations of elements of the form $\d_\sx f$ for $f\in   S^2_\loc(\X;\Hil_\loc)$ are dense in $L^0(T^*\X;\Hil)$.
\end{itemize}
Uniqueness here is intended up to unique isomorphism, i.e.\ if $(\tilde{ \mathscr M},\tilde \d_\sx)$ is another such couple, then there is a unique isomorphism $\Phi:L^0(T^*\X;\Hil)\to \tilde{ \mathscr M}$ such that $\Phi\circ\d_\sx=\tilde \d_\sx$.

An explicit example of couple as above is given by  the module 
\begin{equation}
\label{eq:defl0h}
L^0(T^*\X;\Hil):=L^0(\Y;L^0(T^*\X))\otimes L^0(\X;\Hil)
\end{equation}
and the operator
\begin{equation}
\label{eq:defdx}
 S^2_\loc(\X;\Hil_\loc) \ni f\quad \mapsto\quad \d_\sx f:=\sum_i\d_\sx (\Phi_i\circ f)\otimes \hat e_i,
\end{equation}
where $(e_i)\subset\Hil$ is a local Hilbert base of $\Hil$, $\hat e_i\in L^0(\X;\Hil)$ is the function constantly equal to $e_i$ and  $\Phi_i:\Hil\to L^0(\Y)$ is given by $\Phi_i(\cdot):=\la\cdot, e_i\ra$, for any $i\in\N$.
\end{theorem}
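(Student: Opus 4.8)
The plan is to follow the by-now standard template for establishing existence and uniqueness of a differentiation operator via its universal property, exactly as in Theorem/Definition \ref{thm:defd}: I would first prove uniqueness up to unique isomorphism from properties (i) and (ii) alone, and then verify that the explicit pair built from the tensor product module and the formula \eqref{eq:defdx} satisfies all the requirements.

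For uniqueness, suppose $(\tilde{\mathscr M},\tilde\d_\sx)$ is a second pair with the stated properties. The key point is that the pointwise norm of any \emph{simple} $L^0$-linear combination $\omega=\sum_{i=1}^n\nchi_{A_i}\d_\sx f_i$ (with $A_i\subset\X\times\Y$ Borel and $f_i\in S^2_\loc(\X;\Hil_\loc)$) is \emph{intrinsic}, i.e.\ depends only on the $f_i$ and not on the module. Indeed, passing to the atoms $B_S:=\bigcap_{i\in S}A_i\cap\bigcap_{i\notin S}(\X\times\Y\setminus A_i)$ of the finite algebra generated by the $A_i$, the linearity of $\d_\sx$ gives $\omega=\d_\sx\!\big(\sum_{i\in S}f_i\big)$ on $B_S$, whence $|\omega|=\big|\d_\sx(\sum_{i\in S}f_i)\big|$ there by property (i); since the $B_S$ partition $\X\times\Y$, the function $|\omega|$ is determined by the $f_i$. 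I would then define $\Phi$ on such simple combinations by $\Phi(\omega):=\sum_i\nchi_{A_i}\tilde\d_\sx f_i$: the previous observation shows this is well posed and preserves the pointwise norm, and property (ii) together with completeness lets me extend $\Phi$ to an $L^0(\X\times\Y)$-linear, norm-preserving, surjective map with $\Phi\circ\d_\sx=\tilde\d_\sx$. Uniqueness of $\Phi$ is then forced by its prescribed values on a generating set.

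For existence I would take $L^0(T^*\X;\Hil)$ as in \eqref{eq:defl0h}; since $\X$ is infinitesimally Hilbertian and $\Hil$ is a Hilbert module, both factors are Hilbert modules on $\X\times\Y$ and hence so is their tensor product (see \cite{Gigli14}), which in particular is an $L^0(\X\times\Y)$-normed module. The first task is to check that the series in \eqref{eq:defdx} converges in $L^0$: using the tensor pointwise-inner-product rule $\la v_1\otimes w_1,v_2\otimes w_2\ra=\la v_1,v_2\ra\la w_1,w_2\ra$ together with $\la\hat e_i,\hat e_j\ra=\la e_i,e_j\ra\circ\pi_\Y=\delta_{ij}$ on the dimensional set $E_n$ for $i,j<n$, the partial sums satisfy $\big|\sum_{i\le N}\d_\sx(\Phi_i\circ f)\otimes\hat e_i\big|^2=\sum_{i\le N}|\d_\sx(\Phi_i\circ f)|^2$ on each $E_n$; since by Definition \ref{def:s2hl} the right-hand side converges to $|\d_\sx f|^2\in L^0(\Y;L^2_\loc(\X))$, the partial sums are Cauchy in $L^0(T^*\X;\Hil)$ and the limit $\d_\sx f$ is well defined with pointwise norm exactly $|\d_\sx f|$, giving (i). Linearity of $\d_\sx$ is immediate from the $L^0(\Y)$-linearity of each $\Phi_i=\la\cdot,e_i\ra$. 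For the generation property (ii) I would realize the generators $\d_\sx g\otimes\hat e_i$ (with $g\in L^0(\Y,S^2_\loc(\X))$) as genuine differentials: the element $f\in L^0(\X;\Hil)$ with $f(x)=g(x,\cdot)\,e_i$ lies in $S^2_\loc(\X;\Hil_\loc)$ and, on $E_n$, has components $\Phi_j\circ f=g\,\delta_{ij}$, so $\d_\sx f=\d_\sx g\otimes\hat e_i$. Since $\{\d_\sx g\}$ generates $L^0(\Y,L^0(T^*\X))$ (by Theorem/Definition \ref{thm:defd}) and $\{\hat e_i\}$ generates $L^0(\X;\Hil)$, and finite $L^0$-combinations of the tensors $\d_\sx g\otimes\hat e_i$ are dense in the tensor product, property (ii) follows.

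The main obstacle I expect is purely bookkeeping, but genuinely necessary: the orthonormality relations $\la e_i,e_j\ra=\delta_{ij}$ only hold on the pieces $E_n$ of the dimensional decomposition of $\Hil$, so every norm computation and every identification $\d_\sx f=\d_\sx g\otimes\hat e_i$ must be carried out locally on each $E_n$ and then glued. This is also where one must invoke Proposition \ref{prop:modpart} to know that $|\d_\sx f|$, and hence property (i), is independent of the chosen local base, so that the construction is canonical. Convergence of the infinite series is the one analytic point, and it rests squarely on the integrability built into clause (ii) of Definition \ref{def:s2hl}.
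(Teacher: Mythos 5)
Your proposal is correct and follows essentially the same route as the paper: the same explicit tensor-product module, the same series formula with convergence established via pointwise orthogonality of the addends and the $L^2_\loc$-summability built into Definition \ref{def:s2hl}, and the same realization of the generators $\d_\sx g\otimes\hat e_i$ as differentials of $x\mapsto g(x,\cdot)e_i$. The only difference is cosmetic (you localize to the dimensional sets $E_n$ where the paper uses the inclusion $\{\hat e_i=0\}\subset\{\Phi_i\circ f=0\}$, and you spell out the uniqueness argument that the paper delegates to \cite{GR17}), so the two proofs match.
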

\begin{proof} Uniqueness is a simple consequence of the definitions, see e.g.\ the arguments in \cite[Theorem/Definition 3.2]{GR17} and notice that they can be easily adapted -  we omit the details. Existence also follows by mimicking the construction in \cite{Gigli14}, \cite{GR17} or, alternatively,  by the explicit construction we provide below.

To check that the couple made by  the module $L^0(\Y; L^0(T^*\X))\otimes L^0(\X;\Hil^0)$ and the operator $\d_\sx$ as defined by \eqref{eq:defdx} satisfies the requirements, we must first check that $\d_\sx$ is well defined, i.e.\ that the series in \eqref{eq:defdx} converges in $L^0(\Y;L^0(T^*\X))\otimes L^0(\X;\Hil^0)$. To see this, notice that the addends are pointwise orthogonal and that $\{ \hat e_i=0\}\subset\{\Phi^i\circ f=0\}$, thus for any $N,M\in \N$, $N<M$ we have
\begin{equation}
\label{eq:normadx}
\Big|\sum_{i=N}^M\d_\sx (\Phi_i\circ f)\otimes \hat e_i\Big|^2= \sum_{i=N}^M|\d_\sx (\Phi_i\circ f)|^2,
\end{equation}
and since for $\mm_\Y$-a.e.\ $y\in\Y$ and bounded Borel set $B\subset\X$ the series $\sum_i|\d_\sx (\Phi_i\circ f)|^2(\cdot,y)\nchi_B$ is convergent in $L^2(\X)$, this is sufficient to conclude that the series in   \eqref{eq:defdx} converges in $L^0(T^*\X;L^0(\Y))\otimes L^0(\X;\Hil^0)$. 

Then the fact that $\d_\sx$ is linear is obvious from the definition, while the fact that $(i)$ holds follows from \eqref{eq:normadx} above. 

To check $(ii)$, notice that for any $f\in L^0(\Y;S^2_\loc(\X))$ and  $i\in\N$, the function $F := fe_i \colon \X\to\Hil$, defined by $F(x)=f(x,\cdot)e_i$ for $\mm_\X$-a.e.\ $x\in\X$, satisfies $\Phi^j\circ F=0$ for $j\neq i$ and $\Phi^i\circ F=f\nchi_{\X\times\{e_i\neq 0\}}$. In particular, directly from Definition \ref{def:l0s2} we have that $\Phi^i\circ F\in L^0(\Y;S^2_\loc(\X))$ with $\d_\sx (\Phi^i\circ F)=\nchi_{\X\times\{e_i\neq 0\}}\d_\sx f$. It is then clear, by the very Definition \ref{def:s2hl}, that $F\in S^2_\loc(\X;\Hil_\loc)$ with
\[
\d_\sx F=(\nchi_{\X\times\{e_i\neq 0\}}\d_\sx f)\otimes \hat e_i=\d_\sx f\otimes (\nchi_{\X\times\{e_i\neq 0\}}\hat e_i)=\d_\sx f\otimes \hat e_i.
\]
The conclusion follows from the fact that differentials of functions in $L^0(\Y;S^2_\loc(\X))$ generate $L^0(\Y;T^*\X))$ (trivial consequence of the fact that differentials of functions on $S^2_\loc(\X)$ generate $L^0(T^*\X)$) and the family $(\hat e_i)$ generates $L^0(\X;\Hil)$.
\end{proof}
The explicit representation as in \eqref{eq:defdx} allows some manipulation of the object $\d_\sx f$. For instance, if $\Hil'$ is another Hilbert module over $\Y$ and $T\colon L^0(\X;\Hil)\to L^0(\X;\Hil')$ is a module morphism, then $T$ induces a map, still denoted by $T$ (with a slight abuse of notation, as perhaps ${\rm Id}\otimes T$ would be the canonical choice), from $L^0(T^*\X;L^0(\Y))\otimes L^0(\X;\Hil)$ to $L^0(T^*\X;L^0(\Y))\otimes L^0(\X;\Hil')$ by acting on the `second factors'. More precisely, the map
\[
L^0(T^*\X;L^0(\Y))\otimes L^0(\X;\Hil)\ni \sum_{i=1}^n\omega_i\otimes v_i\quad\mapsto\quad \sum_{i=1}^n\omega_i\otimes (Tv_i)\in L^0(T^*\X;L^0(\Y))\otimes L^0(\X;\Hil') 
\]
which is trivially well defined by the $L^0(\X\times\Y)$-linearity of $T$, can be shown (see below) to be continuous, and thus can be uniquely extended to a continuous map from $L^0(T^*\X;L^0(\Y))\otimes L^0(\X;\Hil)$ to $L^0(T^*\X;L^0(\Y))\otimes L^0(\X;\Hil')$. To check the continuity one possibility is to notice that we can always rewrite a finite sum such as  $\sum_{i=1}^n\omega_i\otimes v_i$ so that the $\omega_i$'s are pointwise orthogonal and once this is done we have
\[
\Big|\sum_{i=1}^n\omega_i\otimes (Tv_i)\Big|^2=\sum_{i=1}^n|\omega_i|^2|Tv_i|^2\leq|T|^2\sum_{i=1}^n|\omega_i|^2|v_i|^2=|T|^2\,\Big|\sum_{i=1}^n\omega_i\otimes v_i\Big|^2.
\]
Thus for $T$ as above it makes sense to speak about $T(\d_\sx f)$ for $f\in S^2_\loc(\X;\Hil_\loc)$. 

We shall mostly apply this construction in two cases: either when - as in formula \eqref{eq:chaindx} - $T:\Hil\to\Hil'$ is a module morphism which induces by post-composition a module morphism from $L^0(\X;\Hil)$ to $L^0(\X;\Hil')$, or when - as in formula \eqref{eq:leibdx} - a given element $w\in L^0(\X;\Hil)$ is considered and $T:L^0(\X;\Hil)\to L^0(\X\times\Y)$ is given by $T(v):=\la v,w\ra$.

\bigskip

We now collect some properties of the newly defined   differentiation operator:
\begin{proposition}\label{thm:closured} Let $(\X,\sfd_\X,\mm_\X),(\Y,\sfd_\Y,\mm_\Y)$ be two metric measure spaces, with $\X$ infinitesimally Hilbertian and let $\Hil$ be a separable Hilbert module over $\Y$.   Then:
\begin{itemize}
\item[i)] \emph{Closure.} Let $(f_n) \subset S^2_\loc(\X;\Hil_\loc))$ be $\mm_\X$-a.e.\  converging to some $f \in L^0(\X ;\Hil)$. Assume also that for some open sets $\Omega^k\subset \X$ (resp.\ Borel subsets $B^k\subset\Y$) with $\cup_k\Omega^k=\X$ (resp.\ $\mm_\Y(\Y\setminus\cup_kB^k)=0$) and $\omega\in L^0(T^*\X;\Hil)$ we have that  $(\nchi_{\Omega^k}\nchi_{B^k}\d_\sx f_n)$ converges to $\nchi_{\Omega^k}\nchi_{B^k}\omega$ in the weak topology of  $ L^2(T^*\X;\Hil)$ (this happens or instance if $(\d_\sx f_n\weakto \omega)$ in $ L^2(T^*\X;\Hil)$). 

Then $f \in  S^2_\loc(\X;\Hil_\loc))$ with $\d_\sx f = \omega$.
\item[ii)] \emph{Locality.} For any $f,g\in S^2_\loc(\X;\Hil_\loc)$ we have
\[
\d_\sx f=\d_\sx g\qquad\mm_\X\otimes\mm_\Y-a.e.\ on\ \{f=g\}.
\]
\item[iii)] \emph{Leibniz rule.} For any $f,g\in S^2_\loc(\X;\Hil_\loc)$ with $|f|,|g|\in L^\infty_\loc(\X\times\Y)$ we have $\la f,g\ra\in S^2_\loc(\X;L^0_\loc(\Y))$ with 
\begin{equation}
\label{eq:leibdx}
\d_\sx\la  f, g\ra=\la \d_\sx f, g\ra+\la  f,\d_\sx g\ra.
\end{equation}
\item[iv)]\emph{Chain rule.} Let $\Hil'$ be another Hilbert module over $\Y$, $T:\Hil\to\Hil'$ a module morphism with $|\T|\in L^\infty(\Y)$ and $f\in S^2_\loc(\X;\Hil_\loc)$. Then $T\circ f\in S^{2}_\loc(\X;\Hil'_\loc)$ and
\begin{equation}
\label{eq:chaindx}
\d_\sx (T\circ f)=T(\d_\sx f).
\end{equation}
\end{itemize}
\end{proposition}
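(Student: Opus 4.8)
The plan is to reduce every assertion to the corresponding scalar statement for the operator $\d_\sx$ on $L^0(\Y,S^2_\loc(\X))$ recorded in Proposition \ref{prop:calpar}, by reading each object through its coordinates in a fixed local Hilbert base $(e_i)$ of $\Hil$. The bridge is the explicit representation \eqref{eq:defdx}: writing $\Phi_i\circ f=\la f,e_i\ra$ one has $\d_\sx f=\sum_i\d_\sx(\Phi_i\circ f)\otimes\hat e_i$ with pointwise norm given by \eqref{eq:defdsx}, and dually the coordinates of $\d_\sx f$ are recovered by pairing with the $\hat e_i$. Since the morphisms $v\mapsto\la v,e_i\ra$ have pointwise norm at most $|e_i|\in L^\infty(\Y)$, they induce bounded linear operators between the relevant $L^2$ spaces; this is exactly what lets me transfer convergence statements back and forth between $\d_\sx f$ and its coordinates $\d_\sx(\Phi_i\circ f)$.

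For the closure (i) I would first pair the localized weak convergence $\nchi_{\Omega^k}\nchi_{B^k}\d_\sx f_n\weakto\nchi_{\Omega^k}\nchi_{B^k}\omega$ with each $\hat e_i$. Because $\la\,\cdot\,,\hat e_i\ra$ is $L^0(\X\times\Y)$-linear and bounded on $L^2$, it preserves localized weak $L^2$-convergence, so $\nchi_{\Omega^k}\nchi_{B^k}\d_\sx(\Phi_i\circ f_n)\weakto\nchi_{\Omega^k}\nchi_{B^k}\la\omega,\hat e_i\ra$ in $L^2(\Y,L^2(T^*\X))$, while $\Phi_i\circ f_n\to\Phi_i\circ f$ $\mm_\X\otimes\mm_\Y$-a.e.\ from the pointwise convergence $f_n\to f$ in $\Hil$. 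The scalar closure Proposition \ref{prop:calpar}(i) (applied after the usual Lipschitz cut-offs that pass to the local setting) then gives $\Phi_i\circ f\in L^0(\Y,S^2_\loc(\X))$ with $\d_\sx(\Phi_i\circ f)=\la\omega,\hat e_i\ra$ for every $i$; summing the squared norms and using that $\nchi_{\Omega^k}\nchi_{B^k}\omega$ is the given local $L^2$-limit, together with Lemma \ref{le:normwl}, yields $|\d_\sx f|=|\omega|\in L^0(\Y,L^2_\loc(\X))$, so $f\in S^2_\loc(\X;\Hil_\loc)$ and $\d_\sx f=\omega$ upon reassembly through \eqref{eq:defdx}. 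The locality (ii) is then immediate by linearity: it suffices to know $\d_\sx h=0$ $\mm_\X\otimes\mm_\Y$-a.e.\ on $\{h=0\}$ for $h:=f-g$, which is precisely Proposition \ref{prop:modpart}(i).

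For the Leibniz rule (iii) and chain rule (iv) I would expand in the base and differentiate termwise. Writing $\la f,g\ra=\sum_i(\Phi_i\circ f)(\Phi_i\circ g)$, the scalar Leibniz rule Proposition \ref{prop:calpar}(iv) applied to each summand gives $\d_\sx[(\Phi_i\circ f)(\Phi_i\circ g)]=(\Phi_i\circ f)\,\d_\sx(\Phi_i\circ g)+(\Phi_i\circ g)\,\d_\sx(\Phi_i\circ f)$; summing and recognising $\sum_i(\Phi_i\circ f)\,\d_\sx(\Phi_i\circ g)=\la f,\d_\sx g\ra$ and symmetrically yields \eqref{eq:leibdx} and the membership $\la f,g\ra\in S^2_\loc(\X;L^0_\loc(\Y))$. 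For (iv), fixing a local base $(e'_j)$ of $\Hil'$ (with $\Phi'_j:=\la\,\cdot\,,e'_j\ra$) and the coefficients $\la Te_i,e'_j\ra\in L^\infty_\loc(\Y)$, the morphism property of $T$ gives $\Phi'_j\circ(T\circ f)=\sum_i\la Te_i,e'_j\ra\,(\Phi_i\circ f)$; differentiating with the $y$-dependent Leibniz rule (last clause of Proposition \ref{prop:calpar}(iv)) produces $\d_\sx(\Phi'_j\circ(T\circ f))=\sum_i\la Te_i,e'_j\ra\,\d_\sx(\Phi_i\circ f)$, which after reassembly through \eqref{eq:defdx} is exactly $T(\d_\sx f)$, giving \eqref{eq:chaindx}, while the estimate $|\d_\sx(T\circ f)|\le|T|\,|\d_\sx f|$ recorded just after \eqref{eq:chaindx} secures $T\circ f\in S^2_\loc(\X;\Hil'_\loc)$.

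The routine parts are the scalar identities; the genuine technical point I would treat with care is the passage from finite sums to the infinite series. To make the termwise differentiation above rigorous I would prove each identity first for the truncations $f_N:=\sum_{i\le N}(\Phi_i\circ f)\,\hat e_i$ and $g_N$ (finite sums, where Proposition \ref{prop:calpar} applies verbatim), and then let $N\to\infty$ using the already-established closure (i) to pass to the limit. The hypotheses $|f|,|g|\in L^\infty_\loc(\X\times\Y)$ in (iii) and $|T|\in L^\infty(\Y)$ in (iv) are precisely what guarantee that the resulting series and their $\d_\sx$'s converge with the correct local $L^2$-integrability and that the limiting objects are the claimed pairings $\la f,\d_\sx g\ra$, $\la\d_\sx f,g\ra$ and $T(\d_\sx f)$; arranging these convergences to meet the exact hypotheses of (i) is where the main effort lies.
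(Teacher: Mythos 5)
Your proposal is correct and follows essentially the same route as the paper: both reduce each claim to the scalar calculus of Proposition \ref{prop:calpar} by expanding in a local Hilbert base through \eqref{eq:defdx}, and both handle the Leibniz and chain rules by proving the identity for finite truncations and passing to the limit via the closure property, with the $L^\infty$ hypotheses supplying precisely the control needed for that limit (the paper's choice of the sets $B^k$ and the Cauchy--Schwarz bound on the tails is the concrete implementation of the step you flag as the main effort). The only cosmetic difference is in the chain rule, where the paper first establishes $\d_\sx(fv)=\d_\sx f\otimes v$ and builds the image element by summing in the target base, whereas you read off its coordinates $\Phi'_j\circ(T\circ f)$; the two are interchangeable.
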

\begin{proof} The closure property $(i)$ is a direct consequence of the the very definition \eqref{eq:defdx} and of the closure of the differential of functions in $L^0(\Y;S^2_\loc(\X))$ established in point $(i)$ of Proposition \ref{prop:calpar}. Similarly,  point $(ii)$ is a direct consequence of the analogous property \eqref{eq:locdx} and Definition \eqref{eq:defdx}.

For what concerns $(iii)$, let $(e_i)$ be a local Hilbert base of $\Hil$, write $f_i:=\Phi^i\circ f$, $g_i:=\Phi^i\circ g$ for brevity and notice that $\la f,g\ra=\sum_if_ig_i$ with the series converging $\mm_\X\otimes\mm_\Y$-a.e.. Then observe that from the Leibniz rule proved in point $(iv)$ of Proposition \ref{prop:calpar} we have that
\[
\d_\sx\sum_{i=1}^Nf_ig_i=\sum_{i=1}^Ng_i\d_\sx f_i+\sum_{i=1}^Nf_i\d_\sx g_i,
\]
thus by the closure of the differential the conclusion will follow if we prove that for any $\Omega\subset\X$ open and bounded there is a sequence $k\mapsto B^k\subset\Y$ of Borel sets with $\mm_\Y(\Y\setminus \cup_kB^k)=0$ such that $\nchi_\Omega\nchi_{B^k}(\sum_{i=1}^Ng_i\d_\sx f_i+\sum_{i=1}^Nf_i\d_\sx g_i)\to \nchi_\Omega\nchi_{B^k}(\la \d_\sx f, g\ra+\la  f,\d_\sx g\ra)$ in $L^2(T^*\X;L^2(\Y))$ as $N\to\infty$ for every $k\in\N$. We pick $B^k:=\{y\in\Y\ :\ \int_\Omega |\d_\sx f|^2(\cdot,y)+|\d_\sx g|^2(\cdot,y)\,\d\mm_\X<k\}\cap B_k(\bar y)$ for some fixed $\bar y\in\Y$, and notice that the assumption $f,g\in  S^2_\loc(\X;\Hil_\loc))$ gives $\mm_\Y(\Y\setminus \cup_kB^k)=0$. Then we observe that
\[
\begin{split}
\int_{\Omega\times B^k}\Big|\sum_{i=N}^Mg_i\d_\sx f_i\Big|^2\,\d(\mm_X\times\mm_\Y)&\leq \int_{\Omega\times B^k}{\sum_{i=N}^M|g_i|^2}{\sum_{i=N}^M|\d_\sx f_i|^2}\,\d(\mm_X\times\mm_\Y)\\
&\leq \||g|\|_{L^\infty(\Omega\times B^k)} \int_{\Omega\times B^k}{\sum_{i\geq N}|\d_\sx f_i|^2}\,\d(\mm_X\times\mm_\Y)
\end{split}
\]
and that by construction it holds 
\[ 
\int_{\Omega\times B^k}{\sum_{i\geq 0}|\d_\sx f_i|^2}\,\d(\mm_X\times\mm_\Y)= \int_{\Omega\times B^k}{\sum_{i\geq 0}|\d_\sx f_i|^2}\,\d(\mm_X\times\mm_\Y)\leq k\,\mm_\X(\Omega)\mm_\Y(B_k(\bar y))<\infty.
\]
These show that $\nchi_\Omega\nchi_{B^k}\sum_{i=1}^Ng_i\d_\sx f_i\to \nchi_\Omega\nchi_{B^k}\sum_{i=1}^\infty g_i\d_\sx f_i$ in $L^2(T^*\X;L^2(\Y))$ as $N\to\infty$ for every $k\in\N$, so that the conclusion follows noticing that $\la\d_\sx f,g\ra=\sum_ig_i\d_\sx f_i$ (from $\d_\sx f=\sum_i\d_\sx f_i\otimes\hat e_i$ and $g=\sum_i g_i\hat e_i$)  and arguing similarly for the other addend.

We pass to $(iv)$ and start claiming that for $f\in S^2_\loc(\X;L^0_\loc(\Y))$ and $v\in\Hil$ with $|v|\in L^\infty(\Y)$, the function $\X\ni x\mapsto f(x,\cdot)v\in \Hil$ belongs to $S^2_\loc(\X;\Hil_\loc))$ with $\d_\sx (fv)=\d_\sx f\otimes v$. To see this, notice that we can write $v=|v|e_1$, where $e_1\in \Hil$ is the first element of some local Hilbert base, use the fact that $f|v|\in S^2_\loc(\X;L^0_\loc(\Y))$ with $\d_\sx(f|v|)=|v|\d_\sx f$ (point $(iii)$ of Proposition \ref{prop:calpar}) and conclude recalling the very definitions of $S^2_\loc(\X;\Hil_\loc)$ and of $\d_\sx$ given in  \eqref{eq:defdx}.

Now denote by $(e_i),(e'_j)$ local Hilbert basis of $\Hil,\Hil'$ respectively and put for brevity $f_i:=\Phi^i\circ f\in S^2_\loc(\X;L^0_\loc(\Y))$ and $a_{ij}:=\langle Te_i,e'_j\rangle\in L^\infty(\Y)$. Then what we just proved ensures that for any $i,j$ we have $f_ia_{ij}e'_j\in S^{2}_\loc(\X;\Hil'_\loc)$ with $\d_\sx(f_ia_{i,j} e'_j)=\d_\sx f_i\otimes (a_{ij} e'_j)$. Adding up in $j$ and using the closure of the differential proved in point $(i)$ above (notice that $\sum_j|a_{ij}|^2=|e_i|^2\in L^\infty(\Y)$) we deduce that $T(f_ie_i)=f_iT(e_i)=\sum_jf_ia_{ij}e'_j$ belongs to $S^2_\loc(\X;\Hil'_\loc)$ with 
\[
\d_\sx \big(T(f_ie_i)\big)=\sum_j\d_\sx f_i\otimes (a_{ij} e'_j)=\d_\sx f_i\otimes \Big(\sum_ja_{ij} e'_j\Big)=\d_\sx f_i\otimes T(e_i)=T\big(\d_\sx f_i\otimes e_i\big),
\]
where the last identity comes from the very definition of the action of $T$ on $L^0(T^*\X;\Hil)=L^0(T^*\X;L^0(\Y))\otimes L^0(\X;\Hil)$. The conclusion \eqref{eq:chaindx} now follows adding up in $i$ and using again the closure of the differential.
\end{proof}

We conclude the section with a statement similar to Proposition \ref{prop:l2div} which allows to check whether $f$ belongs to $W^{1,2}(\X;\Hil)$ via integration by parts:
\begin{proposition}\label{prop:perdxdy}
Let $(\X,\sfd_\X,\mm_\X),(\Y,\sfd_\Y,\mm_\Y)$ be two metric measure spaces satisfying Assumption \ref{ass} with $\X$ being infinitesimally Hilbertian, $\Hil$ an Hilbert module over $\Y$ and $f\in L^2(\X;\Hil)$. Then the following are equivalent:
\begin{itemize}
\item[i)] $f\in W^{1,2}(\X;\Hil)$,
\item[ii)] There is $A\in L^0(T^*\X;\Hil)$ with $|A|_\HS\in L^2(\X\times \Y)$ and a generating set $D\subset \Hil$ made  of bounded elements such that
\begin{equation}
\label{eq:partXX}
-\int \la f,v\ra \div(h\nabla (g\circ\pi_\X))\,\d(\mm_\X\otimes\mm_\Y)=\int h\la A, \d_\sx g \otimes v\ra \,\d(\mm_\X\otimes\mm_\Y)
\end{equation}
holds for any $v\in D$,  $h\in \Lip_\bs(\X\times\Y)$ and $g\in D(\Delta_\X)$. 
\end{itemize}
Moreover, if this is the case the choice $A=\d_\sx f$ is the only one for which $(ii)$ holds and \eqref{eq:partXX} holds for every bounded element $v\in\Hil$.
\end{proposition}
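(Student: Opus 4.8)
The plan is to reduce the whole statement to its scalar counterpart, Proposition~\ref{prop:l2div}, by testing $f$ against bounded elements $v\in\Hil$. For such $v$ the pairing $\la f,v\ra$ is a scalar function in $L^2(\X\times\Y)$, since $|\la f,v\ra|\le|f|\,|v|$ with $|v|\in L^\infty(\Y)$. The bridge to the scalar world is the contraction $A_v\in L^0(\Y;L^0(T^*\X))$ obtained by pairing the $\Hil$-factor of $A$ against $v$: expanding $A=\sum_iA_{e_i}\otimes\hat e_i$ in a local Hilbert base $(e_i)$ one has $A_v=\sum_iA_{e_i}\la e_i,v\ra$, whence $|A_v|\le|A|_\HS|v|\in L^2(\X\times\Y)$ and, most importantly, the pointwise identity $\la A,\d_\sx g\otimes v\ra=\la A_v,\d_\sx g\ra$. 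Through this identity the right-hand side of \eqref{eq:partXX} becomes exactly the right-hand side of condition (ii) of Proposition~\ref{prop:l2div} written for the scalar function $\la f,v\ra$ and the form $A_v$.

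For the implication $(i)\Rightarrow(ii)$ I would set $A:=\d_\sx f$, so that $|A|_\HS=|\d_\sx f|\in L^2(\X\times\Y)$, and take $D$ to be any generating set of bounded elements (for instance a local Hilbert base). For each bounded $v$ the chain rule of Proposition~\ref{thm:closured}(iv), applied to the module morphism $w\mapsto\la w,v\ra$ whose norm is $|v|\in L^\infty(\Y)$, yields $\la f,v\ra\in L^0(\Y,S^2_\loc(\X))$ with $\d_\sx\la f,v\ra=A_v$ and $|\d_\sx\la f,v\ra|\le\|v\|_\infty|\d_\sx f|\in L^2$; hence $\la f,v\ra\in L^2(\Y,W^{1,2}(\X))$. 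Feeding this into the implication $(i)\Rightarrow(ii)$ of Proposition~\ref{prop:l2div} and rewriting $\la A_v,\d_\sx g\ra=\la A,\d_\sx g\otimes v\ra$ produces \eqref{eq:partXX}.

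For $(ii)\Rightarrow(i)$ I would run the argument in reverse: for $v\in D$ the hypothesis \eqref{eq:partXX} is precisely condition (ii) of Proposition~\ref{prop:l2div} for $\la f,v\ra$ and $A_v$, so that proposition gives $\la f,v\ra\in L^2(\Y,W^{1,2}(\X))$ with $\d_\sx\la f,v\ra=A_v$. Call $v$ \emph{good} if it enjoys this conclusion. The crucial point is that the good set is an $L^0(\Y)$-submodule: it is clearly $\R$-linear, and if $v$ is good and $\phi\in L^0(\Y)$ then $\la f,\phi v\ra=\phi\la f,v\ra$, so the Leibniz rule of Proposition~\ref{prop:calpar}(iv) in the form valid for multipliers depending only on $y$ (where $\d_\sx\phi=0$ and no $L^\infty$ bound on $\la f,v\ra$ is needed) gives $\d_\sx(\phi\la f,v\ra)=\phi A_v=A_{\phi v}$, i.e.\ $\phi v$ is good. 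Since the good set contains the generating set $D$ and is a submodule, Gram--Schmidt applied to a countable generating subset of $D$ — which only uses the bounded coefficients $\la v_k,e_j\ra\in L^\infty(\Y)$ in the orthogonalisation and the $L^0(\Y)$-normalisations $1/|\tilde v_k|$ — produces a local Hilbert base $(e_i)$ of good elements. For this base $\la f,e_i\ra\in L^0(\Y,S^2_\loc(\X))$ with $\d_\sx\la f,e_i\ra=A_{e_i}$, and $\sum_i|\d_\sx\la f,e_i\ra|^2=\sum_i|A_{e_i}|^2=|A|_\HS^2\in L^2(\X\times\Y)$; together with $|f|\in L^2$ this is exactly membership $f\in W^{1,2}(\X;\Hil)$ in the sense of Definition~\ref{def:s2hl}, with $\d_\sx f=\sum_iA_{e_i}\otimes\hat e_i=A$. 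Base-independence, Proposition~\ref{prop:modpart}, ensures the conclusion does not depend on the chosen base.

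Finally, uniqueness of $A$ is immediate once $\d_\sx f=A$ has been identified, and the validity of \eqref{eq:partXX} for \emph{every} bounded $v\in\Hil$ (not merely $v\in D$) follows by re-running $(i)\Rightarrow(ii)$ now that $(i)$ is known. I expect the main obstacle to be the $(ii)\Rightarrow(i)$ direction, and specifically the passage from the given generating set $D$ to a genuine local Hilbert base of good elements: one must verify that the submodule of good elements is truly stable under the possibly unbounded $L^0(\Y)$-normalisations occurring in Gram--Schmidt and that the resulting orthonormal system is compatible with the dimensional decomposition of $\Hil$, the integrability being controlled throughout by the bound $|A_v|\le|A|_\HS|v|$ and, wherever a limit is taken, by the local closure property of Proposition~\ref{prop:calpar}(i).
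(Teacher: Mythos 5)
Your reduction to the scalar Proposition \ref{prop:l2div} via the contractions $A_v$, and your whole $(i)\Rightarrow(ii)$ direction, coincide with the paper's argument. Where you genuinely diverge is in $(ii)\Rightarrow(i)$: you propagate the ``good'' property from $D$ by observing that the good elements form an $L^0(\Y)$-submodule (correct, since multiplication by a purely $y$-dependent $\phi\in L^0(\Y)$ acts fibrewise and needs no boundedness) and then run Gram--Schmidt inside that submodule to manufacture a local Hilbert base of good elements, invoking base-independence (Proposition \ref{prop:modpart}) at the end. The paper instead keeps an \emph{arbitrary} local Hilbert base $(e_i)$ of $\Hil$ fixed and shows each $e_i$ is good: it first extends \eqref{eq:Ah} from $D$ to finite $L^\infty(\Y)$-linear combinations of $D$ (linearity plus the $y$-dependent Leibniz rule), and then to every bounded $h\in\Hil$ by approximating $h$ with such combinations having uniformly bounded pointwise norm and appealing to the closure of $\d_\sx$ (Proposition \ref{prop:calpar}(i)). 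The trade-off is real: your route avoids any limit/closure argument (each Gram--Schmidt step is a finite $L^0$-combination) but must pay for it with the technicalities you yourself flag --- extracting a countable generating subset of $D$, the unbounded normalisations $\nchi_{\{|\tilde v_k|>0\}}/|\tilde v_k|$, and the relabelling needed to make the orthogonal output compatible with the dimensional decomposition so that Proposition \ref{prop:modpart} literally applies --- whereas the paper's route needs none of this because the base is chosen in advance and only the scalar closure property is used. Both arguments are sound; if you want to shorten yours, replacing the Gram--Schmidt step by the paper's ``approximate a fixed base by uniformly bounded $L^\infty$-combinations of $D$'' device is the cleaner option. Your treatment of uniqueness of $A$ and of the extension of \eqref{eq:partXX} to all bounded $v$ matches the paper.
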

\begin{proof}\ \\
\noindent{$(i)\Rightarrow(ii)$} Let $v\in\Hil$ be with $|v|\in L^\infty$ and $T:\Hil\to L^0(\Y)$ be given by $T(\cdot):=\la \cdot,v\ra$. Then point $(iv)$ in Proposition \ref{thm:closured} gives that $\la f,v\ra\in W^{1,2}(\X;L^2(\Y))$ with $\d_\sx (\la f,v\ra)=\la \d_\sx f,v\ra\in L^2(\Y;L^2(T^*\X))$. Thus we can  apply \eqref{eq:partX} of Proposition \ref{prop:l2div}  to $\la f,v\ra$ with $g,h$ as in the statement to get
\[
\begin{split}
-\int \la f,v\ra \div(h\nabla (g\circ\pi_\X))\,\d(\mm_\X\otimes\mm_\Y)&=\int h\langle\la \d_\sx f,v\ra,\d_\sx g \rangle\,\d(\mm_\X\otimes\mm_\Y)\\
&=\int h\la \d_\sx f,\d_\sx g \otimes v\ra \,\d(\mm_\X\otimes\mm_\Y),
\end{split}
\]
which is the claim with $A=\d_\sx f$.

\noindent{$(ii)\Rightarrow(i)$} Applying Proposition \ref{prop:l2div} to $\la f,v\ra$ for $v\in D$ we obtain that $\la f,v\ra\in W^{1,2}(\X;L^2(\Y))$ with $\d_\sx \la f,v\ra=\la A,v\ra$ (this expression meaning that $\la\d_\sx \la f,v\ra,\omega\ra=\la A,\omega\otimes v\ra$ for any $\omega\in L^0(\Y;L^0(T^*\X))$). By the linearity of the differential and taking also into account the second claim in the Leibniz rule in point $(iv)$ of Proposition \ref{prop:calpar} we deduce that for $v_i\in D$ and $a_i\in L^\infty(\Y)$ we have $\la f,\sum_ia_iv_i\ra\in W^{1,2}(\X;L^2(\Y))$ with 
\[
\d_\sx \big\langle f,\big(\sum_ia_iv_i\big)\big\rangle=\sum_ia_i\d_\sx \langle f,v_i\rangle=\sum_ia_i\la A, v_i\ra=\langle A, \sum_ia_iv_i\rangle,
\]
where the sums here are finite.
From the fact that $D$ generates $\Hil$ it is easy to deduce that for any $h\in\Hil$ with $|h|\in L^\infty(\Y)$ there exists a sequence of $L^\infty$-linear combinations of elements of $D$ whose pointwise norm is uniformly bounded and $\mm_\Y$-a.e.\ converging to $h$. Hence from what we  proved above and the closure of the differential we conclude that for any $h\in\Hil$ with $|h|\in L^\infty$ we have 
\begin{equation}
\label{eq:Ah}
\la f,h\ra\in W^{1,2}(\X;L^2(\Y))\quad\text{ with }\quad \d_\sx\la f,h\ra=\langle A, h\rangle.
\end{equation}
In particular this holds for $h=e_i$ with $(e_i)$ a local Hilbert base of $\Hil$. Hence from the very definition of $W^{1,2}(\X;\Hil)$, the identity  $\sum_i|\la A,e_i\ra|^2=|A|_\HS^2$ and the assumption $|A|_\HS\in L^2(\X,\times\Y)$ we conclude that $f\in W^{1,2}(\X;\Hil)$. 

To obtain that $A=\d_\sx f$ notice that from \eqref{eq:defdx} we have $\la \d_\sx f,e_i\ra=\d_\sx\la f,e_i\ra$ and recall \eqref{eq:Ah} to deduce that $\d_\sx\la f,h_i\ra=\la A,e_i\ra$ for any $i$. The claim follows.
\end{proof}

\subsection{Schwarz's theorem on symmetry of mixed second derivatives}
Let $\Hil^1,\Hil^2$ be two Hilbert modules over the same space (in our case that would be $\X\times\Y$). Then there is a natural \emph{transposition} map $A\mapsto A^{\rm tr}$ from $\Hil^1\otimes\Hil^2$ to $\Hil^2\otimes\Hil^1$, defined as the only $L^\infty$-linear and continuous extension of the map sending $v_1\otimes v_2$ to $v_2\otimes v_1$ for any $v_i\in\Hil^i$, $i=1,2$. It is trivial to check that the transposition is well defined by this as well as the fact that transposing twice returns the identity.

\bigskip

Now consider infinitesimally Hilbertian spaces $\X,\Y$ and a function $f\in L^2(\Y;W^{1,2}(\X))$. Then in principle we have $\d_\sx f\in  L^2(\Y;L^2(T^*\X))$. Now assume that in fact we have more regularity and that in fact it holds $\d_\sx f\in W^{1,2}(\Y;L^2(T^*\X))$. Then recalling \eqref{eq:defl0h} we see that the differential $\d_\sy\d_\sx f$ is in 
\begin{equation}
\label{eq:dxdy}
L^2(T^*\Y;L^2(T^*\X))\subset L^0(T^*\Y;L^0(T^*\X))\cong L^0(\X;L^0(T^*\Y))\otimes L^0(\Y;L^0(T^*\X)).
\end{equation}
In these circumstances and assuming also that $f\in L^2(\X;W^{1,2}(\Y))$ (see Example \ref{ex:facile} below), it is natural to wonder if we also have $\d_\sy f\in W^{1,2}(\X;L^2(T^*\Y))$ and whether mixed derivatives commute. Inspecting the spaces where $\d_\sx\d_\sy f$ and $\d_\sx\d_\sy f$ belong, we see that the correct way to phrase this question is to ask whether
\begin{equation}
\label{eq:schwarz}
\d_\sx\d_\sy f=(\d_\sy\d_\sx f)^{\rm tr}.
\end{equation}
holds, where here the transposition is acting on $L^0(\X;L^0(T^*\Y))\otimes L^0(\Y;L^0(T^*\X))$ (recall \eqref{eq:dxdy}).

In the following theorem we prove that this is actually the case under natural assumptions. Before stating it let us remark that for $\X,\Y$ infinitesimally Hilbertian we have
\[
W^{1,2}(\X\times\Y)=W^{1,2}(\X;L^2(\Y))\cap W^{1,2}(\Y;L^2(\X)),
\]
as can be seen from Proposition \ref{prop:Wprod} together with the equivalence $W^{1, 2}(\X; L^2(\Y)) \sim L^2(\Y; W^{1, 2}(\X))$ pointed out in Remark \ref{rem:relconprima}. In particular, for $f\in W^{1,2}(\X\times\Y)$ both $\d_\sx f$ and $\d_\sy f$ are defined and these objects belong to $L^2(\Y;L^2(T^*\X))$ and $L^2(\X;L^2(T^*\Y))$ respectively.

In the course of the forthcoming proof and also in subsequent part of the paper, given $f_1\in L^0(\X)$ and $f_2\in L^0(\Y)$ we shall denote by $f_1\otimes f_2\in L^0(\X\times\Y)$ the function given by
\[
f_1\otimes f_2(x,y):=f_1(x)f_2(y).
\]
We then have:
\begin{theorem}\label{thm:swap}
Let  $(\X,\sfd_\X,\mm_\X),(\Y,\sfd_\Y,\mm_\Y)$ be two infinitesimally Hilbertian metric measure spaces satisfying Assumption \ref{ass}.  Let $f\in W^{1,2}(\X\times\Y)$ and assume that  $\d_\sx f\in W^{1,2}(\Y;L^2(T^*\X))$. 

Then $\d_\sy f\in W^{1,2}(\X;L^2(T^*\Y))$ as well and \eqref{eq:schwarz} holds.
\end{theorem}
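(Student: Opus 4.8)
The strategy is to verify, via the integration-by-parts criterion of Proposition \ref{prop:perdxdy} applied with $\Hil=L^2(T^*\Y)$, that $\d_\sy f\in W^{1,2}(\X;L^2(T^*\Y))$ with differential
\[
\d_\sx\d_\sy f \;=\; A\;:=\;(\d_\sy\d_\sx f)^{\mathrm{tr}};
\]
since transposition is a pointwise isometry for the Hilbert--Schmidt norm we have $|A|_\HS=|\d_\sy\d_\sx f|_\HS\in L^2(\X\times\Y)$, so the required integrability holds, and the identification of $\d_\sx\d_\sy f$ with $A$ is exactly \eqref{eq:schwarz}. As generating set of bounded elements of $L^2(T^*\Y)$ I take $D:=\{\d_\sy\psi\}$, with $\psi$ ranging over a rich enough subclass of $D(\Delta_\Y)$ with $|\d_\sy\psi|\in L^\infty$. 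Fixing such $v=\d_\sy\psi$, a test function $h$ and $g\in D(\Delta_\X)$ with $|\d_\sx g|\in L^\infty$, the right-hand side of \eqref{eq:partXX} equals, by definition of the transposition,
\[
\int h\,\la A,\d_\sx g\otimes \d_\sy\psi\ra\,\d(\mm_\X\otimes\mm_\Y)=\int h\,\la\d_\sy\d_\sx f,\d_\sy\psi\otimes\d_\sx g\ra\,\d(\mm_\X\otimes\mm_\Y).
\]
Using the hypothesis $\d_\sx f\in W^{1,2}(\Y;L^2(T^*\X))$ and applying Proposition \ref{prop:perdxdy} to $\d_\sx f$ (roles of $\X,\Y$ swapped, module $L^2(T^*\X)$, bounded element $\d_\sx g$), this equals $-\int\la\d_\sx f,\d_\sx g\ra\,\div(h\nabla(\psi\circ\pi_\Y))\,\d(\mm_\X\otimes\mm_\Y)$. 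Hence \eqref{eq:partXX} reduces to the purely first-order, symmetric identity
\begin{equation}\label{eq:heart}
\int\la\d_\sy f,\d_\sy\psi\ra\,\div\big(h\nabla(g\circ\pi_\X)\big)\,\d\mm=\int\la\d_\sx f,\d_\sx g\ra\,\div\big(h\nabla(\psi\circ\pi_\Y)\big)\,\d\mm,
\end{equation}
with $\d\mm:=\d(\mm_\X\otimes\mm_\Y)$.

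The core of the argument is \eqref{eq:heart}, which I would prove first for test functions of product form $h=h_1\otimes h_2$, $h_i\in\Lip_\bs$. For such $h$, \eqref{eq:divdivx} gives $\div(h\nabla(g\circ\pi_\X))=h_2\,b_1$ and $\div(h\nabla(\psi\circ\pi_\Y))=h_1\,c_2$, where $b_1:=\div_\sx(h_1\nabla_\sx g)\in L^2(\X)$ and $c_2:=\div_\sy(h_2\nabla_\sy\psi)\in L^2(\Y)$ depend on one variable only. Integrating in the remaining variable and using the defining property of $\div_\sx$ (resp.\ $\div_\sy$) against the sections $f(\cdot,y)\in W^{1,2}(\X)$ (resp.\ $f(x,\cdot)\in W^{1,2}(\Y)$), both sides of \eqref{eq:heart} are computed to equal $-\int (b_1\otimes c_2)\,f\,\d\mm$; this uses nothing beyond $f\in W^{1,2}(\X\times\Y)$ and Fubini, and in particular \eqref{eq:heart} holds for every such $f$.

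It then remains to upgrade \eqref{eq:partXX} from the product $h$ and bounded-gradient $g$ just treated to the arbitrary $h\in\Lip_\bs(\X\times\Y)$, $g\in D(\Delta_\X)$ required to invoke Proposition \ref{prop:perdxdy}. Both sides of \eqref{eq:partXX} are continuous in $g$ with respect to $L^2$-convergence of $(\nabla_\sx g,\Delta_\sx g)$, which lets one pass from $g$ with bounded gradient (needed to pair against $\d_\sx g$) to all of $D(\Delta_\X)$ using density of test functions and the closure of $\d_\sx$ in Proposition \ref{thm:closured}(i); and they are continuous in $h$ along product approximations with uniformly bounded sup-norm and Lipschitz constant together with $\mm$-a.e.\ convergence of $h$ and $\nabla_\sx h$, so that dominated convergence applies (the potentially dangerous factor $\la\nabla_\sx h,\nabla_\sx g\ra$ being controlled by $|\nabla_\sx h|\in L^\infty$). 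Once \eqref{eq:partXX} is known for all admissible $h,g,v$, Proposition \ref{prop:perdxdy} yields $\d_\sy f\in W^{1,2}(\X;L^2(T^*\Y))$ and $\d_\sx\d_\sy f=(\d_\sy\d_\sx f)^{\mathrm{tr}}$, as claimed. The main obstacle is precisely this passage to general $h$: the two sides of \eqref{eq:heart} are integrable only through a cancellation between products of $L^2$-gradients, so a naive $W^{1,2}$-density argument in $f$ fails, and one must instead exploit the product structure together with the uniform-bound/a.e.-convergence approximation just described (equivalently, re-run the proof of Proposition \ref{prop:l2div}, which only ever employs product-type test functions $h$ approximating the indicators $\nchi_{\X\times A}$).
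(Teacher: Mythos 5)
Your proof is correct and follows essentially the same route as the paper's: both apply Proposition \ref{prop:perdxdy} twice (once with the roles of $\X,\Y$ swapped to exploit the hypothesis on $\d_\sx f$, once to conclude for $\d_\sy f$), reducing everything to a symmetric first-order identity for product-form $h=h_1\otimes h_2$, which is then verified by a double integration by parts that lands on an expression manifestly symmetric in the two factors (your $-\int (b_1\otimes c_2)f\,\d\mm$ is exactly the four-term expansion the paper writes out). If anything, you are more explicit than the paper about the final approximation step needed to pass from product-type $h$ and bounded-gradient $g$ to the general test data required by Proposition \ref{prop:perdxdy}, a point the paper dispatches with ``by the arbitrariness of $g_1,g_2,h$''.
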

\begin{proof}
Let $g_1\in D(\Delta_\sx)\cap\Lip(\X)$ (resp.\ $g_2\in D(\Delta_\sy)\cap\Lip(\Y)$) and $h=h_1\otimes h_2\in \Lip_\bs(\X\times\Y)$ for $h_1\in \Lip_\bs(\X)$ and $h_2\in \Lip_\bs(\Y)$. Then from Proposition \ref{prop:perdxdy} applied with the roles of $\X,\Y$ swapped and $\Hil:=L^2(T^*\X)$ together with the assumption $\d_\sx f\in W^{1,2}(\Y;L^2(T^*\X))$ we conclude that
\begin{equation}
\label{eq:swap1}
-\int\langle\d_\sx f,\d_\sx g_1\rangle\,\div(h\nabla (g_2\circ\pi_\Y))\,\d(\mm_\X\otimes\mm_\Y)=\int h\langle \d_\sy\d_\sx f,\d_\sy g_2\otimes\d_\sx g_1\rangle\,\d(\mm_\X\otimes\mm_\Y).
\end{equation}
Recalling \eqref{eq:divdivx} we get
\[
\div(h\nabla (g_2\circ\pi_\Y))=\div_\sy(h\nabla_\sy g_2)=h\Delta_\sy g_2+\la\d_\sy  h,\d_\sy g_2\ra=h\Delta_\sy g_2+h_1\la\d_\sy  h_2,\d_\sy g_2\ra,
\]
where in the last step we used the identity $\d_\sy h=h_1\d_\sy h_2$ which follows directly from Definition \ref{def:l0s2}. It is  clear that the rightmost side in the above belongs to $L^2(\Y;W^{1,2}(\X))$ with differential $\d_\sx$ given by $h_2\Delta_\sy g_2 \,\d_\sx h_1+\la\d_\sy h_2,\d_\sy g_2\ra\d_\sx h_1$, thus we have
\[
\begin{split}
-\int\langle\d_\sx f,\d_\sx g_1\rangle&\,\div(h\nabla( g_2\circ\pi_\Y))\,\d(\mm_\X\otimes\mm_\Y)\\
&=-\int \int\langle\d_\sx f,\d_\sx g_1\rangle\,\big(h\Delta_\sy g_2+h_1\la\d_\sy  h_2,\d_\sy g_2\ra\big)\,\d\mm_\X\,\d\mm_\Y\\
&=\int \int  f\big(h\Delta_\sx g_1\Delta_\sy g_2+h_1\Delta_\sx g_1\la\d_\sy  h_2,\d_\sy g_2\ra\\
&\qquad\qquad+h_2\Delta_\sy g_2  \la\d_\sx g_1,\d_\sx h_1\ra+\la\d_\sy h_2,\d_\sy g_2\ra\la\d_\sx g_1,\d_\sx h_1\ra\big)\,\d\mm_\X\,\d\mm_\Y\\
&=\cdots\text{(by the symmetry of the last expression)}\\
&=-\int\langle\d_\sy f,\d_\sy g_2\rangle\,\div(h\nabla (g_1\circ\pi_\X))\,\d(\mm_\X\otimes\mm_\Y).
\end{split}
\]
Hence from \eqref{eq:swap1} we get
\[
-\int\langle\d_\sy f,\d_\sy g_2\rangle\,\div(h\nabla (g_1\circ\pi_\X))\,\d(\mm_\X\otimes\mm_\Y)=\int h\langle \d_\sy\d_\sx f,\d_\sy g_2\otimes\d_\sx g_1\rangle\,\d(\mm_\X\otimes\mm_\Y)
\]
and the claim follows by the arbitrariness of $g_1,g_2,h$, by applying  Proposition \ref{prop:perdxdy} again (with $D:=\{\nabla_\sy g_2:g_2\in D(\Delta_\sy)\cap\Lip(\Y)\}$).
\end{proof}

\begin{example}\label{ex:facile}{\rm Let $\X=\Y=[0,1]$ with the Euclidean distance and measure and consider the function $f:=\nchi_{[0,1]\times[0,1/2]}\in L^2([0,1]\times[0,1])$. It trivially belongs to $L^2(\Y;W^{1,2}(\X))$ with $\d_\sx f=0$, thus evidently $ \d_\sx f\in W^{1,2}(\Y;L^2(T^*\X))$ with $\d_\sy\d_\sx f=0$. Yet, obviously $f\notin L^2(\X;W^{1,2}(\Y))$, showing that the assumption $f\in L^2(\X;W^{1,2}(\Y))$ is necessary in Theorem \ref{thm:swap}.
}\fr\end{example}

\section{Second order partial derivatives on the product space}
\subsection{Basic material}
\subsubsection{Reminders about calculus on $\RCD$ spaces}

Here we briefly recall those notions about calculus on $\RCD$ spaces that will be used in the rest of the manuscript. These will be used throughout the text without further notice.

From now on we fix a metric measure space $\mms$, satisfying the $\RCD(K, \infty)$ condition for some $K \in \R$, meaning that it is an infinitesimally Hilbertian metric measure space satisfying the $\CD(K, \infty)$ condition of Lott-Villani \cite{Lott-Villani09} and Sturm \cite{Sturm06I}. This in particular means that the energy functional introduced in \eqref{def:E} is a quadratic form: we call {\bf heat flow} $(\h_t)$ its gradient flow in $\Lint^2(\mea)$, which is linear and self-adjoint. It is useful to recall the following standard a priori estimates:
\begin{align}
\ener(\h_t f) &\le \frac{1}{4 t} \| f \|^2_{\Lint^2(\mea)}, \label{apriori1}\\
\| \Delta \h_t f   \|^2_{\Lint^2(\mea)} &\le \frac{1}{2 t^2} \| f \|^2_{\Lint^2(\mea)}, \label{apriori2}
\end{align}
valid for every $t > 0$ and every $f \in \Lint^2(\mea)$. Moreover, it holds
\begin{equation}\label{eq:maxh}
\| \h_t f \|_{L^2(\mm)} \le \| f\|_{L^2(\mm)}, \qquad \forall t \ge 0, \, \forall f \in L^2 (\mm).
\end{equation}

Finally, a crucial property of the heat flow which is strongly related to the lower curvature bound is the {\bf Bakry-\'Emery contraction estimate} (see \cite{Gigli-Kuwada-Ohta10} and \cite{AmbrosioGigliSavare11-2}):
\begin{equation}\label{BE}
\abs{\d \h_t f}^2 \le e^{-2 K t} \h_t(\abs{\d f}^2), \qquad \mae, \quad \forall t \ge 0, \quad \forall f \in \W(\X)
\end{equation}
from which it follows the $L^\infty-\Lip$ regularization estimate
\begin{equation}
\label{eq:linftylip}
\||\d \h_t f|\|_{L^\infty}\leq\frac1{\sqrt{2I_{2K}(t)}} \|f\|_{L^\infty},\qquad\forall f\in L^2(\mm),\ t>0,
\end{equation}
where $I_{K}(t):=\int_0^te^{Ks}\,\d s$.

Then the space of {\bf test functions} $\test{\X}$ on $\X$ is defined as
\[
\test\X:=\Big\{f\in L^\infty\cap W^{1,2}(\X)\cap D(\Delta)\ :\ |\d f|\in L^\infty(\X),\ \Delta f\in W^{1,2}(\X)\Big\}.
\]
It turns out that $\test{\X}$ is an algebra dense in $W^{1,2}(\X)$ (in particular gradients of test functions generate the whole $L^2(T\X)$) and that $|\d f|^2\in W^{1,2}(\X)$ for any $f\in\test{\X}$ (see \cite{Savare13}). Another direct consequence of the Ricci curvature lower bound is the following regularity result, which allows to pass from a Sobolev information to a metric one (see \cite{AmbrosioGigliSavare11-2}, \cite{Gigli13}, \cite{Gigli13over}):
\begin{equation}\label{eq:SobtoLip}
\text{any} \, f \in \test{\X} \, \text{ has a Lipschitz representative }\, \bar f \colon \X \to \R \, \text{ with } \, \Lip(\bar f) \le \| \abs{\nabla f} \|_{L^\infty(\mm)}.
\end{equation}
Moreover, it is also useful to know that
\begin{equation}
\label{eq:l2test}
f\in L^2 \cap L^\infty (\mm)\qquad\Rightarrow\qquad \h_tf\in \test{\X},\ \forall t>0,
\end{equation}
which in particular guarantees the density of $\test{\X}$ in $W^{1, 2}(\X)$, and that
\begin{equation}
\label{eq:testcutoff}
\forall B\subset\X\text{ bounded there is $f\in\test{\X}$ with bounded support identically 1 on $B$.}
\end{equation}

The space $W^{2,2}_\loc(\X)$ is the space of $f\in W^{1,2}_\loc(\X)$ for which there exists $A\in L^2_\loc((T^*)^{\otimes 2}\X)$ such that
\begin{equation}
\label{eq:defhess}
\begin{split}
2\int& hA(\nabla g,\nabla \tilde g)\,\d\mm\\
&=-\int\la\nabla f,\nabla g\ra\div(h\nabla\tilde g)+\la\nabla f,\nabla \tilde g\ra\div(h\nabla g)+h\la\nabla f,\nabla(\la\nabla g,\nabla\tilde g\ra)\ra\,\d\mm
\end{split}
\end{equation}
for every $g,\tilde g,h\in\test{\X}$ with bounded support. In this case we call $A$ the {Hessian} of $f$ and denote it by  $\H f$. If $f\in \W(\X)$ and $\H f\in  L^2((T^*)^{\otimes 2}\X)$ we say that $f\in W^{2,2}(\X)$ (it is not hard to check that this definition coincides with the one proposed in \cite{Gigli14}).  The space $W^{2,2}(\X)$ is a separable Hilbert space when endowed the norm
\[
\|f\|_{W^{2,2}(\X)}^2:=\|f\|_{L^2(\X)}^2+\||\D f|\|_{L^2(\X)}^2+\||\H f|_\HS\|_{L^2(\X)}^2.
\]
We have $D(\Delta)\subset W^{2,2}(\X)$ with 
\begin{equation}
\label{eq:hesscont}
\int |\H(f)|_\HS^2\,\d\mm_\X\leq \int|\Delta f|^2-K|\d f|^2\,\d\mm_\X\qquad\forall f\in D(\Delta).
\end{equation}
The space $H^{2,2}(\X)$ is the $W^{2,2}(\X)$-closure of $D(\Delta)$ (or, equivalently, of $\test{\X}$) and, similarly, $H^{2,2}_\loc(\X)$ as the $W^{2,2}_\loc(\X)$-closure of $\test\X$, i.e.: $f\in H^{2,2}_\loc(\X)\subset W^{2,2}_\loc(\X)$ provided there exists a sequence $(f_n)\subset \test\X$ such that $f_n,\d f_n,\H {f_n}$ converge to $f,\d f,\H f$ in $L^2_\loc(\X),L^2_\loc(T^*\X),L^2_\loc((T^*)^{\otimes 2}\X)$ respectively. 

We shall also often use the identity
\[
2\H f(\nabla g,\nabla h)=\d\la\la\d f,\d g\ra,\d h\ra+\d\la\la\d f,\d h\ra,\d g\ra-\d\la\d f,\la\d g,\d h\ra\ra
\]
valid for any $f,g,h\in\test\X$.

\bigskip
The space of Sobolev vector fields $W^{1,2}_{C,\loc}(T\X)$ is defined as the space of $v\in L^2_\loc(T\X)$ for which there is $T\in L^2_\loc(T^{\otimes 2}\X)$ such that
\[
\int h T : (\nabla g \otimes \nabla\tilde g)\,\d\mm=\int -\la v,\nabla\tilde g\ra\div(h\nabla g) +h\H {\tilde g}(X,\nabla g)\,\d\mm
\]
for every $h,g,\tilde g\in\test\X$ with bounded support. In this case we call $T$ the {\bf covariant derivative} of $v$ and denote it by $\nabla_C v$. If $|v|,|\nabla_C v|_\HS\in L^2(\X)$ we shall say that $v\in W^{1,2}_C(T\X)$; this definition of $W^{1,2}_C(T\X)$ coincides with the one given in \cite{Gigli14}. Vector fields of the form $g\nabla f$ for $f,g\in\test\X$ are in $W^{1,2}_C(T\X)$: we denote by $\Test V(\X)$  the linear span of such vector fields, and by $H^{1,2}_{C}(T\X)$ its $W^{1,2}_C$-closure . The space $H^{1,2}_{C,\loc}(T\X)$ is then equivalently defined either as the subspace of $L^2_\loc(T\X)$ made of vectors $v$ of such that $fv\in H^{1,2}_C(T\X)$ for every $f\in\test\X$ with bounded support or as the $W^{1,2}_{C,\loc}$-closure of $H^{1,2}_C(T\X)$, i.e.\ as the space of vector fields $v\in W^{1,2}_{C,\loc}(T\X)$ such that there is $(v_n)\subset H^{1,2}_C(T\X)$ such that  $v_n\to v$ and $\nabla_C v_n\to\nabla_C v$ in $L^2_\loc(T\X)$ and $L^2_\loc(T^{\otimes 2}\X)$ as $n\to\infty$.

A useful result we are going to use later on is the following (we refer to \cite[Theorem 3.4.2-(iv) and (v)]{Gigli14} for its proof):
\[
f \in \WT_\loc(\X) \quad \Rightarrow \quad \nabla f \in \W_{C, \loc}(\T\X), \text{ with } \nabla (\nabla f) = (\hess(f))^\sharp.
\]
Moreover, $\Test V(\X) \subset \W_C(\T \X)$ with
\[
\nabla v = \sum_i \nabla g_i \otimes \nabla f_i + g_i (\hess(f_i))^\sharp, \qquad \text{for} \,\, v = \sum_i g_i \nabla f_i.
\]
\medskip

\subsubsection{Product of $\RCD$ spaces}

We start recalling  the following fact, which allows to use the results obtained in Sections \ref{Sec:CalcPr} and \ref{sec:CalcPr2} in the case of interest for us:
\begin{proposition}\label{prop:rcdass}
Let $\X,\Y$ be two $\RCD(K,\infty)$ spaces. Then they satisfy Assumption \ref{ass} and the product $\X\times\Y$ is $\RCD(K,\infty)$ as well.
\end{proposition}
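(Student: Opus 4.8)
The statement bundles two claims: (1) two $\RCD(K,\infty)$ spaces $\X,\Y$ satisfy Assumption \ref{ass}, i.e.\ both the tensorization of the Cheeger energy (Definition \ref{def:tensch}) and the strong measurability of the sections (Definition \ref{def:smsec}); and (2) the product $\X\times\Y$ is again $\RCD(K,\infty)$. My plan is to treat these essentially as citations to the established structure theory, since both facts are known in the literature, but to indicate clearly which ingredient supplies each piece.

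\emph{Tensorization and the product $\CD$/$\RCD$ bound.} The stability of the $\CD(K,\infty)$ condition under Cartesian products is due to Sturm \cite{Sturm06I} (and Lott--Villani \cite{Lott-Villani09}): if $\X,\Y$ are $\CD(K,\infty)$, then so is $\X\times\Y$ with the $\ell^2$-product distance and product measure. For the $\RCD$ upgrade one needs, in addition, that $\X\times\Y$ is infinitesimally Hilbertian. This is precisely the content of the tensorization of the Cheeger energy: the identity \eqref{Chpr}, $\abs{\d f}^2=\abs{\d_\sx f}^2+\abs{\d_\sy f}^2$, shows that the minimal weak upper gradient on the product decomposes as an orthogonal sum, so that if the Cheeger energies on $\X$ and $\Y$ are both quadratic forms then so is the one on $\X\times\Y$. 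The tensorization property itself for $\RCD$ (equivalently, for spaces with the quadratic Cheeger energy satisfying a curvature bound) was established in \cite{AmbrosioGigliSavare11-2}; I would cite that result directly to obtain the first bullet of Assumption \ref{ass}, and combine it with Sturm's product theorem to conclude that $\X\times\Y$ is $\RCD(K,\infty)$.

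\emph{Strong measurability of the sections.} This is the more delicate measurability statement, and it is exactly the point flagged in the Remark following Assumption \ref{ass}: in \cite{GR17} this was deduced from the density of the ``product algebra'' in the strong topology of $W^{1,2}(\X\times\Y)$. For $\RCD$ spaces such density is available because the algebra of test functions $\test{\X}$ is dense in $W^{1,2}(\X)$ (and likewise on $\Y$), so that tensor products $g\otimes h$ with $g\in\test{\X}$, $h\in\test{\Y}$ generate a dense subalgebra of $W^{1,2}(\X\times\Y)$. For such tensor-product functions the maps $y\mapsto\d_\sx f$ and $x\mapsto\d_\sy f$ are manifestly essentially separably valued (they are, up to the scalar factors $h(y)$ resp.\ $g(x)$, constant in the module variable). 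One then passes to general $f\in\Sclass_\loc(\X\times\Y)$ by approximation, using the continuity statement in Lemma \ref{le:perprod} (the implication $\d f_n\to\d f \Rightarrow \d_\sx f_n\to\d_\sx f$ in $L^0(\Y,L^0(T^*\X))$) to transfer essential separability of the approximants to the limit, since an $L^0$-limit of essentially separably valued maps is again essentially separably valued.

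\emph{Main obstacle.} The genuinely non-trivial ingredient is the strong measurability of the sections, as it rests on the density of the product algebra in the strong $W^{1,2}$-topology on the product; this density is where the full strength of the $\RCD$ hypothesis (via density and stability of $\test{\X}$, $\test{\Y}$) is used. The tensorization of the Cheeger energy and the $\CD$-stability of products, by contrast, are by now standard and can be quoted. Consequently I would structure the proof as a short assembly of three external results—Sturm's product theorem for the curvature bound, the tensorization result of \cite{AmbrosioGigliSavare11-2} for infinitesimal Hilbertianity and the first bullet of Assumption \ref{ass}, and the product-algebra density argument of \cite{GR17} for the second bullet—rather than a self-contained computation.
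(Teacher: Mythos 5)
Your proposal is correct and follows essentially the same route as the paper, whose entire proof is the single citation to \cite[Theorem 5.1]{AmbrosioGigliSavare12}; your finer decomposition into three ingredients (curvature tensorization, tensorization of the Cheeger energy from \cite{AmbrosioGigliSavare11-2}, and strong measurability of the sections via the product-algebra density of \cite{GR17}) is a faithful unpacking of what that citation rests on, and it has the merit of explicitly addressing the measurability clause of Assumption \ref{ass}, which the bare citation does not mention. The one imprecision is attributing the stability of $\CD(K,\infty)$ under products to \cite{Sturm06I} and \cite{Lott-Villani09} without qualification: in those references tensorization of the curvature condition requires a non-branching hypothesis, and the argument actually used for $\RCD(K,\infty)$ spaces in \cite{AmbrosioGigliSavare12} proceeds instead through the ${\rm EVI}_K$ characterization of the heat flow, which tensorizes directly (alternatively one may invoke the essential non-branching of $\RCD$ spaces, but that is a later result and would make the argument circular here).
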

\begin{proof}
See \cite[Theorem 5.1]{AmbrosioGigliSavare12}.
\end{proof}
A direct consequence of the tensorization of the Cheeger energy is that the heat flow tensorizes as well. In what follows we will denote by $\h_t^\X,\h_t^\Y,\h_t$ the heat flows on $\X,\Y,\X\times\Y$ respectively. We will also apply the operator $\h_t^\X$ to functions in two variables: in this case we mean that `$y$' variable is `frozen', i.e.:
\[
\h_t^\X f(x,y):=\h_t^\X (f(\cdot, y))(x)\qquad\mm_\X\otimes\mm_\Y-a.e.\ x,y.
\]
We then have:
\begin{corollary}\label{cor:th}
Let $\X,\Y$ be two $\RCD(K,\infty)$ spaces. Then for every $f\in L^2(\X\times\Y)$ we have
\begin{equation}
\label{eq:tenscal}
\h_tf=\h_t^\X(\h_t^\Y f)=\h_t^\Y(\h_t^\X f).
\end{equation}
In particular, for $f=f_1\otimes f_2$ with $f_1\in L^2(\X)$ and $f_2\in L^2(\Y)$ we have
\begin{equation}
\label{eq:tenscal2}
\h_tf=\h_t^\X f_1\otimes \h_t^\Y f_2.
\end{equation}
\end{corollary}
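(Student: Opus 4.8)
The plan is to first establish the product formula \eqref{eq:tenscal2} for elementary tensors $f=f_1\otimes f_2$, and then to bootstrap to the general identity \eqref{eq:tenscal} by linearity, density and continuity. To prepare the second step I would note that all three operators appearing in \eqref{eq:tenscal}, namely $f\mapsto \h_tf$, $f\mapsto\h_t^\X(\h_t^\Y f)$ and $f\mapsto\h_t^\Y(\h_t^\X f)$, are bounded on $L^2(\X\times\Y)$: the first by \eqref{eq:maxh}, while $\h_t^\X$ and $\h_t^\Y$ are contractions on $L^2(\X\times\Y)\cong L^2(\Y;L^2(\X))\cong L^2(\X;L^2(\Y))$, being obtained by applying fibrewise the contractions $\h_t^\X$ on $L^2(\X)$, resp.\ $\h_t^\Y$ on $L^2(\Y)$ (use Fubini and \eqref{eq:maxh} on each factor). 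Since elementary tensors span a dense subspace of $L^2(\X\times\Y)$, it then suffices to check that the three operators agree on such functions.

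So the core step is to fix $f_1\in L^2(\X)$, $f_2\in L^2(\Y)$ and to identify $\h_t(f_1\otimes f_2)$. Freezing the inactive variable and using linearity of the fibrewise flows, one sees at once that $\h_t^\Y(f_1\otimes f_2)=f_1\otimes\h_t^\Y f_2$ and hence $\h_t^\X(\h_t^\Y(f_1\otimes f_2))=\h_t^\X f_1\otimes\h_t^\Y f_2=\h_t^\Y(\h_t^\X(f_1\otimes f_2))$; it remains to match this with $\h_t(f_1\otimes f_2)$. I would set $u_t:=\h_t^\X f_1\otimes\h_t^\Y f_2$ and verify that $t\mapsto u_t$ is the heat-flow trajectory issued from $f_1\otimes f_2$. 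Continuity on $[0,\infty)$ and the initial condition $u_0=f_1\otimes f_2$ are immediate from the strong continuity of $\h^\X,\h^\Y$ and the fact that $(a,b)\mapsto a\otimes b$ is bilinear and continuous with $\|a\otimes b\|_{L^2(\X\times\Y)}=\|a\|_{L^2(\X)}\|b\|_{L^2(\Y)}$. For $t>0$ one has $\h_t^\X f_1\in D(\Delta_\X)$ and $\h_t^\Y f_2\in D(\Delta_\Y)$, so Proposition \ref{cor:lapprod} applies to the tensor $u_t$ and gives $u_t\in D(\Delta)$ with
\[
\Delta u_t=(\Delta_\X\h_t^\X f_1)\otimes\h_t^\Y f_2+\h_t^\X f_1\otimes(\Delta_\Y\h_t^\Y f_2).
\]
On the other hand, the Hilbert-valued curves $t\mapsto\h_t^\X f_1$ and $t\mapsto\h_t^\Y f_2$ are locally absolutely continuous on $(0,\infty)$ with strong derivatives $\Delta_\X\h_t^\X f_1$ and $\Delta_\Y\h_t^\Y f_2$, so the Leibniz rule for the bounded bilinear map $\otimes$ yields that $t\mapsto u_t$ is locally absolutely continuous with $\frac{\d}{\d t}u_t=(\Delta_\X\h_t^\X f_1)\otimes\h_t^\Y f_2+\h_t^\X f_1\otimes(\Delta_\Y\h_t^\Y f_2)$. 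Comparing the two expressions gives $\frac{\d}{\d t}u_t=\Delta u_t$ for all $t>0$, so by uniqueness of the gradient flow of the Cheeger energy $u_t=\h_t(f_1\otimes f_2)$, which is \eqref{eq:tenscal2}; then \eqref{eq:tenscal} follows from the density and continuity recorded above.

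The only genuinely delicate point is the differentiation of the tensor $u_t=\h_t^\X f_1\otimes\h_t^\Y f_2$: I must ensure that the strong $L^2$-derivatives of the two factor curves combine, through the Leibniz rule for the bounded bilinear map $(a,b)\mapsto a\otimes b$, into the strong $L^2(\X\times\Y)$-derivative of $u_t$, and that the resulting vector coincides with the product formula for $\Delta u_t$ supplied by Proposition \ref{cor:lapprod}. Once these two independent computations of $\frac{\d}{\d t}u_t$ and of $\Delta u_t$ are seen to agree, uniqueness of the heat flow closes the argument. I expect the integrability hypotheses required to invoke Proposition \ref{cor:lapprod} to hold automatically, since each factor lies in $L^2$ and, for $t>0$, in the relevant domain of the Laplacian, so that all the fibrewise quantities and their products are square integrable on $\X\times\Y$.
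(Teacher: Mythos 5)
Your argument is correct, but it is worth noting that the paper does not actually prove this corollary: it simply refers to \cite{AmbrosioGigliSavare11-2} and \cite{AmbrosioGigliSavare12}, where the tensorization of the heat flow is deduced from the tensorization of the Cheeger energy. Your proposal instead gives a self-contained derivation from results already recorded in the paper, namely Proposition \ref{cor:lapprod} and the Brezis--Komura uniqueness for gradient flows of convex lower semicontinuous functionals recalled in Section 2.1, and this is a perfectly legitimate route given that Proposition \ref{prop:rcdass} is taken as a black box. The structure (verify the formula on elementary tensors by identifying $u_t:=\h_t^\X f_1\otimes\h_t^\Y f_2$ as the gradient-flow trajectory, then extend by density and the uniform $L^2$-contractivity of all three operators) is sound, and the Leibniz rule for the bounded bilinear map $(a,b)\mapsto a\otimes b$ applied to the locally absolutely continuous factor curves is the right way to compute $\frac{\d}{\d t}u_t$. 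The only step you should make explicit is the hypothesis $u_t\in W^{1,2}(\X\times\Y)$ required by part (ii) of Proposition \ref{cor:lapprod}: this does hold, since for $t>0$ each factor lies in $W^{1,2}$ of its space and the tensorization of the Cheeger energy (Definition \ref{def:tensch}, valid here by Proposition \ref{prop:rcdass}) upgrades the fibrewise Sobolev regularity, with the evident integrability of $|\d_\sx u_t|$ and $|\d_\sy u_t|$, to membership in $W^{1,2}(\X\times\Y)$; as stated, your appeal to that proposition only checks the fibrewise Laplacian hypotheses. With that addition the proof is complete.
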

\begin{proof}
See {\cite[Section 6.4]{AmbrosioGigliSavare11-2}} and {\cite[Section 5.1]{AmbrosioGigliSavare12}}.
\end{proof}
\subsubsection{Setting up the problem}

Let us fix two $\RCD(K,\infty)$ spaces $\X,\Y$. Much like in the previous chapter we shall put subscripts $\sx,\sy$ to differentiation operators to specify that they act on the $x,y$ variable respectively. Thus, for instance,  with some abuse of notation we shall denote by $\H_\sx(f)$ both the Hessian of a function $f\in W^{2,2}(\X)$ and the map $y\mapsto \H_\sx(f(\cdot,y))$, if $f$ is defined on $\X\times\Y$ and is such that $f(\cdot,y)\in W^{2,2}(\X)$ for $\mm_\Y$-a.e.\ $y\in\Y$. It may also occur that for $f\in  W^{2,2}(\X)$ we write $\H_\sx(f)$ to denote the constant map $y\mapsto \H_\sx(f)\in L^2((T^*)^{\otimes 2}\X)$: this choice will alleviate the notational burden, and it will be clear from the context when we will be doing so.

\bigskip

With this said,  in Section \ref{sec:Hess} we prove that the Hessian of a function $f$ on $\X\times\Y$ admits a decomposition of the form
\[
\H (f)=\Bigg(
\begin{array}{cc}
\H_\sx (f)&\d_\sx\d_\sy f\\
\d_\sy\d_\sx f&\H_\sy (f)
\end{array}
\Bigg)
\]
meaning that the right hand side is a well defined tensor in $L^2$ if and only if so it is the left hand side, and similarly in Section \ref{sec:CovDer} we prove that the Covariant Derivative of a vector field $v = (v_{\sx}, v_{\sy})$ in the tangent module over $\X \times \Y$ admits a decomposition of the form
\[
\nabla_C v  =\Bigg(
\begin{array}{cc}
\nabla_{C, \sx} v_{\sx} &\quad\d_\sx v_{\sy}\\
\d_\sy v_\sx&\quad \nabla_{C, \sy} v_{\sy}
\end{array}
\Bigg).
\] 
\black
To make this rigorous we start from Theorem \ref{dectang} and notice that $\Phi_\sx\colon L^0(\Y;L^0(\T^*\X))\to L^0(T^*(\X\times\Y))$ induces a map,  denoted by $\Phi_\sx^{\otimes 2}$, from $L^0(\Y;L^0(\T^*\X))\otimes L^0(\Y;L^0(\T^*\X))$ to $L^0(T^*(\X\times\Y))\otimes L^0(T^*(\X\times\Y))=L^0((T^*)^{\otimes 2}(\X\times\Y))$ by acting component-wise. More precisely, the $L^0(\X\times\Y)$-linear extension of the map
\[
\omega_1\otimes\omega_2\quad\mapsto\quad \Phi_\sx(\omega_1)\otimes \Phi_\sx(\omega_2)\qquad\forall \omega_1,\omega_2\in L^0(\Y;L^0(\T^*\X))
\]
(which is easily seen to be well defined) can be extended by continuity to a norm-preserving map. This is a consequence of the fact that $\Phi_\sx$ itself is norm-preserving (see Proposition \ref{prop:defPhi}), so that  we have
\[
\begin{split}
\Big|\sum_i\Phi_\sx(\omega_{i,1})\otimes\Phi_\sx(\omega_{i,2})\Big|^2&=\sum_{i,j}\la\Phi_\sx(\omega_{i,1}),\Phi_\sx(\omega_{j,1})\ra\la\Phi_\sx(\omega_{i,2}),\Phi_\sx(\omega_{j,2})\ra\\
&=\sum_{i,j}\la \omega_{i,1}, \omega_{j,1}\ra\la \omega_{i,2}, \omega_{j,2}\ra=\Big|\sum_i\omega_{i,1}\otimes\omega_{i,2}\Big|^2
\end{split}
\]
for any finite choice of $\omega_{i,1},\omega_{i,2}\in L^0(\Y;L^0(\T^*\X))$.

In a similar way we can define the maps 
\[
\begin{split}
\Phi_\sy^{\otimes 2}&:L^0(\X;L^0(\T^*\Y))\otimes L^0(\X;L^0(\T^*\Y))\quad \to\quad L^0((T^*)^{\otimes 2}(\X\times\Y))\\
\Phi_\sx\otimes\Phi_\sy&:L^0(\Y;L^0(\T^*\X))\otimes L^0(\X;L^0(\T^*\Y))\quad \to\quad L^0((T^*)^{\otimes 2}(\X\times\Y))\\
\Phi_\sy\otimes\Phi_\sx&:L^0(\X;L^0(\T^*\Y))\otimes L^0(\Y;L^0(\T^*\X))\quad \to\quad L^0((T^*)^{\otimes 2}(\X\times\Y))
\end{split}
\]
and prove that they are $L^0(\X\times\Y)$-linear and norm preserving. 

Then, from the fact that $\Phi_\sx\oplus\Phi_\sy$ is an isomorphism of modules, we deduce that 
\[
(\Phi_\sx\oplus\Phi_\sy)^{\otimes 2}=(\Phi_\sx^{\otimes 2})\oplus(\Phi_\sx\otimes\Phi_\sy)\oplus(\Phi_\sy\otimes\Phi_\sx)\oplus(\Phi^{\otimes 2}_\sy)
\] is also an isomorphism of modules, in the sense made precise by the following statement:
\begin{proposition}
Let $\X,\Y$ be $\RCD(K,\infty)$ spaces. Then, with the notation introduced above, the following holds.

For any $A\in  L^0((T^*)^{\otimes 2}(\X\times\Y))$ there are unique $A_{\sx\sx},A_{\sx\sy},A_{\sy\sx},A_{\sy\sy}\in  L^0((T^*)^{\otimes 2}(\X\times\Y))$ in the images of $\Phi_\sx^{\otimes 2}, \Phi_\sx\otimes\Phi_\sy,\Phi_\sy\otimes\Phi_\sx,\Phi^{\otimes 2}_\sy$, respectively, such that
\begin{equation}
\label{eq:decA}
A=A_{\sx\sx}+A_{\sx\sy}+A_{\sy\sx}+A_{\sy\sy}.
\end{equation}
Moreover, we have $\la A_{ij},A_{i'j'}\ra=0$ whenever $(i,j)\neq (i',j')$ and thus
\begin{equation}
\label{eq:decA2}
|A|^2=|A_{\sx\sx}|^2+|A_{\sx\sy}|^2+|A_{\sy\sx}|^2+|A_{\sy\sy}|^2\qquad\mm_\X\otimes\mm_\Y\text{-a.e.}.
\end{equation}
\end{proposition}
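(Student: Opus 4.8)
The plan is to reduce everything to the first-order decomposition of Theorem \ref{dectang} together with the orthogonality relation \eqref{eq:orto}, both of which are available here since $\X,\Y$ are $\RCD$ and hence (by Proposition \ref{prop:rcdass}) infinitesimally Hilbertian spaces satisfying Assumption \ref{ass}. Write $M_\sx:=\mathrm{Im}(\Phi_\sx)$ and $M_\sy:=\mathrm{Im}(\Phi_\sy)$ for the two submodules of $L^0(T^*(\X\times\Y))$; by Theorem \ref{dectang} every $\omega\in L^0(T^*(\X\times\Y))$ splits uniquely as $\omega=\Phi_\sx(\omega_\sx)+\Phi_\sy(\omega_\sy)$, and by \eqref{eq:orto} this splitting is pointwise orthogonal. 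The four target images in the statement are then exactly the images of the four pieces of the tensor square of this splitting, and the whole proposition is the assertion that this tensorized splitting is again orthogonal.

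For existence I would first treat generators. Recall that $L^0((T^*)^{\otimes 2}(\X\times\Y))=L^0(T^*(\X\times\Y))\otimes L^0(T^*(\X\times\Y))$ is generated, as an $L^0(\X\times\Y)$-module, by simple tensors $\omega_1\otimes\omega_2$. Splitting each factor as above and expanding bilinearly gives
\[
\omega_1\otimes\omega_2=\Phi_\sx^{\otimes 2}((\omega_1)_\sx\otimes(\omega_2)_\sx)+(\Phi_\sx\otimes\Phi_\sy)((\omega_1)_\sx\otimes(\omega_2)_\sy)+(\Phi_\sy\otimes\Phi_\sx)((\omega_1)_\sy\otimes(\omega_2)_\sx)+\Phi_\sy^{\otimes 2}((\omega_1)_\sy\otimes(\omega_2)_\sy),
\]
so every generator, and hence every finite $L^0$-linear combination of generators, decomposes into the four images, which already settles \eqref{eq:decA} on a dense subset of $L^0((T^*)^{\otimes 2}(\X\times\Y))$.

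Next I would establish the orthogonality, since it is what upgrades the above to a clean statement for a general $A$. By $L^0$-bilinearity and continuity of the pointwise scalar product it suffices to test on generators of the two images involved, where the inner product of simple tensors factorizes, e.g.
\[
\la\Phi_\sx(\alpha)\otimes\Phi_\sx(\alpha'),\,\Phi_\sx(\gamma)\otimes\Phi_\sy(\delta)\ra=\la\Phi_\sx(\alpha),\Phi_\sx(\gamma)\ra\,\la\Phi_\sx(\alpha'),\Phi_\sy(\delta)\ra.
\]
Whenever the two index pairs differ, at least one slot pairs an element of $M_\sx$ with one of $M_\sy$, so that factor vanishes by \eqref{eq:orto}; hence $\la A_{ij},A_{i'j'}\ra=0$ for $(i,j)\neq(i',j')$. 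In particular each of the four images is a \emph{closed} submodule, being the image of one of the norm-preserving (hence isometric, with complete domain) maps $\Phi_\sx^{\otimes 2},\Phi_\sx\otimes\Phi_\sy,\Phi_\sy\otimes\Phi_\sx,\Phi_\sy^{\otimes 2}$; and the Pythagorean identity \eqref{eq:decA2} is then immediate from \eqref{eq:decA} and orthogonality.

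Finally, for a general $A$ I would approximate it by finite combinations of generators $A_n\to A$, decompose each $A_n=\sum_{ij}A_n^{ij}$ into its four orthogonal pieces, and use orthogonality to write $|A_n-A_m|^2=\sum_{ij}|A_n^{ij}-A_m^{ij}|^2$ pointwise; since $|A_n^{ij}-A_m^{ij}|\le|A_n-A_m|$ each sequence $(A_n^{ij})_n$ is $L^0$-Cauchy and thus converges, by closedness of the images, to some $A^{ij}$ in the corresponding image, with $A=\sum_{ij}A^{ij}$. Uniqueness follows again from orthogonality: if $\sum_{ij}(A^{ij}-\tilde A^{ij})=0$ with each difference in the respective image, then $0=\sum_{ij}|A^{ij}-\tilde A^{ij}|^2$ forces $A^{ij}=\tilde A^{ij}$. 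The only genuinely delicate point I expect is this last convergence step, namely guaranteeing that the four images are closed and that the componentwise limits remain inside them; this is precisely where completeness and the isometric (norm-preserving) nature of the four maps, already recorded above, are essential, and all the remaining steps are formal consequences of bilinearity and \eqref{eq:orto}.
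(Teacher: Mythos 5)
Your proof is correct and follows essentially the same route as the paper's: bilinear expansion of simple tensors via the first-order splitting of Theorem \ref{dectang}, pointwise orthogonality of the four images via \eqref{eq:orto} tested on generators, and a density/continuity argument to pass to general $A$, with uniqueness falling out of the Pythagorean identity. The only difference is cosmetic: where the paper simply invokes continuity of the maps $A\mapsto A_{ij}$ granted by \eqref{eq:decA2}, you spell out the Cauchy-sequence argument and the closedness of the images of the norm-preserving maps, which is a correct and slightly more explicit rendering of the same step.
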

\begin{proof} We start proving that the images of $\Phi_\sx^{\otimes 2}$ and $\Phi_\sx\otimes\Phi_\sy$ are pointwise orthogonal. Since we  know that these maps are continuous, it is sufficient to check that $\la\Phi_\sx^{\otimes 2}(V), \Phi_\sx\otimes\Phi_\sy(W)\ra=0$ for arbitrary $V,W$ in dense subsets of the respective source spaces. We thus pick $V=\sum_iv_{1,i}\otimes v_{2,i}$ and $W=\sum_jv_{3,j}\otimes w_j$ with $v_{1,i},v_{2,i},v_{3,j}\in L^0(\Y;L^0(\T^*\X))$ and $w_j\in L^0(\X;L^0(\T^*\Y))$. Then
\[
\begin{split}
\la\Phi_\sx^{\otimes 2}(V), \Phi_\sx\otimes\Phi_\sy(W)\ra&=\sum_{i,j}\la\Phi_\sx(v_{1,i}){\otimes }\Phi_\sx(v_{2,i}), \Phi_\sx(v_{3,j})\otimes\Phi_\sy(w_{j})\ra\\
&=\sum_{i,j}\la\Phi_\sx(v_{1,i}),\Phi_\sx(v_{3,j})\ra\underbrace{\la \Phi_\sx(v_{2,i}),\Phi_\sy(w_{j})\ra}_{\equiv 0\atop\text{ by \eqref{eq:orto}}}=0
\end{split}
\]
as desired. Similarly for other couple of images. This is enough to prove that whenever \eqref{eq:decA} holds, the identity \eqref{eq:decA2} holds as well and in turn this proves uniqueness of the $A_{ij}$'s as in the statement.

Thus it remains to prove the existence of $A_{11},A_{12},A_{21}$, and $A_{22}$. Since \eqref{eq:decA2} shows that the maps $A\mapsto A_{ij}$, $i,j\in\{\sx,\sy\}$, are continuous, we can validate such existence just for $A$'s running in a dense subset of $L^0((T^*)^{\otimes 2}(\X\times\Y))$. We thus pick $A$ of the form $A=\sum_i\omega_i\otimes\omega_i'$ for $\omega_i,\omega_i'\in L^0(T^*(\X\times\Y))$, and recall that, from the surjectivity part of the statement of Theorem \ref{dectang}, for any $i$ we have $\omega_i=\Phi_\sx(\omega_{i,\sx})+\Phi_\sy(\omega_{i,\sy})$, and similarly $\omega_i'=\Phi_\sx(\omega'_{i,\sx})+\Phi_\sy(\omega'_{i,\sy})$, for appropriate $\omega_{i,\sx},\omega'_{i,\sx}\in  L^0(\Y;L^0(\T^*\X))$ and $\omega_{i,\sy},\omega'_{i,\sy}\in  L^0(\X;L^0(\T^*\Y))$.

It is then clear from the definitions that with the choices
\[
\begin{array}{rll}
&A_{\sx\sx}:=\Phi_\sx^{\otimes 2}\Big(\sum_i\omega_{i,\sx}\otimes\omega'_{i,\sx}\Big),\qquad\qquad\qquad &A_{\sx\sy}:=\Phi_\sx\otimes\Phi_\sy\Big(\sum_i\omega_{i,\sx}\otimes\omega'_{i,\sy}\Big),\\
&A_{\sy\sx}:=\Phi_\sy\otimes \Phi_\sx\Big(\sum_i\omega_{i,\sy}\otimes\omega'_{i,\sx}\Big),\qquad\qquad\qquad  &A_{\sy\sy}:=\Phi_\sy^{\otimes 2}\Big(\sum_i\omega_{i,\sy}\otimes\omega'_{i,\sy}\Big),\\
\end{array}
\]
the equality \eqref{eq:decA} holds, thus giving the conclusion.
\end{proof}

\subsection{Hessian on the product space}\label{sec:Hess}
\subsubsection{From regularity on the factors to regularity on the product}
In this section we prove that if a function of two variables is such that the functions obtaining by freezing one variable are in $W^{2,2}$ and with appropriate integrability assumptions, then the function itself belongs to $W^{2,2}(\X\times\Y)$, see Theorem \ref{prop:dafatt} for the precise statement (and Proposition \ref{prop:dafatt2} for a suboptimal version about the space $H^{2,2}(\X\times\Y)$).

It will be useful to consider the following algebra of functions:
\begin{definition}[The space $\Test_\times\fct(\X\times\Y)$]
Let $\X,\Y$ be two $\RCD(K,\infty)$ spaces. 

Then $\Test_\times\fct(\X\times\Y)$ is the space of finite linear combinations of functions of the form $g_1\otimes g_2$, with $g_1\in\test{\X}$ and $g_2\in\test{\Y}$.
\end{definition}

From the tensorization of the Cheeger energy in Definition \ref{def:tensch} (which holds in this setting because of Proposition \ref{prop:rcdass}), and the tensorization of the Laplacian given in  Corollary \ref{cor:lapprod}, it is easy to see that $\Test_\times(\X\times\Y)\subset\test{\X\times\Y}$. This newly defined space of functions will be useful in proving the main result of this section, Theorem \ref{prop:dafatt}, because, as we will see in Proposition \ref{le:w22tp}, to check whether a function is in $W^{2,2}(\X\times\Y)$ it is sufficient to verify the defining property  of the Hessian only for functions in the smaller space $\Test_\times(\X\times\Y)$, where computations are easier. In turn, this latter fact will be a direct consequence of the following density result:

\begin{lemma}\label{le:apprtestx}
Let $\X,\Y$ be two $\RCD(K,\infty)$ spaces and $g\in L^2\cap L^\infty({\X\times\Y})$ (this in particular holds if $g\in \test{X\times\Y}$). Then we can find  functions $g_{n,t}\in \Test_\times\fct(\X\times\Y)$, parametrized by $n\in\N$ and $t > 0$, such that for every $t>0$ it holds:
\begin{itemize}
\item[i)] $\sup_{n}\|g_{n,t}\|_{L^\infty}<\infty$,
\item[ii)] $\sup_n\Lip(g_{n,t})<\infty$,
\item[iii)] $(g_{n,t}),(\Delta g_{n,t})$ converge to $\h_tg,\Delta\h_t g$ respectively in $L^2(\X\times\Y)$ as  $n\to\infty$,
\item[iv)] $(\nabla g_{n,t}),(\nabla(|\nabla g_{n,t}|^2))$ converge to $\nabla \h_t g,\nabla(|\nabla\h_t g|^2)$ respectively in $L^2(T(\X\times\Y))$ as  $n\to\infty$.
\end{itemize}
\end{lemma}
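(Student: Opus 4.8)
The plan is to build the functions $g_{n,t}$ as $\h_t g_n$, where $(g_n)$ is a sequence of \emph{product simple functions} approximating $g$ in $L^2(\X\times\Y)$ with a uniform $L^\infty$ bound, and then to read off (i)--(iv) from the a priori estimates for the (product) heat flow. Since $\X\times\Y$ is $\RCD(K,\infty)$ by Proposition \ref{prop:rcdass}, all the RCD calculus recalled above is available on the product, and the tensorization formula \eqref{eq:tenscal2} together with \eqref{eq:l2test} will be what forces membership in $\Test_\times\fct(\X\times\Y)$.

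First I would construct $(g_n)$. Set $M:=\|g\|_{L^\infty}$. Approximating $g$ in $L^2(\X\times\Y)$ by simple functions subordinate to finite-measure rectangles, and noting that the algebra generated by finitely many rectangles has atoms of product type $P_i\times Q_j$, every such simple function has the form $\sum_{i,j}c_{ij}\,\nchi_{P_i}\otimes\nchi_{Q_j}$. Truncating at level $M$ (a $1$-Lipschitz operation that fixes $g$, since $|g|\le M$) preserves both the $L^2$-convergence and this product structure, because the truncated map is again constant on the cells $P_i\times Q_j$; it also gives $\|g_n\|_{L^\infty}\le M$. Putting $g_{n,t}:=\h_t g_n$ and using \eqref{eq:tenscal2} yields $g_{n,t}=\sum_{i,j}c_{ij}\,\h_t^\X\nchi_{P_i}\otimes\h_t^\Y\nchi_{Q_j}$, which lies in $\Test_\times\fct(\X\times\Y)$ since $\h_t^\X\nchi_{P_i}\in\test\X$ and $\h_t^\Y\nchi_{Q_j}\in\test\Y$ by \eqref{eq:l2test}.

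Properties (i)--(iii) and the linear half of (iv) are then immediate. For (i) I use the standard $L^\infty$-contractivity of the heat semigroup to bound $\|g_{n,t}\|_{L^\infty}\le\|g_n\|_{L^\infty}\le M$. For (ii), estimate \eqref{eq:linftylip} gives $\||\d g_{n,t}|\|_{L^\infty}\le (2I_{2K}(t))^{-1/2}M$, and \eqref{eq:SobtoLip} turns this into a uniform Lipschitz bound. For (iii) I note $g_n\to g$ in $L^2$, so $g_{n,t}\to\h_t g$ by the $L^2$-contractivity \eqref{eq:maxh}, while $\Delta g_{n,t}\to\Delta\h_t g$ follows because $\Delta\h_t$ is a bounded linear operator by \eqref{apriori2}; likewise $\nabla g_{n,t}\to\nabla\h_t g$ in $L^2(T(\X\times\Y))$ from the bound $\||\d\h_t(g_n-g)|\|_{L^2}^2\le(2t)^{-1}\|g_n-g\|_{L^2}^2$ contained in \eqref{apriori1}.

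The main obstacle is the nonlinear convergence $\nabla(|\nabla g_{n,t}|^2)\to\nabla(|\nabla\h_t g|^2)$ in (iv). Here I would use the identity $\frac12\nabla(|\nabla f|^2)=(\H f(\nabla f,\cdot))^\sharp$ for test functions $f$ (a consequence of the Hessian formula recalled above, taken with $g=f$), so it suffices to show $\H u_n(\nabla u_n,\cdot)\to\H u(\nabla u,\cdot)$ in $L^2(T^*(\X\times\Y))$, where $u_n:=g_{n,t}$ and $u:=\h_t g$. Applying \eqref{eq:hesscont} to the differences $u_n-u_m\in D(\Delta)$, the already-established $L^2$-convergence of $\Delta u_n$ and $\nabla u_n$ forces $(\H u_n)$ to be $L^2$-Cauchy, whence $\H u_n\to\H u$ in $L^2((T^*)^{\otimes2}(\X\times\Y))$ by closedness of the Hessian. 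Writing $\H u_n(\nabla u_n,\cdot)-\H u(\nabla u,\cdot)=\H u_n(\nabla u_n-\nabla u,\cdot)+(\H u_n-\H u)(\nabla u,\cdot)$, the second term is bounded by $L\,\|\H u_n-\H u\|_{L^2}\to0$ using the uniform bound $\||\nabla u|\|_{L^\infty}\le L:=(2I_{2K}(t))^{-1/2}M$ from step (ii). For the first term I bound its squared $L^2$-norm by $\int|\H u_n|_\HS^2\,|\nabla u_n-\nabla u|^2$; since $|\H u_n|_\HS^2\to|\H u|_\HS^2$ in $L^1$ (hence is uniformly integrable) while $|\nabla u_n-\nabla u|^2\to0$ a.e.\ along a subsequence and stays bounded by $(2L)^2$, a Vitali-type argument shows this integral tends to $0$, and the usual subsequence argument upgrades the conclusion to the full sequence. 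This combination of the uniform gradient bound with the Hessian estimate is the delicate point of the whole lemma.
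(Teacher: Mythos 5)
Your proposal is correct and follows essentially the same route as the paper: approximate $g$ in $L^2$ by equibounded simple functions supported on product rectangles, set $g_{n,t}:=\h_t g_n$ and use \eqref{eq:tenscal2} with \eqref{eq:l2test} to land in $\Test_\times\fct(\X\times\Y)$, derive (i)--(iii) and the linear half of (iv) from the a priori heat-flow estimates, and handle $\nabla(|\nabla g_{n,t}|^2)=2\H(g_{n,t})(\nabla g_{n,t})$ by combining $L^2$-convergence of the Hessians (via \eqref{eq:hesscont}) with the uniform Lipschitz bound from (ii). The only cosmetic differences are that the paper gets Hessian convergence by applying \eqref{eq:hesscont} directly to $g_{n,t}-\h_t g$ rather than through a Cauchy argument, and splits the product into three terms each controlled by the uniform gradient bound and dominated convergence instead of your two-term split with a Vitali argument; both variants are sound.
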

\begin{proof} It is easy to see that we can find a sequence of equibounded functions  $(g_n)$ $L^2$-converging to $g$ of the  form $g_n=\sum_{i}^{N_n}\alpha_i \nchi_{A_{n,i} \times B_{n,i}}$, where the sets $A_i \subset \X$ and $B_i \subset \Y$ are Borel and bounded and $\alpha_{i} \in \real$. We then put $g_{n,t}:=\h_t(g_n)$ and notice that Corollary \ref{cor:th}  and \eqref{eq:l2test} ensures that $g_{n,t}\in \Test_\times\fct(\X\times\Y)$ for any $n\in\N$, $t>0$.

Then property $(i)$ follows from \eqref{eq:maxh} and $(ii)$ from \eqref{eq:linftylip}, \eqref{eq:SobtoLip} and $(i)$. The first convergence in $(iii)$ follows from the continuity of the heat flow as a map from $L^2$ into itself and the second by also taking into account the a priori estimate \eqref{apriori2}. The fact that $\nabla g_{n,t}\to\nabla \h_tg$ in $L^2(T(\X\times\Y))$ follows from \eqref{apriori1}, which ensures that $\h_t:L^2\to W^{1,2}$ is a continuous operator. 

It remains to prove $L^2$-convergence of $(\nabla(|\nabla g_{n,t}|^2))$ to $\nabla(|\nabla \h_tg|^2)$. To see this, start observing that $\nabla(|\nabla f|^2)=2\H( f)(\nabla f)$ for any $f\in\test{\X\times\Y}$, and notice that what already proved together with  \eqref{eq:hesscont} show that $\H(g_{n,t})\to \H(\h_tg)$ in $L^2((T^*)^{\otimes 2}(\X\times\Y))$ as $n\to\infty$ for every $t>0$.

Now put $g_t:=\h_tg$ and observe that
\[
\begin{split}
&\|\H(g_{n,t})(\nabla g_{n,t})-\H(g_t)(\nabla g_t)\|_{L^2}\\
&\leq\|\H(g_{n,t}-g_t)(\nabla g_{n,t}-\nabla g_t)\|_{L^2}+\|\H(g_t)(\nabla g_{n,t}-\nabla g_t)\|_{L^2}+\|\H(g_{n,t}-g_t)(\nabla g_t)\|_{L^2}.
\end{split}
\]
As $n\to\infty$ we see from $(ii)$ and the $L^2$-convergence of Hessians that the first and third addends in the right hand side go to 0. For the second we use  the $L^2$-convergence of gradients and the fact that they are equibounded to conclude with the dominated convergence theorem.
\end{proof}

\begin{proposition}\label{le:w22tp}
Let $\X,\Y$ be two $\RCD(K,\infty)$ spaces,  $f\in \W (\X\times\Y)$ and $A\in L^2((T^*)^{\otimes 2}(\X\times\Y))$ symmetric.

Then $f\in W^{2,2}(\X\times\Y)$ with $\H (f) = A$ if and only if we have
\begin{equation}
\label{eq:testh}
-\int \la\d f,\d g\ra \,\div(h\nabla g)+\tfrac12h\langle\d f,\d {|\d g|^2}\rangle\,\d(\mm_\X\times\mm_Y)=\int h \la A,\d g\otimes d g\ra\,\d(\mm_\X\times\mm_Y)
\end{equation}
for every $g,h\in\Test_{\times}\fct(\X\times\Y)$.
\end{proposition}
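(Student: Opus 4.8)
The plan is to read \eqref{eq:testh} as the \emph{diagonal} (quadratic) case $\tilde g=g$ of the bilinear identity \eqref{eq:defhess} defining the Hessian, and to use that the symmetry of $A$ lets polarization recover the full bilinear identity from its diagonal. Thus the proposition splits into two tasks: a polarization trading the quadratic formulation for the bilinear one, and a density argument replacing the full test class $\test{\X\times\Y}$ by the algebra $\Test_\times\fct(\X\times\Y)$.

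For the ``only if'' direction, if $f\in W^{2,2}(\X\times\Y)$ with $\H(f)=A$ then \eqref{eq:defhess} holds for all $g,\tilde g,h\in\test{\X\times\Y}$ with bounded support; setting $\tilde g=g$ and halving yields exactly \eqref{eq:testh}. Since $\Test_\times\fct(\X\times\Y)\subset\test{\X\times\Y}$, all that remains is to drop the bounded-support restriction on $g,h$. This is routine: for test functions all the integrands are globally in $L^1$ (using $\d f\in L^2$, boundedness of $g,h$ and of $|\d g|,|\d h|$, $\Delta g\in L^2$, and $|\nabla g|\in L^4$ coming from $|\nabla g|^2\in W^{1,2}$), so multiplying $h$ by the cut-offs of \eqref{eq:testcutoff} and letting their supports exhaust $\X\times\Y$ gives the claim by dominated convergence.

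The substance is the ``if'' direction. Fix $h\in\Test_\times\fct(\X\times\Y)$. For $g_0\in L^2\cap L^\infty(\X\times\Y)$ and $t>0$, apply Lemma \ref{le:apprtestx} to $g_0$ to get $g_{n,t}\in\Test_\times\fct(\X\times\Y)$ with uniform $L^\infty$ and Lipschitz bounds and with $\Delta g_{n,t}$, $\nabla g_{n,t}$, $\nabla(|\nabla g_{n,t}|^2)$ converging in $L^2$ to the corresponding quantities of $\h_t g_0$. Writing \eqref{eq:testh} for $(g_{n,t},h)$ and letting $n\to\infty$, each term passes to the limit: $\div(h\nabla g_{n,t})=h\Delta g_{n,t}+\la\nabla h,\nabla g_{n,t}\ra$ converges strongly in $L^2$, while $\la\d f,\d g_{n,t}\ra$ is bounded in $L^2$ (uniform Lipschitz bound) and converges in $L^1$, hence weakly in $L^2$, so the product integrates to the limit by a weak-strong pairing; the second-order term pairs the $L^2$-limit of $\nabla(|\nabla g_{n,t}|^2)$ against $h\,\d f\in L^2$; and the right-hand side converges since $A\in L^2$ and $\nabla g_{n,t}$ is $L^\infty$-bounded and $L^2$-convergent. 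This yields \eqref{eq:testh} for $g=\h_t g_0$ and every $h\in\Test_\times\fct(\X\times\Y)$.

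Finally I would remove the last restrictions and polarize. Taking $g_0=g$ a test function and letting $t\downarrow 0$, one has $\d\h_t g\to\d g$ in $L^2$ with $\||\d\h_t g|\|_{L^\infty}$ controlled by the Bakry--\'Emery estimate \eqref{BE} and the maximum principle, $\Delta\h_t g=\h_t\Delta g\to\Delta g$ in $L^2$, and, crucially, $\nabla(|\nabla\h_t g|^2)=2\H(\h_t g)(\nabla\h_t g)\to\nabla(|\nabla g|^2)$ in $L^2$, where $\H(\h_t g)\to\H(g)$ in $L^2$ follows from \eqref{eq:hesscont} applied to $\h_t g-g\in D(\Delta)$; hence \eqref{eq:testh} holds for all $g\in\test{\X\times\Y}$. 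A parallel approximation (again with uniformly bounded, a.e.-convergent $\Test_\times\fct$-functions, so that the $L^1$ coefficient of $h$ and the $L^2$ coefficient of $\nabla h$ pass to the limit) upgrades the admissible $h$ to all of $\test{\X\times\Y}$ with bounded support. Then \eqref{eq:testh} is the vanishing diagonal of the symmetric bilinear form in \eqref{eq:defhess}; since $\test{\X\times\Y}$ is a vector space, polarizing in $g$ with $h$ fixed gives \eqref{eq:defhess} for all $g,\tilde g,h$ of bounded support, i.e.\ $f\in W^{2,2}(\X\times\Y)$ with $\H(f)=A$. The main obstacle is exactly this limit $t\downarrow 0$ for the second-order term: securing $L^2$-convergence of $\nabla(|\nabla\h_t g|^2)$ is where the $\RCD$ structure genuinely enters, through \eqref{eq:hesscont} and the commutation of $\h_t$ with $\Delta$, and it is the most delicate convergence to make rigorous.
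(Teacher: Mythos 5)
Your proof is correct and follows essentially the same route as the paper's: reduce the bilinear identity \eqref{eq:defhess} to its diagonal \eqref{eq:testh} by polarization (using the symmetry of $A$), and upgrade from $\Test_\times\fct(\X\times\Y)$ to $\test{\X\times\Y}$ via Lemma \ref{le:apprtestx}, passing first $n\to\infty$ and then $t\downarrow 0$. The only differences are cosmetic — you polarize at the end rather than at the outset and spell out the weak--strong pairings and the $L^2$-convergence of $\nabla(|\nabla\h_t g|^2)$ that the paper leaves as ``immediate''.
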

\begin{proof}
The `only if' follows from the definitions of $W^{2,2}$ and Hessian and the inclusion  $\Test_\times\fct(\X\times\Y)\subset \test{\X\times\Y}$. For the `if' we start observing that from the symmetry in $g,\tilde g$ of the definition of Hessian \eqref{eq:defhess} and the fact that $A$ is symmetric we deduce that it is sufficient to check that \eqref{eq:testh} holds for $g,h\in \test{\X\times\Y}$.

Thus, fix such $g,h$ and apply Lemma \ref{le:apprtestx} to find corresponding functions $g_{n,t},h_{n,t}$ as in the statement. Hence we know that \eqref{eq:testh} holds with $g_{n,t},h_{n,t}$ in place of $g,h$ respectively and letting $n\to\infty$ in such identity and using the conclusions of Lemma \ref{le:apprtestx} it is immediate to check that \eqref{eq:testh} holds also with  $\h_tg,\h_th$ in place of $g,h$ respectively. It is now easy to see that we can pass to the limit as $t\downarrow0$ in the resulting identity  to conclude that \eqref{eq:testh} holds for the $g,h\in\test{\X\times\Y}$ initially chosen, as desired.
\end{proof}

\begin{theorem}\label{prop:dafatt} Let $\X,\Y$ be two $\RCD(K,\infty)$ spaces and let $f\in \W (\X\times\Y)$ be such that:
\begin{itemize}
\item[i)] for $\mm_\X$-a.e.\ $x\in\X$ the map $\Y\ni y\mapsto f(x,y)$ belongs to $W^{2,2}(\Y)$ with $\iint|\H_\sy (f)|^2_\HS\,\d\mm_\Y\,\d\mm_\X<\infty$,
\item[ii)] for $\mm_\Y$-a.e.\ $y\in\Y$ the map $\X\ni x\mapsto f(x,y)$ belongs to $W^{2,2}(\X)$ with $\iint|\H_\sx (f)|^2_\HS\,\d\mm_\X\,\d\mm_\Y<\infty$,
\item[iii)] we have $\d_\sy f\in W^{1,2}(\X;L^2(T^*\Y))$ (and thus, by Theorem \ref{thm:swap}, also $\d_\sx f\in W^{1,2}(\Y;L^2(T^*\X))$).
\end{itemize}
Then $f\in W^{2,2}(\X\times\ Y)$ with 
\begin{equation}
\label{eq:hprod}
\H(f)=\Phi^{\otimes 2}_\sx(\H_\sx (f))+\Phi^{\otimes 2}_\sy(\H_\sy (f))+\Phi_\sx\otimes\Phi_\sy(\d_\sx\d_\sy f)+\Phi_\sy\otimes\Phi_\sx(\d_\sy\d_\sx f)
\end{equation}
i.e.\ for every $v,w\in L^0(T^*(\X\times \Y))$, writing $v=(v_\sx,v_\sy)$ and $w=(w_\sx,w_\sy)$ (recall \eqref{eq:decv}) we have
\begin{equation}
\label{eq:hprod2}
\H(f)(v,w)=\H_\sx(f)(v_\sx,w_\sx)+\H_\sy(f)(v_\sy,w_\sy)+\la\d_\sx\d_\sy f,v_\sx\otimes w_\sy\ra+\la \d_\sy\d_\sx f,v_\sy\otimes w_\sx\ra
\end{equation}
\end{theorem}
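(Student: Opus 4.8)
The plan is to take the right-hand side of \eqref{eq:hprod} as the candidate Hessian,
\[
A:=\Phi^{\otimes 2}_\sx(\H_\sx f)+\Phi^{\otimes 2}_\sy(\H_\sy f)+\Phi_\sx\otimes\Phi_\sy(\d_\sx\d_\sy f)+\Phi_\sy\otimes\Phi_\sx(\d_\sy\d_\sx f),
\]
and to verify the two hypotheses of Proposition \ref{le:w22tp}, namely that $A\in L^2((T^*)^{\otimes 2}(\X\times\Y))$ is symmetric and satisfies the testing identity \eqref{eq:testh}. For the $L^2$ bound I would use that the four summands lie in the mutually orthogonal images of $\Phi^{\otimes 2}_\sx,\Phi^{\otimes 2}_\sy,\Phi_\sx\otimes\Phi_\sy,\Phi_\sy\otimes\Phi_\sx$ and that these maps preserve the pointwise norm, so that \eqref{eq:decA2} gives pointwise $|A|_\HS^2=|\H_\sx f|_\HS^2+|\H_\sy f|_\HS^2+|\d_\sx\d_\sy f|^2+|\d_\sy\d_\sx f|^2$; each term is integrable by assumptions $(i)$, $(ii)$ and, invoking Theorem \ref{thm:swap} for the integrability of $\d_\sy\d_\sx f$, by $(iii)$. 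Symmetry is the place where Schwarz's theorem enters: the first two summands are symmetric because Hessians are, while \eqref{eq:schwarz} asserts $\d_\sx\d_\sy f=(\d_\sy\d_\sx f)^{\rm tr}$, so the two cross terms are transposes of one another and their sum is symmetric.

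It then remains to establish \eqref{eq:testh} for all $g,h\in\Test_\times\fct(\X\times\Y)$. Since the identity is linear in $h$ and quadratic in $g$, and $\Test_\times\fct(\X\times\Y)$ is the linear span of products, I would pass to the symmetric bilinear form associated to the quadratic form in $g$ (the polarisation matching \eqref{eq:defhess}) and verify it on product functions $g=g_1\otimes g_2$, $\tilde g=\tilde g_1\otimes\tilde g_2$, $h=h_1\otimes h_2$ with $g_1,\tilde g_1\in\test\X$, $g_2,\tilde g_2\in\test\Y$, $h_1\in\Lip_\bs(\X)$ and $h_2\in\Lip_\bs(\Y)$; bilinearity in the two slots and in $h$ then returns the diagonal identity on all of $\Test_\times\fct(\X\times\Y)$.

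For such product functions everything tensorises. By \eqref{eq:difflegati} one has $\d(g_1\otimes g_2)=(g_2\,\d_\sx g_1,\,g_1\,\d_\sy g_2)$, and by Proposition \ref{divprod} together with the orthogonality \eqref{eq:orto} both the divergence $\div(h\nabla\tilde g)$ and the quantities $\la\d f,\d g\ra$ and $|\d g|^2$ split cleanly into an $\X$-part and a $\Y$-part. Expanding the two sides and sorting the resulting integrands into the four groups $\sx\sx$, $\sy\sy$, $\sx\sy$, $\sy\sx$, I expect the purely $\sx\sx$ contributions to reproduce exactly the defining Hessian identity \eqref{eq:defhess} for $f(\cdot,y)\in W^{2,2}(\X)$ tested against $g_1,\tilde g_1,h_1$ and integrated in $y$ (and symmetrically for the $\sy\sy$ group against $g_2,\tilde g_2,h_2$), while the mixed $\sx\sy$ and $\sy\sx$ contributions should be produced precisely by the integration-by-parts characterisation of $\d_\sy\d_\sx f$ furnished by Proposition \ref{prop:perdxdy}, in the very same way as in the proof of Theorem \ref{thm:swap}.

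The main obstacle is the bookkeeping of the term $\tfrac12\int h\la\d f,\d|\d g|^2\ra$: because $|\d g|^2=g_2^2|\d_\sx g_1|^2+g_1^2|\d_\sy g_2|^2$ couples the two factors, its differential $\d$ mixes the $\X$- and $\Y$-variables, so this term feeds into all four groups at once. The delicate point will be to check that the cross-contributions it generates recombine with the cross-contributions of $-\int\la\d f,\d g\ra\div(h\nabla\tilde g)$ to yield exactly $\la\d_\sx\d_\sy f,\cdot\ra+\la\d_\sy\d_\sx f,\cdot\ra$, with all spurious terms cancelling; once this matching is carried out, \eqref{eq:testh} holds and Proposition \ref{le:w22tp} delivers $f\in W^{2,2}(\X\times\Y)$ with $\H(f)=A$, which is \eqref{eq:hprod}, and \eqref{eq:hprod2} is its componentwise restatement.
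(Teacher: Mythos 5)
Your proposal is correct and follows essentially the same route as the paper: reduce via Proposition \ref{le:w22tp} to product test functions $g_1\otimes g_2$, $h_1\otimes h_2$, expand both sides into the four blocks $\sx\sx,\sy\sy,\sx\sy,\sy\sx$, recognize the diagonal blocks as the factor-wise Hessian identities from assumptions $(i)$--$(ii)$, and obtain the mixed blocks from assumption $(iii)$ via the identity $\d_\sy\la\d_\sx f,\d_\sx g_1\ra=\la\d_\sy\d_\sx f,\d_\sx g_1\ra$ and an integration by parts on $\Y$. The cancellation you flag as the delicate point does work out exactly as you anticipate: the spurious term $h g_1|\d_\sy g_2|^2\la\d_\sx f,\d_\sx g_1\ra$ arising from $\tfrac12\d|\d g|^2$ is cancelled by the matching term produced when the $y$-divergence is integrated by parts against $g_2\la\d_\sx f,\d_\sx g_1\ra$.
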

\begin{proof} Thanks to the assumptions the right hand side of \eqref{eq:hprod} defines a symmetric tensor in $L^2((T^*)^{\otimes 2}(\X\times\Y))$. Thus by the very definition of $W^{2,2}(\X\times\Y)$ and of Hessian, Proposition  \ref{le:w22tp} above and keeping in mind the equivalent version \eqref{eq:hprod2} of \eqref{eq:hprod}, to conclude it is sufficient to prove  that: for $g_1,h_1$ (resp.\ $g_2,h_2$) test functions on $\X$ (resp.\ $\Y$) and putting $g:=g_1\otimes g_2$, $h:=h_1\otimes h_2$ we have
\begin{equation}
\label{eq:claimh}
\begin{split}
-\int \la\d f,\d g\ra \,\div(h\nabla g)+\tfrac12h\langle\d f,\d {|\d g|^2}\rangle\,\d(\mm_\X&\times\mm_\Y)\\
=\int  h\Big(\H_\sx (f)(\nabla_\sx g,\nabla_\sx g)+&\H_\sy (f)(\nabla_\sy g,\nabla_\sy g)\\
+\la\d_\sy\d_\sx f,\d_\sy g\otimes\d_\sx g\ra&+ \la\d_\sx\d_\sy f,\d_\sy g\otimes\d_\sx g\ra\Big)\,\d(\mm_\X\otimes\mm_\Y).
\end{split}
\end{equation}
We thus concentrate into proving this. From $\d g = (g_2\,\d_\sx g_1,g_1\,\d_\sy g_2)$ we get
\[
\la\d f,\d g\ra=g_2\la\d_\sx f,\d_\sx g_1\ra+g_1\la\d_\sy f,\d_\sy g_2\ra.
\]
Also, for any $h\in \Test(\X\times \Y)$ from Proposition \ref{divprod} we have
\[
\div(h\nabla g)=\div_\sx(h\nabla_\sx g)+\div_\sy(h\nabla_\sy g)=g_2\div_\sx(h\nabla_\sx g_1)+g_1\div_\sy(h\nabla_\sy g_2)
\]
and thus
\begin{equation}
\label{eq:p1}
\begin{split}
\la\d f,\d g\ra \,\div(h\nabla g)=&\underbrace{|g_2|^2\la\d_\sx f,\d_\sx g_1\ra\div_\sx(h\nabla_\sx g_1)}_A+\underbrace{g_1g_2\la\d_\sx f,\d_\sx g_1\ra\div_\sy(h\nabla_\sy g_2)}_B\\
&+\underbrace{g_1g_2\la\d_\sy f,\d_\sy g_2\ra\div_\sx(h\nabla_\sx g_1)}_C+\underbrace{|g_1|^2\la\d_\sy f,\d_\sy g_2\ra\div_\sy(h\nabla_\sy g_2)}_D.
\end{split}
\end{equation}
Now notice that $ |\d g|^2=|g_2|^2|\d_\sx g_1|^2+|g_1|^2|\d_\sy g_2|^2$, therefore 
\[
\d \frac{|\d g|^2}2=\big(|g_2|^2\d_\sx\frac{|\d_\sx g_1|^2}{2}+g_1\d_\sx g_1|\d_\sy g_2|^2,g_2\d_\sy g_2|\d_\sx g_1|^2+|g_1|^2\d_\sy\frac{|\d_\sy g_2|^2}{2}\big)
\]
and
\begin{equation}
\label{eq:p2}
\begin{split}
h\la\d f,\d \frac{|\d g|^2}2\ra=&\underbrace{h|g_2|^2\la\d_\sx f,\d_\sx\frac{|\d_\sx g_1|^2}{2}\ra}_{A'}+\underbrace{hg_1|\d_\sy g_2|^2\la\d_\sx f,\d_\sx g_1\ra}_{B'}\\
&+\underbrace{hg_2|\d_\sx g_1|^2\la\d_\sy f,\d_\sy g_2\ra}_{C'}+\underbrace{h|g_1|^2\la\d_\sy f,\d_\sy\frac{|\d_\sy g_2|^2}{2}\ra}_{D'}.
\end{split}
\end{equation}
Now observe that, from assumption $(i)$ and the integrability properties of $g,h$, it follows that 
\[
\begin{split}
-\iint A+A'\,\d\mm_\X\,\d\mm_\Y&=\int|g_2|^2\int h\H_\sx (f)(\nabla_\sx g_1,\nabla_\sx g_1)\,\d\mm_\X \,\d\mm_\Y\\
&=\int h\H_\sx (f)(\nabla_\sx g,\nabla_\sx g)\,\d(\mm_\X \otimes\mm_\Y).
\end{split}
\]
Similarly from assumption $(ii)$  we obtain
\[
-\iint D+D'\,\d\mm_\X\,\d\mm_\Y=\int h\H_\sy (f)(\nabla_\sy g,\nabla_\sy g)\,\d(\mm_\X \otimes\mm_\Y).
\]
Also, assumption $(iii)$ and the chain rule in point $(iv)$ of Proposition \ref{thm:closured} with $\X,\Y$ swapped, $\Hil=L^0(T^*\X)$ and $T:L^0(T^*\X)\to L^0(\X)$ given by $T(\cdot)=\la\cdot,\d_\sx g_1\ra$  ensure that $ \la\d_\sx f,\d_\sx g_1\ra\in W^{1,2}(\Y;L^2(\X))$ with
\begin{equation}
\label{eq:dxdy2}
\d_\sy  \la\d_\sx f,\d_\sx g_1\ra= \la\d_\sy\d_\sx f,\d_\sx g_1\ra.
\end{equation}
In particular, for $\mm_\X$-a.e.\ $x\in\X$ the function $ g_2(\cdot)\la\d_\sx f,\d_\sx g_1\ra(x,\cdot)$ is in $W^{1,2}(\Y)$ and  the following integration by parts is justified:
\[
\begin{split}
-\iint B\,\d\mm_\Y\d\mm_\X&=\int g_1\int h\la \d_\sy(g_2\la\d_\sx f,\d_\sx g_1\ra),\d_\sy g_2\ra\,\d\mm_\Y\d\mm_\X\\
&=\iint hg_1|\d_\sy g_2|^2\la\d_\sx f,\d_\sx g_1\ra+hg_1g_2\underbrace{\la\d_\sy\la\d_\sx f,\d_\sx g_1\ra,\d_\sy g_2\ra}_{=\la\d_\sy\d_\sx f,\d_\sy g_2\otimes\d_\sx g_1\ra \text{ by } \eqref{eq:dxdy2}}\,\d(\mm_\X\otimes\mm_\Y). 
\end{split}
\]
Hence we have
\[
-\iint B+B'\,\d\mm_\Y\d\mm_\X=\int h\la\d_\sy\d_\sx f,\d_\sy g\otimes\d_\sx g\ra \,\d(\mm_\X\otimes\mm_\Y)
\]
and analogously $-\iint C+C'\,\d\mm_\X\d\mm_\Y=\int h\la\d_\sx\d_\sy f,\d_\sy g\otimes\d_\sx g\ra \,\d(\mm_\X\otimes\mm_\Y).
$
Collecting what proved so far and recalling \eqref{eq:p1} and \eqref{eq:p2} we obtain \eqref{eq:claimh} and the conclusion.
\end{proof}
One might wonder if improving the requirements $(i),(ii)$ in this last statement by asking that the sections belong to the corresponding $H^{2,2}$ space is enough to conclude that $f\in H^{2,2}(\X\times\Y)$. We don't know if this is the case, but we can point out the following simple result (which is independent from Theorem \ref{prop:dafatt} above). Notice that it is not necessary to assume the existence of `mixed' derivatives.
\begin{proposition}\label{prop:dafatt2}
 Let $\X,\Y$ be two $\RCD(K,\infty)$ spaces and let $f\in \W (\X\times\Y)$ be such that:
\begin{itemize}
\item[i')]  for $\mm_\Y$-a.e.\ $y\in\Y$ we have $f(\cdot,y)\in D(\Delta_\sx)$ and $\iint |\Delta_\sx f|^2\,\d\mm_\Y\,\d\mm_\X<\infty$,
\item[ii')]  for $\mm_\X$-a.e.\ $x\in\X$ we have $f(x,\cdot)\in D(\Delta_\sy)$ and $\iint |\Delta_\sy f|^2\,\d\mm_\X\,\d\mm_\Y<\infty$.
\end{itemize}
Then $f\in H^{2,2}(\X\times\Y)$ and the identities \eqref{eq:hprod} and \eqref{eq:hprod2} hold.
\end{proposition}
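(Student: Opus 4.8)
The plan is to separate the membership $f\in H^{2,2}(\X\times\Y)$, which is essentially immediate, from the decomposition formula \eqref{eq:hprod}, which requires an approximation argument. First I would notice that hypotheses $(i')$ and $(ii')$ are exactly the assumptions of Corollary \ref{cor:lapprod}$(ii)$, so that $f\in D(\Delta,\X\times\Y)$ with $\Delta f=\Delta_\sx f+\Delta_\sy f$. Since $\X\times\Y$ is $\RCD(K,\infty)$ by Proposition \ref{prop:rcdass}, we have $D(\Delta)\subset W^{2,2}(\X\times\Y)$; as $H^{2,2}(\X\times\Y)$ is by definition the $W^{2,2}$-closure of $D(\Delta)$, the membership $f\in D(\Delta)$ already gives $f\in H^{2,2}(\X\times\Y)$. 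In particular $\H(f)$ is a well-defined element of $L^2((T^*)^{\otimes 2}(\X\times\Y))$ which, by \eqref{eq:decA}--\eqref{eq:decA2}, splits orthogonally into four blocks lying in the images of $\Phi_\sx^{\otimes 2}$, $\Phi_\sx\otimes\Phi_\sy$, $\Phi_\sy\otimes\Phi_\sx$, $\Phi_\sy^{\otimes 2}$.

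To obtain the identities I would argue by approximation in $\Test_\times\fct(\X\times\Y)$. Since $f\in D(\Delta)$ one has $\h_t f\to f$ in $W^{2,2}(\X\times\Y)$: indeed \eqref{eq:hesscont} applied to $\h_t f-\h_s f$ controls $\||\H(\h_t f-\h_s f)|_\HS\|_{L^2}$ by $\|\h_t\Delta f-\h_s\Delta f\|_{L^2}$ and $\|\d(\h_t f-\h_s f)\|_{L^2}$, both infinitesimal as $t,s\downarrow 0$, so $\H(\h_t f)$ is Cauchy and converges to $\H f$ by closedness of the Hessian. Combining this with Lemma \ref{le:apprtestx} (after a preliminary truncation of $f$ to the bounded case and a diagonal extraction) produces $(f_n)\subset\Test_\times\fct(\X\times\Y)$ with $f_n\to f$ in $W^{2,2}(\X\times\Y)$. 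For a single product test function a direct computation gives $\H(g_1\otimes g_2)=\Phi_\sx^{\otimes 2}(g_2\H_\sx g_1)+\Phi_\sy^{\otimes 2}(g_1\H_\sy g_2)+\Phi_\sx\otimes\Phi_\sy(\d_\sx g_1\otimes\d_\sy g_2)+\Phi_\sy\otimes\Phi_\sx(\d_\sy g_2\otimes\d_\sx g_1)$, that is \eqref{eq:hprod} with genuine partial objects; by linearity \eqref{eq:hprod} holds for each $f_n$.

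The decisive step is then to pass to the limit block by block. Because $\H(f_n)\to\H f$ in $L^2$ and the four summands are mutually orthogonal norm-preserving images of the $\Phi$-maps, each of $\H_\sx f_n$, $\H_\sy f_n$, $\d_\sx\d_\sy f_n$, $\d_\sy\d_\sx f_n$ converges in its own $L^2$-space. The two diagonal limits are identified with $\H_\sx f$ and $\H_\sy f$ by closedness of the Hessian on the factors, using that $f_n(\cdot,y)\to f(\cdot,y)$ in $W^{1,2}(\X)$ for $\mm_\Y$-a.e.\ $y$ together with $(ii')$ (and symmetrically with $(i')$). For the off-diagonal limits I would invoke the closure of the module-valued differential, Proposition \ref{thm:closured}$(i)$, with $\Hil=L^2(T^*\Y)$: since $\d_\sy f_n\to\d_\sy f$ in $L^2(\X;L^2(T^*\Y))$ by Lemma \ref{le:perprod} and $\d_\sx\d_\sy f_n$ converges in $L^2(T^*\X;L^2(T^*\Y))$, it follows that $\d_\sy f\in W^{1,2}(\X;L^2(T^*\Y))$ with $\d_\sx\d_\sy f$ equal to this limit, and symmetrically for $\d_\sy\d_\sx f$. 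Passing to the limit in the block identity for $f_n$ then yields \eqref{eq:hprod}, and hence \eqref{eq:hprod2}.

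I expect the main obstacle to be precisely this last identification: the regularity $\d_\sy f\in W^{1,2}(\X;L^2(T^*\Y))$ (and its mirror $\d_\sx f\in W^{1,2}(\Y;L^2(T^*\X))$) is \emph{not} among the hypotheses and must be produced from $f\in W^{2,2}(\X\times\Y)$ alone. The approximation therefore has to control all four blocks simultaneously — which is exactly what the orthogonality and norm-preservation of the $\Phi$-maps buy — so that the closure Proposition \ref{thm:closured}$(i)$ becomes applicable. A more routine but still delicate technical point is the $W^{2,2}$-density of $\Test_\times\fct(\X\times\Y)$ when $f$ lies merely in $D(\Delta)$ and is possibly unbounded, which is handled by truncating and regularizing through $\h_t$ before applying Lemma \ref{le:apprtestx}.
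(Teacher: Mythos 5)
Your argument is correct. For the membership $f\in H^{2,2}(\X\times\Y)$ you follow exactly the paper's (one-line) route: Corollary \ref{cor:lapprod}$(ii)$ gives $f\in D(\Delta,\X\times\Y)$, and $H^{2,2}$ contains $D(\Delta)$ by definition. Where you diverge is in establishing \eqref{eq:hprod}--\eqref{eq:hprod2}: the paper does not spell this out at all and implicitly defers to Theorem \ref{prop:w22prod} (proved later, by a quite different mechanism: the integration-by-parts characterizations of Propositions \ref{prop:hessx} and \ref{prop:perdxdy} together with the duality/sup formula for the functional $\E_{2,\sx}$), which applies to any $f\in W^{2,2}(\X\times\Y)$ and yields hypotheses $(i)$--$(iii)$ of Theorem \ref{prop:dafatt} and hence the identities. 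You instead exploit the stronger information $f\in D(\Delta)$ to run a direct approximation: heat-flow regularization plus Lemma \ref{le:apprtestx} give $W^{2,2}$-approximation by $\Test_\times\fct(\X\times\Y)$, the block identity is checked on product test functions, and the orthogonality and norm-preservation of the $\Phi$-maps let you pass to the limit block by block, with the closure statements (Hessian on the factors, Proposition \ref{thm:closured}$(i)$ for the mixed blocks) identifying the limits. This buys a self-contained proof of the identities that does not lean on the later section, and it produces $\d_\sy f\in W^{1,2}(\X;L^2(T^*\Y))$ as a genuine byproduct rather than importing it; the price is length, and the need for the extra truncation/diagonalization bookkeeping you correctly flag. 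Two small slips worth fixing: in identifying the $\sx\sx$ diagonal block you should invoke $(i')$ (the $\Delta_\sx$ hypothesis), not $(ii')$, and the fiberwise identification of the diagonal limits should be stated along a subsequence for $\mm_\Y$-a.e.\ $y$ (or, more cleanly, via Proposition \ref{prop:hessx}). Neither affects the validity of the argument.
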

\begin{proof}
This is a direct consequence of Corollary \ref{cor:lapprod} and the fact that $H^{2,2}$ is defined as the $W^{2,2}$-closure of $D(\Delta)$, and thus it contains the latter.
\end{proof}

\subsubsection{From regularity on the product to regularity on the factors}
In this section we investigate the validity of the converse implication w.r.t.\ the one proved before, namely we study what the assumption $f\in W^{2,2}(\X\times\Y),H^{2,2}(\X\times\Y)$ implies in terms of regularity of $f(\cdot,y),f(x,\cdot)$.

We start with the following result, which is similar in spirit to Propositions \ref{prop:l2div}, \ref{prop:perdxdy}.
\begin{proposition}\label{prop:hessx}
Let $(\X,\sfd_\X,\mm_\X)$ and $(\Y,\sfd_\Y,\mm_\Y)$ be $\RCD(K,\infty)$ spaces and $f\in W^{1,2}(\X\times\Y)$. Then the following are equivalent:
\begin{itemize}
\item[i)] for $\mm_\Y$-a.e.\ $y\in\Y$ we have $f(\cdot,y)\in W^{2,2}(\X)$ with $\iint|\H_\sx (f)|^2_\HS\,\d\mm_\X\,\d\mm_\Y<\infty$,
\item[ii)] there is $A\in L^2(\Y;L^2((T^*)^{\otimes 2}\X))$  such that the identity
\[
\begin{split}
2\int hA(\nabla_\sx g,\nabla_\sx g')\,\d(\mm_\X&\otimes\mm_\Y)=\int -\la\d_\sx f,\d_\sx g\ra\div(h\nabla(g'\circ\pi_\X))\\
\qquad\qquad&-\la\d_\sx f,\d_\sx g'\ra\div(h\nabla(g\circ\pi_\X))-h\langle\d_\sx f,\d_\sx\la\d_\sx g,\d_\sx g'\ra\rangle \,\d(\mm_\X\otimes\mm_\Y)
\end{split}
\] 
holds for every $g,g'\in \test{\X}$ and $h\in\Lip_\bs(\X\times\Y)$.
\end{itemize}
Moreover, if this holds then the choice $A=\H_\sx (f)$ is the only one for which $(ii)$ holds.
\end{proposition}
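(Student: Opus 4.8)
The plan is to follow the scheme of Propositions \ref{prop:l2div} and \ref{prop:perdxdy}: the forward implication is a direct computation obtained by freezing the $y$-variable and integrating, while the reverse one is proved by localising the integral identity in $y$ and reconstructing the pointwise-in-$y$ defining property of the Hessian.

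For $(i) \Rightarrow (ii)$ I would set $A := \H_\sx(f)$, which by assumption $(i)$ is a well-defined element of $L^2(\Y; L^2((T^*)^{\otimes 2}\X))$. First I would record that the defining identity \eqref{eq:defhess} of the Hessian, originally tested against $h \in \test\X$ with bounded support, extends by a routine approximation to every cutoff $h \in \Lip_\bs(\X)$: indeed the two sides depend on $h$ only through $h$ and $\d_\sx h$, linearly, and test functions with bounded support are $W^{1,2}$-dense (with uniform $L^\infty$ bound) among such $h$ by \eqref{eq:l2test}. Then, fixing $g, g' \in \test\X$ and $h \in \Lip_\bs(\X\times\Y)$, for $\mm_\Y$-a.e.\ $y$ the slice $h(\cdot, y)$ lies in $\Lip_\bs(\X)$, so \eqref{eq:defhess} applied to $f(\cdot, y)$ gives the identity at the level of the fixed $y$. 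Integrating this against $\d\mm_\Y$---all terms being integrable thanks to $A, \d_\sx f \in L^2$ and the boundedness of the test quantities, so that Fubini applies---and rewriting $\div_\sx(h\nabla_\sx g') = \div(h\nabla(g'\circ\pi_\X))$ via \eqref{eq:divdivx} (and symmetrically for $g$) yields exactly the integral identity of $(ii)$ with $A = \H_\sx(f)$.

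For $(ii) \Rightarrow (i)$ I would first note that $f \in W^{1,2}(\X\times\Y)$ forces $f(\cdot, y) \in W^{1,2}(\X)$ for $\mm_\Y$-a.e.\ $y$ (Proposition \ref{prop:Wprod} and Lemma \ref{le:perprod}), and that $A(\cdot, y) \in L^2((T^*)^{\otimes 2}\X)$ for a.e.\ $y$. The goal is then to verify, for a.e.\ $y$, the defining identity \eqref{eq:defhess} of the Hessian for $f(\cdot, y)$ with tensor $A(\cdot, y)$. To localise in $y$, I would insert into the integral identity of $(ii)$ the cutoff $h = h_1 \otimes \chi_m$, where $h_1 \in \Lip_\bs(\X)$ and $\chi_m \in \Lip_\bs(\Y)$ converge boundedly $\mm_\Y$-a.e.\ to $\nchi_B$ for a Borel set $B \subset \Y$; since $\d_\sx(h_1\otimes\chi_m) = \chi_m\,\d_\sx h_1$, dominated convergence lets me pass to the limit $m\to\infty$ and obtain the same identity with the $x$-integrands multiplied by $\nchi_B$ and integrated over $\X \times B$. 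By the arbitrariness of $B$ this means that, for each fixed triple $(g, g', h_1)$, the scalar function of $y$ given by the difference of the two sides of \eqref{eq:defhess} (for $f(\cdot, y)$, with cutoff $h_1$) vanishes for $\mm_\Y$-a.e.\ $y$.

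The remaining, and main, difficulty is the interchange of the quantifiers ``for all test functions'' and ``for $\mm_\Y$-a.e.\ $y$''. I would fix once and for all a countable family $\mathcal D \subset \test\X$ of functions with bounded support which is dense in $\test\X$ in a topology strong enough that $g \mapsto \d_\sx g$, $g \mapsto \Delta_\sx g$ and $(g, g') \mapsto \d_\sx \la \d_\sx g, \d_\sx g'\ra$ all pass to the limit (such a family exists because $\X$ is $\RCD$, so $\test\X$ is a separable algebra with $|\d_\sx g|^2 \in W^{1,2}(\X)$). Taking the union over the countably many triples in $\mathcal D$ of the $\mm_\Y$-null sets produced above, outside a single null set the pointwise-in-$y$ identity holds for every triple in $\mathcal D$, hence---by the chosen density and the continuity of the identity---for all $g, g', h \in \test\X$. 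By definition this says $f(\cdot, y) \in W^{2,2}(\X)$ with $\H_\sx(f)(\cdot, y) = A(\cdot, y)$ for a.e.\ $y$; the integrability bound is then $\iint |\H_\sx(f)|^2_\HS = \iint |A|^2 < \infty$, giving $(i)$, and the uniqueness statement follows at once since $A$ is forced to coincide $\mm_\X\otimes\mm_\Y$-a.e.\ with the uniquely determined Hessian. Carrying out this density-continuity reduction rigorously---ensuring in particular the joint measurability in $y$ of all the quantities involved---is where the bulk of the technical care lies.
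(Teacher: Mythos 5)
Your forward implication matches the paper's (slice in $y$, apply the defining identity \eqref{eq:defhess}, integrate, rewrite the divergences via \eqref{eq:divdivx}), and your observation that \eqref{eq:defhess} must first be extended from $h\in\test{\X}$ to $h\in\Lip_\bs(\X)$ is a legitimate point that the paper glosses over. For the reverse implication, however, you take a genuinely different route. The paper never attempts to verify the defining identity of the Hessian slice-by-slice: it invokes the duality formula \eqref{eq:ide2} for the convex, lower semicontinuous functional $\E_{2,\sx}(f)=\tfrac12\int|\H_\sx(f)|^2_\HS\,\d\mm_\X$ from \cite[Theorem 3.3.2]{Gigli14}, discretizes $\Y$ into fine partitions $(A^n_i)$, averages $f$ over each cell, localizes the hypothesis $(ii)$ to $\X\times A^n_i$ by heat-flow regularized cut-offs, and bounds $\mm_\Y(A^n_i)\E_{2,\sx}(f^n_i)\le\tfrac12\int_{\X\times A^n_i}|A|^2_\HS$ via Young's inequality; membership in $W^{2,2}(\X)$ for a.e.\ $y$ then comes for free from finiteness of the sup in \eqref{eq:ide2}, and the identification $A=\H_\sx(f)$ is recovered afterwards from the $(i)\Rightarrow(ii)$ direction plus density of the tensors $h\,\d_\sx g\otimes\d_\sx g'$. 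Your route instead localizes in $y$ directly and then interchanges ``for all test triples'' with ``for a.e.\ $y$'' via a countable dense family $\mathcal D\subset\test{\X}$. This is workable, and arguably more direct in that it identifies $\H_\sx(f)(\cdot,y)=A(\cdot,y)$ in one stroke, but the step you defer is precisely the crux: you need $\mathcal D$ to be dense in a topology strong enough that $g\mapsto\Delta_\sx g$ converges in $L^2$, $|\d_\sx g_n|$ stays uniformly bounded in $L^\infty$ (needed against $A\in L^2$ and $\d_\sx f\in L^2$), and above all $\d_\sx\la\d_\sx g_n,\d_\sx g'_n\ra\to\d_\sx\la\d_\sx g,\d_\sx g'\ra$ in $L^2$. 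The last convergence is not automatic from any naive density statement; it does follow from $W^{2,2}$-convergence plus uniform Lipschitz bounds via $\d\la\d g,\d g'\ra=\H(g)(\nabla g',\cdot)+\H(g')(\nabla g,\cdot)$, and such a countable family can be produced by applying $\h_t$ at rational times to a countable $L^2\cap L^\infty$-dense set and using \eqref{eq:hesscont} and \eqref{BE}, but until this is spelled out your argument has a hole exactly where the paper's duality-based argument was designed not to have one. I would either carry out that construction explicitly or switch to the paper's variational scheme for $(ii)\Rightarrow(i)$.
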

\begin{proof}\ \\
\noindent{$(i)\Rightarrow(ii)$} The fact that $(ii)$ holds with the choice $A:=\H_\sx (f)$ is a direct consequence of the definitions of $W^{2,2}(\X)$ and of the Hessian, of the identity \eqref{eq:divdivx} and of the fact that for $h\in\Lip_\bs(\X\times\Y)$ we have that $h(\cdot,y)\in \Lip_\bs(\X)$ for $\mm_\Y$-a.e.\ $y\in\Y$. The fact that $\H_\sx (f)$ is the only choice is a consequence of the density of the linear span of elements of the form $h\d_\sx g\otimes\d_\sx g'$ in $L^2(\Y;L^2((T^*)^{\otimes 2}\X))$ for $g,g',h$ as in the statement, which in turn is a trivial consequence of the fact that differentials of test functions generate the cotangent module.

\noindent{$(ii)\Rightarrow(i)$} Recall from \cite[Theorem 3.3.2]{Gigli14} that the functional $\E_{2,\sx}\colon W^{1,2}(\X)\to[0,\infty]$ defined by $\E_{2,\sx}(f):=\frac12\int|\H_\sx(f)|_\HS^2\,\d\mm_\X$ if $f\in W^{2,2}(\X)$ and $\E_{2,\sx}(f):=+\infty$ otherwise, is convex, lower semicontinuous and satisfies
\begin{equation}
\label{eq:ide2}
\begin{split}
2\E_{2,\sx}(f)=&\sup\bigg(\sum_j\int   -\la\d_\sx f,\d_\sx g_j\ra\div_\sx(h_jh'_j\nabla_\sx g_i')-\la\d_\sx f,\d_\sx g_j'\ra\div_\sx(h_jh_j'\nabla_\sx g_j)\\
&\qquad\qquad-h_jh_j'\big\langle\d_\sx f,\d_\sx\langle\d_\sx g_j,\d_\sx g'_j\rangle\big\rangle \,\d\mm_\X\bigg)-\bigg\|\sum_j(h_j\nabla_\sx g_j)\otimes(h'_j\nabla g_j')\bigg\|^2_{L^2(\X)},
\end{split}
\end{equation}
where the sup is taken among all finite collections of functions $g_j,g'_j,h_j,h'_j\in\test{\X}$. 

Now, for every $n\in\N$, let $(A^n_i)$ be a Borel partition of $\Y$ made of at most countable sets, with $\mm_\Y(A^n_i)\in(0,\infty)$, ${\rm diam}(A^n_i)\leq \frac1n$ and so that $(A^{n+1}_i)$ is a refinement of $(A^n_i)$. Put $f^n_i:=\mm_\Y(A^n_i)^{-1}\int_{A^n_i}f(\cdot,y)\,\d\mm_\Y(y)\in L^2(\X)$ and $f^n(x,y):=\sum_i\nchi_{A^n_i}(y)f^n_i(x)\in L^2(\X\times\Y)$. We have already noticed in the proof of Proposition \ref{prop:l2div} that $f^n\to f$ in $L^2(\X\times\Y)$ and the proof of the same proposition  shows that $\int \E_\sx(f^n(\cdot,y))\,\d\mm_\Y(y)\to\int \E_\sx(f(\cdot,y))\,\d\mm_\Y(y)$. Hence taking into account that $L^2(\Y;W^{1,2}(\X))$ is a Hilbert space (because $W^{1,2}(\X)$ is so) we just proved that $f_n\to f$ in $L^2(\Y;W^{1,2}(\X))$ and thus up to pass to a non-relabeled subsequence we have that $f^n(\cdot,y)\to f(\cdot,y)$ in $W^{1,2}(\X)$ for $\mm_\Y$-a.e.\ $y\in\Y$. Hence the $W^{1,2}(\X)$-lower semicontinuity of $\E_{2,\sx}$ and Fatou's lemma give that
 \begin{equation}
\label{eq:limEX2}
\int \E_{2,\sx}(f(\cdot,y))\,\d \mm_\Y(y)\leq \limi_{n\to\infty }\int \E_{2,\sx}(f^n(\cdot,y))\,\d \mm_\Y(y).
\end{equation}
Now fix $n,i$ and pick a finite family of functions $g_j,g'_j,h_j,h'_j\in\test{\X}$ in the identity \eqref{eq:ide2} written for the function $f^n_i$ in place of $f$. Then for every $j$ and $t>0$ define  $h_{j,t}\in\test{\X\times\Y}$ as
\[
h_{j,t}:=\h_t\big(\nchi_{\X\times A^n_i}(h_jh'_j)\circ\pi_\X\big)\,\eta\circ\pi_\Y\stackrel{\eqref{eq:tenscal2}}=\h^\X_t(h_jh'_j)\otimes \big(\h_t^\Y(\nchi_{A^n_i})\,\eta\big),
\]
where $\eta\in\test{\Y}$ has bounded support and is identically 1 on $A^n_i$. Then from the weak maximum principle \eqref{eq:maxh} and the Bakry-\'Emery gradient estimates \eqref{BE} it easily follows that $(|h_{j,t}-h_j|),(|\d_\sx h_{j,t}-\d_\sx h_j|)$ are uniformly bounded and converge to 0 in $L^2(\X\times\Y)$ as $t\downarrow0$. Hence the dominate convergence theorem, the identity \eqref{eq:divdivx}  and the closure of the differential give that
\begin{equation}
\label{eq:limj}
\begin{split}
\int -\la\d_\sx f,\d_\sx g_j\ra\div(h_{j,t}\nabla(g'_j\circ\pi_\X))-\la\d_\sx f,\d_\sx g_j'\ra\div(h_{j,t}&\nabla(g_j\circ\pi_\X))\\
-h_{j,t}\langle\d_\sx f,&\d_\sx\la\d_\sx g_j,\d_\sx g_j'\ra\rangle \,\d(\mm_\X\otimes\mm_\Y)\\
\to\quad\mm_\Y(A^n_i)\int   -\la\d_\sx f^n_i,\d_\sx g_j\ra\div_\sx(h_jh'_j\nabla_\sx g_i')-&\la\d_\sx f^n_i,\d_\sx g_j'\ra\div_\sx(h_jh_j'\nabla_\sx g_j)\\
-h_jh_j'\langle\d_\sx f^n_i&,\d_\sx\la\d_\sx g_j,\d_\sx g'_j\ra\rangle \,\d\mm_\X
\end{split}
\end{equation}
as $t\downarrow0$ for any $j$. 
Similarly
\begin{equation}
\label{eq:ahj}
\int \langle A,h_{j,t}\nabla_\sx g_j\otimes\nabla_\sx g'\rangle \,\d(\mm_\X\otimes\mm_\Y)\quad\to\quad \int_{\X\times A^n_i} \langle A,(h_{j}\nabla_\sx g_j)\otimes(h_j'\nabla_\sx g_j')\rangle \,\d (\mm_\X\otimes\mm_\Y)
\end{equation}
Now observe that from assumption $(ii)$ we have that
\[
\begin{split}
\lim_{t\downarrow0}\sum_j\int -\langle\d_\sx f&,\d_\sx g_j\rangle\div(h_{j,t}\nabla(g'_j\circ\pi_\X))-\la\d_\sx f,\d_\sx g_j'\ra\div(h_{j,t}\nabla(g_j\circ\pi_\X))\\
&\qquad\qquad\qquad\qquad\qquad\qquad\qquad\qquad-h_{j,t}\langle\d_\sx f,\d_\sx\la\d_\sx g_j,\d_\sx g_j'\ra\rangle \,\d(\mm_\X\otimes\mm_\Y)\\
&=\lim_{t\downarrow0}\sum_j2\int \langle A,h_{j,t}\nabla_\sx g_j\otimes\nabla_\sx g'\rangle \,\d(\mm_\X\otimes\mm_\Y)\\
\text{by \eqref{eq:ahj}}\qquad&=2\int \langle\nchi_{\X\times A^n_i} A,\sum_j(h_{j}\nabla_\sx g_j)\otimes(h_j'\nabla_\sx g_j)'\rangle \,\d (\mm_\X\otimes\mm_\Y)\\
&\leq \int_{\X\times A^n_i}|A|_\HS^2 \,\d (\mm_\X\otimes\mm_\Y)+\mm_\Y(A^n_i)\Big\|\sum_j(h_j\nabla_\sx g_j)\otimes(h'_j\nabla g_j')\Big\|^2_{L^2(\X)},
\end{split}
\]
having used Young's inequality in the last step. This inequality, the arbitrariness of $g_j,g_j',h_j,h_j'$, \eqref{eq:limj} and \eqref{eq:ide2} give that
\[
\mm_\Y(A^n_i)\E_{2,\sx}(f^n_i)\leq \tfrac12 \int_{\X\times A^n_i}|A|_\HS^2 \,\d (\mm_\X\otimes\mm_\Y).
\]
Adding up over $i$, then letting $n\to\infty$ keeping in mind \eqref{eq:limEX2} we deduce that
\[
\int \E_{2,\sx}(f(\cdot,y))\,\d \mm_\Y(y)\leq\tfrac12\int_{\X\times \Y}|A|_\HS^2 \,\d (\mm_\X\otimes\mm_\Y),
\]
which is $(i)$.
\end{proof}

\begin{lemma}\label{le:easy}
Let $(\X,\sfd_\X,\mm_\X)$ and $(\Y,\sfd_\Y,\mm_\Y)$ be $\RCD(K,\infty)$ spaces and $f\in W^{2,2}(\X\times\Y)$. Then the identity 
\begin{equation}
\label{eq:hess}
\begin{split}
2\int& h\H(f)(\nabla g_1,\nabla g_2) \,\d(\mm_\X\otimes\mm_\Y)\\
&=-\int \la \d f,\d g_1\ra\div(h\nabla g_2)+\la \d f,\d g_2\ra\div(h\nabla g_1)-h\la \d f,\d\la\d g_1,\d g_2\ra\ra\,\d(\mm_\X\otimes\mm_\Y)
\end{split}
\end{equation}
holds for any $h\in \Lip_\bs(\X\times\Y)$ and $g_1,g_2\in \{g\circ\pi_\X:g\in\test{\X}\}\cup\{g\circ\pi_\Y:g\in\test{\Y}\}$.
\end{lemma}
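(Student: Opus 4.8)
The plan is to deduce \eqref{eq:hess} from the very definition \eqref{eq:defhess} of $\H(f)$ by two successive approximation arguments. Indeed, for $g_1,g_2,h\in\test{\X\times\Y}$ with bounded support the identity \eqref{eq:hess} is, up to the symmetry $\H(f)(v,w)=\H(f)(w,v)$, precisely the defining relation \eqref{eq:defhess} of the Hessian — which is available because $f\in W^{2,2}(\X\times\Y)$, so that $\H(f)\in L^2((T^*)^{\otimes2}(\X\times\Y))$ is a genuine tensor. Two extensions then remain: first, from weights $h\in\test{\X\times\Y}$ to weights $h\in\Lip_\bs(\X\times\Y)$; second, from genuine product test functions $g_1,g_2$ to the pullbacks allowed in the statement. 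I would carry these out in this order.

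\emph{Extension in $h$.} I would fix $g_1,g_2\in\test{\X\times\Y}$ with bounded support and $h\in\Lip_\bs(\X\times\Y)$, then by \eqref{eq:testcutoff} pick $\chi\in\test{\X\times\Y}$ with bounded support which is identically $1$ on a neighborhood of $\supp h$, and set $h_n:=\chi\,\h_{1/n}(h)$. By \eqref{eq:l2test} and the algebra property of $\test{\X\times\Y}$ each $h_n$ is a test function with bounded support; moreover $\sup_n\|h_n\|_{L^\infty}<\infty$ by the maximum principle \eqref{eq:maxh}, and $h_n\to h$ in $W^{1,2}(\X\times\Y)$ (using $\h_{1/n}(h)\to h$ in $W^{1,2}$ and $\chi\equiv1$ on $\supp h$). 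Writing each term of \eqref{eq:hess} with $h_n$ in place of $h$ — recalling that $\div(h_n\nabla g_i)=h_n\Delta g_i+\la\d h_n,\d g_i\ra$ and that $\H(f)(\nabla g_1,\nabla g_2)$, $\la\d g_1,\d g_2\ra$ and $\d\la\d g_1,\d g_2\ra$ all lie in $L^2$ (here one uses $\la\d g_1,\d g_2\ra\in W^{1,2}$, valid for test functions by polarization) — I would pass to the limit $n\to\infty$ along a subsequence by dominated convergence, using the uniform $L^\infty$-bound together with the $L^2$- and $W^{1,2}$-convergence of $h_n$. This gives \eqref{eq:hess} for $g_1,g_2\in\test{\X\times\Y}$ and every $h\in\Lip_\bs(\X\times\Y)$.

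\emph{Extension in $g_1,g_2$.} Now I would let $g_1,g_2$ be pullbacks as in the statement and $h\in\Lip_\bs(\X\times\Y)$, and with $\chi$ as above set $\hat g_i:=\chi\,g_i$. By \eqref{eq:locpb1form} and Corollary \ref{cor:lapprod}, each $g_i$ lies in $S^2_\loc(\X\times\Y)\cap D(\Delta_\loc)$ with bounded differential, so $\hat g_i\in\test{\X\times\Y}$ has bounded support and $\hat g_i=g_i$ on the neighborhood $U$ of $\supp h$ where $\chi\equiv1$. Since $h$ vanishes outside $\supp h$ we have $h\nabla g_i=h\nabla\hat g_i$, hence $\div(h\nabla g_i)=\div(h\nabla\hat g_i)$, and this divergence is supported in $\supp h\subset U$; by locality of the differential $\d g_i=\d\hat g_i$ and $\d\la\d g_1,\d g_2\ra=\d\la\d\hat g_1,\d\hat g_2\ra$ hold $\mm_\X\otimes\mm_\Y$-a.e.\ on $U$. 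Consequently every integrand in \eqref{eq:hess} is unchanged when each $g_i$ is replaced by $\hat g_i$, because each is multiplied either by $h$ or by a factor supported in $\supp h$. Applying the previous step to $\hat g_1,\hat g_2$ then yields \eqref{eq:hess} for the pullbacks. The only point demanding genuine care — and the main obstacle — is the verification that the cut-off pullbacks $\hat g_i$ are test functions, i.e.\ that $\Delta\hat g_i\in W^{1,2}$; this rests on the Leibniz rule for the Laplacian together with the identity $\Delta(g\circ\pi_\X)=(\Delta_\sx g)\circ\pi_\X$ of Corollary \ref{cor:lapprod} and the localization afforded by $\chi$, all remaining manipulations being routine.
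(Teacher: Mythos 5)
Your argument is correct and follows essentially the same route as the paper's proof: both reduce to the defining identity \eqref{eq:defhess} by multiplying the pullbacks $g_i$ with a test cut-off equal to $1$ near $\supp h$ (so that $\chi g_i\in\test{\X\times\Y}$, a fact the paper also asserts without detailed verification) and invoking locality of the differential and divergence, and both then pass from test weights to $h\in\Lip_\bs(\X\times\Y)$ by a $W^{1,2}$-approximation with uniformly bounded supports. The only differences are cosmetic: you perform the two reductions in the opposite order and build the approximating weights explicitly via the heat flow rather than by abstract density.
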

\begin{proof} Suppose at first that $h\in \test{\X\times\Y}$ has bounded support and let $\eta\in\test{\X\times\Y}$ be with bounded support and such that $\supp(h)\subset\{\eta=1\}$ (see \eqref{eq:testcutoff}). Then for $g_1,g_2\in \{g\circ\pi_\X:g\in\test{\X}\}\cup\{g\circ\pi_\Y:g\in\test{\Y}\}$ we have that $\eta g_1,\eta g_2\in\test{\X\times\Y}$ and the validity of \eqref{eq:hess} follows from the very definition of Hessian and the locality property of the differential, which ensures that $\mm_\X\otimes\mm_\Y$-a.e.\ on $\supp(h)$ it holds $\d(\eta g_i)=\d g_i$, $i=1,2$, and $\d\la\d (\eta g_1),\d (\eta g_2)\ra=\d\la\d g_1,\d g_2\ra$.

Now let $h\in \Lip_\bs(\X\times\Y)$, $\eta$ as before and $(h_n)\subset \test{\X\times\Y}$  a sequence $W^{1,2}$-converging to $h$. Then  $\eta h_n\in \test{\X\times\Y}$ for every $n\in\N$ and $\eta h_n\to \eta h=h$ in $W^{1,2}(\X\times\Y)$. From the fact that \eqref{eq:hess} holds with  $\eta h_n$ and the fact that these functions have uniformly bounded support it is now easy to pass to the limit in $n$ and get the claim.
\end{proof}

\begin{lemma}\label{le:horto} Let $(\X,\sfd_\X,\mm_\X)$ and $(\Y,\sfd_\Y,\mm_\Y)$ be $\RCD(K,\infty)$ spaces, $g:\X\times\Y\to\R$ of the form $g=\tilde g\circ\pi_\X$ for some $\tilde g\in\test{\X}$. Then for every $ h=\tilde h\circ\pi_\Y$ with $\tilde h\in W^{1,2}(\Y)$ we have
\begin{equation}
\label{eq:horto}
\H (g)(\nabla h)=0\qquad\mm_\X\otimes\mm_\Y-a.e..
\end{equation}
Similarly with the roles of $\X$ and $\Y$ swapped.
\end{lemma}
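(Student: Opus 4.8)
The plan is to reduce first to the case $\tilde h\in\test\Y$ and then to pair the one-form $\H(g)(\nabla h)$ against a generating family of vector fields, using Lemma \ref{le:easy} to rewrite each pairing as an integration by parts in which every surviving term vanishes. For the reduction, note that $\tilde g\in\test\X$ forces $g=\tilde g\circ\pi_\X\in W^{2,2}_\loc(\X\times\Y)$ (through the tensorization of the Laplacian, Corollary \ref{cor:lapprod}), so $\H(g)\in L^2_\loc((T^*)^{\otimes 2}(\X\times\Y))$ is well defined and $\H(g)(\nabla h)$ makes sense in $L^1_\loc$ with $|\H(g)(\nabla h)|\le|\H(g)|_\HS\,|\nabla h|$ a.e. If $\tilde h_n\to\tilde h$ in $W^{1,2}(\Y)$ with $\tilde h_n\in\test\Y$ (density of test functions), then on any bounded box $B_\X\times B_\Y$ one has $\int|\nabla_\sy(\tilde h_n-\tilde h)|^2\circ\pi_\Y\,\d(\mm_\X\otimes\mm_\Y)\le\mm_\X(B_\X)\|\tilde h_n-\tilde h\|_{W^{1,2}(\Y)}^2\to0$, so $\nabla h_n\to\nabla h$ in $L^2_\loc(T(\X\times\Y))$; by the pointwise bound this gives $\H(g)(\nabla h_n)\to\H(g)(\nabla h)$ in $L^1_\loc$. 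Hence it suffices to prove \eqref{eq:horto} for $\tilde h\in\test\Y$.

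So I would assume $\tilde h\in\test\Y$ and set $h=\tilde h\circ\pi_\Y$. I would use that the gradients $\nabla\psi$, for $\psi$ ranging over $\{\varphi\circ\pi_\X:\varphi\in\test\X\}\cup\{\varphi\circ\pi_\Y:\varphi\in\test\Y\}$, generate $L^0(T(\X\times\Y))$: gradients of test functions generate $L^0(T\X)$ and $L^0(T\Y)$, and the product tangent module is the direct sum of the two pullbacks (Theorem \ref{dectang}). Thus it is enough to show $\H(g)(\nabla h,\nabla\psi)=0$ $\mm_\X\otimes\mm_\Y$-a.e.\ for each such $\psi$. Fixing $\psi$ and an arbitrary weight $\zeta\in\Lip_\bs(\X\times\Y)$, I apply (a local version of) Lemma \ref{le:easy} with its $f,g_1,g_2$ and weight equal to $g,h,\psi$ and $\zeta$:
\[
2\int\zeta\,\H(g)(\nabla h,\nabla\psi)\,\d(\mm_\X\otimes\mm_\Y)=-\int\la\d g,\d h\ra\,\div(\zeta\nabla\psi)+\la\d g,\d\psi\ra\,\div(\zeta\nabla h)-\zeta\,\la\d g,\d\la\d h,\d\psi\ra\ra\,\d(\mm_\X\otimes\mm_\Y).
\]
Since $\d g=(\d_\sx\tilde g,0)$ has only the $\X$-component while $\d h=(0,\d_\sy\tilde h)$ has only the $\Y$-component, the orthogonality \eqref{eq:orto} gives $\la\d g,\d h\ra=0$, so the first term drops.

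It then remains to handle the last two terms in two cases. If $\psi=\varphi\circ\pi_\Y$ with $\varphi\in\test\Y$, then $\d\psi$ is cotangent to $\Y$, so $\la\d g,\d\psi\ra=0$ by \eqref{eq:orto} (second term gone), while $\la\d h,\d\psi\ra=\la\d_\sy\tilde h,\d_\sy\varphi\ra$ depends only on $y$, making $\d\la\d h,\d\psi\ra$ again cotangent to $\Y$ and hence orthogonal to $\d g$ (third term gone). If instead $\psi=\varphi\circ\pi_\X$ with $\varphi\in\test\X$, then $\d\psi$ is cotangent to $\X$, so $\la\d h,\d\psi\ra=0$ and the third term vanishes; the surviving second term is $-\int(p\circ\pi_\X)\,\div(\zeta\nabla h)\,\d(\mm_\X\otimes\mm_\Y)$ with $p:=\la\d_\sx\tilde g,\d_\sx\varphi\ra$, and here \eqref{eq:divdivx} with the roles of $\X,\Y$ swapped (legitimate since $\tilde h\in D(\Delta_\Y)$) gives $\div(\zeta\nabla h)=\div_\sy(\zeta\nabla_\sy\tilde h)$, whence by Fubini
\[
-\int(p\circ\pi_\X)\,\div_\sy(\zeta\nabla_\sy\tilde h)\,\d(\mm_\X\otimes\mm_\Y)=-\int_\X p(x)\Big(\int_\Y\div_\sy\big(\zeta(x,\cdot)\nabla_\sy\tilde h\big)\,\d\mm_\Y\Big)\d\mm_\X=0,
\]
because the $\Y$-integral of a bounded-support $\Y$-divergence is zero. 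In all cases the right-hand side vanishes for every $\zeta\in\Lip_\bs(\X\times\Y)$, so $\H(g)(\nabla h,\nabla\psi)=0$ a.e., and generation yields $\H(g)(\nabla h)=0$, i.e.\ \eqref{eq:horto}; the symmetric claim follows by exchanging $\X$ and $\Y$.

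The genuinely delicate points here are bookkeeping: that Lemma \ref{le:easy} (whose proof relies only on bounded-support cutoffs) remains valid for $g\in W^{2,2}_\loc(\X\times\Y)$ rather than $W^{2,2}(\X\times\Y)$, and that all approximations must be run in $L^2_\loc$ rather than $L^2$, since the factors may carry infinite mass. The single computational step with content is the vanishing of the surviving term in the case $\psi=\varphi\circ\pi_\X$, which rests on the elementary fact that integrating a compactly supported divergence over one factor gives zero; everything else is forced by the $\X$/$\Y$ orthogonality \eqref{eq:orto}.
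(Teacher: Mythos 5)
Your proof is correct and follows essentially the same route as the paper's: reduce to $\tilde h\in\test{\Y}$ by density, decompose $L^0(T(\X\times\Y))$ via Theorem \ref{dectang} into gradients pulled back from the two factors, and kill every term using the orthogonality \eqref{eq:orto}. The only difference is one of implementation rather than substance: the paper works with the pointwise identity $2\H(g)(\nabla h,\nabla f)=\la\d\la\d g,\d h\ra,\d f\ra+\la\d\la\d g,\d f\ra,\d h\ra-\la\d g,\d\la\d h,\d f\ra\ra$ and observes that $\la\d g,\d f\ra$ depends only on $x$, whereas you run the same computation in integrated form via Lemma \ref{le:easy} against an arbitrary weight and dispose of the one surviving term by Fubini and the vanishing of the fiberwise integral of a compactly supported divergence --- both are valid, and your flagged localization of Lemma \ref{le:easy} to $W^{2,2}_\loc$ is indeed harmless since its proof only uses bounded-support cutoffs.
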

\begin{proof}
Recalling that gradients of test functions on $\Y$ generate $L^2(T^*\Y)$,  we can reduce to prove \eqref{eq:horto} for $h$ as in the statement with $\tilde h\in \test{\Y}$. Then taking into account Theorem \ref{dectang} we see that to conclude it is sufficient to show that
 \begin{equation}
\label{eq:tesieq}
\begin{split}
\H (g)(\nabla h,\nabla f)&=0,\quad\mm_\X\otimes\mm_\Y-a.e.,\quad \forall f=\tilde f\circ\pi_\X\quad\text{with }\tilde f\in\test{\X},\\
\H (g)(\nabla h,\nabla  f)&=0,\quad\mm_\X\otimes\mm_\Y-a.e.,\quad \forall  f=\tilde f\circ\pi_\Y\quad\text{with }\tilde f\in\test{\Y}.
\end{split}
\end{equation}
We start with the first of the above and notice that
\[
\begin{split}
2\H (g)(\nabla h,\nabla f)&=\langle\d\underbrace{\la\d g,\d h\ra}_{\equiv0\atop\text{ by \eqref{eq:orto}}},\d f\rangle+\la\d\la\d g,\d f\ra,\d h\ra-\langle\d g,\d\underbrace{\la\d h,\d f\ra}_{\equiv 0\atop\text{ by \eqref{eq:orto}}}\rangle,
\end{split}
\]
while for the middle term we notice that by Proposition \ref{prop:defPhi} (in particular by polarization from the fact that $\Phi_\sx$ preserves the pointwise norm) we have $\la\d g,\d f\ra=\langle\d_\sx \tilde g,\d_\sx \tilde f \rangle\circ\pi_\X$, so that this function depends only on the $x$ variable and the scalar product of its differential with that of $h$ is 0, again by  \eqref{eq:orto}.

Similarly, for the second in \eqref{eq:tesieq} we have
\[
\begin{split}
2\H (g)(\nabla h,\nabla f)&=\langle\d\underbrace{\la\d g,\d h\ra}_{\equiv0\atop\text{ by \eqref{eq:orto}}},\d \tilde f\rangle+\langle\d\underbrace{\langle\d g,\d  f\rangle}
_{\equiv 0\atop\text{ by \eqref{eq:orto}}},\d h\rangle-\langle\d g,\d{\langle\d h,\d  f\rangle}\rangle,
\end{split}
\]
and in the last addend arguing as above we see that the function ${\langle\d h,\d  f\rangle}$ depends only on $y$, while $ g$ depends only on $x$, so that  \eqref{eq:orto} ensures that it is identically 0, as claimed.
\end{proof}
We now come to the main result of this section:
\begin{theorem}\label{prop:w22prod} Let $(\X,\sfd_\X,\mm_\X)$ and $(\Y,\sfd_\Y,\mm_\Y)$ be $\RCD(K,\infty)$ spaces and $f\in W^{2,2} (\X\times\Y)$. Then 
\begin{itemize}
\item[i)] for $\mm_\Y$-a.e.\ $y\in\Y$ we have $ f(\cdot,y)\in W^{2,2}(\X)$ with $\iint|\H_\sx (f)|^2_\HS\,\d\mm_\X\,\d\mm_\Y<\infty$,
\item[ii)] for $\mm_\X$-a.e.\ $x\in\X$ we have $ f(x,\cdot)\in W^{2,2}(\Y)$ with $\iint|\H_\sy (f)|^2_\HS\,\d\mm_\Y\,\d\mm_\X<\infty$,
\item[iii)] we have $\d_\sy f\in W^{1,2}(\X;L^2(T^*\Y))$ and   $\d_\sx f\in W^{1,2}(\Y;L^2(T^*\X))$.
\end{itemize}
Also, the identities \eqref{eq:hprod} and \eqref{eq:hprod2} hold.
\end{theorem}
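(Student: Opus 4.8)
The plan is to read the four blocks of $\H(f)$ off the decomposition \eqref{eq:decA}, writing $\H(f)=A_{\sx\sx}+A_{\sx\sy}+A_{\sy\sx}+A_{\sy\sy}$ with $A_{\sx\sx}=\Phi_\sx^{\otimes 2}(B_{\sx\sx})$, $A_{\sy\sy}=\Phi_\sy^{\otimes 2}(B_{\sy\sy})$, $A_{\sx\sy}=\Phi_\sx\otimes\Phi_\sy(B_{\sx\sy})$ and $A_{\sy\sx}=\Phi_\sy\otimes\Phi_\sx(B_{\sy\sx})$, where the four tensors $B_{\cdot\cdot}$ lie in the natural $L^2$ spaces since $\H(f)\in L^2$ and the $\Phi$'s preserve the pointwise norm. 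The goal is to identify $B_{\sx\sx}=\H_\sx(f)$, $B_{\sy\sy}=\H_\sy(f)$, $B_{\sx\sy}=\d_\sx\d_\sy f$ and $B_{\sy\sx}=\d_\sy\d_\sx f$; once this is done \eqref{eq:hprod} is precisely the decomposition and \eqref{eq:hprod2} follows by evaluating the tensors on $(v,w)$ using that $\Phi_\sx\otimes\Phi_\sy(B)(v,w)=B(v_\sx,w_\sy)$ and analogues.

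For statements (i) and (ii) I would apply Lemma \ref{le:easy} with $g_1=g\circ\pi_\X$, $g_2=g'\circ\pi_\X$ for $g,g'\in\test\X$. Since $\d(g\circ\pi_\X)=\Phi_\sx(\d_\sx g)$, the orthogonality \eqref{eq:orto} and the fact that $\Phi_\sx$ preserves pointwise products turn the right-hand side of \eqref{eq:hess} into the right-hand side of condition (ii) of Proposition \ref{prop:hessx} verbatim, while the left-hand side collapses to $2\int hB_{\sx\sx}(\nabla_\sx g,\nabla_\sx g')$ because the off-diagonal and $\sy\sy$ blocks pair to zero against gradients whose $\Y$-component vanishes. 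Thus $B_{\sx\sx}$ satisfies condition (ii) of Proposition \ref{prop:hessx}, which gives (i) together with $\H_\sx(f)=B_{\sx\sx}$; the symmetric choice yields (ii) and $\H_\sy(f)=B_{\sy\sy}$.

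For (iii) I would again use Lemma \ref{le:easy}, now with the \emph{mixed} pair $g_1=g\circ\pi_\X$, $g_2=g'\circ\pi_\Y$. The term $\la\d g_1,\d g_2\ra$ drops out by \eqref{eq:orto}, and the block analysis isolates $B_{\sx\sy}$, giving
\[
2\int hB_{\sx\sy}(\nabla_\sx g,\nabla_\sy g')\,\d(\mm_\X\otimes\mm_\Y)=-T_1-T_2,
\]
with $T_1:=\int\la\d_\sx f,\d_\sx g\ra\,\div(h\nabla(g'\circ\pi_\Y))\,\d(\mm_\X\otimes\mm_\Y)$ and $T_2:=\int\la\d_\sy f,\d_\sy g'\ra\,\div(h\nabla(g\circ\pi_\X))\,\d(\mm_\X\otimes\mm_\Y)$. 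To feed this into Proposition \ref{prop:perdxdy} (taking $\Hil=L^2(T^*\Y)$, $F=\d_\sy f$, generating set $D=\{\d_\sy g':g'\in\test\Y\}$ of bounded elements, candidate $A=B_{\sx\sy}$) I must establish $-T_2=\int hB_{\sx\sy}(\nabla_\sx g,\nabla_\sy g')\,\d(\mm_\X\otimes\mm_\Y)$, which by the displayed identity is equivalent to the symmetry $T_1=T_2$. This is the heart of the matter, and it cannot be obtained from the earlier integration-by-parts propositions without circularity, since they presuppose exactly the mixed regularity we are proving. I would instead prove it by density: both $T_1$ and $T_2$ are continuous for the $W^{1,2}(\X\times\Y)$-topology (the assignment $f\mapsto\d_\sx f$ is continuous into $L^2(\Y;L^2(T^*\X))$ and the remaining factors are fixed and bounded or $L^2$), so it suffices to check $T_1(f_k)=T_2(f_k)$ for $f_k\in\test{\X\times\Y}$ approximating $f$. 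For such $f_k$ the function $u:=\la\d_\sx f_k,\d_\sx g\ra=\la\d f_k,\d(g\circ\pi_\X)\ra$ is in $W^{1,2}(\X\times\Y)$ (inner product of differentials of test functions, after multiplying $g\circ\pi_\X$ by a cutoff that is $1$ on $\supp h$), so integrating the $\Y$-divergence by parts gives $T_1(f_k)=-\int h\la\d_\sy u,\d_\sy g'\ra\,\d(\mm_\X\otimes\mm_\Y)$; then the identity $\la\d\la\d f_k,\d(g\circ\pi_\X)\ra,\d(g'\circ\pi_\Y)\ra=\H(f_k)(\nabla(g\circ\pi_\X),\nabla(g'\circ\pi_\Y))+\H(g\circ\pi_\X)(\nabla f_k,\nabla(g'\circ\pi_\Y))$ combined with Lemma \ref{le:horto} (which kills the second summand, as $g\circ\pi_\X$ depends only on $x$ and $g'\circ\pi_\Y$ only on $y$) yields $T_1(f_k)=-\int h\,\H(f_k)(\nabla_\sx g,\nabla_\sy g')\,\d(\mm_\X\otimes\mm_\Y)$. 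The same computation with $\X$ and $\Y$ interchanged gives $T_2(f_k)=-\int h\,\H(f_k)(\nabla_\sy g',\nabla_\sx g)\,\d(\mm_\X\otimes\mm_\Y)$, and symmetry of the Hessian forces $T_1(f_k)=T_2(f_k)$.

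With $T_1=T_2$ established, the displayed identity gives $-T_2=\int hB_{\sx\sy}(\nabla_\sx g,\nabla_\sy g')\,\d(\mm_\X\otimes\mm_\Y)$, i.e.\ \eqref{eq:partXX} for $F=\d_\sy f$; since $|B_{\sx\sy}|_\HS\in L^2$ and $D$ generates $L^2(T^*\Y)$, Proposition \ref{prop:perdxdy} gives $\d_\sy f\in W^{1,2}(\X;L^2(T^*\Y))$ with $\d_\sx\d_\sy f=B_{\sx\sy}$ (the passage from $g\in\test\X$ to $g\in D(\Delta_\X)$ being by density in the graph norm, both sides being continuous there). Interchanging $\X$ and $\Y$ yields $\d_\sx f\in W^{1,2}(\Y;L^2(T^*\X))$ with $\d_\sy\d_\sx f=B_{\sy\sx}$. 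Substituting the four identifications into \eqref{eq:decA} proves \eqref{eq:hprod}, and \eqref{eq:hprod2} follows by evaluation. I expect the only genuinely delicate point to be the symmetry $T_1=T_2$: Lemma \ref{le:easy} delivers only the symmetric combination $T_1+T_2$, and breaking it into the two individual mixed derivatives requires the Schwarz-type argument on test approximants above rather than a direct appeal to integration by parts.
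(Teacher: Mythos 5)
Your proposal is correct and follows essentially the same route as the paper: Lemma \ref{le:easy} with both test functions pulled back from the same factor feeds Proposition \ref{prop:hessx} for (i)--(ii), and the mixed case reduces to splitting the symmetric combination $-T_1-T_2$ into the single identity needed for Proposition \ref{prop:perdxdy}. Your way of obtaining $T_1=T_2$ (integration by parts on test approximants, the product rule for $\d\la\d f_k,\d(g\circ\pi_\X)\ra$, Lemma \ref{le:horto}, symmetry of the Hessian, then $W^{1,2}$-continuity) is the same mathematical content as the paper's auxiliary identity for $2\int h\H(g_1)(\nabla f,\nabla g_2)$ extended by continuity and then killed by Lemma \ref{le:horto}, so the two arguments differ only in packaging.
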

\begin{proof} Point $(i)$ follows by a direct application of Lemma \ref{le:easy} above with $g_1=g_2\in \{g\circ\pi_\X:g\in\test{\X}\}$ in conjunction with Proposition \ref{prop:hessx}. Similarly for $(ii)$. 

To get $(iii)$ we start claiming that for $g_1,g_2\in \{g\circ\pi_\X:g\in\test{\X}\}\cup\{g\circ\pi_\Y:g\in\test{\Y}\}$ and $h\in\test{\X\times\Y}$ with bounded support it holds
\[
\begin{split}
&2\int h\H(g_1)(\nabla f,\nabla g_2) \,\d(\mm_\X\otimes\mm_\Y)\\
&=\int- \la \d g_1,\d f\ra\div(h\nabla g_2)+h\la\d\la \d g_1,\d g_2\ra,\d f\ra +\la \d f,\d g_2\ra\div(h\nabla g_1)\,\d(\mm_\X\otimes\mm_\Y).
\end{split}
\]
Indeed, this identity holds for $f\in\test{\X\times\Y}$ by the very definition of Hessian and two integration by parts. Then the claim follows from the continuity of both sides of the identity in $f$ w.r.t.\ the $W^{1,2}$-topology. Adding up this identity with \eqref{eq:hess} we obtain
\[
\int h\H (f)(\nabla g_1,\nabla  g_2)+h \H (g_1)(\nabla f,\nabla  g_2)\,\d(\mm_\X\otimes\mm_\Y)=-\int \la \d f,\d g_1\ra\div(h\nabla g_2)\,\d(\mm_\X\otimes\mm_\Y).
\]
Here we pick $g_1=g\circ \pi_\X$ and $g_2=\tilde g\circ\pi_\Y$ with $g\in\test{\X}$ and $\tilde g\in \test{\Y}$ respectively and take into account Lemma \ref{le:horto} to deduce that
\[
\begin{split}
\int h\H (f)(\nabla( g\circ\pi_\X),\nabla ( \tilde g\circ\pi_\Y))\,\d(\mm_\X\otimes\mm_\Y)&=-\int \la \d f,\d (g\circ\pi_\X)\ra\div(h\nabla(\tilde g\circ\pi_\Y))\,\d(\mm_\X\otimes\mm_\Y)\\
\end{split}
\]
Hence recalling   Proposition \ref{prop:defPhi} (in particular the fact that $\Phi_\sx$ preserves the pointwise norm)   and denoting by $A$ the restriction of the Hessian of $f$ to $\Phi_\sx(L^2(\Y;L^2(T^*\X)))\otimes\Phi_\sy( L^2(\X;L^2(T^*\Y)))\subset L^2((T^*)^{\otimes 2}(\X\times\Y))$ we obtain
\begin{equation}
\label{eq:perdxdy}
\int h\la A,\d_\sx g\otimes \d_\sy  \tilde g\ra=-\int \la \d_\sx f,\d_\sx g \ra\div(h\nabla(\tilde g\circ\pi_\Y))\,\d(\mm_\X\otimes\mm_\Y).
\end{equation}
We have established \eqref{eq:perdxdy} for $\tilde g\in \test{\Y}$ and $h\in \test{\X\times\Y}$ with bounded support, but a simple approximation argument based on the heat flow and a multiplication with a test cut-off function with bounded support shows that it actually holds for $h\in \Lip_\bs(\X\times\Y)$ and $\tilde g\in D(\Delta_\sy)$. 

Since $\{\d_\sx g:g\in\test{\X}\}$ generates $L^2(T^*\X)$, we are now in position to apply Proposition \ref{prop:perdxdy} with $\d_\sx f$ in place of $f$ to conclude that $\d_\sx f\in W^{1,2}(\Y;L^2(T^*\X))$, as desired (with $\d_\sy\d_\sx  f=A$). The argument for $\d_\sy\d_\sx f$   is analogous.

The fact that  the identities \eqref{eq:hprod} and \eqref{eq:hprod2} hold is now a direct consequence of the proof (alternatively, now that we know $(i),(ii),(iii)$, of Proposition \ref{prop:dafatt}).
\end{proof}
If the function $f$ in the previous statement is assumed to be in $H^{2,2}(\X\times\Y)$, also the fibers can be proved to be in the respective $H^{2,2}$ spaces:
\begin{proposition}\label{prop:h22prod} Let $(\X,\sfd_\X,\mm_\X)$ and $(\Y,\sfd_\Y,\mm_\Y)$ be $\RCD(K,\infty)$ spaces and  $f\in H^{2,2} (\X\times\Y)$. 

Then in addition to the conclusions of Theorem \ref{prop:w22prod} above we  have:
\begin{itemize}
\item[i')] for $\mm_\Y$-a.e.\ $y\in\Y$ we have $f(\cdot,y)\in H^{2,2}(\X)$,
\item[ii')] for $\mm_\X$-a.e.\ $x\in\X$ we have $f(x,\cdot)\in H^{2,2}(\Y)$.
\end{itemize}
\end{proposition}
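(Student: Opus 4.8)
The plan is to approximate $f$ in the strong $W^{2,2}(\X\times\Y)$-topology by functions whose fibers manifestly belong to $\test\X$ and $\test\Y$, and then to transfer the regularity to the fibers of $f$ using the orthogonal decomposition of the Hessian. The natural class to use is the product algebra $\Test_\times\fct(\X\times\Y)$, since a fiber $(g_1\otimes g_2)(\cdot,y)=g_2(y)\,g_1$ of a generator is just a scalar multiple of $g_1\in\test\X$.

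The first, and main, step will be to prove the \emph{density claim}: $\Test_\times\fct(\X\times\Y)$ is $W^{2,2}(\X\times\Y)$-dense in $H^{2,2}(\X\times\Y)$. Since by definition $H^{2,2}(\X\times\Y)$ is the $W^{2,2}$-closure of $\test{\X\times\Y}$, it suffices to approximate an arbitrary $u\in\test{\X\times\Y}$. I would first note that $u\in D(\Delta)$ gives $\h_tu\to u$ in $W^{2,2}(\X\times\Y)$ as $t\downarrow0$: indeed $\Delta\h_tu=\h_t\Delta u\to\Delta u$ and $\d\h_tu\to\d u$ in $L^2$ by strong continuity of the (linear) heat semigroup, whence \eqref{eq:hesscont} applied to $\h_tu-u$ forces $\H(\h_tu)\to\H(u)$ in $L^2$. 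Next, since $u\in L^2\cap L^\infty(\X\times\Y)$, Lemma \ref{le:apprtestx} provides $g_{n,t}\in\Test_\times\fct(\X\times\Y)$ with $g_{n,t}\to\h_tu$, $\Delta g_{n,t}\to\Delta\h_tu$ and $\nabla g_{n,t}\to\nabla\h_tu$ in $L^2$; moreover, as is shown \emph{inside} the proof of that lemma (again via \eqref{eq:hesscont}), one also has $\H(g_{n,t})\to\H(\h_tu)$ in $L^2((T^*)^{\otimes 2}(\X\times\Y))$, so that $g_{n,t}\to\h_tu$ in $W^{2,2}$. Consequently the $W^{2,2}$-closure of $\Test_\times\fct(\X\times\Y)$ contains every $\h_tu$, hence every $u\in\test{\X\times\Y}$, and being closed it contains all of $H^{2,2}(\X\times\Y)$.

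With the claim in hand, I would pick $(f_n)\subset\Test_\times\fct(\X\times\Y)$ with $f_n\to f$ in $W^{2,2}(\X\times\Y)$. Writing the Hessian of $f_n-f\in W^{2,2}(\X\times\Y)$ through the decomposition \eqref{eq:hprod2} and invoking the pointwise orthogonality \eqref{eq:decA2} together with \eqref{Chpr} for the differential, integration yields
\begin{equation*}
\int\|f_n(\cdot,y)-f(\cdot,y)\|_{W^{2,2}(\X)}^2\,\d\mm_\Y(y)\le\|f_n-f\|_{W^{2,2}(\X\times\Y)}^2\to0\qquad(n\to\infty),
\end{equation*}
because the left-hand side collects precisely the non-negative summands $|f_n-f|^2$, $|\d_\sx(f_n-f)|^2$ and $|\H_\sx(f_n-f)|_\HS^2$ of the full squared norm. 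Passing to a non-relabelled subsequence I may then assume $f_n(\cdot,y)\to f(\cdot,y)$ in $W^{2,2}(\X)$ for $\mm_\Y$-a.e.\ $y$. Since each fiber $f_n(\cdot,y)$ is a finite real linear combination of functions in $\test\X$, hence lies in $\test\X\subset H^{2,2}(\X)$, and $H^{2,2}(\X)$ is $W^{2,2}(\X)$-closed, the a.e.\ limit gives $f(\cdot,y)\in H^{2,2}(\X)$ for $\mm_\Y$-a.e.\ $y$, which is $(i')$; exchanging the roles of $\X$ and $\Y$ gives $(ii')$.

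I expect the density claim to be the only delicate point: the definition of $H^{2,2}$ only guarantees approximation by $\test{\X\times\Y}$, whose fibers need not belong to $\test\X$, so the real content is to replace these by functions of the smaller algebra $\Test_\times\fct$—and it is precisely here that Lemma \ref{le:apprtestx}, and through it the tensorization of the heat flow, does the essential work. Everything after the claim is bookkeeping with the already-established orthogonal splitting of $\H(f)$ from Theorem \ref{prop:w22prod}.
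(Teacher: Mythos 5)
Your proof is correct and follows essentially the same strategy as the paper's: approximate $f$ in $W^{2,2}(\X\times\Y)$ by functions whose fibers are manifestly test functions, then use the orthogonal decomposition of the Hessian from Theorem \ref{prop:w22prod} together with \eqref{Chpr} to dominate the fiberwise $W^{2,2}$-norms by the product norm and extract an a.e.-convergent subsequence, concluding by the closedness of $H^{2,2}$ on the factor. The only difference is in how the approximating sequence is built: the paper takes $\h_{t_n}f_n$ with $f_n\in\test{\X\times\Y}$ and uses \eqref{eq:tenscal} and \eqref{eq:l2test} to see that its fibers are test functions, whereas you route through the $W^{2,2}$-density of $\Test_\times\fct(\X\times\Y)$ via Lemma \ref{le:apprtestx} --- both devices rest on the tensorization of the heat flow, and your intermediate density claim is a valid (and mildly stronger) byproduct.
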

\begin{proof}
Let $(f_n)\subset \test{\X\times \Y}$ be $W^{2,2}$-converging to $f$ and notice that with a diagonalization argument based on the fact that $\h_tf_n\to f_n$ in $W^{2,2}$ as $t\downarrow0 $ (recall \eqref{eq:hesscont}), we see that $h_{t_n}f_n\to f$ in $W^{2,2}$ for some $t_n\downarrow0$.

The conclusion then follows from the following two simple facts (and their analogue with inverted variables):
\[
\text{for $\mm_\X$-a.e.\ $x\in\X$ we have $\h_{t_n}f_n(x,\cdot)\in\test{\Y}$ for every $n\in\N$}
\]
and
\[
\text{for  $\mm_\X$-a.e.\ $x\in\X$ we have $\h_{t_{n_k}}f_{n_k}(x,\cdot)\to f(x,\cdot)$ in $W^{2,2}(\Y)$ for some $n_k\uparrow\infty$.}
\]
The first follows from \eqref{eq:tenscal} and \eqref{eq:l2test}, while for the second we notice that the identities \eqref{Chpr} and \eqref{eq:decA2}, \eqref{eq:hprod} (which is valid thanks to Theorem \ref{prop:w22prod}) imply that
\[
|\d_\sy g(x,\cdot)|(y)\leq |\d g|(x,y)\qquad\text{ and }\qquad |\H_\sy g(x,\cdot)|_\HS (y)\leq|\H g|_\HS(x,y)
\]
$\mm_\X\otimes\mm_\Y$-a.e.\ $(x,y)\in \X\times\Y$. Then we apply these to $g:=f-f_n$ and use the assumption $f_n\to f$ in $W^{2,2}(\X\times\Y)$ to conclude.
\end{proof}

 \subsection{Covariant derivative on the product space}\label{sec:CovDer}

In this part of the paper we investigate the relation between Sobolev regularity of vector fields on base spaces and in the product.

\subsubsection{From regularity on the factors to regularity on the product}
The following is analogue of Proposition \ref{le:w22tp}:
\begin{proposition}\label{prop:WCtp}
Let $\X, \Y$ be two $\RCD(K, \infty)$ spaces, $v \in L^0(T(\X \times \Y))$ and $T \in L^2(T^{\otimes 2}(\X \times \Y))$. Then $v \in W^{1, 2}_C(T(\X \times \Y))$ with $\nabla v = T$ if and only if it holds
\begin{equation}\label{eq:testcov}
-\int \langle v, \nabla \tilde g \rangle \div(h \nabla g) + h \H(\tilde g)(v, \nabla g) \, \d(\mm_\X \otimes \mm_\Y) \, = \, \int h \langle T, \nabla g \otimes \nabla \tilde g \rangle \, \d(\mm_\X \otimes \mm_\Y).
\end{equation}
for every $g, \tilde g, h \in \Test_\times \fct(\X \times \Y)$.
\end{proposition}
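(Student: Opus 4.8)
The plan is to mirror the proof of Proposition \ref{le:w22tp}, with the Hessian replaced by the covariant derivative. The essential difference to keep in mind is that, unlike the defining identity \eqref{eq:testh} for the Hessian, the identity \eqref{eq:testcov} is not symmetric in $g$ and $\tilde g$ (the covariant derivative is a general, not necessarily symmetric, two-tensor), so no polarization is available and I must carry the two arguments $g,\tilde g$ together with the weight $h$ through the whole argument.

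The ``only if'' direction is the routine one: by definition $v\in\W_C(T(\X\times\Y))$ with $\nabla_C v=T$ means that \eqref{eq:testcov} holds for all $g,\tilde g,h\in\test{\X\times\Y}$ with bounded support, and since $\Test_\times\fct(\X\times\Y)\subset\test{\X\times\Y}$ and the objects $v,T$ are in $L^2$, every term of \eqref{eq:testcov} is integrable (by Cauchy--Schwarz, using that gradients of test functions are bounded and their Hessians are in $L^2$). Exactly as for Proposition \ref{le:w22tp}, the weak formulation is then insensitive to the support of the test functions, and \eqref{eq:testcov} extends from the bounded-support test functions to all of $\Test_\times\fct(\X\times\Y)$.

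For the ``if'' direction I would fix $g,\tilde g,h\in\test{\X\times\Y}$ (which I may take of bounded support, these being the functions required by the definition) and apply Lemma \ref{le:apprtestx} to produce approximants $g_{n,t},\tilde g_{n,t},h_{n,t}\in\Test_\times\fct(\X\times\Y)$ that are uniformly bounded in $L^\infty$ and Lipschitz and such that the functions, their Laplacians, gradients and Hessians (the latter through $\nabla(|\nabla f|^2)=2\H(f)(\nabla f)$ together with \eqref{eq:hesscont}, as in the proof of that lemma) converge in $L^2$ to those of $\h_tg,\h_t\tilde g,\h_th$. Writing the assumed identity \eqref{eq:testcov} for these approximants and letting $n\to\infty$, every term is a product of sequences that converge in $L^2$ and are uniformly bounded in $L^\infty$, paired against the fixed $v,T\in L^2$; passing to the limit is then a matter of splitting differences by the triangle inequality and invoking Cauchy--Schwarz and dominated convergence, precisely as done for the term $\H(g_{n,t})(\nabla g_{n,t})$ inside the proof of Lemma \ref{le:apprtestx}. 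This gives \eqref{eq:testcov} for $\h_tg,\h_t\tilde g,\h_th$, and a second passage to the limit as $t\downarrow0$, using that $\h_tg,\h_t\tilde g,\h_th$ converge to $g,\tilde g,h$ in $W^{1,2}(\X\times\Y)$ (and in $W^{2,2}$ for the factor $\tilde g$ carrying the Hessian, by \eqref{eq:hesscont}) and the uniform bound furnished by the maximum principle \eqref{eq:maxh}, recovers \eqref{eq:testcov} for the original $g,\tilde g,h$. Having verified the defining identity for all bounded-support test functions, and recalling $T\in L^2$ and $v\in L^2$, I conclude $v\in\W_C(T(\X\times\Y))$ with $\nabla_C v=T$.

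I expect the main obstacle to be the limiting behaviour of the genuinely trilinear term $\int h\,\H(\tilde g)(v,\nabla g)\,\d(\mm_\X\otimes\mm_\Y)$: here one varies at once the Hessian (converging only in $L^2$), the gradient (converging in $L^2$ but also uniformly bounded) and the weight $h$, while contracting against the fixed $v\in L^2$, so the integrand lies only in $L^1$ and the passage to the limit must be organised carefully, letting one factor vary at a time. A secondary point concerns the integrability of $v$: for \eqref{eq:testcov} to be meaningful $v$ must in fact be in $L^2$ rather than merely in $L^0$, and it is this $L^2$-membership that places the conclusion in the global space $\W_C$ rather than in its localised version $\W_{C,\loc}$; accordingly I would treat $v\in L^2$ as part of the standing data throughout the limiting arguments above.
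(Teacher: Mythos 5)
Your proposal is correct and follows essentially the same route as the paper's proof: the ``only if'' direction from the definition of $\W_C$ together with the inclusion $\Test_\times\fct(\X\times\Y)\subset\test{\X\times\Y}$, and the ``if'' direction by approximating fixed $g,\tilde g,h\in\test{\X\times\Y}$ via Lemma \ref{le:apprtestx} and passing to the limit first in $n$ and then in $t$. Your additional observations --- that no polarization in $g,\tilde g$ is available here (unlike in Proposition \ref{le:w22tp}) and that $v$ must in fact lie in $L^2$ for the global conclusion --- are accurate and, if anything, make the argument more explicit than the paper's.
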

\begin{proof}
The `only if' part follows directly from the definitions of $W^{1,2}_C$ and Covariant Derivative, and the inclusion  $\Test_\times \fct(\X\times\Y)\subset \test{\X\times\Y}$. As for the `if' part, fix $g, \tilde g, h\in \test{\X\times\Y}$, and apply Lemma \ref{le:apprtestx} to find corresponding functions $g_{n,t}, \tilde g_{n,t}, h_{n,t}$ as in the statement. Therefore, \eqref{eq:testcov} holds with $g_{n,t}, \tilde g_{n,t}, h_{n,t}$ in place of $g, \tilde g, h$, respectively. Now, letting $n\to\infty$ in such identity, and using the conclusions of Lemma \ref{le:apprtestx}, we obtain that \eqref{eq:testh} holds also with  $\h_tg, \h_t \tilde g_{n}, \h_th$ in place of $g, \tilde g, h$, respectively. In order to conclude, we pass to the limit as $t\downarrow0$ in the resulting identity and we get that \eqref{eq:testcov} actually holds for the $g, \tilde g, h\in\test{\X\times\Y}$ initially chosen, as desired.
\end{proof}
We then the following result, analogue of Theorem \ref{prop:dafatt}.
\begin{theorem}\label{prop:dafattCov} Let $\X,\Y$ be two $\RCD(K,\infty)$ spaces and let $v\in L^0 (T (\X\times\Y))$, $v=(v_\sx, v_\sy)$, be such that:
\begin{itemize}
\item[i)] for $\mm_\X$-a.e.\ $x\in\X$ we have that $v_\sy(x, \cdot) \in W^{1,2}_C(T\Y)$ with $\iint|\nabla_{C, \sy} (v_\sy)|^2_\HS\,\d\mm_\Y\,\d\mm_\X<\infty$,
\item[ii)] for $\mm_\Y$-a.e.\ $y\in\Y$ we have that $v_\sx(\cdot, y) \in W^{1,2}_C(T\X)$ with $\iint|\nabla_{C, \sx} (v_\sx)|^2_\HS\,\d\mm_\X\,\d\mm_\Y<\infty$,
\item[iii)] the map $\Y\ni y\mapsto v_\sx(\cdot, y)$ belongs to $W^{1,2}(\Y; L^2(T \X))$, and the map $\X\ni x\mapsto v_\sy(x, \cdot)$ belongs to $W^{1,2}(\X; L^2(T \Y))$.
\end{itemize}
Then $v\in W^{1,2}_C(T(\X\times\Y))$ with 
\begin{equation}
\label{eq:covprod}
\nabla_C v = \Phi^{\otimes 2}_\sx(\nabla_{C, \sx} v_\sx)+\Phi^{\otimes 2}_\sy(\nabla_{C, \sy} v_\sy)+\Phi_\sx\otimes\Phi_\sy(\d_\sx v_\sy)+\Phi_\sy\otimes\Phi_\sx(\d_\sy v_\sx)
\end{equation}
i.e.\ for every $w, z \in L^0(T(\X\times \Y))$, writing $w=(w_\sx, w_\sy)$ and $z=(z_\sx, z_\sy)$ (recall \eqref{eq:coppia}) we have
\begin{equation}
\label{eq:covprod2}
 \nabla_C v : (w\otimes z)  = \langle \nabla_{C, \sx} v_\sx, w_\sx\otimes z_\sx\rangle + \langle \nabla_{C, \sy} v_\sy, w_\sy\otimes z_\sy\rangle + \la\d_\sx v_\sy, w_\sx\otimes z_\sy\ra+\la \d_\sy v_\sx, w_\sy\otimes z_\sx\ra.
\end{equation}
\end{theorem}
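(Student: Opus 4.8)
The plan is to mimic closely the proof of Theorem \ref{prop:dafatt}, replacing the Hessian by the covariant derivative throughout. First I would check that the right-hand side of \eqref{eq:covprod} is a genuine element of $L^2(T^{\otimes 2}(\X\times\Y))$: since the four maps $\Phi^{\otimes 2}_\sx,\Phi^{\otimes 2}_\sy,\Phi_\sx\otimes\Phi_\sy,\Phi_\sy\otimes\Phi_\sx$ are norm-preserving with pairwise orthogonal images, the pointwise norm of the candidate tensor $T$ equals $|\nabla_{C,\sx}v_\sx|^2_\HS+|\nabla_{C,\sy}v_\sy|^2_\HS+|\d_\sx v_\sy|^2+|\d_\sy v_\sx|^2$, which is integrable by assumptions (i), (ii), (iii). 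Then, by Proposition \ref{prop:WCtp}, it suffices to verify the defining identity \eqref{eq:testcov} for this $T$; and since \eqref{eq:testcov} is separately linear in $g,\tilde g,h$, I may take $g=g_1\otimes g_2$, $\tilde g=\tilde g_1\otimes\tilde g_2$ and $h=h_1\otimes h_2$ with all factors test functions.

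With such product functions the computation splits just as in Theorem \ref{prop:dafatt}. Using $\nabla g=(g_2\nabla_\sx g_1,g_1\nabla_\sy g_2)$, the orthogonal decomposition $\langle v,\nabla\tilde g\rangle=\tilde g_2\langle v_\sx,\nabla_\sx\tilde g_1\rangle+\tilde g_1\langle v_\sy,\nabla_\sy\tilde g_2\rangle$, Proposition \ref{divprod} to write $\div(h\nabla g)=\div_\sx(hg_2\nabla_\sx g_1)+\div_\sy(hg_1\nabla_\sy g_2)$, and the Hessian decomposition \eqref{eq:hprod2} for the product function $\tilde g$ (legitimate by Theorem \ref{prop:dafatt}, and yielding $\d_\sx\d_\sy\tilde g=\d_\sx\tilde g_1\otimes\d_\sy\tilde g_2$), both sides of \eqref{eq:testcov} break into four families: two ``pure'' ones carrying only $\X$- or only $\Y$-objects and two ``mixed'' ones. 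I would then match family by family. For the pure-$\X$ family I would collect the divergence contribution and the $\H_\sx(\tilde g)$ contribution and recognize, for $\mm_\Y$-a.e.\ $y$, the defining identity of $\nabla_{C,\sx}v_\sx$ on $\X$ tested against $\nabla_\sx g_1\otimes\nabla_\sx\tilde g_1$ with weight $hg_2\tilde g_2$ (here one uses that $g_2,\tilde g_2$ are constants for $\div_\sx$); assumption (ii) makes this licit and produces the $\Phi^{\otimes 2}_\sx(\nabla_{C,\sx}v_\sx)$ term, and symmetrically assumption (i) handles the pure-$\Y$ family.

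The mixed families are the heart of the matter. Consider the terms containing $v_\sx$: the piece $-\tilde g_2\langle v_\sx,\nabla_\sx\tilde g_1\rangle\div_\sy(hg_1\nabla_\sy g_2)$ from the divergence and the piece coming from $\langle\d_\sx\d_\sy\tilde g,v_\sx\otimes(\nabla g)_\sy\rangle$, which equals $\langle v_\sx,\nabla_\sx\tilde g_1\rangle\,g_1\langle\d_\sy\tilde g_2,\d_\sy g_2\rangle$. Setting $\psi:=\langle v_\sx,\nabla_\sx\tilde g_1\rangle$, assumption (iii) and the chain rule of Proposition \ref{thm:closured}(iv), applied to the contraction morphism $T=\langle\,\cdot\,,\nabla_\sx\tilde g_1\rangle$, give $\psi\in W^{1,2}(\Y;L^2(\X))$ with $\d_\sy\psi=\langle\d_\sy v_\sx,\nabla_\sx\tilde g_1\rangle$. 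Integrating the divergence term by parts in $y$ produces, via the Leibniz rule, one term $\langle\d_\sy\psi,\d_\sy g_2\rangle$ — precisely the target $\langle\d_\sy v_\sx,\d_\sy g_2\otimes\nabla_\sx\tilde g_1\rangle$ — plus a spurious term proportional to $hg_1\psi\langle\d_\sy\tilde g_2,\d_\sy g_2\rangle$, which cancels exactly against the mixed Hessian contribution above. This identifies the $v_\sx$-family with the $\Phi_\sy\otimes\Phi_\sx(\d_\sy v_\sx)$ term of \eqref{eq:covprod2}, and the $v_\sy$-family is treated identically to yield $\Phi_\sx\otimes\Phi_\sy(\d_\sx v_\sy)$. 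Collecting the four families gives \eqref{eq:covprod2}, hence \eqref{eq:covprod} and $v\in W^{1,2}_C(T(\X\times\Y))$.

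I expect the main obstacle to be exactly this cancellation in the mixed families: one must check that the cross term created by differentiating the weight during the integration by parts is matched by the off-diagonal block of the Hessian of the product test function — this is the analogue of the role played by the Schwarz-type identity \eqref{eq:dxdy2} in the proof of Theorem \ref{prop:dafatt}. Keeping precise track of which scalar factors are frozen (i.e.\ act as constants) under each of $\div_\sx,\div_\sy,\d_\sx,\d_\sy$, and arranging all signs consistently with the conventions of Proposition \ref{prop:WCtp}, is the remaining piece of delicate bookkeeping; once this is done the four contributions assemble into \eqref{eq:covprod2}.
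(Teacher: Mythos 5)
Your proposal is correct and follows essentially the same route as the paper: reduce to product test functions via Proposition \ref{prop:WCtp}, split both sides into two pure and two mixed families, handle the pure families by the one-variable definition of the covariant derivative under assumptions (i)--(ii), and handle the mixed families by applying the chain rule of Proposition \ref{thm:closured}(iv) to the contraction $\la\,\cdot\,,\nabla_\sx\tilde g_1\ra$ and integrating by parts in $y$, with the spurious Leibniz term cancelling the off-diagonal block of $\H(\tilde g)$. The cancellation you flag as the delicate point is exactly the mechanism the paper uses (its identity \eqref{eq:dxdyCOV}), so no gap remains.
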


\begin{proof}
First of all, we observe that the assumptions guarantee that the right hand side of \eqref{eq:covprod} defines a tensor in $L^2(T^{\otimes 2}(\X \times \Y))$. Hence, keeping in mind the very definition of $W^{1,2}_C(T(\X\times\Y))$ and of the Covariant Derivative, Proposition  \ref{prop:WCtp} above and the equivalent version \eqref{eq:covprod2} of \eqref{eq:covprod}, in order to conclude we have to prove that for $g_1, \tilde g_1, h_1$ (resp.\ $g_2, \tilde g_2, h_2$) test functions on $\X$ (resp.\ $\Y$), once we set $g:=g_1\otimes g_2$, $\tilde g := \tilde g_1\otimes \tilde g_2$, and $h := h_1\otimes h_2$, we have
\begin{equation}
\label{eq:claimcov}
\begin{split}
-\int \langle v, \nabla \tilde g \rangle \div(h \nabla g) + h \H(\tilde g)(v, \nabla g) \,\d(\mm_\X&\times\mm_\Y)\\
=\int  h\Big(\langle \nabla_{C, \sx} v_\sx, \nabla_\sx g \otimes \nabla_\sx \tilde g\rangle+& \langle \nabla_{C, \sy} v_\sy, \nabla_\sy g \otimes \nabla_\sy \tilde g\rangle\\
+\la\d_\sy v_\sx, \d_\sy g\otimes\d_\sx \tilde g\ra &+ \la\d_\sx v_\sy,\d_\sy g\otimes\d_\sx \tilde g\ra\Big)\,\d(\mm_\X\otimes\mm_\Y).
\end{split}
\end{equation}
From the fact that $\nabla \tilde g = (\tilde g_2 \nabla_\sx \tilde g_1, \tilde g_1 \nabla_\sy \tilde g_2)$, we have
\[
\la v,\nabla \tilde g\ra=\tilde g_2\la v_\sx, \nabla_\sx \tilde g_1\ra + \tilde g_1\la v_\sy,\nabla_\sy \tilde g_2\ra.
\]
Moreover, from Proposition \ref{divprod} we have that for any $h\in \Test(\X\times \Y)$ it holds
\[
\div(h\nabla g)=\div_\sx(h\nabla_\sx g)+\div_\sy(h\nabla_\sy g)=g_2\div_\sx(h\nabla_\sx g_1)+g_1\div_\sy(h\nabla_\sy g_2),
\]
and thus
\begin{equation}
\label{eq:p1cov}
\begin{split}
\la v, \nabla \tilde g\ra \,\div(h\nabla g) =& \underbrace{g_2 \tilde g_2\la v_\sx,\nabla_\sx \tilde g_1\ra\div_\sx(h\nabla_\sx g_1)}_A +\underbrace{g_1\tilde g_2\la v_\sx,\nabla_\sx \tilde g_1\ra\div_\sy(h\nabla_\sy g_2)}_B \\
& +\underbrace{\tilde g_1g_2\la v_\sy,\nabla_\sy \tilde g_2\ra\div_\sx(h\nabla_\sx g_1)}_C  + \underbrace{g_1 \tilde g_1\la v_\sy,\nabla_\sy \tilde g_2\ra\div_\sy(h\nabla_\sy g_2)}_D.
\end{split}
\end{equation}
At this point, a direct application of Theorem \ref{prop:w22prod} ensures that
\begin{equation}\label{eq:p2cov}
\begin{split}
h \H(\tilde g)(v, \nabla g) 
= & \underbrace{ h g_2 \tilde g_2 \H_\sx (\tilde g_1) (v_\sx, \nabla_\sx g_1)}_{A'} + \underbrace{ h g_1 \la v_\sx \otimes \nabla_\sy  g_2, \nabla_\sx \tilde g_1 \otimes \nabla_\sy \tilde g_2 \ra}_{B'}  \\
 & + \underbrace{ h g_2 \la v_\sy \otimes \nabla_\sx  g_1, \nabla_\sy \tilde g_2 \otimes \nabla_\sx \tilde g_1 \ra}_{C'}  + \underbrace{h g_1 \tilde g_1 \H_\sy (\tilde g_2) (v_\sy, \nabla_\sy g_2)}_{D'}.
\end{split}
\end{equation}
Now, observe that from assumption $(i)$, and the integrability properties of $g, \tilde g, h$ it follows that
\[
\begin{split}
- \int \int A + A' \, \d \mm_\sx \, \d \mm_\sy &= \int g_2 \tilde g_2 \int h \nabla_{C, \sx} v_\sx : (\nabla_\sx g_1 \otimes \nabla_\sx \tilde g_1)\, \d \mm_\sx \, \d \mm_\sy\\
&= \int  h \nabla_{C, \sx} v_\sx : (\nabla_\sx g \otimes \nabla_\sx \tilde g)\, \d (\mm_\sx \otimes \mm_\sy),
\end{split}
\]
and, similarly, from assumption $(ii)$ we obtain
\[
- \int \int D + D' \, \d \mm_\sx \, \d \mm_\sy = \int  h \nabla_{C, \sy} v_\sy : (\nabla_\sy g \otimes \nabla_\sy \tilde g)\, \d (\mm_\sx \otimes \mm_\sy).
\]
Moreover, assumption $(iii)$ and the chain rule in point $(iv)$ of Proposition \ref{thm:closured} with $\X,\Y$ swapped, $\Hil = L^0(T^*\X)$ and $T \colon L^0(T^*\X) \to L^0(\X)$ given by $T(\cdot) = \la\cdot,\d_\sx g_1\ra$  ensure that $ \la v_\sx,\d_\sx g_1\ra\in W^{1,2}(\Y;L^2(\X))$ with
\begin{equation}
\label{eq:dxdyCOV}
\d_\sy  \la v_\sx, \d_\sx g_1\ra= \la\d_\sy v_\sx,\d_\sx g_1\ra.
\end{equation}
In particular, for $\mm_\X$-a.e.\ $x\in\X$ the function $ g_2(\cdot)\la\d_\sx f,\d_\sx g_1\ra(x,\cdot)$ is in $W^{1,2}(\Y)$ and  the following integration by parts is justified:
\[
\begin{split}
-\iint B\,\d\mm_\Y\d\mm_\X&=\int g_1\int h\la \d_\sy(\tilde g_2\la v_\sx,\nabla_\sx \tilde g_1\ra),\d_\sy g_2\ra\,\d\mm_\Y\d\mm_\X\\
&=\iint hg_1 \underbrace{\la \d_\sy \tilde g_2, \d_\sy g_2\ra \la v_\sx, \nabla_\sx \tilde g_1\ra}_{=\la v_\sx \otimes \nabla_\sy \tilde g_2, \nabla_\sx \tilde g_1 \otimes \nabla_\sy g_2 \ra} + h g_1 \tilde g_2\underbrace{\la\d_\sy\la v_\sx,\d_\sx \tilde g_1\ra,\d_\sy g_2\ra}_{=\la\d_\sy v_\sx,\d_\sy g_2\otimes\d_\sx \tilde g_1\ra \text{ by } \eqref{eq:dxdyCOV}}\,\d(\mm_\X\otimes\mm_\Y). 
\end{split}
\]
Therefore we get
\[
-\iint B + B' \, \d \mm_\Y \d \mm_\X = \int h \la\d_\sy v_\sx,\d_\sy g \otimes\d_\sx \tilde g \ra \, \d (\mm_\X \otimes \mm_\Y),
\]
and similarly $-\iint C + C' \, \d \mm_\X \d \mm_\Y = \int h \la\d_\sx v_\sy,\d_\sx g \otimes\d_\sy \tilde g \ra \, \d (\mm_\X \otimes \mm_\Y)$. Summing up \eqref{eq:p1cov} and \eqref{eq:p2cov}, and keeping in mind all these identities, we obtain \eqref{eq:claimcov} and the conclusion.
\end{proof}

\subsubsection{From regularity on the product to regularity on the factors}

The following statement is the analogue of Proposition \ref{prop:hessx}.
\begin{proposition}\label{prop:covx}
Let $(\X,\sfd_\X,\mm_\X)$ and $(\Y,\sfd_\Y,\mm_\Y)$ be $\RCD(K,\infty)$ spaces, and let $v \in \Lint^2(\T(\X \times \Y))$, $v = (v_\sx, v_\sy)$.
Then the following are equivalent:
\begin{itemize}
\item[i)] for $\mm_\Y$-a.e. $y \in \Y$, we have $v_\sx(\cdot, y) \in \W_C(\T \X)$ with $\iint \abs{\nabla_{C, \X} (v_\sx)}^2_\HS \, \d \mm_\X \, \d \mm_\Y < \infty$,
\item[ii)] there exists $T \in \Lint^2(\Y; \Lint^2(\T^{\otimes 2} \X))$ such that the identity
\[
\begin{split}
\int h T : (\nabla g \otimes \nabla \tilde g) \, &\d (\mm_\X \otimes \mm_\Y)\\
& = \int - \la v_\sx, \nabla_x \tilde g \ra \div(h \nabla (g\circ \pi_\X)) - h \H(\tilde g)(v_\sx, g) \, \d (\mm_\X \otimes \mm_\Y)
\end{split}
\]
holds for every $g, \tilde g \in \test{\X}$ and $h \in \Lip_\bs(\X \times \Y)$.
\end{itemize}
Moreover, in this case the choice $T = \nabla_{C, \X} (v_\sx)$ is the only one for which the identity in $ii)$ holds.
\end{proposition}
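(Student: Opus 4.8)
The plan is to run, essentially verbatim, the scheme of the proof of Proposition \ref{prop:hessx}, with the second-order energy $\E_{2,\sx}$ replaced by the first-order \emph{covariant energy}
\[
\E_{C,\sx}(w):=\tfrac12\int|\nabla_{C,\X}w|^2_\HS\,\d\mm_\X\ \text{ if } w\in\W_C(\T\X),\qquad\E_{C,\sx}(w):=+\infty\ \text{ otherwise},
\]
viewed as a functional on $L^2(\T\X)$. For $(i)\Rightarrow(ii)$ I would take $T:=\nabla_{C,\X}(v_\sx)$, which lies in $L^2(\Y;L^2(\T^{\otimes2}\X))$ precisely because of the integrability in $(i)$, and integrate in $y$ the defining identity of the covariant derivative of $v_\sx(\cdot,y)$, using \eqref{eq:divdivx} to rewrite $\div_\sx(h\nabla_\sx g)$ as $\div(h\nabla(g\circ\pi_\X))$ and the fact that $h(\cdot,y)\in\Lip_\bs(\X)$ for $\mm_\Y$-a.e.\ $y$; here the Hessian term is read fiberwise, consistently with \eqref{eq:hprod2} and Lemma \ref{le:horto} applied to the pullback $\tilde g\circ\pi_\X$. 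Uniqueness of $T$ follows, as in Proposition \ref{prop:hessx}, from the density in $L^2(\Y;L^2(\T^{\otimes2}\X))$ of the span of the tensors $h\,\nabla_\sx g\otimes\nabla_\sx\tilde g$ with $g,\tilde g\in\test\X$ and $h\in\Lip_\bs(\X\times\Y)$, itself a consequence of the fact that gradients of test functions generate $L^2(\T\X)$ and of the density of piecewise constant maps into $L^2(\T^{\otimes2}\X)$.

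For $(ii)\Rightarrow(i)$ I would first record, as the covariant counterpart of \cite[Theorem 3.3.2]{Gigli14}, that $\E_{C,\sx}$ is convex and $L^2(\T\X)$-lower semicontinuous and admits a sup-representation of the same shape as \eqref{eq:ide2}, the supremum ranging over finite families $g_j,\tilde g_j,h_j,h'_j\in\test\X$ and its integrand being the covariant integration-by-parts expression (in particular carrying a term $\H_\sx(\tilde g_j)(v_\sx,\nabla_\sx g_j)$) corrected by $-\big\|\sum_j(h_j\nabla_\sx g_j)\otimes(h'_j\nabla_\sx\tilde g_j)\big\|^2_{L^2}$. Then, exactly as in Proposition \ref{prop:hessx}, I would fix refining Borel partitions $(A^n_i)$ of $\Y$ of vanishing diameter, set $v^n_i:=\mm_\Y(A^n_i)^{-1}\int_{A^n_i}v_\sx(\cdot,y)\,\d\mm_\Y(y)\in L^2(\T\X)$ and $v^n:=\sum_i\nchi_{A^n_i}v^n_i$. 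The one genuine simplification with respect to the Hessian case is that here the energy is of first order in the field and is already lower semicontinuous for $L^2(\T\X)$-convergence: hence it suffices to note that the $v^n$, being the conditional expectations of $v_\sx$, converge to $v_\sx$ in $L^2(\Y;L^2(\T\X))$, whence along a subsequence $v^n(\cdot,y)\to v_\sx(\cdot,y)$ in $L^2(\T\X)$ for $\mm_\Y$-a.e.\ $y$, and lower semicontinuity together with Fatou give
\[
\int\E_{C,\sx}(v_\sx(\cdot,y))\,\d\mm_\Y(y)\le\limi_{n\to\infty}\int\E_{C,\sx}(v^n(\cdot,y))\,\d\mm_\Y(y).
\]

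The core is then the fiberwise bound $\mm_\Y(A^n_i)\,\E_{C,\sx}(v^n_i)\le\tfrac12\int_{\X\times A^n_i}|T|^2_\HS\,\d(\mm_\X\otimes\mm_\Y)$. I would obtain it by inserting an arbitrary competitor $g_j,\tilde g_j,h_j,h'_j$ into the sup-representation written for $v^n_i$ and, for each $j$ and $t>0$, regularizing the cut-off as $h_{j,t}:=\h_t^\X(h_jh'_j)\otimes\big(\h_t^\Y(\nchi_{A^n_i})\,\eta\big)\in\test{\X\times\Y}$ with $\eta\in\test\Y$ equal to $1$ on $A^n_i$ (using \eqref{eq:tenscal2}); by \eqref{eq:maxh}, the Bakry-\'Emery estimate \eqref{BE}, the identity \eqref{eq:divdivx} and the closure of the differential one passes to the limit $t\downarrow0$ reproducing the convergences \eqref{eq:limj} and \eqref{eq:ahj}, so that assumption $(ii)$ and Young's inequality bound the competitor value, and taking the supremum over competitors yields the stated fiberwise bound. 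Summing over $i$ and letting $n\to\infty$, together with the previous display, gives $(i)$. I expect the main obstacle to be the bookkeeping around the extra second-order term $\H_\sx(\tilde g)(v_\sx,\nabla_\sx g)$ that is present in the covariant integration by parts but absent from the purely second-order Hessian computation: one must verify that, after the heat-flow regularization and the passage to $t\downarrow0$, this term converges to the correct limit and that the product-space Hessian of the regularized pullbacks reduces, via \eqref{eq:hprod2} and Lemma \ref{le:horto}, to the $\X$-Hessian, so that the limiting identity matches the sup-representation of $\E_{C,\sx}(v^n_i)$ term by term.
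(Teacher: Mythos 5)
Your proposal follows essentially the same route as the paper's proof: the forward implication and the uniqueness of $T$ are obtained exactly as you describe, and the converse is proved via the sup-representation of the connection energy $\E_{C,\sx}$ (the paper invokes \cite[Theorem 3.4.2]{Gigli14}, phrased over finite families of test vector fields $w_j,z_j\in\Test\V(\X)$ rather than over the functions $g_j,\tilde g_j,h_j,h'_j$ directly, which is only a cosmetic difference), combined with the same refining partitions of $\Y$, conditional averages $v^n_i$, heat-flow regularized cut-offs $h_{j,\ell,t}$, Lemma \ref{le:horto}, Young's inequality, and lower semicontinuity plus Fatou. Your observation that the first-order nature of the energy simplifies the lower-semicontinuity step relative to Proposition \ref{prop:hessx} is also consistent with what the paper does.
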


\begin{proof}
$i) \Rightarrow ii)$ Similarly to the proof of Proposition \ref{prop:hessx}, the validity of the identity in $ii)$ with the only choice $T = \nabla_\sx v$ is guaranteed by the definition of the Covariant Derivative and the space $\W_C(\T \X)$, the identity \eqref{eq:divdivx} and the fact that $h (\cdot, y) \in \Lip_\bs(\X)$ for $\mm_\Y$-a.e. $y \in \Y$.\\
$ii) \Rightarrow i)$ We start recalling that the connection energy functional $\E_{C,\sx}\colon \Lint^2(\T \X)\to[0,\infty]$ defined by $\E_{C, \sx}(v):=\frac12\int|\nabla v|_\HS^2\,\d\mm_\X$ if $v\in W^{1,2}_C(\T \X)$ and $\E_{C,\sx}(v):=+\infty$ otherwise, is convex, lower semicontinuous and satisfies
\begin{equation}
\label{eq:ideCov}
\begin{split}
\E_{C, \sx}(v) = \sup\bigg\{\sum_j & \int   -\la v, z_j\ra\div_\sx(w_j) - \nabla z_j : (w_j \otimes v) \,\d\mm_\X - \frac12 \bigg\|\sum_j w_j \otimes z_ j\bigg\|^2_{\Lint^2(\T^{\otimes 2}\X)} \bigg\},
\end{split}
\end{equation}
where the sup is taken among all finite collections of vector fields $w_j, z_j \in \Test \V(\X)$, and over all finite collections of functions $g_{j, \ell}, h_{j, \ell} \in\test{\X}$ such that $z_\ell = \sum_j h_{j, \ell} \nabla g_{j, \ell}$ (\cite[Theorem 3.4.2]{Gigli14}). 

Hence, arguing similarly as in the proof of Proposition \ref{prop:hessx}, for each $n \in \N$ we denote by $(A_i^n)$ a Borel partition of $\Y$ made of at most countable sets such that $0 < \mm_\Y(A_i^n) < \infty$, $\text{diam}(A_i^n) \le 1/n$ and with the property that $(A^{n+1}_i)$ is a refinement of $(A^n_i)$. Then we define $v^n_i := \mm_\Y(A_i^n)^{-1} \int_{A_i^n} v_\sx \, \d \mm_\Y(y) \in L^2(T\X)$, and $v^n := \sum_i \chi_{A_i^n}(y) \Phi_\sx(v_i^n) \in L^2(T(\X \times \Y))$. It is clear that $(v^n)$ converges to $(v_\sx,0)$ in $L^2(\T(\X \times \Y))$ as $n\to\infty$, thus from the lower semicontinuity of $\E_{C,\sx}$ on $\X$ we easily get
\begin{equation}\label{eq:ECX}
\int \E_{C, \sx}(v_\sx) \, \d \mm_\Y(y) \le \limi_{n\to\infty} \int \E_{C, \sx}(v^n_\sx) \, \d \mm_\Y(y).
\end{equation}
At this point, we fix $n, i \in \N$ and pick a finite family of vector fields $w_j, z_j = \sum_\ell h_{j, \ell} \nabla g_{j, \ell} \in \Test V(\X)$ in the identity \eqref{eq:ideCov} written for $v_i^n$ in place of $v$. Thus, for every fixed $j, \ell$ and $t > 0$, we define $h_{j, \ell, t} \in \test{\X \times \Y}$ by posing
\[
h_{j, \ell, t} := \h_t\big( \chi_{\X \times A_i^n} h_{j, \ell} \circ \pi_\X \big) \eta \circ \pi_\Y \overset{\eqref{eq:tenscal2}}{=} \h_t^\X(h_{j, \ell}) \otimes \big( \h_t^\Y(\chi_{A_i^n}) \eta \big),
\]
for a function $\eta \in \test{\Y}$ with bounded support and identically equal to $1$ on $A_i^n$. Thus $(\abs{h_{j, \ell, t} - h_{j, \ell}}), (\abs{\d_\sx h_{j, \ell, t} - \d_\sx h_{j, \ell}})$ are uniformly bounded and converge to $0$ in $L^2(\X \times \Y)$ as $t \downarrow 0$. Therefore, the dominate convergence theorem, the identities \eqref{eq:divdivx} and \eqref{eq:horto}, and the closure of the differential give that, as $t \downarrow 0$,
\begin{equation}\label{eq:limjCov}
\begin{split}
\int - &\la v_\sx, \nabla_\sx g_{j, \ell} \ra \div (h_{j, \ell, t} w_j)  - h_{j, \ell, t} \H_\sx(g_{j, \ell})(w_j, v_\sx)\circ \pi_\X  \, \d (\mm_\X \otimes \mm_\Y) \\
&{\to} \quad \mm_\Y(A_i^n) \int -  \la v_\sx, \nabla_\sx g_{j, \ell} \ra \div_\sx (h_{j, \ell} w_j)  - h_{j, \ell} \H_\sx(g_{j, \ell})(w_j, v_\sx)  \, \d \mm_\X
\end{split}
\end{equation}
for every fixed $j, \ell$. In a similar way we get that
\begin{equation}\label{eq:Thj}
\int \la  T, h_{j, \ell, t} \nabla_\sx g_{j, \ell} \otimes w_j   \ra \, \d (\mm_\X \otimes \mm_\Y)\, \overset{t \downarrow 0}{\longrightarrow} \, \int_{\X \times A_i^n} \la  T, h_{j, \ell} \nabla_\sx g_{j, \ell} \otimes w_j   \ra \, \d (\mm_\X \otimes \mm_\Y).
\end{equation}
Moreover, directly from $ii)$, we have that
\[
\begin{split}
\lim_{t \downarrow 0} \sum_{j,\ell}\int - &\la v_\sx, \nabla_\sx g_{j, \ell} \ra \div (h_{j, \ell, t} w_j)  - h_{j, \ell, t} \H_\sx(g_{j, \ell})(w_j, v_\sx)\circ \pi_\X  \, \d (\mm_\X \otimes \mm_\Y) \\
&\quad= \lim_{t \downarrow 0} \int \langle  T, w_j\otimes \sum_{j, \ell} h_{j, \ell, t} \nabla_\sx g_{j, \ell}    \rangle \, \d (\mm_\X \otimes \mm_\Y)\\
&\overset{\eqref{eq:Thj}}{=}  \int \langle \chi_{\X \times A_i^n}  T,\sum_j w_j\otimes z_j\rangle \, \d (\mm_\X \otimes \mm_\Y)\\
&\quad \le \tfrac12\int_{\X \times A_i^n} \abs{T}_\HS^2 \, \d (\mm_\X \otimes \mm_\Y) + \tfrac12 \mm_\Y(A_i^n)\Big\| \sum_j w_j \otimes z_j  \Big\|^2_{L^2(\X)}
\end{split}
\]
where in the last step follows from Young's inequality. This inequality, the fact that the choice of $w_j$ and $z_j$ is arbitrary, \eqref{eq:limjCov} and \eqref{eq:ideCov}  provide
\[
\mm_\Y(A_i^n) \E_{C, \sx}(v_i^n) \le\tfrac12 \int_{\X \times A_i^n} \abs{T}^2_\HS \, \d (\mm_\X \otimes \mm_\Y).
\]
Finally, adding up over $i$ and letting $n \to \infty$, thanks to \eqref{eq:ECX}, we get
\[
\int \E_{C, \sx}(v_\sx) \, \d \mm_\Y(y) \le\tfrac12 \int_{\X \times \Y} \abs{T}_\HS^2 \, \d (\mm_\X \otimes \mm_\Y),
\]
which is $i)$.
\end{proof}
The following is a natural variant of Lemma \ref{le:easy}:
\begin{lemma}\label{le:easyCov}
Let $(\X, \d_\X, \mm_\X)$ and $(\Y, \d_\Y, \mm_\Y)$ be $\RCD(K, \infty)$ spaces, and $v \in W_C^{1, 2}(\T(\X \times \Y))$. Then the identity
\begin{equation}\label{eq:cov}
\int h \nabla_C v : (\nabla g_1 \otimes \nabla g_2) \, \d (\mm_\X \otimes \mm_\Y) = - \int \la v, \nabla g_1 \ra \div(h \nabla g_2) -  h \H(g_2)(v, \nabla g_1) \, \d (\mm_\X \otimes \mm_\Y)
\end{equation}
holds for any $h \in \Lip_\bs(\X \times \Y)$ and $g_1, g_2 \in  \{ g \circ \pi_\X  : g \in \test{\X}\} \cup \{ g \circ \pi_\Y  : g \in \test{\Y}\}$.
\end{lemma}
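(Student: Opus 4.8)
The plan is to mimic, line by line, the two-step proof of Lemma \ref{le:easy}, replacing the defining identity of the Hessian by the defining identity of the covariant derivative. Since $\X\times\Y$ is $\RCD(K,\infty)$ by Proposition \ref{prop:rcdass}, all the second-order calculus on the product is available; in particular the covariant derivative $\nabla_C v\in L^2(T^{\otimes 2}(\X\times\Y))$ is well defined and characterised by its integration-by-parts formula tested against functions in $\test{\X\times\Y}$ with bounded support. Thus it suffices to massage that formula into \eqref{eq:cov} for the special choice of (pullbacks of) test functions $g_1,g_2$, and then to remove the test-function restriction on $h$.

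First I would prove \eqref{eq:cov} under the additional hypothesis that $h\in\test{\X\times\Y}$ has bounded support. Using \eqref{eq:testcutoff}, fix a cut-off $\eta\in\test{\X\times\Y}$ with bounded support and $\eta\equiv1$ on $\supp(h)$. The structural point is that the products $\eta g_1,\eta g_2$ again belong to $\test{\X\times\Y}$: each $g_i$ is bounded and Lipschitz and, being a pullback of a test function, satisfies $\Delta(g\circ\pi_\X)=(\Delta_\sx g)\circ\pi_\X$ by Proposition \ref{cor:lapprod} (and symmetrically for $\pi_\Y$), so multiplying by the compactly supported test function $\eta$ and invoking the Leibniz rules for the gradient and the Laplacian keeps the result inside $\test{\X\times\Y}$. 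I then apply the defining identity of $\nabla_C v$ to the admissible triple $(\eta g_1,\eta g_2,h)$.

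The identity \eqref{eq:cov} then follows by localisation, exactly as in Lemma \ref{le:easy}. Since $h$ is supported in $\{\eta=1\}$, the locality of the differential gives $h\,\nabla(\eta g_i)=h\,\nabla g_i$ and hence $\div(h\nabla(\eta g_i))=\div(h\nabla g_i)$ (the support of a divergence being contained in that of the field), while the locality of the Hessian gives $\H(\eta g_2)=\H(g_2)$ $\mm_\X\otimes\mm_\Y$-a.e.\ on $\{\eta g_2=g_2\}\supset\supp(h)$. Substituting these equalities into the defining identity for $(\eta g_1,\eta g_2,h)$ produces precisely \eqref{eq:cov}. I would also record the consistency check that, when $g_1,g_2$ are pulled back from different factors, the Hessian contribution $\H(g_2)(v,\nabla g_1)$ vanishes by Lemma \ref{le:horto}, whereas when both come from the same factor the identity collapses, through \eqref{eq:divdivx}, to the single-space covariant identity; neither observation is logically needed, but both serve to verify signs and indices.

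Finally, for a general $h\in\Lip_\bs(\X\times\Y)$ I would choose $(h_n)\subset\test{\X\times\Y}$ with $h_n\to h$ in $W^{1,2}(\X\times\Y)$ and the same cut-off $\eta$; then $\eta h_n\in\test{\X\times\Y}$ has bounded support, $\eta h_n\to\eta h=h$ in $W^{1,2}(\X\times\Y)$, and all the supports stay inside the fixed bounded set $\supp(\eta)$. Writing \eqref{eq:cov} for each $\eta h_n$ and letting $n\to\infty$, every term converges: the $L^2$-convergence of $\eta h_n$ and of $\nabla(\eta h_n)$ is paired against the fixed $L^2$ objects $\nabla_C v$, $v$, $\nabla g_i$ and $\H(g_2)$ over the fixed bounded region, yielding $L^1$-convergence of all integrands. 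I expect the only genuinely delicate step to be the membership $\eta g_i\in\test{\X\times\Y}$ — that is, checking $\Delta(\eta g_i)\in W^{1,2}(\X\times\Y)$ via Proposition \ref{cor:lapprod} together with the Leibniz rule — since once this is granted the remaining manipulations are routine and entirely parallel to those in Lemma \ref{le:easy}.
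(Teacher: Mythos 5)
Your proposal is correct and follows essentially the same route as the paper's proof: first establish \eqref{eq:cov} for $h\in\test{\X\times\Y}$ with bounded support by applying the defining identity of $\nabla_C v$ to the triple $(\eta g_1,\eta g_2,h)$ with a cut-off $\eta$ and then invoking locality of the gradient and Hessian, and finally pass to general $h\in\Lip_\bs(\X\times\Y)$ via $W^{1,2}$-approximation by $\eta h_n$ with uniformly bounded supports. The only difference is that you spell out the (genuinely necessary, and correctly identified as the delicate point) verification that $\eta g_i\in\test{\X\times\Y}$, which the paper asserts without comment.
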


\begin{proof}
First of all, let us consider the case in which $h \in \test{\X\times\Y}$ has bounded support, and let $\eta \in \test{\X\times \Y}$ be with bounded support and such that $\supp(h) \subset \{\eta = 1\}$, whose existence is guaranteed by \eqref{eq:testcutoff}. Then for $g_1, g_2 \in \{ g \circ \pi_\X : g \in \test{\X} \} \cup \{ g \circ \pi_\Y : g \in \test{\Y} \}$, we have that $\eta g_1, \eta g_2 \in \test{\X \times \Y}$, and the validity of \eqref{eq:cov} follows from the definition of Covariant Derivative and the locality property of the differential and the Hessian, which ensures that $\mm_\X \times \mm_\Y$-a.e. on $\supp(h)$ it holds $\nabla (\eta g_i) = \nabla g_i$, $i = 1, 2$ and $\H(\eta g_2) = \H(g_2)$.

At this point, in order to get the conclusion, we take $h \in \Lip_\bs(\X \times \Y)$, $\eta$ as before and $(h_n) \subset \test{\X \times \Y}$ a sequence $W^{1, 2}$-converging to $h$. Then $\eta h_n \in \test{\X\times \Y}$ for every $n \in \N$ and $\eta h_n \to \eta h = h$ in $W^{1, 2}(\X \times \Y)$. Since \eqref{eq:cov} holds with $\eta h_n$ and the functions have uniformly bounded support, we can pas to the limit in $n$, and get the claim.
\end{proof}
We are now ready to prove the main result of the section:
\begin{theorem}\label{thm:wCprod}
Let $(\X, \d_\X, \mm_\X)$ and $(\Y, \d_\Y, \mm_\Y)$ be $\RCD(K, \infty)$ spaces, and $v = (v_\sx, v_\sy) \in W_C^{1, 2}(\T(\X \times \Y))$. Then
\begin{itemize}
\item[i)] for $\mm_\Y$-a.e. $y \in \Y$ it holds $v_\sx(\cdot, y) \in W^{1, 2}_C(\T \X)$ with $\iint \abs{\nabla_{C, \sx} (v_\sx)}^2_{\HS}\,\d \mm_\X \, \d \mm_\Y < \infty$,
\item[ii)] for $\mm_\X$-a.e. $x \in \X$ it holds $v_\sy(x, \cdot) \in W^{1, 2}_C(\T \Y)$ with $\iint \abs{\nabla_{C,\sy} (v_\sy)}^2_{\HS}\,\d \mm_\Y \, \d \mm_\X < \infty$,
\item[iii)] we have $y \mapsto v_\sx(\cdot, y) \in W^{1, 2}(\Y; \Lint^2(\T \X))$ and $x \mapsto v_\sy(x, \cdot) \in W^{1, 2}(\X; \Lint^2(\T \Y))$.
\end{itemize}
In particular, the identities \eqref{eq:covprod} and \eqref{eq:covprod2} hold.
\end{theorem}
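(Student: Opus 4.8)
The plan is to mirror, essentially line by line, the proof of Theorem \ref{prop:w22prod}, replacing the Hessian by the covariant derivative and invoking the covariant analogues Lemma \ref{le:easyCov} and Proposition \ref{prop:covx} wherever that proof used Lemma \ref{le:easy} and Proposition \ref{prop:hessx}. Throughout I use the orthogonal block decomposition of the tensor $\nabla_C v\in L^2(T^{\otimes2}(\X\times\Y))$ into the images of $\Phi_\sx^{\otimes2},\Phi_\sx\otimes\Phi_\sy,\Phi_\sy\otimes\Phi_\sx,\Phi_\sy^{\otimes2}$, together with the facts that $\nabla(g\circ\pi_\X)=(\nabla_\sx g,0)$ and $\nabla(\tilde g\circ\pi_\Y)=(0,\nabla_\sy\tilde g)$ are pure gradients, so that contracting $\nabla_C v$ against $\nabla g_1\otimes\nabla g_2$ with $g_1,g_2$ pulled back from prescribed factors extracts exactly one block.

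For $(i)$ I would apply Lemma \ref{le:easyCov} with $g_1,g_2\in\{g\circ\pi_\X:g\in\test{\X}\}$ and $h\in\Lip_\bs(\X\times\Y)$. By \eqref{eq:divdivx} the divergence reduces to $\div_\sx(h\nabla_\sx g_2)$, and since $g_2$ depends only on $x$ we have $\H(g_2)=\Phi_\sx^{\otimes2}(\H_\sx g_2)$, so that the $v_\sy$-component drops out of the Hessian term and it becomes $h\H_\sx(g_2)(v_\sx,\nabla_\sx g_1)$. The left-hand side equals $\int h\la(\nabla_C v)_{\sx\sx},\nabla_\sx g_1\otimes\nabla_\sx g_2\ra$. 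Hence the resulting identity coincides, after relabelling the two test functions, with the characterizing condition $(ii)$ of Proposition \ref{prop:covx}, with $T$ the element of $L^2(\Y;L^2(T^{\otimes2}\X))$ representing the block $(\nabla_C v)_{\sx\sx}$; Proposition \ref{prop:covx} then yields $v_\sx(\cdot,y)\in W^{1,2}_C(T\X)$ for $\mm_\Y$-a.e.\ $y$ with the stated integrability bound and identifies $\nabla_{C,\sx}v_\sx$ with that block. Point $(ii)$ is the symmetric statement obtained by pulling back from $\Y$.

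For $(iii)$ I would again use Lemma \ref{le:easyCov}, now with $g_1=g\circ\pi_\X$ ($g\in\test{\X}$) and $g_2=\tilde g\circ\pi_\Y$ ($\tilde g\in\test{\Y}$). The crucial simplification is that the term $h\H(g_2)(v,\nabla g_1)$ vanishes $\mm_\X\otimes\mm_\Y$-a.e.\ by Lemma \ref{le:horto}, since $g_2$ is pulled back from $\Y$ while $\nabla g_1$ is a pure $\X$-gradient. What remains, after using $\la v,\nabla g_1\ra=\la v_\sx,\nabla_\sx g\ra$, is
\[
\int h\la(\nabla_C v)_{\sx\sy},\nabla_\sx g\otimes\nabla_\sy\tilde g\ra\,\d(\mm_\X\otimes\mm_\Y)=-\int\la v_\sx,\nabla_\sx g\ra\,\div(h\nabla(\tilde g\circ\pi_\Y))\,\d(\mm_\X\otimes\mm_\Y).
\]
A heat-flow approximation combined with a test cutoff of bounded support (exactly as in the passage from $\test{\Y}$ to $D(\Delta_\sy)$ in the proof of Theorem \ref{prop:w22prod}) extends this to all $h\in\Lip_\bs(\X\times\Y)$ and $\tilde g\in D(\Delta_\sy)$. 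Since $\{\nabla_\sx g:g\in\test{\X}\}$ generates $L^2(T\X)$, this is precisely the integration-by-parts hypothesis of Proposition \ref{prop:perdxdy} applied with the roles of $\X,\Y$ swapped, $\Hil=L^2(T\X)$ and $f=v_\sx$; it therefore gives $y\mapsto v_\sx(\cdot,y)\in W^{1,2}(\Y;L^2(T\X))$ and identifies $\d_\sy v_\sx$ with the appropriate transpose of $(\nabla_C v)_{\sx\sy}$. The symmetric choice $g_1=g\circ\pi_\Y$, $g_2=\tilde g\circ\pi_\X$ gives $x\mapsto v_\sy(x,\cdot)\in W^{1,2}(\X;L^2(T\Y))$. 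Finally, \eqref{eq:covprod} and \eqref{eq:covprod2} follow from the block identifications obtained along the way or, having verified $(i)$–$(iii)$, by invoking Theorem \ref{prop:dafattCov}.

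The main obstacle I anticipate is the one feature absent from the Hessian case: $\nabla_C v$ is not symmetric, so the two off-diagonal blocks $\d_\sx v_\sy$ and $\d_\sy v_\sx$ are genuinely distinct, related only through the transposition of Theorem \ref{thm:swap}. One must therefore keep scrupulous track of the ordering of the two tensor slots when matching the identities produced by Lemma \ref{le:easyCov} to the characterizations in Propositions \ref{prop:covx} and \ref{prop:perdxdy}; this is what forces the two pullback choices in $(iii)$ to be handled separately, rather than one being deduced from the other by symmetry, and it is essentially the only point where the argument genuinely departs from the proof of Theorem \ref{prop:w22prod}.
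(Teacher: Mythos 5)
Your proposal is correct and follows essentially the same route as the paper's proof: points $(i)$ and $(ii)$ via Lemma \ref{le:easyCov} with both test functions pulled back from the same factor combined with Proposition \ref{prop:covx}, point $(iii)$ via the mixed choice of pullbacks, Lemma \ref{le:horto} to kill the Hessian term, a heat-flow approximation, and Proposition \ref{prop:perdxdy}, with the final identities obtained from Theorem \ref{prop:dafattCov}. The only quibble is your closing aside: for a general vector field the two off-diagonal blocks $\d_\sx v_\sy$ and $\d_\sy v_\sx$ are not related by the transposition of Theorem \ref{thm:swap} (which concerns mixed derivatives of a single function), but this does not affect the argument since you treat the two blocks separately anyway.
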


\begin{proof}
Point $i)$ is a direct consequence of Lemma \ref{le:easyCov} with $g_1, g_2 \in \{ g \circ \pi_\X  : g \in \test{\X}\}$ together with Proposition \ref{prop:covx}. In the same way we get also $ii)$.

In order to prove $iii)$, we start observing that directly from \eqref{eq:cov} it holds
\[
\int h \nabla_C v : (\nabla g_1 \otimes \nabla g_2) + h \H(g_2)(v, g_1) \, \d (\mm_\X \otimes \mm_\Y) = - \int \la v, \nabla g_1 \ra \div(h \nabla g_2) \, \d(\mm_\X \otimes \mm_\Y)
\]
for $g_1, g_2 \in \{ g \circ \pi_\X  : g \in \test{\X}\} \cup \{ g \circ \pi_\Y  : g \in \test{\Y}\}$, and $h \in \test{\X \times \Y}$ with bounded support. Thus, if we take $g_1 = g \circ \pi_\X$ and $g_2 = \tilde g \circ \pi_\Y$, with $g \in \test{\X}$ and $\tilde g \in \test{\Y}$ respectively, and we take into account Lemma \ref{le:horto}, we get
\[
\int h \nabla_C v : (\nabla (g \circ \pi_\X) \otimes \nabla (\tilde g \circ \pi_\Y)) \, \d (\mm_\X \otimes \mm_\Y) = - \int \la v, \nabla (g \circ \pi_\X) \ra \div(h \nabla (\tilde g \circ \pi_\Y)) \, \d(\mm_\X \otimes \mm_\Y).
\]
Recalling Proposition \ref{prop:defPhi} and denoting by $T$ the restriction of the Covariant Derivative of $v$ to $\Phi_\sx(L^2(\Y; L^2(\T \X))) \otimes \Phi_\sy(L^2(\X; L^2(\T \Y))) \subset L^2(T^{\otimes 2}(\X \times \Y))$, we obtain
\begin{equation}\label{eq:covdxdy}
\int h \la T, \d_\sx g \otimes \d_\sy \tilde g \ra \, \d (\mm_\X \otimes \mm_\Y) = - \int \la v_\sx, \d_\sx g \ra \div(h \nabla (\tilde g \circ \pi_\Y)) \, \d(\mm_\X \otimes \mm_\Y).
\end{equation}
By now we have proven the validity of \eqref{eq:covdxdy} for $\tilde g \in \test{\Y}$ and $h \in \test{\X \times \Y}$ with bounded support: a standard approximation argument involving the heat flow and the multiplication with a test cut-off function with bounded support allows to conclude that actually this equality holds for $h \in \Lip_\bs(\X \times \Y)$ and $\tilde g \in D(\Delta_\sy)$.

Since $\{ \d_\sx g : g \in \test{\X} \}$ generates $L^2(\T^* \X)$, we can then apply Proposition \ref{prop:perdxdy} to conclude that $\Y \ni y \mapsto v_\sx(\cdot, y) \in W^{1, 2}(\Y; L^2(\T \X))$ with $\d_\sy v_\sx = T$. An analogous argument applies to the case $\d_\sx v_\sy$.

At this point we have that $i), ii), iii)$ in Proposition \ref{prop:dafattCov} hold true, and this in particular means that the identities \eqref{eq:covprod} and \eqref{eq:covprod2} are valid.
\end{proof}

We conclude this section providing an analogous of Proposition \ref{prop:h22prod}: we prove that if the vector field $v = (v_\sx, v_\sy)$ in the previous statement is assumed to be in $H_C^{1, 2}(\T(\X\times \Y))$, then also the components are in the respective $H_C^{1, 2}$ spaces.
\begin{proposition}\label{prop:hCprod}
Let $(\X, \d_\X, \mm_\X)$ and $(\Y, \d_\Y, \mm_\Y)$ be two $\RCD(K, \infty)$ spaces, and $v=(v_\sx,v_\sy) \in H_C^{1, 2}(\T(\X \times \Y))$. Then in addition to the conclusions of Theorem \ref{thm:wCprod} above it holds:
\begin{itemize}
\item[i')] $v_\sx(\cdot, y) \in H^{1, 2}_C(\T \X)$ for $\mm_\Y$-a.e. $y \in \Y$,
\item[ii')] $v_\sy(x, \cdot) \in H^{1, 2}_C(\T \Y)$ for $\mm_\X$-a.e. $x \in \X$.
\end{itemize}
\end{proposition}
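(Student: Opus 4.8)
The plan is to follow the strategy of Proposition \ref{prop:h22prod}: produce a sequence of vector fields approximating $v$ in $W^{1,2}_C(T(\X\times\Y))$ whose $\X$- and $\Y$-components are \emph{fibrewise} in $H^{1,2}_C$ of the factors by construction, and then transfer this regularity to $v_\sx(\cdot,y)$ (resp.\ $v_\sy(x,\cdot)$) by combining the orthogonal decomposition of the covariant derivative with the fact that $H^{1,2}_C(T\X)$ is $W^{1,2}_C$-closed. Concretely, I would first introduce the class $\Test_\times\V(\X\times\Y)$ of finite linear combinations of vector fields of the form $(g_1\otimes g_2)\nabla(f_1\otimes f_2)$, with $f_1,g_1\in\test{\X}$ and $f_2,g_2\in\test{\Y}$. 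Writing $\nabla(f_1\otimes f_2)=(f_2\nabla_\sx f_1,f_1\nabla_\sy f_2)$ one reads off that the $\X$-component of such a field at the fibre $y$ equals the scalar multiple $(g_2 f_2)(y)\,g_1\nabla_\sx f_1$, which lies in $\Test V(\X)\subset H^{1,2}_C(T\X)$ for every $y$; symmetrically for the $\Y$-component. Thus, once $v$ is approximated by elements of $\Test_\times\V(\X\times\Y)$, the approximants have the required fibrewise regularity automatically.

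The crux, and the step I expect to be the main obstacle, is the density of $\Test_\times\V(\X\times\Y)$ in $H^{1,2}_C(T(\X\times\Y))$ for the $W^{1,2}_C$-norm. Since $H^{1,2}_C$ is by definition the $W^{1,2}_C$-closure of $\Test V(\X\times\Y)$, it suffices to approximate each generator $g\nabla f$ with $f,g\in\test{\X\times\Y}$, and I would do so with a double limit. For fixed $t>0$, Lemma \ref{le:apprtestx} applied to $f$ and to $g$ yields $f_{n,t},g_{n,t}\in\Test_\times\fct(\X\times\Y)$ with uniformly bounded $L^\infty$-norms and Lipschitz constants, with $g_{n,t}\to\h_t g$ and $\nabla f_{n,t}\to\nabla\h_t f$ in $L^2$, and with $\H(f_{n,t})\to\H(\h_t f)$ in $L^2$ (the latter being established inside the proof of that lemma). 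Using the formula $\nabla_C(g\nabla f)=\nabla g\otimes\nabla f+g(\H f)^\sharp$ and dominated convergence, one then checks $g_{n,t}\nabla f_{n,t}\to\h_t g\,\nabla\h_t f$ in $W^{1,2}_C$, where each $g_{n,t}\nabla f_{n,t}$ belongs to $\Test_\times\V(\X\times\Y)$. Letting $t\downarrow0$, the bounds \eqref{eq:maxh}, \eqref{eq:linftylip}, \eqref{eq:SobtoLip} give $\h_t g\to g$ in $W^{1,2}$ with uniform $L^\infty$ and Lipschitz control, while $\h_t f\to f$ in $W^{2,2}$ by \eqref{eq:hesscont} and the commutation $\Delta\h_t f=\h_t\Delta f$; hence $\h_t g\,\nabla\h_t f\to g\nabla f$ in $W^{1,2}_C$. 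A diagonal argument produces a sequence in $\Test_\times\V(\X\times\Y)$ converging to $g\nabla f$ in $W^{1,2}_C$, which gives the density.

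With the density in hand, I would fix $(v_n)\subset\Test_\times\V(\X\times\Y)$ with $v_n\to v$ in $W^{1,2}_C(T(\X\times\Y))$ and apply the decomposition of Theorem \ref{thm:wCprod} to $v_n-v\in W^{1,2}_C$. The splitting \eqref{eq:covprod}, together with the pointwise orthogonality of its four summands (the covariant analogue of \eqref{eq:decA2}), yields $|(v_n)_\sx-v_\sx|\le|v_n-v|$ and $|\nabla_{C,\sx}((v_n)_\sx-v_\sx)|_\HS\le|\nabla_C(v_n-v)|_\HS$ $\mm_\X\otimes\mm_\Y$-a.e.. Integrating, $\int_\Y\|(v_n)_\sx(\cdot,y)-v_\sx(\cdot,y)\|_{W^{1,2}_C(T\X)}^2\,\d\mm_\Y(y)\le\|v_n-v\|_{W^{1,2}_C}^2\to0$, so along a subsequence $n_k$ we get $(v_{n_k})_\sx(\cdot,y)\to v_\sx(\cdot,y)$ in $W^{1,2}_C(T\X)$ for $\mm_\Y$-a.e.\ $y$. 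Since for each $k$ the fibre $(v_{n_k})_\sx(\cdot,y)$ lies in $\Test V(\X)\subset H^{1,2}_C(T\X)$ for a.e.\ $y$, and $H^{1,2}_C(T\X)$ is closed in $W^{1,2}_C(T\X)$, the limit $v_\sx(\cdot,y)$ belongs to $H^{1,2}_C(T\X)$ for a.e.\ $y$, which is $i')$. The claim $ii')$ follows by the symmetric argument with $\X$ and $\Y$ exchanged.
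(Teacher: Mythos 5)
Your proposal is correct and follows essentially the same route as the paper's proof: the double limit $n\to\infty$, $t\downarrow 0$ via Lemma \ref{le:apprtestx} to approximate each generator of $H^{1,2}_C(T(\X\times\Y))$ by product-type test vector fields in $W^{1,2}_C$, followed by the fibrewise pointwise bounds coming from the orthogonal splitting \eqref{eq:covprod} and a subsequence extraction to get $\mm_\Y$-a.e.\ convergence of the $\X$-components in $W^{1,2}_C(T\X)$ towards elements of the $W^{1,2}_C$-closure of $\Test V(\X)$. The only cosmetic difference is that you package the approximants into an explicitly named class $\Test_\times\V(\X\times\Y)$, whereas the paper works directly with sums $\sum_i f_i\nabla g_i$ with $f_i,g_i\in\Test_\times\fct(\X\times\Y)$; these are the same objects.
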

\begin{proof} 
We start claiming that for $f, g \in \test{\X\times \Y}$ if we consider functions $\{f_{n, t}\}, \{ g_{n, t}  \} \subset \Test_\times \fct (\X \times \Y)$ parametrized by $n\in\N$ and $t>0$ as given by Lemma \ref{le:apprtestx} we have that
\begin{equation}
\label{eq:claimtestprod}
\lim_{t\downarrow0}\lim_{n\to\infty}f_{n,t}\nabla g_{n,t}=f\nabla g\qquad\textrm{ in }W^{1,2}_C(T(\X\times\Y)).
\end{equation}
Indeed we have
\[
\begin{split}
\|  f_{n,t}\nabla g_{n,t} - f\nabla g  \|_{L^2} \le \| f_{n, t} \|_{L^\infty} \| \nabla (g_{n, t} - g)  \|_{L^2} + \| \abs{\nabla g} \|_{L^\infty} \| f_{n, t} - f \|_{L^2} 
\end{split}
\]
and $\sup_{n,t} \| f_{n, t} \|_{L^\infty}<\infty$ by $(i)$ of Lemma \ref{le:apprtestx}, $\lim_t\lim_n\|_{L^\infty} \| f_{n, t} - f \|_{L^2}=0$ by  $(iii)$ of Lemma \ref{le:apprtestx} and  $\lim_t\lim_n\|_{L^\infty} \| \nabla (g_{n, t} - g)  \|_{L^2}=0$ by  $(iv)$ of Lemma \ref{le:apprtestx} and  the continuity in $W^{1,2}$ of the heat flow. This proves that the limit in \eqref{eq:claimtestprod} holds in $L^2(T(\X\times\Y))$. To obtain that it holds in $W^{1,2}_C(T(\X\times\Y))$, taking into account the identity $\nabla(f\nabla g)=\nabla f\otimes\nabla g+f\H g^\sharp$, we see that it remains to prove that
\begin{equation}
\label{eq:partegrad}
\lim_{t\downarrow0}\lim_{n\to\infty}\Big\| \Big(\nabla f_{n, t} \otimes \nabla g_{n, t} + f_{n, t} (\hess(g_{n, t}))^\sharp\Big) - \Big(\nabla f \otimes \nabla g + f (\hess(g))^\sharp\Big)  \Big\|_{L^2(T^{\otimes 2}(\X \times \Y))} = 0,
\end{equation}
The convergence of $\nabla f_{n, t} \otimes \nabla g_{n, t}$ to $\nabla f \otimes \nabla g$ in $L^2(T^{\otimes 2}(\X \times \Y))$ is ensured by $iv)$ in Lemma \ref{le:apprtestx}, \eqref{BE}, and a diagonalization argument. For the other term we notice that \eqref{eq:hesscont} together with $iii)$ and $iv)$ in Lemma \ref{le:apprtestx} gives that $\H g_{n,t}\to\H\h_tg$ in $L^2((T^*)^{\otimes 2}(\X\times\Y))$ as $n\to\infty$ for any $t>0$. In particular, for every $t>0$ the sequence $|\H g_{n,t}-\H\h_tg|_\HS$ is dominated in $L^2$ and this fact together with $L^2$-convergence of $(f_{n,t})$ to $\h_tf$, the uniform $L^\infty$-bound on these functions and an application of the dominated convergence theorem gives 
\[
\|(f_{n, t}-\h_tf)( \hess(g_{n, t})- \hess(\h_tg))\|_{L^2}\to 0
\]
as $n\to\infty$ for every $t>0$. It is then clear that
\[
\lim_{n\to\infty}\|f_{n, t} \hess(g_{n, t})-\h_tf \hess(\h_tg)\|_{L^2}= 0.
\]
A similar line of thought gives $\lim_{t\downarrow0}\|\h_tf \hess(\h_tg)-f \hess(g)\|_{L^2}= 0$ hence \eqref{eq:partegrad}, and thus \eqref{eq:claimtestprod}, follows.

From this claim and the definition of $H^{1,2}_C(T(\X\times\Y))$ it follows that for $v\in H^{1,2}_C(T(\X\times\Y))$ there is a sequence $(v_n)$ converging to $v$ in $W^{1,2}_C(T(\X\times\Y))$ so that each $v_n$ is of the form $\sum f_i\nabla g_i$ for $f_i,g_i\in \Test_\times \fct (\X \times \Y)$. Up to pass to a subsequence and using the bounds $|v|\geq |v_\sx|$ and $|\nabla_Cv|\geq |\nabla_{C,\sx}f_\sx|$ (for the latter recall \eqref{eq:covprod}) we can assume that for $\mm_\Y$-a.e.\ $y\in\Y$ we have $v_{n,\sx}(y)\to v_\sx(y)$ in $W^{1,2}(\X)$ as $n\to\infty$ (recall that the component $v_\sx$ of a vector field $v\in L^2(T(\X\times\Y))$ belongs to $L^2(\Y;L^2(T\X))$) and similarly that $v_{n,\sy}(x)\to v_\sy(x)$ in $W^{1,2}(\Y)$ for $\mm_\X$-a.e.\ $x\in\X$.

The conclusion then follows from the observation that vectors $v_n$ as those considered are such that   $v_{n,\sx}(y) \in \Test\V(\X)$ for $\mm_\Y$-a.e. $y \in \Y$ and, similarly, $v_{n,\sy}(x) \in \Test\V(\Y)$ for $\mm_\X$-a.e. $x \in \X$.
\end{proof}

\bibliographystyle{abbrv}
\bibliography{partder}

\end{document}